\pdfoutput=1
\documentclass[hidelinks, reqno]{amsart}
\usepackage[english,french]{babel}
\usepackage{subcaption}
\usepackage{tabularx}
\usepackage{booktabs}
\usepackage{array}
\usepackage{amsmath}
\usepackage{amsfonts}
\usepackage{amssymb}
\usepackage{amsthm}
\usepackage[scr]{rsfso}
\usepackage{enumitem}
\usepackage{permute}
\usepackage{slashed}
\usepackage[usenames,dvipsnames]{xcolor}
\usepackage[pagebackref = true,colorlinks, linkcolor = Red, citecolor = Green, bookmarksdepth=2, linktocpage=true]{hyperref}
\usepackage[capitalize]{cleveref}
\usepackage{comment}
\usepackage[T1]{fontenc}
\usepackage{makecell}

\usepackage{tikz}
\usetikzlibrary{calc}

\allowdisplaybreaks

\DeclareMathOperator{\interior}{int}

\DeclareMathOperator{\Id}{Id}
\DeclareMathOperator{\tr}{tr}
\DeclareMathOperator{\Stab}{Stab}

\DeclareMathOperator{\supp}{supp}

\DeclareMathOperator{\SL}{SL}
\DeclareMathOperator{\GL}{GL}
\DeclareMathOperator{\SO}{SO}
\DeclareMathOperator{\U}{U}

\DeclareMathOperator{\Lip}{Lip}

\DeclareMathOperator{\proj}{proj}
\DeclareMathOperator{\Cay}{Cay}

\DeclareMathOperator{\T}{T}
\DeclareMathOperator{\F}{F}

\DeclareMathOperator{\diam}{diam}
\DeclareMathOperator{\len}{len}
\let\Pr\relax
\DeclareMathOperator{\Pr}{Pr}

\DeclareMathOperator{\Ad}{Ad}
\DeclareMathOperator{\BP}{BP}
\DeclareMathOperator{\inj}{inj}

\DeclareMathOperator{\li}{li}
\DeclareMathOperator{\Core}{Core}
\DeclareMathOperator{\Hull}{Hull}


\DeclareFontFamily{U}{mathb}{\hyphenchar\font45}
\DeclareFontShape{U}{mathb}{m}{n}{
      <5> <6> <7> <8> <9> <10> gen * mathb
      <10.95> mathb10 <12> <14.4> <17.28> <20.74> <24.88> mathb12
      }{}
\DeclareSymbolFont{mathb}{U}{mathb}{m}{n}
\DeclareMathSymbol{\bigast}{2}{mathb}{"06}


\def\XXint#1#2#3{{\setbox0=\hbox{$#1{#2#3}{\int}$}
     \vcenter{\hbox{$#2#3$}}\kern-.5\wd0}}

\theoremstyle{plain}
\newtheorem{theorem}{Theorem}[section]
\newtheorem{proposition}[theorem]{Proposition}
\newtheorem{lemma}[theorem]{Lemma}
\newtheorem{corollary}[theorem]{Corollary}
\newtheorem{conjecture}[theorem]{Conjecture}

\theoremstyle{definition}
\newtheorem{definition}[theorem]{Definition}

\newtheoremstyle{remark}
{}   
{}   
{\normalfont}  
{}       
{\itshape} 
{.}         
{5pt plus 1pt minus 1pt} 
{}          

\theoremstyle{remark}
\newtheorem*{remark}{Remark}

\setlist[enumerate,1]{ref=(\arabic*)}
\setlist[enumerate,2]{ref=(\theenumi)(\alph*)}
\setlist[enumerate,3]{ref=(\theenumi)(\theenumii)(\roman*)}
\setlist[enumerate,4]{ref=(\theenumi)(\theenumii)(\theenumiii)(\Alph*)}

\newlist{alternative}{enumerate}{4}     
\setlist[alternative,1]{label=(\arabic*), ref=(\arabic*)}
\setlist[alternative,2]{label=(\alph*), ref=(\thealternativei)(\alph*)}
\setlist[alternative,3]{label=(\roman*), ref=(\thealternativei)(\thealternativeii)(\roman*)}
\setlist[alternative,4]{label=(\Alph*), ref=(\thealternativei)(\thealternativeii)(\thealternativeiii)(\Alph*)}


\Crefname{enumi}{Property}{Properties}
\Crefname{alternativei}{Alternative}{Alternatives}
\Crefname{subsection}{Subsection}{Subsections}


\makeatletter
\renewenvironment{abstract}{%
  \ifx\maketitle\relax
    \ClassWarning{\@classname}{Abstract should precede
      \protect\maketitle\space in AMS document classes; reported}%
  \fi
  \global\setbox\abstractbox=\vtop \bgroup
    \normalfont\Small
    \list{}{\labelwidth\z@
      \leftmargin3pc \rightmargin\leftmargin
      \listparindent\normalparindent \itemindent\z@
      \parsep\z@ \@plus\p@
      
      \itemsep\medskipamount
    }%
}{%
  \endlist\egroup
  \ifx\@setabstract\relax \@setabstracta \fi
}
\makeatother

\begin{document}
\selectlanguage{english}
\title[Generalization of Selberg's $3/16$ theorem]{Generalization of Selberg's $3/16$ theorem for convex cocompact thin subgroups of $\SO(n, 1)$}
\author{Pratyush Sarkar}
\address{Department of Mathematics, Yale University, New Haven, Connecticut 06511}
\email{pratyush.sarkar@yale.edu}
\date{\today}

\begin{abstract}
\item[\hskip\labelsep\scshape Abstract.]
Let $\Gamma$ be a convex cocompact thin subgroup of an arithmetic lattice in $\SO(n, 1)$. We generalize Selberg's $\frac{3}{16}$ theorem in this setting, i.e., we prove uniform exponential mixing of the frame flow and obtain a uniform resonance-free half plane for the congruence covers of the hyperbolic manifold $\Gamma \backslash \mathbb H^n$. This extends the work of Oh--Winter who established the $n = 2$ case.

The theorem follows from uniform spectral bounds for the congruence transfer operators with holonomy. We employ Sarkar--Winter's frame flow version of Dolgopyat's method uniformly over the congruence covers as well as Golsefidy--Varj\'{u}'s generalization of Bourgain--Gamburd--Sarnak's expansion machinery by using the properties that the return trajectory subgroups are Zariski dense and have trace fields which coincide with that of $\Gamma$. These properties follow by proving that the return trajectory subgroups have finite index in $\Gamma$.

\noindent\hrulefill
\item[\hskip\labelsep\scshape R\'{e}sum\'{e}]
\scshape (G\'{e}n\'{e}ralisation du th\'{e}or\`{e}me $3/16$ de Selberg pour sous-groupes \'{e}pars convexes cocompactes de $\SO(n, 1)$).
\normalfont Soit $\Gamma$ un sous-groupe \'{e}pars convexe cocompact de r\'{e}seau arithm\'{e}tique dans $\SO(n, 1)$. Nous g\'{e}n\'{e}ralisons le th\'{e}or\`{e}me $\frac{3}{16}$ de Selberg dans ce contexte, c'est-\`{a}-dire que nous d\'{e}montrons un m\'{e}lange exponentiel uniforme du flot rep\`{e}re et obtenons un demi-plan uniforme sans r\'{e}sonance pour les rev\^{e}tements de congruence de la vari\'{e}t\'{e} hyperbolique $\Gamma \backslash \mathbb H^n$. Cela \'{e}tend le travail d'Oh--Winter qui ont d\'{e}montr\'{e} le r\'{e}sultat pour le cas $n = 2$.

Le th\'{e}or\`{e}me s'ensuit des bornes spectrales uniformes pour les op\'{e}rateurs de transfert de congruence avec l'holonomie. Nous employons la version flot rep\`{e}re par Sarkar--Winter de la m\'{e}thode de Dolgopyat uniform\'{e}ment sur les rev\^{e}tements de congruence ainsi que la g\'{e}n\'{e}ralisation par Golsefidy--Varj\'{u} de la machinerie d'expansion de Bourgain--Gamburd--Sarnak en utilisant les propri\'{e}t\'{e}s que les sous-groupes de trajectoire de retour sont Zariski denses et ont des corps des traces qui co\"{i}ncident avec celui de $\Gamma$. Ces propri\'{e}t\'{e}s suivent en d\'{e}montrant que les sous-groupes de trajectoire de retour ont l'indice fini dans $\Gamma$.
\end{abstract}

\maketitle

\setcounter{tocdepth}{1}
\tableofcontents


\section{Introduction}
\label{sec:Introduction}
\subsection{Historical background}
\label{subsec:HistoricalBackground}
Selberg's eigenvalue conjecture is one of the most influential conjectures which has connections to spectral theory, number theory, and Riemann surfaces. We recount it in \cref{con:Selberg's_1/4_Conjecture}. Let $\mathbb H^2$ be the hyperbolic plane. The conjecture is concerned with the family of spaces given by the base space $X = \Gamma \backslash \mathbb H^2$ where $\Gamma = \SL_2(\mathbb Z)$ and its \emph{congruence covers} $X_q = \Gamma_q \backslash \mathbb H^2$ of \emph{level} $q$ where $\Gamma_q = \ker(\pi_q)$ is the kernel of the canonical quotient map $\pi_q: \SL_2(\mathbb Z) \to \SL_2(\mathbb Z/q\mathbb Z)$ for all $q \in \mathbb N$. Denote the $L^2$ eigenvalues of the Laplacian on $X_q$ by
\begin{align*}
0 = \lambda_0(X_q) < \lambda_1(X_q) \leq \lambda_2(X_q) \leq \dotsb \qquad \text{for all $q \in \mathbb N$}.
\end{align*}

\begin{conjecture}
\label{con:Selberg's_1/4_Conjecture}
For all $q \in \mathbb N$, we have $\lambda_1(X_q) \geq \frac{1}{4}$.
\end{conjecture}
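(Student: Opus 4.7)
The plan is to interpret Selberg's eigenvalue bound as temperedness at the archimedean place for every cuspidal automorphic representation of $\GL_2(\mathbb{A}_\mathbb{Q})$ of level dividing $q$, and to attack it through the Langlands--Shahidi symmetric power machine. First I would pass from the hyperbolic picture to the adelic one: strong approximation identifies the cuspidal part of $L^2(X_q)$ with a subspace of $L^2_{\mathrm{cusp}}(\GL_2(\mathbb{Q}) \backslash \GL_2(\mathbb{A}_\mathbb{Q}))$ cut out by a level structure at the finite places, while Eisenstein and residual contributions to the spectrum lie at $\lambda = \tfrac{1}{4}$ or above and therefore pose no obstruction. A putative exceptional eigenvalue $\lambda = \tfrac{1}{4} - s^2$ with $s \in (0, \tfrac{1}{2})$ forces some cuspidal $\pi = \otimes_v \pi_v$ to have complementary series archimedean component with Satake parameter $s$. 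Proving Ramanujan at $\infty$, i.e.\ $s = 0$, therefore yields $\lambda_1(X_q) \geq \tfrac{1}{4}$ uniformly in $q$, which is the assertion of the conjecture.

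The key step is to produce symmetric power lifts $\mathrm{Sym}^k \pi$ as isobaric automorphic representations of $\GL_{k+1}(\mathbb{A}_\mathbb{Q})$ for every $k \geq 1$. Given these, I would invoke the Luo--Rudnick--Sarnak positivity trick: the Jacquet--Piatetski-Shapiro--Shalika Rankin--Selberg convolution $L(s, \mathrm{Sym}^k \pi \times \widetilde{\mathrm{Sym}^k \pi})$ has positive Dirichlet coefficients and is holomorphic on $\Re(s) \geq 1$ except for a simple pole at $s = 1$, and combining this with the convexity bound forces the complementary series parameter $|s|$ at $\infty$ into a window that shrinks as $k$ grows. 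Letting $k \to \infty$ collapses the window and pins $s = 0$. I would combine this with Shahidi's holomorphy and nonvanishing theorems for the relevant $L$-functions at $s = 1$, and observe that the whole argument is uniform in $q$ since the archimedean parameter is independent of the finite-part level structure.

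The main obstacle is precisely the unconditional construction of $\mathrm{Sym}^k \pi$ for all $k$, and this is where the proof currently cannot be completed. Kim and Shahidi established $\mathrm{Sym}^3$ and $\mathrm{Sym}^4$ via converse theorems, exterior square liftings, and the triple product $L$-function, which plugged into the positivity machine produces the Kim--Sarnak window $|s| \leq \tfrac{7}{64}$, i.e.\ $\lambda_1(X_q) \geq \tfrac{975}{4096}$, falling short of $\tfrac{1}{4}$. To close this gap I would attempt either (i) extending Kim--Shahidi to $\mathrm{Sym}^5$ and beyond by a refined converse theorem on $\GL_6$ together with new analytic control over the exterior cube $L$-function, iterating to access arbitrarily high symmetric powers; or (ii) importing recent automorphy lifting technology, adapted from the Taylor--Wiles--Kisin patching framework for Galois representations, to the Maass form setting via potential modularity over totally real fields, from which Ramanujan would follow by base change descent. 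Absent such an advance \emph{the conjecture as stated remains open}, and I would therefore present a proof in the form of a conditional theorem: $\lambda_1(X_q) \geq \tfrac{1}{4}$ assuming the existence of cuspidal $\mathrm{Sym}^k$ lifts for all $k$, flagging this hypothesis as the one and only missing ingredient.
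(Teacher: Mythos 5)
The statement you are addressing is labeled a \emph{conjecture} in the paper, and the paper offers no proof of it; it is Selberg's eigenvalue conjecture, which remains open. The paper only records the history: Selberg's own $\frac{3}{16}$ bound and the Kim--Sarnak improvement to $\frac{975}{4096}$. So there is no ``paper proof'' to compare against, and any genuine proof here would be a major breakthrough.

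You correctly recognize all of this and, to your credit, explicitly decline to claim a proof, instead laying out the standard conditional strategy. Your sketch is accurate as far as it goes: temperedness at the archimedean place is the reformulation, strong approximation passes to the adelic picture, Eisenstein and residual spectrum pose no obstruction, and the Luo--Rudnick--Sarnak positivity argument applied to $L(s, \mathrm{Sym}^k\pi \times \widetilde{\mathrm{Sym}^k\pi})$ does pinch the complementary-series parameter to zero as $k \to \infty$, assuming all $\mathrm{Sym}^k\pi$ exist as isobaric automorphic representations on $\GL_{k+1}$. Your identification of the bottleneck (unconditional functoriality of high symmetric powers) and your accounting that $\mathrm{Sym}^3, \mathrm{Sym}^4$ give exactly $|s| \leq \frac{7}{64}$, hence $\lambda_1 \geq \frac{975}{4096}$, are both correct. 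Since the conjecture is genuinely open, flagging the hypothesis and presenting a conditional statement is the only honest outcome, and that is what you do. No correction needed beyond emphasizing: nothing in this paper proves or relies on \cref{con:Selberg's_1/4_Conjecture}; the paper's actual contributions concern the non-lattice, higher-dimensional congruence setting where the analogous spectral gap statements are established by entirely different (dynamical, expander-theoretic) means.
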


Selberg proved his $\frac{3}{16}$ theorem which is the above statement with the lower bound $\frac{1}{4}$ replaced by $\frac{3}{16}$. The best known improvement of the lower bound is $\frac{975}{4096}$ by Kim--Sarnak \cite[Appendix 2]{Kim03}. Theorems of this type are called \emph{spectral gap} and they show that the arithmetic nature of the congruence covers have surprising implications on their spectral properties. Using the theory of tempered and spherical representations, we can then derive the \emph{decay of matrix coefficients} reformulation \cref{con:UniformExponentialMixingOfTheGeodesicFlowSL2Z} from \cref{con:Selberg's_1/4_Conjecture}. This is of particular interest from the point of view of homogeneous dynamics because it can be interpreted as a \emph{uniform exponential mixing} statement. Corresponding to the congruence covers of $X$, we have the family of homogeneous spaces given by the unit tangent bundle $\T^1(X) \cong \Gamma \backslash \SL_2(\mathbb R)$ and its congruence covers $\T^1(X_q) \cong \Gamma_q \backslash \SL_2(\mathbb R)$ for all $q \in \mathbb N$. For all $q \in \mathbb N$, let $m_q^{\mathrm{Haar}}$ be the right $\SL_2(\mathbb R)$-invariant finite measure on $\Gamma_q \backslash \SL_2(\mathbb R)$ induced from some fixed Haar measure on $\SL_2(\mathbb R)$. We recall that the right translation action by
\begin{align*}
A = \left\{a_t =
\begin{pmatrix}
e^{\frac{t}{2}} & 0 \\
0 & e^{-\frac{t}{2}}
\end{pmatrix}:
t \in \mathbb R
\right\}
\end{align*}
parametrized in the above fashion corresponds to the geodesic flow which we note is measure preserving. We denote by $\|\cdot\|_{\mathcal{S}^1}$ the $L^2$ Sobolev norm of order $1$.

\begin{conjecture}
\label{con:UniformExponentialMixingOfTheGeodesicFlowSL2Z}
For all $q \in \mathbb N$ and $\epsilon > 0$, there exists $C > 0$ such that for all $\phi, \psi \in C_{\mathrm{c}}^\infty(\Gamma_q \backslash \SL_2(\mathbb R), \mathbb R)$ and $t > 0$, we have
\begin{multline*}
\left|\int_{\Gamma_q \backslash \SL_2(\mathbb R)} \phi(xa_t) \psi(x) \, dm_q^{\mathrm{Haar}}(x) - \frac{1}{m_q^{\mathrm{Haar}}(\Gamma_q \backslash \SL_2(\mathbb R))}m_q^{\mathrm{Haar}}(\phi) \cdot m_q^{\mathrm{Haar}}(\psi)\right| \\
\leq Ce^{-(\frac{1}{2} - \epsilon) t} \|\phi\|_{\mathcal{S}^1} \|\psi\|_{\mathcal{S}^1}.
\end{multline*}
\end{conjecture}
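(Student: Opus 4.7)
The plan is to derive \cref{con:UniformExponentialMixingOfTheGeodesicFlowSL2Z} from \cref{con:Selberg's_1/4_Conjecture} via the classical dictionary between the Laplace spectrum of $X_q$ and the decomposition of the right regular representation of $\SL_2(\mathbb R)$ on $L^2(\Gamma_q \backslash \SL_2(\mathbb R))$. Fix $q \in \mathbb N$. The bound $\lambda_1(X_q) \geq \frac{1}{4}$ should force all non-trivial irreducible constituents of this unitary representation to be tempered, whereupon the desired exponential decay follows from the Harish-Chandra estimate for matrix coefficients of tempered representations of $\SL_2(\mathbb R)$.

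First I would extract the mean. Write any $f \in L^2(\Gamma_q \backslash \SL_2(\mathbb R))$ as $f = c_f + f_0$, where $c_f = m_q^{\mathrm{Haar}}(f)/m_q^{\mathrm{Haar}}(\Gamma_q \backslash \SL_2(\mathbb R))$ is the orthogonal projection onto constants and $f_0$ lies in the orthogonal complement $L^2_0$. Expanding the integral in \cref{con:UniformExponentialMixingOfTheGeodesicFlowSL2Z} with this splitting of both $\phi$ and $\psi$, and using $A$-invariance of constants together with $\int f_0 \, dm_q^{\mathrm{Haar}} = 0$, the cross terms vanish and one is left with the desired main term plus the single matrix coefficient $\int_{\Gamma_q \backslash \SL_2(\mathbb R)} \phi_0(xa_t)\psi_0(x)\, dm_q^{\mathrm{Haar}}(x)$. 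So the task reduces to bounding this matrix coefficient with the claimed exponential rate.

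Next I would invoke the spectral decomposition of $L^2_0(\Gamma_q \backslash \SL_2(\mathbb R))$ into irreducible unitary representations of $\SL_2(\mathbb R)$. By the Bargmann classification each constituent is principal series, discrete series, or complementary series. The standard dictionary between spherical vectors and the Laplacian identifies complementary series constituents with Laplace eigenvalues in the open interval $(0, \frac{1}{4})$, which are excluded by \cref{con:Selberg's_1/4_Conjecture}. Hence every irreducible summand appearing in $L^2_0$ is tempered, and for tempered representations $\pi$ of $\SL_2(\mathbb R)$ and $K$-finite vectors $v, w$ the Harish-Chandra estimate gives
\[
|\langle \pi(a_t)v, w\rangle| \;\leq\; \sqrt{(\dim \langle Kv\rangle)(\dim \langle Kw\rangle)}\,\|v\|\,\|w\|\,\Xi(a_t),
\]
where the Harish-Chandra spherical function for $\SL_2(\mathbb R)$ satisfies $\Xi(a_t) \ll (1+t)e^{-t/2}$.

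To pass from $K$-finite to arbitrary smooth vectors, I would decompose $\phi, \psi \in C_c^\infty$ into $K$-isotypic components and exploit the Sobolev hypothesis: the $\mathcal S^1$-norm controls a Casimir-type derivative on the $K$-side, so the $n$-th isotypic component has $L^2$-norm with polynomial decay in $n$; summing against the polynomial-in-weight factors in the Harish-Chandra bound produces an absolutely convergent estimate of the form $C(1+t)e^{-t/2}\|\phi\|_{\mathcal S^1}\|\psi\|_{\mathcal S^1}$, and absorbing $(1+t)$ into $e^{\epsilon t}$ completes the proof. The main technical step is precisely this Sobolev promotion: one must check that a single $L^2$ derivative is just enough to render the sum over $K$-types absolutely convergent against the Harish-Chandra polynomial growth, a rank-one phenomenon particular to $\SL_2(\mathbb R)$. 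The constant $C$ depends on $q$ through the volume of $\Gamma_q \backslash \SL_2(\mathbb R)$ and auxiliary $K$-type sums, so the statement yields the uniform exponential rate $\frac{1}{2} - \epsilon$ across all $q$ but with a $q$-dependent multiplicative constant.
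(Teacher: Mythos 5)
Your proposal matches the route the paper alludes to: the paper states that \cref{con:UniformExponentialMixingOfTheGeodesicFlowSL2Z} is derived from \cref{con:Selberg's_1/4_Conjecture} ``using the theory of tempered and spherical representations,'' and your argument supplies exactly that standard dictionary (mean extraction, Bargmann classification, Harish-Chandra bound, Sobolev promotion over $K$-types). The paper does not carry out the details because the conjecture is stated only as motivation; your filled-in argument is the intended one. Two points to tighten. First, since $\Gamma_q < \SL_2(\mathbb Z)$ is non-uniform, $L^2_0(\Gamma_q\backslash\SL_2(\mathbb R))$ splits into a discrete part and a continuous (Eisenstein) part, and you should say explicitly that the continuous spectrum consists of unitary principal series and is thus automatically tempered, so Selberg's bound only needs to exclude complementary series from the residual/cuspidal discrete spectrum. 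Second, the reason $\mathcal S^1$ suffices should be stated as a Cauchy--Schwarz argument: the $K$-types of $\SL_2(\mathbb R)$ are one-dimensional so the Harish-Chandra prefactor $(\dim\langle Kv\rangle\cdot\dim\langle Kw\rangle)^{1/2}$ equals $1$ on $K$-isotypic vectors, and from $\sum_n(1+n^2)\|\phi_n\|_2^2\lesssim\|\phi\|_{\mathcal S^1}^2$ one gets $\sum_n\|\phi_n\|_2\leq\bigl(\sum_n(1+n^2)^{-1}\bigr)^{1/2}\|\phi\|_{\mathcal S^1}<\infty$; you gesture at this (``just enough'') but the convergence of $\sum(1+n^2)^{-1}$ is the actual rank-one input, not a generic ``polynomial decay'' heuristic. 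Also note the constant $C$ should be allowed to depend on $\epsilon$ as well as $q$, which your absorption of $(1+t)$ into $e^{\epsilon t}$ implicitly uses. Otherwise the argument is correct and is the standard one.
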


Corresponding to Selberg's $\frac{3}{16}$ theorem, we get the above statement with $\frac{1}{2}$ replaced by $\frac{1}{4}$. This is very powerful because apart from being of intrinsic interest in representation theory and homogeneous dynamics, it can be used to derive all sorts of number theoretic results.

So far in our discussion, $\Gamma = \SL_2(\mathbb Z)$ is a lattice in $\SL_2(\mathbb R)$. However, it is also natural to investigate aforementioned properties in cases when $\Gamma$ is a non-lattice. This was done by Bourgain--Gamburd--Sarnak in their breakthrough papers \cite{BGS10,BGS11} for the appropriate setting where $\Gamma$ is a \emph{thin} group, i.e., a Zariski dense subgroup of infinite index in $\SL_2(\mathbb Z)$. Let $\delta_\Gamma \in (0, 1]$ denote the critical exponent of $\Gamma$. The error term they obtained is exponential when $\delta_\Gamma > \frac{1}{2}$ but not when $\delta_\Gamma \in (0, \frac{1}{2}]$. Nevertheless, they anticipated that an exponential error term is still possible when $\delta_\Gamma \in (0, \frac{1}{2}]$ and this was indeed achieved by Oh--Winter in the important work \cite{OW16}. In the same spirit, this paper is concerned with generalizing the results of \cite{OW16} to higher dimensions, i.e., we establish uniform exponential mixing for frame flows on the congruence covers associated to any convex cocompact thin subgroup $\Gamma < \SO(n, 1)^\circ$ for any $n \geq 2$.

\subsection{Main result}
\label{subsec:Result}
Let $\mathbb H^n$ be the $n$-dimensional hyperbolic space for $n \geq 2$. Let $\mathbb K \subset \mathbb R$ be a totally real number field and $\mathcal{O}_{\mathbb K}$ be the corresponding ring of integers. Let $\mathbf{G} < \GL_N$ for some $N \in \mathbb N$ be an algebraic group defined over $\mathbb K$ such that $\mathbf{G}(\mathbb R) \cong \SO(n, 1)$ and $\mathbf{G}^\sigma(\mathbb R)$ is compact for all nontrivial embeddings $\sigma: \mathbb K \hookrightarrow \mathbb R$. Let $G = \mathbf{G}(\mathbb R)^\circ$ which we recognize as the group of orientation preserving isometries of $\mathbb H^n$. We identify $\mathbb H^n$, its unit tangent bundle $\T^1(\mathbb H^n)$, and its frame bundle $\F(\mathbb H^n)$ with $G/K$, $G/M$, and $G$ respectively where $M < K$ are compact subgroups of $G$. Let $A = \{a_t: t \in \mathbb R\} < G$ be a one-parameter subgroup of semisimple elements such that its right translation action corresponds to the geodesic flow on $G/M$ and the frame flow on $G$.

Let $\tilde{\pi}: \tilde{\mathbf{G}} \to \mathbf{G}$ be a simply connected cover defined over $\mathbb K$. For all ideals $\mathfrak{q} \subset \mathcal{O}_{\mathbb K}$, let $\pi_{\mathfrak{q}}: \tilde{\mathbf{G}}(\mathcal{O}_{\mathbb K}) \to \tilde{\mathbf{G}}(\mathcal{O}_{\mathbb K}/\mathfrak{q})$ be the canonical quotient map. Let $\Gamma < \mathbf{G}(\mathbb K)$ be a Zariski dense torsion-free convex cocompact subgroup such that $\tilde{\pi}^{-1}(\Gamma)$ is contained in $\tilde{\mathbf{G}}(\mathcal{O}_{\mathbb K})$ with trace field $\mathbb Q(\tr(\Ad(\tilde{\pi}^{-1}(\Gamma)))) = \mathbb K$. We impose these conditions so that $\Gamma$ satisfies the strong approximation property. For all nontrivial ideals $\mathfrak{q} \subset \mathcal{O}_{\mathbb K}$, let $\Gamma_\mathfrak{q} < \Gamma$ be the congruence subgroup of level $\mathfrak{q}$, meaning that $\Gamma_\mathfrak{q} = \Gamma \cap \tilde{\pi}(\ker(\pi_\mathfrak{q}))$. For all nontrivial ideals $\mathfrak{q} \subset \mathcal{O}_{\mathbb K}$, let $N_{\mathbb K}(\mathfrak{q})$ be the norm of the ideal $\mathfrak{q}$ and $m^{\mathrm{BMS}}_\mathfrak{q}$ be the Bowen--Margulis--Sullivan measure on $\Gamma_\mathfrak{q} \backslash G$ induced from the one on $\Gamma \backslash G/M$.

\begin{theorem}
\label{thm:TheoremUniformExponentialMixingOfFrameFlow}
There exist $\eta > 0$, $C > 0$, $r \in \mathbb N$, and a nontrivial proper ideal $\mathfrak{q}_0 \subset \mathcal{O}_{\mathbb K}$ such that for all square-free ideals $\mathfrak{q} \subset \mathcal{O}_{\mathbb K}$ coprime to $\mathfrak{q}_0$, $\phi \in C_{\mathrm{c}}^r(\Gamma_\mathfrak{q} \backslash G, \mathbb R)$, $\psi \in C_{\mathrm{c}}^1(\Gamma_\mathfrak{q} \backslash G, \mathbb R)$, and $t > 0$, we have
\begin{multline*}
\left|\int_{\Gamma_\mathfrak{q} \backslash G} \phi(xa_t)\psi(x) \, dm^{\mathrm{BMS}}_\mathfrak{q}(x) - \frac{1}{m^{\mathrm{BMS}}_\mathfrak{q}(\Gamma_\mathfrak{q} \backslash G)} m^{\mathrm{BMS}}_\mathfrak{q}(\phi) \cdot m^{\mathrm{BMS}}_\mathfrak{q}(\psi)\right| \\
\leq CN_{\mathbb K}(\mathfrak{q})^C e^{-\eta t} \|\phi\|_{C^r} \|\psi\|_{C^1}.
\end{multline*}
\end{theorem}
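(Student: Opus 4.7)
The plan is to reduce \cref{thm:TheoremUniformExponentialMixingOfFrameFlow} to a uniform spectral bound for a family of congruence transfer operators carrying a holonomy twist, and then to prove that bound by combining a frame-flow Dolgopyat argument that is uniform in the congruence level with Golsefidy--Varj\'{u}'s generalization of the Bourgain--Gamburd--Sarnak expansion machinery. To set this up, I would first construct a Markov section for the frame flow on $\Gamma \backslash G$ subordinate to a local product structure on the non-wandering set, and rewrite the correlation integral on $\Gamma_\mathfrak{q} \backslash G$ as a sum over iterates of the associated transfer operator $\mathcal{L}_{\xi, \mathfrak{q}}$. This operator acts on vector-valued H\"{o}lder functions on the shift space, and its twist encodes three pieces of data: a Laplace parameter $\xi = a + ib$ in the flow direction, a unitary representation of $M$ capturing the frame rotation, and the quasi-regular representation of $\Gamma$ on $\ell^2(\Gamma_\mathfrak{q} \backslash \Gamma)$ coming from the congruence cover. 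A standard Paley--Wiener / contour-shift argument, combined with Sobolev smoothing, then converts a uniform bound of the shape $\|\mathcal{L}_{\xi, \mathfrak{q}}^n\| \leq C N_{\mathbb{K}}(\mathfrak{q})^C (1-\eta)^n$ for $\xi$ in an appropriate strip and on the non-trivial spectral components into the polynomially-lossy exponential mixing stated in the theorem; the Sobolev order $r$ in the hypothesis is determined by this smoothing step.

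Next, I would prove the uniform spectral bound by splitting into two regimes. In the high-frequency regime $|b| \gg 1$, the Dolgopyat-type cancellation in the oscillatory factor $e^{-ib\tau}$ drives the estimate; I would run the frame-flow version of Dolgopyat's method due to Sarkar--Winter on each congruence cover, verifying that the local non-integrability condition, the non-concentration property of the holonomy twist, and the resulting $L^2$-contraction estimate survive with constants that depend only on the base data on $\Gamma \backslash G$. This uniformity is natural because the geometric and symbolic inputs descend from the base, while the congruence twist merely sits along for the ride as an auxiliary fiber. In the complementary low-frequency regime, Dolgopyat cancellation is not available and one must instead extract the spectral gap from the congruence twist itself. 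Here I would invoke Golsefidy--Varj\'{u}: the return-time random walk on $\Gamma_\mathfrak{q} \backslash \Gamma$ driven by the weighted word measure supplied by the Markov coding admits a uniform-in-$\mathfrak{q}$ spectral gap on the non-trivial isotypic components, provided its support generates a subgroup of $\Gamma$ that is Zariski dense in $\mathbf{G}$ and whose trace field on $\Ad$ equals $\mathbb{K}$.

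The main obstacle, and the step that absorbs most of the real work, is precisely the verification of these algebraic hypotheses for each \emph{return trajectory subgroup} $\Gamma_R < \Gamma$ attached to a rectangle $R$ of the Markov section. My strategy is to prove a stronger and cleaner statement: that each $\Gamma_R$ has finite index in $\Gamma$. Once this is established, Zariski density and equality of trace fields are automatic, since both the Zariski closure and the trace field of the adjoint representation are preserved under passage to finite-index subgroups; the strong approximation property of $\Gamma$ then transfers to $\Gamma_R$, making the Golsefidy--Varj\'{u} input applicable uniformly in all square-free $\mathfrak{q}$ coprime to a fixed exceptional ideal $\mathfrak{q}_0$. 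To prove finite index I would argue dynamically: topological mixing of the frame flow on its non-wandering set allows one to connect any two Markov rectangles by return orbits, and by concatenating such returns, loops of bounded combinatorial complexity generate the fundamental group of a connected cover of $\Gamma \backslash G$ whose deck transformation group is a quotient of $\Gamma / \Gamma_R$; a compactness argument for the convex core then forces this quotient to be finite. Chaining this with the two reductions above completes the proof.
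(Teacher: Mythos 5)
Your architecture --- a twisted transfer operator carrying the Laplace parameter, an $M$-representation for the holonomy, and the quasi-regular representation on $\ell^2(\Gamma_\mathfrak{q}\backslash\Gamma)$; a two-regime spectral gap via Dolgopyat and Golsefidy--Varj\'u expansion; and a finite-index theorem for return trajectory subgroups to feed the latter --- matches the paper's strategy, including the decision to deduce Zariski density and trace field equality from finite index. But two steps as you describe them have genuine problems. The dichotomy ``$|b|\gg 1$: Dolgopyat; $|b|\ll 1$: expanders'' is the wrong split: the expander machinery controls cancellation only in the $\ell^2(\Gamma_\mathfrak{q}\backslash\Gamma)$ factor and is blind to $V_\rho$, so for small $|b|$ and nontrivial $\rho\in\widehat M$ your plan has no source of decay at all. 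In the paper the Dolgopyat regime is $(b,\rho)\in\widehat M_0(b_0)$, meaning $|b|>b_0$ \emph{or} $\rho\neq 1$, because the oscillation driving Dolgopyat's method lives in the joint $AM$-representation $\rho_b$ --- a nontrivial holonomy twist contributes even at $b=0$ --- while the expander route handles only the leftover $(|b|\leq b_0,\ \rho=1)$ corner. (Smaller point: the frame flow is only partially hyperbolic, so there is no ``Markov section for the frame flow''; the Markov section lives on $\Gamma\backslash G/M$ for the geodesic flow and $M$ is tracked via the holonomy cocycle $\vartheta$. Your transfer-operator description suggests you know this, but the opening sentence misstates it.)

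The more serious gap is in the finite-index argument, which the paper flags as its main technical novelty. You propose that ``loops of bounded combinatorial complexity generate the fundamental group of a connected cover of $\Gamma\backslash G$ whose deck transformation group is a quotient of $\Gamma/\Gamma_R$; a compactness argument for the convex core then forces this quotient to be finite.'' Neither clause stands. The return trajectory subgroup $H(y,z)$ is not normal in $\Gamma$, so $\Gamma/H(y,z)$ is merely a coset space and the intermediate cover has no deck transformation group. And compactness of the convex core of $\Gamma\backslash\mathbb H^n$ does not transfer for free: cocompactness of the convex core of $H(y,z)\backslash\mathbb H^n$ is precisely convex cocompactness of $H(y,z)$, which is what must be established --- invoking it is circular. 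The paper's actual argument is geometric and delicate: for every $\xi\in\Lambda(\Gamma)$ it constructs, by concatenating Markov returns controlled explicitly in the upper-half-space model, a sequence of $H(y,z)$-orbit points converging to $\xi$ \emph{inside a fixed cone}. This shows $\xi\in\Lambda_{\mathrm r}(H(y,z))$, hence $\Lambda_{\mathrm r}(H(y,z))=\Lambda(H(y,z))=\Lambda(\Gamma)$, so $H(y,z)$ is convex cocompact by Bowditch's criterion, and finite index then follows from Susskind--Swarup's theorem that a nonelementary subgroup of a discrete group with the same limit set has finite index. Your sketch supplies no mechanism for the radiality control, which is the entire difficulty.
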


\begin{remark}
A recent work of He--Saxc\'{e} \cite[Theorem 6.1]{HdS19} removes the square-free assumption in the work of Golsefidy--Varj\'{u} \cite[Theorem 1]{GV12} for simple algebraic groups, as is the case for $n \neq 3$. Thus, using the work of He--Saxc\'{e} instead as a black box in the proof of \cref{lem:GV_Expander} in \cref{sec:L2FlatteningLemma}, we can in fact \emph{remove} the square-free hypothesis in \cref{thm:TheoremUniformExponentialMixingOfFrameFlow} for $n \neq 3$.
\end{remark}

\begin{remark}
We mention here some special cases and related works.
\begin{enumerate}
\item For $n = 2$, \cref{thm:TheoremUniformExponentialMixingOfFrameFlow} has been established by Oh--Winter \cite{OW16}. Moreover, if $\delta_\Gamma > \frac{1}{2}$, then spectral gap was established by Bourgain--Gamburd--Sarnak \cite{BGS11}. Moreover, if $\delta_\Gamma > \frac{5}{6}$, then building on the work of Sarnak--Xue \cite{SX91}, Gamburd \cite{Gam02} obtained an \emph{explicit} interval $[\delta_\Gamma(1 - \delta_\Gamma), \frac{5}{36})$ where no new eigenvalues arise.
\item For a single convex cocompact hyperbolic manifold for any $n \geq 2$ without the congruence aspect, exponential mixing for the \emph{geodesic flow} follows from results of Stoyanov \cite{Sto11} which used Dolgopyat's method \cite{Dol98}. Exponential mixing for the \emph{frame flow} was obtained in \cite{SW20}.
\item If $\delta_\Gamma > \frac{n - 1}{2}$ for $n \in \{2, 3\}$, or if $\delta_\Gamma > n - 2$ for $n \geq 4$, then \cref{thm:TheoremUniformExponentialMixingOfFrameFlow} has been established by Mohammadi--Oh \cite{MO15} using the work on expanders by Bourgain--Gamburd--Sarnak \cite{BGS10,BGS11} and unitary representation theory. Recently, Edwards--Oh \cite{EO19} have improved the condition to $\delta_\Gamma > \frac{n - 1}{2}$ for all $n \geq 2$. If $\delta_\Gamma > s_n^0 = (n - 1) - \frac{2(n - 2)}{n(n + 1)}$, then Magee \cite{Mag15} generalized Gamburd's work \cite{Gam02} to arbitrary $n \geq 3$ and obtained an \emph{explicit} interval $[\delta_\Gamma(1 - \delta_\Gamma), s_n^0(n - 1 - s_n^0))$ where no new eigenvalues arise.
\end{enumerate}
\end{remark}

\subsection{Applications}
\label{Applications}
Here we present some immediate applications of \cref{thm:TheoremUniformExponentialMixingOfFrameFlow}.

Let $\mathfrak{q} \subset \mathcal{O}_{\mathbb K}$ be an ideal. Let $m_\mathfrak{q}^{\mathrm{Haar}}$ be the right $G$-invariant measure on $\Gamma_\mathfrak{q} \backslash G$ induced from some fixed Haar measure on $G$. We denote by $m^{\mathrm{BR}}_\mathfrak{q}$ and $m^{\mathrm{BR}_*}_\mathfrak{q}$ the unstable and stable Burger--Roblin measures on $\Gamma_\mathfrak{q} \backslash G$ respectively, compatible with the choice of the Haar measure. The first application is the following theorem regarding decay of matrix coefficients which is derived exactly as in \cite{OS13,OW16} using Roblin's transverse intersection argument \cite{Rob03}.

\begin{theorem}
There exist $\eta > 0$, $c > 0$, $r \in \mathbb N$, and a nontrivial proper ideal $\mathfrak{q}_0 \subset \mathcal{O}_{\mathbb K}$ such that for all square-free ideals $\mathfrak{q} \subset \mathcal{O}_{\mathbb K}$ coprime to $\mathfrak{q}_0$, $\phi \in C_{\mathrm{c}}^r(\Gamma_\mathfrak{q} \backslash G, \mathbb R)$, $\psi \in C_{\mathrm{c}}^1(\Gamma_\mathfrak{q} \backslash G, \mathbb R)$, there exists $C > 0$ such that for all $t > 0$, we have
\begin{multline*}
\left|e^{(n - 1 - \delta_\Gamma)t}\int_{\Gamma_\mathfrak{q} \backslash G} \phi(xa_t)\psi(x) \, dm_\mathfrak{q}^{\mathrm{Haar}}(x) - \frac{1}{m^{\mathrm{BMS}}_\mathfrak{q}(\Gamma_\mathfrak{q} \backslash G)} m^{\mathrm{BR}}_\mathfrak{q}(\phi) \cdot m^{\mathrm{BR}_*}_\mathfrak{q}(\psi)\right| \\
\leq CN_{\mathbb K}(\mathfrak{q})^c e^{-\eta t} \|\phi\|_{C^r} \|\psi\|_{C^1}
\end{multline*}
where $C$ depends only on the projections of $\supp(\phi)$ and $\supp(\psi)$ under the covering map $\Gamma_\mathfrak{q} \backslash G \to \Gamma \backslash G$.
\end{theorem}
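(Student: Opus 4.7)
The plan is to deduce this decay-of-matrix-coefficients statement from \cref{thm:TheoremUniformExponentialMixingOfFrameFlow} by Roblin's transverse intersection argument, following Oh--Shah \cite{OS13} and Oh--Winter \cite{OW16} essentially verbatim with their $\SL_2$-setup replaced by the present $\SO(n,1)^\circ$-setup. The conceptual point is that the Haar measure splits locally as Lebesgue on the unstable leaf times Lebesgue on the stable leaf (with an $A$-factor), whereas the BMS, BR, and BR${}_*$ measures replace one or both Lebesgue factors by the Patterson--Sullivan density of dimension $\delta_\Gamma$; the exponential prefactor $e^{(n-1-\delta_\Gamma)t}$ arises because the Lebesgue and PS masses of horospherical $e^{-t}$-balls differ by precisely this factor via the Busemann cocycle.

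First I would use a partition of unity to reduce to the case that $\phi,\psi$ are supported in small flow boxes of the form $xN_\epsilon^- A_\epsilon M_\epsilon N_\epsilon^+$ in $\Gamma_\mathfrak{q}\backslash G$. Crucially, because $\Gamma$ is convex cocompact, $\Gamma\backslash G$ is compact, and a fixed finite $\epsilon$-cover of the projections of $\supp(\phi),\supp(\psi)$ in $\Gamma\backslash G$ lifts to a controlled cover of the actual supports in $\Gamma_\mathfrak{q}\backslash G$; the number of flow boxes and all local geometric constants depend only on these base projections and not on $\mathfrak{q}$, which explains the dependence of $C$ on the supports stated in the theorem. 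Next I would thicken $\phi$ in the unstable direction and $\psi$ in the stable direction against the PS densities, producing functions $\Phi_\epsilon,\Psi_\epsilon$ satisfying
\begin{align*}
m^{\mathrm{BMS}}_\mathfrak{q}(\Phi_\epsilon) = m^{\mathrm{BR}}_\mathfrak{q}(\phi) + O(\epsilon\|\phi\|_{C^1}), \qquad m^{\mathrm{BMS}}_\mathfrak{q}(\Psi_\epsilon) = m^{\mathrm{BR}_*}_\mathfrak{q}(\psi) + O(\epsilon\|\psi\|_{C^1}).
\end{align*}
The Hopf parametrization together with the $\delta_\Gamma$-quasi-invariance of PS measures under the horospherical subgroups then yields
\begin{align*}
e^{(n-1-\delta_\Gamma)t}\int_{\Gamma_\mathfrak{q}\backslash G} \phi(xa_t)\psi(x)\,dm^{\mathrm{Haar}}_\mathfrak{q}(x) = \int_{\Gamma_\mathfrak{q}\backslash G} \Phi_\epsilon(xa_t)\Psi_\epsilon(x)\,dm^{\mathrm{BMS}}_\mathfrak{q}(x) + O(\epsilon)
\end{align*}
up to an $\epsilon$-error absorbable into the BMS integral. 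Applying \cref{thm:TheoremUniformExponentialMixingOfFrameFlow} to $(\Phi_\epsilon,\Psi_\epsilon)$ produces the main term with an error of order $CN_{\mathbb K}(\mathfrak{q})^C e^{-\eta t}\|\Phi_\epsilon\|_{C^r}\|\Psi_\epsilon\|_{C^1}$; since the thickening inflates Sobolev norms like $\|\Phi_\epsilon\|_{C^r}\ll \epsilon^{-r}\|\phi\|_{C^r}$, choosing $\epsilon = e^{-ct}$ for small enough $c>0$ converts this into $e^{-\eta' t}$ for some slightly smaller $\eta'>0$.

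The main obstacle is purely bookkeeping: ensuring that the polynomial factor $N_{\mathbb K}(\mathfrak{q})^c$ and the decay rate $\eta$ remain uniform in $\mathfrak{q}$ throughout the argument. Uniformity of the flow-box construction and of the PS-versus-Lebesgue comparisons descends from the compact base $\Gamma\backslash G$, and the normalization $m^{\mathrm{BMS}}_\mathfrak{q}(\Gamma_\mathfrak{q}\backslash G)$ grows at most polynomially in $N_{\mathbb K}(\mathfrak{q})$ by the covering relation with $\Gamma\backslash G/M$. Otherwise the derivation is now standard and amounts to a careful but routine transcription of the $n=2$ scheme of \cite{OW16} into arbitrary dimension, which is why the paper simply cites \cite{OS13,OW16}.
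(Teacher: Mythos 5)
Your proposal correctly identifies the paper's cited approach: Roblin's transverse intersection argument, deriving the Haar-measure decay from the BMS mixing of \cref{thm:TheoremUniformExponentialMixingOfFrameFlow} by thickening $\phi$ along the unstable horosphere and $\psi$ along the stable horosphere against the Patterson--Sullivan densities, then optimizing the thickening width $\epsilon = e^{-ct}$. This is the same route the paper attributes to \cite{OS13,OW16}, and the bookkeeping of the $N_{\mathbb K}(\mathfrak{q})^c$ factor via the covering is the right new ingredient.

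However, there is one misstatement worth correcting: $\Gamma \backslash G$ is \emph{not} compact in this paper's setting. Here $\Gamma$ is a convex cocompact \emph{thin} subgroup, meaning it has infinite index in a lattice and hence infinite covolume; consequently $\Gamma \backslash \mathbb{H}^n$ has infinite-volume ends and $\Gamma \backslash G$ is non-compact. Only the convex core and $\Omega = \supp(m^{\mathrm{BMS}}) \subset \Gamma\backslash G/M$ are compact. Indeed, this is precisely \emph{why} the theorem's constant $C$ is allowed to depend on the projections of $\supp(\phi)$ and $\supp(\psi)$ to $\Gamma\backslash G$: if $\Gamma\backslash G$ were compact, one could take a uniform finite cover of the whole quotient and no such dependence would be needed. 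Your very next sentence (covering the projections of the supports and noting they fix the local geometric constants) is the correct mechanism for uniformity in $\mathfrak{q}$, so this is a local error rather than a flaw in the overall structure; but the reasoning should rest on the compactness of $\supp(\phi), \supp(\psi)$ and of their projections, not on any (false) compactness of the quotient.
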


The second application is to affine sieve as in \cite{BGS11}. By \cite{MO15}, \cref{thm:TheoremUniformExponentialMixingOfFrameFlow} is sufficient to derive \cref{thm:AffineSieveTheorem}. For this theorem, consider the special case $\mathbf{G} = \SO_Q$ for some integral quadratic form $Q$ of signature $(n, 1)$ and $\Gamma < \mathbf{G}(\mathbb Z)$. Let $\mathbf{G}(\mathbb R)$ act on $\mathbb R^{n + 1}$ by the standard representation, where we equip $\mathbb R^{n + 1}$ with any norm $\|\cdot\|$.

\begin{theorem}
\label{thm:AffineSieveTheorem}
Let $w_0 \in \mathbb Z^{n + 1} \setminus \{0\}$. If $n = 2$ and $Q(w_0) > 0$, we further assume that $w_0$ is not externally $\Gamma$-parabolic (see \cite{MO15} for details). Then, there exist $C_1 > 0$, $C_2 > 0$, and $R > 0$ such that for all $F = F_1 F_2 \dotsb F_r \in \mathbb Q[x_1, x_2, \dotsc, x_{n + 1}]$ factored into irreducible polynomials over $\mathbb Q$ and $T > 0$, we have
\begin{align*}
\#\{x \in w_0\Gamma: \|x\| \leq T,  F_j(x) \text{ is prime for all } j \in \{1, 2, \dotsc, r\}\} &\leq C_1\frac{T^{\delta_\Gamma}}{\log(T)^r}; \\
\#\{x \in w_0\Gamma: \|x\| \leq T, F(x) \text{ has at most $R$ prime factors}\} &\geq C_2\frac{T^{\delta_\Gamma}}{\log(T)^r}.
\end{align*}
\end{theorem}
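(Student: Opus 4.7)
The plan is to invoke the reduction of affine sieve to uniform equidistribution on congruence covers, which has been carried out in full by Mohammadi--Oh \cite{MO15}: once one has uniform exponential mixing on $\Gamma_\mathfrak{q} \backslash G$ with polynomial dependence on $N_{\mathbb K}(\mathfrak{q})$, namely the conclusion of \cref{thm:TheoremUniformExponentialMixingOfFrameFlow}, the affine sieve statement follows. I would proceed in two conceptually separate steps: first, convert the mixing statement into a uniform orbital counting estimate on $w_0 \Gamma_\mathfrak{q}$, and second, feed the resulting count into a standard combinatorial sieve of Brun--Selberg type.

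For the first step, the wave front / thickening argument of Eskin--McMullen, combined with the decomposition of the Haar measure into leafwise Bowen--Margulis--Sullivan and (un)stable Burger--Roblin components developed by Roblin and Oh--Shah, promotes the bound of \cref{thm:TheoremUniformExponentialMixingOfFrameFlow} to an asymptotic of the form
\begin{align*}
\#\{x \in w_0 \Gamma_\mathfrak{q} : \|x\| \leq T\} = \frac{\mathcal{B}_{w_0}(T)}{[\Gamma : \Gamma_\mathfrak{q}]} \bigl(1 + O\bigl(N_{\mathbb K}(\mathfrak{q})^C T^{-\eta'}\bigr)\bigr),
\end{align*}
uniformly over square-free $\mathfrak{q}$ coprime to $\mathfrak{q}_0$, where $\mathcal{B}_{w_0}(T) \asymp T^{\delta_\Gamma}$. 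The assumption that $w_0$ is not externally $\Gamma$-parabolic when $n = 2$ and $Q(w_0) > 0$ enters precisely here, as it guarantees regular growth of the main term $\mathcal{B}_{w_0}(T)$; this hypothesis is inherited verbatim from \cite{MO15}.

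For the second step, this uniform counting is fed into the combinatorial sieve: one stratifies $w_0 \Gamma$ by divisibility of the values $F_1(x) F_2(x) \dotsb F_r(x)$ by prime ideals, invokes the upper bound sieve to majorize the count of $x$ for which each $F_j(x)$ is prime (producing $C_1$), and invokes Brun's lower bound sieve (with the saturation number $R$ chosen sufficiently large in terms of $r$, $\deg(F)$, $\delta_\Gamma$, and the level of distribution exponent coming from the allowed range of $N_{\mathbb K}(\mathfrak{q})$) to produce almost-prime points (yielding $C_2$). This is axiomatic sieve theory once the level of distribution has been established, and is spelled out in \cite{BGS11, MO15}.

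The main obstacle lies entirely in the first step, i.e., upgrading the mixing estimate to a counting estimate with polynomial dependence on $N_{\mathbb K}(\mathfrak{q})$; within that step the delicate point is quantifying Roblin's transverse intersection argument uniformly in $\mathfrak{q}$ so that the error factor $N_{\mathbb K}(\mathfrak{q})^C$ survives the passage through the BMS/BR measure decomposition. This technical work is precisely what is carried out in \cite{MO15}, which takes our \cref{thm:TheoremUniformExponentialMixingOfFrameFlow} as a black-box input and delivers \cref{thm:AffineSieveTheorem} as output; hence no further argument is required here.
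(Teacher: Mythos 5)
Your approach is the same as the paper's: the paper does not give an independent proof but simply observes that, by \cite{MO15}, Theorem~\ref{thm:TheoremUniformExponentialMixingOfFrameFlow} suffices to derive Theorem~\ref{thm:AffineSieveTheorem}, which is exactly the black-box invocation you describe. Your elaboration of the two-step structure (uniform counting on congruence orbits via the thickening/transverse intersection argument, followed by the combinatorial sieve) correctly summarizes what is contained in \cite{MO15} and matches the paper's intent.
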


The third application is regarding uniform exponential equidistribution of holonomy of closed geodesics as in \cite{MMO14} and an asymptotic orbit counting formula as in \cite{MO15}. Let $\mathfrak{q} \subset \mathcal{O}_{\mathbb K}$ be an ideal. Define
\begin{align*}
\mathcal{G}_\mathfrak{q}(T) ={}&\#\{\gamma: \gamma \text{ is a primitive closed geodesic in } \Gamma_\mathfrak{q} \backslash \mathbb H^n \text{ with length at most } T\}
\end{align*}
and $\mathcal N_{\mathfrak{q}}(T; u, v) = \#\{\gamma \in \Gamma_{\mathfrak{q}}: d(u, \gamma v) \leq T\}$ for all $u, v \in \mathbb H^n$ and $T > 0$. For all primitive closed geodesics $\gamma$ in $\Gamma_\mathfrak{q} \backslash \mathbb H^n$, its \emph{holonomy} is a conjugacy class $h_\gamma$ in $M$ induced by parallel transport along $\gamma$. Fix the probability Haar measure on $M$. Recall the function $\li: (2, \infty) \to \mathbb R$ defined by $\li(x) = \int_2^x \frac{1}{\log(t)} \, dt$ for all $x \in (2, \infty)$.

\begin{theorem}
\label{thm:CountingPrimitiveClosedGeodesicsAndOrbits}
There exist $\eta > 0$, $C > 0$, and a nontrivial proper ideal $\mathfrak{q}_0 \subset \mathcal{O}_{\mathbb K}$ such that for all square-free ideals $\mathfrak{q} \subset \mathcal{O}_{\mathbb K}$ coprime to $\mathfrak{q}_0$, we have
\begin{enumerate}
\item for all class functions $\phi \in C^\infty(M, \mathbb R)$ and $T > \frac{\log(2)}{\delta_\Gamma}$, we have
\begin{align*}
\left|\sum_{\gamma \in \mathcal{G}_\mathfrak{q}(T)} \phi(h_\gamma) - \li\bigl(e^{\delta_\Gamma T}\bigr) \int_M \phi \, dm\right| \leq CN_{\mathbb K}(\mathfrak{q})^C e^{(\delta_\Gamma - \eta)T};
\end{align*}
\item\label{itm:TEST} for all $u, v \in \mathbb H^n$, there exists $c > 0$ such that
\begin{align*}
\big|\mathcal N_{\mathfrak{q}}(T; u, v) - ce^{\delta_\Gamma T}\big| \leq CN_{\mathbb K}(\mathfrak{q})^C e^{(\delta_\Gamma - \eta)T} \qquad \text{for all $T > 0$}.
\end{align*}
\end{enumerate}
\end{theorem}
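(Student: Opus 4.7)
Both assertions are standard consequences of exponential mixing of the frame flow combined with its Burger--Roblin analogue; the only new feature is the uniform polynomial level dependence $N_{\mathbb{K}}(\mathfrak{q})^C$, which is inherited directly from \cref{thm:TheoremUniformExponentialMixingOfFrameFlow} and the preceding matrix coefficient application. The plan is to execute the strategies of Mohammadi--Margulis--Oh \cite{MMO14} for part (1) and Mohammadi--Oh \cite{MO15} for part (2) with these uniform inputs, tracking constants to verify that the polynomial factor survives.

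For part (2), I would follow the Eskin--McMullen / Oh--Shah strategy as executed in \cite{OS13,MO15}. Writing $\mathcal N_\mathfrak{q}(T;u,v)$ as a bilinear sum of matrix coefficients on $\Gamma_\mathfrak{q}\backslash G$ against approximate delta functions concentrated at the stabilizers of $u$ and $v$, one smooths by compactly supported $C^1$ bump functions $\phi_\epsilon,\psi_\epsilon$ of $C^1$-norm $O(\epsilon^{-r})$, and integrates over a Bowen shell using the wave front lemma together with Roblin's transverse intersection argument. Inserting the preceding decay of matrix coefficients produces a main term of size $ce^{\delta_\Gamma T}$ plus an error of size $N_{\mathbb{K}}(\mathfrak{q})^c\epsilon^{-r}e^{(\delta_\Gamma-\eta)T}$; optimizing $\epsilon$ as a small negative power of $e^T$ absorbs the $\epsilon^{-r}$ loss into a slightly smaller exponent, yielding the claimed bound.

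For part (1), I would first invoke Peter--Weyl to decompose the class function $\phi$ as a sum of irreducible characters $\chi_\rho$ of $M$, reducing to the case $\phi=\chi_\rho$. The holonomy-weighted count $\sum_{\gamma\in\mathcal G_\mathfrak{q}(T)}\chi_\rho(h_\gamma)$ is the prime orbit count for the frame flow on $\Gamma_\mathfrak{q}\backslash G$ twisted by $\rho$. Following \cite{MMO14}, this is controlled by the spectrum of the congruence transfer operator of the paper, further twisted by the holonomy representation $\rho$; the uniform spectral gap of these operators with polynomial loss $N_{\mathbb{K}}(\mathfrak{q})^C$ is precisely the technical engine that underlies \cref{thm:TheoremUniformExponentialMixingOfFrameFlow}. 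A quantitative Ikehara--Delange Tauberian argument then converts this spectral gap into the claimed prime orbit asymptotic with exponential error. The delicate step, and the main obstacle, will be the Tauberian contour-shift itself: one must control the growth of the twisted transfer-operator resolvent on vertical lines uniformly in $\mathfrak{q}$ and in $\rho$, and verify that this growth is polynomial in $N_{\mathbb{K}}(\mathfrak{q})$ and in $\dim(\rho)$, in order to match the stated uniform error bound.
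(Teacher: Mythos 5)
Your proposal is correct and mirrors what the paper does, which is to establish \cref{thm:TheoremUniformExponentialMixingOfFrameFlow} and then invoke \cite{MMO14} for part (1) and \cite{MO15} for part (2), exactly the route you lay out. The uniform resolvent control you flag as the delicate step for part (1) is precisely the content of \cref{thm:TheoremFrameFlow}: the bound $CN_{\mathbb K}(\mathfrak{q})^C e^{-\eta k}\|\tilde{H}\|_{1,\|\rho_b\|}$ is polynomial in $N_{\mathbb K}(\mathfrak{q})$ and uniform over $\rho \in \widehat{M}$, and the rapid Peter--Weyl decay of a $C^\infty$ class function then closes the convergence of your sum over $\rho$.
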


Let $\mathfrak{q} \subset \mathcal{O}_{\mathbb K}$ be an ideal. If $\delta_\Gamma \in (\frac{n - 1}{2}, n - 1]$, then \cref{thm:ResonanceFreeRegions} follows from the spectral gap result in Step II of the proof of \cite[Theorem 11.1]{Kim15} which generalizes \cite[Theorem 1.2]{BGS11}. Thus, we now focus on the case $\delta_\Gamma \in (0, \frac{n - 1}{2}]$ which is exactly when the spectrum of the Laplacian $\Delta_\mathfrak{q}: L^2(\Gamma_\mathfrak{q} \backslash \mathbb H^n, \mathbb C) \to L^2(\Gamma_\mathfrak{q} \backslash \mathbb H^n, \mathbb C)$ is purely continuous, namely $\bigl[\left(\frac{n - 1}{2}\right)^2, \infty\bigr)$ \cite{LP82,Sul82}. The resolvent $\xi \mapsto R_\mathfrak{q}(\xi) = (\Delta_\mathfrak{q} - \xi(n - 1 - \xi))^{-1}$ is then holomorphic on the half plane $\{\xi \in \mathbb C: \Re(\xi) > \frac{n - 1}{2}\}$. Mazzeo--Melrose \cite{MM87} and Guillop\'{e}--Zworski \cite{GZ95} showed that viewing the resolvent $R_\mathfrak{q}$ as a $L_{\mathrm{c}}^2(\Gamma_\mathfrak{q} \backslash \mathbb H^n, \mathbb C) \to L_{\mathrm{loc}}^2(\Gamma_\mathfrak{q} \backslash \mathbb H^n, \mathbb C)$ bounded operator-valued map, it has a meromorphic extension to the entire complex plane, whose poles are called \emph{resonances}. Moreover, Patterson \cite{Pat88} showed that the line $\{\xi \in \mathbb C: \Re(\xi) = \delta_\Gamma\}$ has exactly one simple pole at $\delta_\Gamma$ for the map $\xi \mapsto \Gamma(\xi - \frac{n - 1}{2} + 1) R_\mathfrak{q}(\xi)$. Finally, another application is \cref{thm:ResonanceFreeRegions} regarding a uniform resonance-free half plane for these resolvents. It is proved exactly as in \cite[Section 6]{OW16} using \cite[Proposition 2.2]{GN09}, with the correct factor $f(\xi) = \frac{2^{-1}\pi^{-\frac{n - 1}{2}}\Gamma(\xi)}{\Gamma(\xi - \frac{n - 1}{2} + 1)}$, along with the asymptotic orbit counting formula in \cref{thm:CountingPrimitiveClosedGeodesicsAndOrbits}.

\begin{theorem}
\label{thm:ResonanceFreeRegions}
There exist $\epsilon > 0$ and a nontrivial proper ideal $\mathfrak{q}_0 \subset \mathcal{O}_{\mathbb K}$ such that for all square-free ideals $\mathfrak{q} \subset \mathcal{O}_{\mathbb K}$ coprime to $\mathfrak{q}_0$, $\{\xi \in \mathbb C: \Re(\xi) > \delta_\Gamma - \epsilon\}$ is a resonance-free half plane for $R_\mathfrak{q}$ if $\delta_\Gamma \in \{\frac{n - 1}{2} - k: k \in \mathbb N\}$ but with an exception of a simple pole at $\delta_\Gamma$ if $\delta_\Gamma \notin \{\frac{n - 1}{2} - k: k \in \mathbb N\}$.
\end{theorem}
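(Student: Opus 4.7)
The plan is to follow \cite[Section 6]{OW16} with the hyperbolic dimension $n$ in place of $2$, using two external inputs: the resolvent kernel formula of \cite[Proposition 2.2]{GN09} and the quantitative orbit counting estimate from \cref{thm:CountingPrimitiveClosedGeodesicsAndOrbits}\ref{itm:TEST}. Fix $u, v \in \mathbb H^n$. For $\Re(\xi)$ sufficiently large, \cite[Proposition 2.2]{GN09} expresses the resolvent kernel as
\begin{align*}
R_\mathfrak{q}(\xi)(u, v) = f(\xi) \sum_{\gamma \in \Gamma_\mathfrak{q}} \Phi_\xi\bigl(d(u, \gamma v)\bigr),
\end{align*}
where $\Phi_\xi(r)$ is a hypergeometric kernel, entire in $\xi$, with $\Phi_\xi(r) = e^{-\xi r}\bigl(1 + O(e^{-2r})\bigr)$ as $r \to \infty$, and $f(\xi) = \frac{2^{-1}\pi^{-(n-1)/2}\Gamma(\xi)}{\Gamma(\xi - (n-1)/2 + 1)}$ has a zero at each point of $\bigl\{\frac{n-1}{2} - k : k \in \mathbb N\bigr\}$ coming from the factor $1/\Gamma(\xi - (n-1)/2 + 1)$.

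Next, I would rewrite the group sum via Stieltjes integration by parts as $\int_0^\infty \Phi_\xi(T) \, d\mathcal N_\mathfrak{q}(T; u, v)$, and decompose $\mathcal N_\mathfrak{q}(T; u, v) = ce^{\delta_\Gamma T} + E_\mathfrak{q}(T)$ where $|E_\mathfrak{q}(T)| \leq CN_{\mathbb K}(\mathfrak{q})^C e^{(\delta_\Gamma - \eta) T}$ is supplied by \cref{thm:CountingPrimitiveClosedGeodesicsAndOrbits}\ref{itm:TEST}. The main-term integral against $d(ce^{\delta_\Gamma T})$ is an elementary Laplace-type integral that is meromorphic in $\xi$ on the entire complex plane with a single simple pole at $\xi = \delta_\Gamma$. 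The error integral against $dE_\mathfrak{q}(T)$ converges absolutely on $\{\xi \in \mathbb C : \Re(\xi) > \delta_\Gamma - \eta\}$ and defines a holomorphic function there of norm at most $CN_{\mathbb K}(\mathfrak{q})^C$.

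Multiplying by $f(\xi)$, the simple pole at $\xi = \delta_\Gamma$ persists precisely when $f$ does not vanish at $\delta_\Gamma$, i.e., when $\delta_\Gamma \notin \bigl\{\frac{n-1}{2} - k : k \in \mathbb N\bigr\}$; in the opposite case, the zero of $f$ cancels it, consistent with Patterson's identification of the simple pole of $\xi \mapsto \Gamma(\xi - (n-1)/2 + 1) R_\mathfrak{q}(\xi)$ on the critical line. Taking $\epsilon = \eta$ and the same ideal $\mathfrak{q}_0$ as in \cref{thm:CountingPrimitiveClosedGeodesicsAndOrbits} then yields the advertised uniform resonance-free half plane for $R_\mathfrak{q}$.

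The main technical point to watch is ensuring that the $O(e^{-2r})$ correction to $\Phi_\xi$ does not inject spurious poles into $\{\xi \in \mathbb C : \Re(\xi) > \delta_\Gamma - \epsilon\}$. This is handled exactly as in \cite[Section 6]{OW16}: the correction contributes at worst a factor $e^{-(\xi + 2) r}$ in the kernel, so the associated Stieltjes integral is absolutely convergent for $\Re(\xi) > \delta_\Gamma - 2$, and hence holomorphic on the half plane of interest; the $N_{\mathbb K}(\mathfrak{q})^C$ dependence is tracked automatically from the counting bound. The whole argument is therefore a dimension-agnostic repetition of the Oh--Winter scheme once \cref{thm:CountingPrimitiveClosedGeodesicsAndOrbits} is available, which is why no new analytic difficulty arises beyond the $n = 2$ case.
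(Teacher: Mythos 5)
Your proposal follows exactly the route the paper sketches: appeal to \cite[Proposition 2.2]{GN09} for the resolvent kernel with the dimension-corrected factor $f(\xi)$, feed in the quantitative orbit count of \cref{thm:CountingPrimitiveClosedGeodesicsAndOrbits}\ref{itm:TEST} via a Stieltjes decomposition, and read off cancellation of the pole at $\delta_\Gamma$ from the zeros of $1/\Gamma(\xi - \tfrac{n-1}{2} + 1)$ — precisely the Oh--Winter Section~6 scheme the paper cites. Since the paper gives only this one-sentence reference rather than a worked proof, your write-up is a faithful expansion of it rather than a different argument.
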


\subsection{Outline of the proof of \texorpdfstring{\cref{thm:TheoremUniformExponentialMixingOfFrameFlow}}{\autoref{thm:TheoremUniformExponentialMixingOfFrameFlow}}}
First we recount the proof of exponential mixing for the geodesic flow on the single manifold $\T^1(\Gamma \backslash \mathbb H^n) \cong \Gamma \backslash G/M$. Then based on that we describe the proof of uniform exponential mixing for the frame flow in this paper.

As $\Gamma$ is convex cocompact, it is well known from the works of Bowen and Ratner \cite{Bow70,Rat73} that there exists a Markov section for the geodesic flow on the compact invariant subset $\Omega =\supp(m^{\mathrm{BMS}}) \subset \T^1(\Gamma \backslash \mathbb H^n)$. This provides a coding for the geodesic flow and allows us to use tools from symbolic dynamics and thermodynamic formalism. Moreover, by an observation of Pollicott \cite{Pol85} which was used by many others \cite{Dol98,AGY06,Sto11}, the Laplace transform of the correlation function can be written as an infinite series involving the transfer operators $\mathcal{L}_\xi: C(U, \mathbb C) \to C(U, \mathbb C)$ for $\xi = a + ib \in \mathbb C$ defined by
\begin{align*}
\mathcal{L}_\xi(h)(u) = \sum_{u' \in \sigma^{-1}(u)} e^{-(a + \delta_\Gamma - ib)\tau(u')}h(u').
\end{align*}
Here $\sigma$ is the shift map on $U$, $\delta_\Gamma$ is the critical exponent of $\Gamma$ which is known to be equal to the entropy of the geodesic flow, and $\tau$ is the first return time map associated to the Markov section. Now, by a Paley--Wiener type of analysis, it suffices to find spectral bounds for the transfer operators to prove exponential mixing of the geodesic flow.

For small frequencies $|\Im(\xi)| \ll 1$, the spectral bounds for the transfer operators can be derived using the Ruelle--Perron--Frobenius theorem together with perturbation theory. On the other hand, large frequencies $|\Im(\xi)| \gg 1$ are much harder to deal with. However, Dolgopyat \cite{Dol98} invented an ingenious method to obtain spectral bounds by working explicitly on the strong unstable leaves of the Markov section rather than the purely symbolic space. The geometry of the manifold provides a crucial \emph{local non-integrability condition} (LNIC) which implies that $\tau$ is highly oscillating. From this we can get the required spectral bounds. Moreover, he provided the right technical framework for the whole process to work in harmony.

We would like to follow a similar line of argument to prove exponential mixing for the \emph{frame flow} on a family of manifolds $\F(\Gamma_\mathfrak{q} \backslash \mathbb H^n) \cong \Gamma_\mathfrak{q} \backslash G$ \emph{uniformly} in the ideals $\mathfrak{q} \subset \mathcal{O}_{\mathbb K}$. The novelty of this paper in contrast with past works is that we handle \emph{holonomy} and the uniformity in the ideals $\mathfrak{q} \subset \mathcal{O}_{\mathbb K}$ simultaneously. We are also able to do this whilst in arbitrary dimension $n \geq 2$.

Our strategy begins with using instead the \emph{congruence} transfer operators with holonomy $\mathcal{M}_{\xi, \mathfrak{q}, \rho}: C\big(U, L^2(\Gamma_\mathfrak{q} \backslash \Gamma, \mathbb C) \otimes {V_\rho}^{\oplus \dim(\rho)}\big) \to C\big(U, L^2(\Gamma_\mathfrak{q} \backslash \Gamma, \mathbb C) \otimes {V_\rho}^{\oplus \dim(\rho)}\big)$ for $\xi = a + ib \in \mathbb C$ and ideal $\mathfrak{q} \subset \mathcal{O}_{\mathbb K}$ defined by
\begin{align*}
\mathcal{M}_{\xi, \mathfrak{q}, \rho}(H)(u) &= \sum_{u' \in \sigma^{-1}(u)} e^{-(a + \delta_\Gamma - ib)\tau(u')} \big(\mathtt{c}_\mathfrak{q}(u')^{-1} \otimes \rho(\vartheta(u'))^{-1}\big) H(u').
\end{align*}
Here the \emph{congruence cocycle} $\mathtt{c}_\mathfrak{q}$ ``keeps track of the coordinate in the fibers'' of the congruence cover $\T^1(\Gamma_\mathfrak{q} \backslash \mathbb H^n) \to \T^1(\Gamma \backslash \mathbb H^n)$, the \emph{holonomy} $\vartheta$ ``keeps track of the coordinate in the fibers'' of the principal $M$-bundle $\F(\Gamma_\mathfrak{q} \backslash \mathbb H^n) \to \T^1(\Gamma_\mathfrak{q} \backslash \mathbb H^n)$, and $\rho$ is an irreducible representation of $M$. With this formulation, we now require spectral bounds for the congruence transfer operators with holonomy, uniformly in the ideals $\mathfrak{q} \subset \mathcal{O}_{\mathbb K}$ and irreducible representations $\rho$.

But now for small frequencies $|\Im(\xi)| \ll 1$ and trivial $\rho$, since there are countably many ideals $\mathfrak{q} \subset \mathcal{O}_{\mathbb K}$, the Ruelle--Perron--Frobenius theorem and perturbation theory are insufficient to obtain spectral bounds uniformly in the ideals $\mathfrak{q} \subset \mathcal{O}_{\mathbb K}$. Nevertheless, Bourgain--Gamburd--Sarnak demonstrated in their breakthrough works \cite{BGS10,BGS11} that the required bounds are attainable using the entirely new methods of expander graphs. Golsefidy--Varj\'{u} \cite{GV12} generalized their results to perfect algebraic groups. The expander machinery captures the spectral properties of a certain family of Cayley graphs coming from the congruence setting. This gives large cancellations of the summands in the congruence transfer operators due to the cocycle from which we can derive uniform spectral bounds. We mainly follow \cite{OW16,MOW17} to use the expander machinery. In particular, Bourgain--Kontorovich--Magee has shown a method in \cite[Appendix]{MOW17} to use the expander machinery more directly. In order to generalize their techniques in our setting, it is crucial to verify the Zariski density of certain subgroups of $\Gamma$ which we call the \emph{return trajectory subgroups}. Over a general totally real number field $\mathbb K$, we also need to verify that the return trajectory subgroups have full trace field $\mathbb K$. We ensure these properties by proving that the return trajectory subgroups have finite index in $\Gamma$. Although these were trivial points in \cite[Appendix]{MOW17} for the Schottky semigroup and continued fraction settings in $\SL_2(\mathbb Z)$, its higher dimensional analogue is quite intricate, and obtaining this is the main new technical result of this paper. This completes the necessary tools and we obtain spectral bounds for small frequencies $|\Im(\xi)| \ll 1$ and trivial $\rho$, uniformly in the ideals $\mathfrak{q} \subset \mathcal{O}_{\mathbb K}$.

Next, for large frequencies $|\Im(\xi)| \gg 1$ or nontrivial $\rho$, we would like to use Dolgopyat's method as above. We deal with the cocycle with relative ease since it is locally constant on $U$. This was first observed by Oh--Winter in \cite{OW16}. The holonomy on the other hand is trickier to deal with. However, this has already been dealt with in \cite{SW20} where the appropriate LNIC and \emph{non-concentration property} (NCP) was proven and used to carry out Dolgopyat's method. The difference in this paper is to incorporate the cocycle and make sure everything in \cite{SW20} goes through uniformly in the ideals $\mathfrak{q} \subset \mathcal{O}_{\mathbb K}$. We need to modify a few things but there are no complications because of the local constancy of the cocycle. Dolgopyat's method then gives spectral bounds for large frequencies $|\Im(\xi)| \gg 1$ or nontrivial $\rho$, uniformly in the ideals $\mathfrak{q} \subset \mathcal{O}_{\mathbb K}$.

Finally, applying a Paley--Wiener type of analysis, the uniform spectral bounds with holonomy can be converted to uniform exponential mixing of the frame flow.

\subsection{Organization of the paper}
\label{subsec:OrganizationOfThePaper}
We start with the background for our dynamical system in \cref{sec:Preliminaries}. In \cref{sec:CodingTheGeodesicFlow}, we will first code the geodesic flow so that we can use tools from symbolic dynamics and thermodynamic formalism. Then we construct the congruence setting and define cocycles. In \cref{sec:HolonomyAndRepresentationTheory}, we define holonomy and cover some representation theoretic background. In \cref{sec:CongruenceTransferOperatorsWithHolonomyAndTheirUniformSpectralBounds}, we define congruence transfer operators with holonomy and present their uniform spectral bounds which is the main technical theorem in this paper. To prove these bounds, \cref{sec:ReductionToNewInvariantFunctionsAtLevel_q,sec:ApproximatingTheTransferOperator,sec:ZariskiDensityAndTraceFieldOfTheReturnTrajectorySubgroups,sec:L2FlatteningLemma,sec:SupremumAndLipschitzBounds} are devoted to the expander machinery part of the argument which we note does not require holonomy and hence independent of \cref{sec:HolonomyAndRepresentationTheory}. First, we make some reductions in \cref{sec:ReductionToNewInvariantFunctionsAtLevel_q}. In \cref{sec:ApproximatingTheTransferOperator}, we explain how congruence transfer operators can be approximated by convolutions with measures. In \cref{sec:ZariskiDensityAndTraceFieldOfTheReturnTrajectorySubgroups}, we prove that the return trajectory subgroups are Zariski dense and have full trace field $\mathbb K$. This is the main new theorem required to use the expander machinery in \cref{sec:L2FlatteningLemma} to prove a $L^2$-flattening lemma for the approximating measures. The $L^2$-flattening lemma is then used in \cref{sec:SupremumAndLipschitzBounds} to finish the expander machinery part of the argument. Next, we go through the Dolgopyat's method part of the argument in \cref{sec:Dolgopyat'sMethod}. We finish by describing how to convert the uniform spectral bounds with holonomy to uniform exponential mixing of the frame flow.

\subsection*{Acknowledgements}
This work is part of my Ph.D. thesis at Yale University. I am extremely grateful to my advisor Hee Oh for introducing the beautiful field of homogeneous dynamics and suggesting to think about this problem and helpful guidance throughout. I also thank Dale Winter for explaining his work, Wenyu Pan for helpful discussions, and Jialun Li for helpful comments on \cref{thm:ResonanceFreeRegions}.

\section{Preliminaries}
\label{sec:Preliminaries}
We will first introduce the basic setup and fix notations for the rest of the paper.

Let $\mathbb H^n$ be the $n$-dimensional hyperbolic space for $n \geq 2$. We denote by $\langle \cdot, \cdot\rangle$ and $\|\cdot\|$ the inner product and norm respectively on any tangent space of $\mathbb H^n$ induced by the hyperbolic metric. Similarly, we denote by $d$ the distance function on $\mathbb H^n$ induced by the hyperbolic metric. Let $\mathbb K \subset \mathbb R$ be a totally real number field and $\mathcal{O}_{\mathbb K}$ be the corresponding ring of integers. Let $\mathbf{G} < \GL_N$ for some $N \in \mathbb N$ be an algebraic group defined over $\mathbb K$ such that $\mathbf{G}(\mathbb R) \cong \SO(n, 1)$ and $\mathbf{G}^\sigma(\mathbb R)$ is compact for all nontrivial embeddings $\sigma: \mathbb K \hookrightarrow \mathbb R$. Let $G = \mathbf{G}(\mathbb R)^\circ$ which we recognize as the group of orientation preserving isometries of $\mathbb H^n$. Let $\Gamma < G$ be a Zariski dense torsion-free discrete subgroup. Let $o \in \mathbb H^n$ be a reference point and $v_o \in \T^1(\mathbb H^n)$ be a reference tangent vector at $o$. Then we have the stabilizer subgroups $K = \Stab_G(o)$ and $M = \Stab_G(v_o) < K$. Note that $K \cong \SO(n)$ and it is a maximal compact subgroup of $G$ and $M \cong \SO(n - 1)$. Our base hyperbolic manifold is $X = \Gamma \backslash \mathbb H^n \cong \Gamma \backslash G/K$, its unit tangent bundle is $\T^1(X) \cong \Gamma \backslash G/M$ and its frame bundle is $\F(X) \cong \Gamma \backslash G$ which is a principal $\SO(n)$-bundle over $X$ and a principal $\SO(n - 1)$-bundle over $\T^1(X)$. There is a one parameter subgroup of semisimple elements $A = \{a_t: t \in \mathbb R\} < G$, where $C_G(A) = AM$, parametrized such that its canonical right translation action on $G/M$ and $G$ corresponds to the geodesic flow and the frame flow respectively. We choose a left $G$-invariant and right $K$-invariant Riemannian metric on $G$ \cite{Sas58,Mok78} which descends down to the previous hyperbolic metric on $\mathbb H^n \cong G/K$ and again use the notations $\langle \cdot, \cdot\rangle$, $\|\cdot\|$, and $d$ on $G$ and any of its quotient spaces.

To make use of the strong approximation theorem of Weisfeiler \cite{Wei84} later on, we need to work on the simply connected cover $\tilde{\mathbf G}$ endowed with the covering map $\tilde{\pi}: \tilde{\mathbf G} \to \mathbf G$ defined over $\mathbb K$. Let $\tilde{G} = \tilde{\mathbf G}(\mathbb R)$ which is connected and projects down to $\tilde{\pi}(\tilde{G}) = G$. Let $\tilde{\Gamma} < \tilde{G}$ be a Zariski dense subgroup containing the finite central subgroup $\ker(\tilde{\pi})$ as the only torsion elements. To be able to discuss the notion of congruence subgroups, let us suppose that $\tilde{\Gamma} < \tilde{\mathbf G}(\mathcal{O}_{\mathbb K})$. To use the strong approximation theorem, we also need to make the technical assumption that the trace field is $\mathbb Q(\tr(\Ad(\tilde{\Gamma}))) = \mathbb K$. Then we take $\Gamma$ introduced previously to be $\Gamma = \tilde{\pi}(\tilde{\Gamma})$ which also has trace field $\mathbb Q(\tr(\Ad(\Gamma))) = \mathbb K$ by \cite[Corollary 1.4.8]{Mar91}.

\subsection{Convex cocompact groups}
Denote $\partial_\infty\mathbb H^n$ to be the boundary at infinity and $\overline{\mathbb H^n} = \mathbb H^n \cup \partial_\infty\mathbb H^n$ to be the compactification of $\mathbb H^n$.

\begin{definition}[Limit set]
The \emph{limit set} of $\Gamma$ is the set of limit points $\Lambda(\Gamma) = \lim(\Gamma o) \subset \partial_\infty\mathbb H^n \subset \overline{\mathbb H^n}$.
\end{definition}

\begin{definition}[Radial limit set]
\label{def:RadialLimitSet}
A point $\xi \in \Lambda(\Gamma)$ is called a \emph{radial limit point} of $\Gamma$ if for some geodesic $\xi: \mathbb R \to \mathbb H^n$ with $\lim_{t \to \infty} \xi(t) = \xi$, there exists $r > 0$ and sequences $\{\gamma_k\}_{k \in \mathbb N} \subset \Gamma$ and $\{t_k\}_{k \in \mathbb N} \subset \mathbb R$ such that $d(\gamma_k o, \xi(t_k)) < r$ and $\lim_{k \to \infty} \gamma_k o = \xi$. The \emph{radial limit set} is the set of radial limit points $\Lambda_{\mathrm{r}}(\Gamma) \subset \Lambda(\Gamma)$.
\end{definition}

\begin{definition}[Critical exponent]
The \emph{critical exponent} $\delta_\Gamma$ of $\Gamma$ is the abscissa of convergence of the Poincar\'{e} series $\mathscr{P}_\Gamma(s) = \sum_{\gamma \in \Gamma} e^{-s d(o, \gamma o)}$.
\end{definition}

\begin{remark}
The above definitions are independent of the choice of $o \in \mathbb H^n$.
\end{remark}

\begin{definition}[Convex cocompact]
A torsion-free discrete subgroup $\Gamma < G$ is called \emph{convex cocompact} if the \emph{convex core} $\Core(X) = \Gamma \backslash \Hull(\Lambda(\Gamma)) \subset X$, where $\Hull$ denotes the convex hull, is compact.
\end{definition}

The following is a theorem of Bowditch \cite{Bow93}.

\begin{theorem}
\label{thm:ConvexCocompactIFFLimitSetIsRadial}
A torsion-free discrete subgroup $\Gamma < G$ is convex cocompact if and only if $\Lambda(\Gamma) = \Lambda_{\mathrm{r}}(\Gamma)$.
\end{theorem}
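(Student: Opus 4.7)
The plan is to prove the two implications separately, in both directions exploiting the geometry of the convex hull $H = \Hull(\Lambda(\Gamma)) \subset \mathbb H^n$ and its quotient $\Gamma \backslash H = \Core(X)$.

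For the forward direction, I fix an arbitrary $\xi \in \Lambda(\Gamma)$ and pick any $\eta \in \Lambda(\Gamma) \setminus \{\xi\}$; such $\eta$ exists because the Zariski density of $\Gamma$ (standing hypothesis in \cref{sec:Preliminaries}) forces $\Lambda(\Gamma)$ to be infinite. The biinfinite geodesic $c: \mathbb R \to \mathbb H^n$ from $\eta$ to $\xi$ lies in $H$, so its projection $\bar c$ lies in the compact set $\Core(X)$. Extracting times $t_k \to \infty$ along which $\bar c(t_k)$ converges in $\Core(X)$ and lifting by suitable $\gamma_k \in \Gamma$ places $\gamma_k^{-1} c(t_k)$ in a fixed compact subset of $\mathbb H^n$; this yields a uniform bound $d(\gamma_k o, c(t_k)) \leq r$ for some $r > 0$, and since $c(t_k) \to \xi$, the points $\gamma_k o$ converge to $\xi$ as well, verifying that $\xi \in \Lambda_{\mathrm{r}}(\Gamma)$ per \cref{def:RadialLimitSet}.

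For the reverse direction, I argue by contradiction: assume $\Core(X)$ is not compact and choose $\bar x_k \in \Core(X)$ leaving every compact subset. Lift to $x_k \in H$; using $\Gamma$-invariance of $H$, replace $x_k$ by a $\Gamma$-translate so that $d(o, x_k) = d(\Gamma o, x_k)$, which forces $d(o, x_k) \to \infty$. After extraction, $x_k \to \xi$ in $\overline{\mathbb H^n}$; since each $x_k$ lies on some geodesic $[\xi_k, \eta_k]$ with $\xi_k, \eta_k \in \Lambda(\Gamma)$ and $\Lambda(\Gamma)$ is closed in $\partial_\infty \mathbb H^n$, a further extraction together with $d(o, x_k) \to \infty$ forces $\xi = \lim \xi_k \in \Lambda(\Gamma)$. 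Invoking $\Lambda(\Gamma) = \Lambda_{\mathrm{r}}(\Gamma)$, I fix a geodesic ray $c$ from $o$ to $\xi$, a constant $r_0 > 0$, and $\gamma_j \in \Gamma$, $s_j \to \infty$, with $d(\gamma_j o, c(s_j)) < r_0$. Standard hyperbolic convergence of the geodesics $[\xi_k, \eta_k]$ to $[\eta_\infty, \xi]$ on compact sets, combined with the fact that $[\eta_\infty, \xi]$ is asymptotic to $c$ at $\xi$, shows that $x_k$ itself lies within bounded distance of $c$; pairing $x_k$ with a nearby $\gamma_j o$ yields $d(x_k, \Gamma o) = O(1)$, contradicting $d(x_k, \Gamma o) \to \infty$.

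The main obstacle is the final geometric comparison in the reverse direction: one must convert the radial control of $\Gamma o$ along the ray $c$ into an explicit bounded-distance control on the sequence $x_k$, which lies on a varying family of geodesics $[\xi_k, \eta_k]$. This hinges on the standard fact in hyperbolic geometry that two geodesics sharing an ideal endpoint become exponentially close in that direction, together with Hausdorff convergence of $[\xi_k, \eta_k]$ to $[\eta_\infty, \xi]$ on compact subsets of $\mathbb H^n$, but uniformity in $k$ must be arranged carefully so that the "close to $c$" and "close to $\gamma_j o$" estimates chain to produce a single bounded distance from $x_k$ to $\Gamma o$.
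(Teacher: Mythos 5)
The paper does not prove this statement; it is quoted as a theorem of Bowditch~\cite{Bow93}, so there is no internal proof to compare against. Your forward implication is sound: a biinfinite geodesic with both ideal endpoints in $\Lambda(\Gamma)$ lies in $\Hull(\Lambda(\Gamma))$, its projection lies in the compact convex core, and pulling back to a fixed compact subset by group elements produces exactly the data in \cref{def:RadialLimitSet}. (You should note that your supporting appeal to Zariski density to guarantee $\#\Lambda(\Gamma)\ge 2$ is a standing hypothesis of the paper but not of the statement itself; the elementary cases need a sentence.)

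The reverse implication, however, has a genuine gap that you acknowledge but frame as a matter of ``arranging uniformity carefully,'' when in fact it is the entire content of the theorem. \cref{def:RadialLimitSet} only guarantees a \emph{subsequence} of times $s_j \to \infty$ with $d(c(s_j), \Gamma o) < r_0$; it does not say that $t \mapsto d(c(t), \Gamma o)$ is bounded along the whole ray. Your contradiction points $x_k$ lie near $c(T_k)$ for some $T_k \to \infty$, and by construction $d(c(T_k), \Gamma o) \to \infty$; nothing in the hypothesis forces any $T_k$ to land near any $s_j$, since a priori the gaps $s_{j+1} - s_j$ can grow without bound. The step ``pairing $x_k$ with a nearby $\gamma_j o$ yields $d(x_k, \Gamma o) = O(1)$'' is therefore unsupported, and repairing it amounts to upgrading every radial limit point to a \emph{uniformly radial} one (bounded geodesic), which is essentially the statement you are trying to prove. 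There is also a secondary omission: you implicitly place yourself in the non-degenerate case $\eta_\infty \neq \xi$. When $\xi_k, \eta_k \to \xi$ simultaneously (perfectly possible when $\Lambda(\Gamma)$ is a Cantor set), the geodesics $[\xi_k, \eta_k]$ escape to $\xi$ and do not converge on compacta to any biinfinite geodesic $[\eta_\infty, \xi]$, so the ``two geodesics sharing an ideal endpoint are eventually exponentially close'' fact has nothing to apply to, and $x_k \to \xi$ alone does not put $x_k$ within bounded distance of $c$. Bowditch's proof circumvents both issues with a horoball/thin--thick packing argument rather than the direct asymptotic comparison you attempt, and I would encourage you to consult~\cite{Bow93} before trying to patch the estimate chain.
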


We assume that $\Gamma$ is convex cocompact throughout the paper.

\begin{remark}
In our case, $\delta_\Gamma \in (0, n - 1]$ and coincides with the Hausdorff dimension of $\Lambda(\Gamma)$.
\end{remark}

\subsection{Patterson--Sullivan density}
\label{subsec:Patterson--SullivanDensity}
Let $\{\mu^{\mathrm{PS}}_x: x \in \mathbb H^n\}$ denote the \emph{Patterson--Sullivan density} of $\Gamma$ \cite{Pat76,Sul79}, i.e., the set of finite Borel measures on $\partial_\infty\mathbb H^n$ supported on $\Lambda(\Gamma)$ such that
\begin{enumerate}
\item	$g_*\mu^{\mathrm{PS}}_x = \mu^{\mathrm{PS}}_{gx}$ for all $g \in \Gamma$ and $x \in \mathbb H^n$;
\item	$\frac{d\mu^{\mathrm{PS}}_x}{d\mu^{\mathrm{PS}}_y}(\xi) = e^{\delta_\Gamma \beta_{\xi}(y, x)}$ for all $\xi \in \partial_\infty\mathbb H^n$ and $x, y \in \mathbb H^n$
\end{enumerate}
where $\beta_{\xi}$ denotes the \emph{Busemann function} at $\xi \in \partial_\infty\mathbb H^n$ defined by $\beta_{\xi}(y, x) = \lim_{t \to \infty} (d(\xi(t), y) - d(\xi(t), x))$, where $\xi: \mathbb R \to \mathbb H^n$ is any geodesic such that $\lim_{t \to \infty} \xi(t) = \xi$. We also allow tangent vector arguments for the Busemann function in which case we will use their basepoints in the definition.

\subsection{Bowen--Margulis--Sullivan measure}
\label{subsec:BMS_Measure}
For all $u \in \T^1(\mathbb H^n)$, let $u^+$ and $u^-$ denote its forward and backward limit points. Using the Hopf parametrization via the homeomorphism $G/M \cong \T^1(\mathbb H^n) \to \{(u^+, u^-) \in \partial_\infty\mathbb H^n \times \partial_\infty\mathbb H^n: u^+ \neq u^-\} \times \mathbb R$ defined by $u \mapsto (u^+, u^-, t = \beta_{u^-}(o, u))$, we define the \emph{Bowen--Margulis--Sullivan (BMS) measure} $m^{\mathrm{BMS}}$ on $G/M$ \cite{Mar04,Bow71,Kai90} by
\begin{align*}
dm^{\mathrm{BMS}}(u) = e^{\delta_\Gamma \beta_{u^+}(o, u)} e^{\delta_\Gamma \beta_{u^-}(o, u)} \, d\mu^{\mathrm{PS}}_o(u^+) \, d\mu^{\mathrm{PS}}_o(u^-) \, dt
\end{align*}
Note that this definition only depends on $\Gamma$ and not on the choice of reference point $o \in \mathbb H^n$ and moreover $m^{\mathrm{BMS}}$ is left $\Gamma$-invariant. We now define induced measures on other spaces all of which we call the BMS measures and denote by $m^{\mathrm{BMS}}$ by abuse of notation. Since $M$ is compact, we can use the probability Haar measure on $M$ to lift $m^{\mathrm{BMS}}$ to a right $M$-invariant measure on $G$. By left $\Gamma$-invariance, $m^{\mathrm{BMS}}$ now descends to a measure on $\Gamma \backslash G$. By right $M$-invariance, $m^{\mathrm{BMS}}$ descends once more to a measure on $\Gamma \backslash G/M$. We normalize the above measures such that $m^{\mathrm{BMS}}(\Gamma \backslash G) = 1$. It can be checked that the BMS measures are right $A$-invariant. We denote $\Omega = \supp(m^{\mathrm{BMS}}) \subset \Gamma \backslash G/M$ which is compact since $\Gamma$ is convex cocompact.

\section{Coding of the geodesic flow and its associated cocycles}
\label{sec:CodingTheGeodesicFlow}
In this section, we will first review the required background for Markov sections, symbolic dynamics, and thermodynamic formalism. Then, we introduce the congruence setting, define cocycles, and construct compatible Markov sections.

\subsection{Markov sections}
\label{subsec:MarkovSections}
We will use a Markov section on $\Omega \subset \T^1(X) \cong \Gamma \backslash G/M$ to obtain a symbolic coding of the geodesic flow. Recall that the geodesic flow on $\T^1(X)$ is Anosov. Let $W^{\mathrm{su}}(w) \subset \T^1(X)$ and $W^{\mathrm{ss}}(w) \subset \T^1(X)$ denote the leaves through $w \in \T^1(X)$ of the strong unstable and strong stable foliations, and $W_{\epsilon}^{\mathrm{su}}(w) \subset W^{\mathrm{su}}(w)$ and $W_{\epsilon}^{\mathrm{ss}}(w) \subset W^{\mathrm{ss}}(w)$ denote the open balls of radius $\epsilon > 0$ with respect to the induced distance functions $d_{\mathrm{su}}$ and $d_{\mathrm{ss}}$, respectively. We use similar notations for the weak unstable and weak stable foliations by replacing `su' with `wu' and `ss' with `ws' respectively. From the Anosov property we obtain a constant $C_{\mathrm{Ano}} > 0$, such that for all $w \in \T^1(X)$, we have
\begin{align*}
d_{\mathrm{su}}(ua_{-t}, va_{-t}) &\leq C_{\mathrm{Ano}}e^{-t}d_{\mathrm{su}}(u, v); & d_{\mathrm{ss}}(ua_t, va_t) &\leq C_{\mathrm{Ano}}e^{-t}d_{\mathrm{ss}}(u, v)
\end{align*}
for all $t \geq 0$, for all $u, v \in W^{\mathrm{su}}(w)$ or for all $u, v \in W^{\mathrm{ss}}(w)$ respectively. From \cite{Rat73}, we recall that there exist $\epsilon_0, \epsilon_0' > 0$ such that for all $w \in \T^1(X)$, $u \in W_{\epsilon_0}^{\mathrm{wu}}(w)$, and $s \in W_{\epsilon_0}^{\mathrm{ss}}(w)$, there exists a unique intersection denoted by
\begin{align}
\label{eqn:BracketOfUandS}
[u, s] = W_{\epsilon_0'}^{\mathrm{ss}}(u) \cap W_{\epsilon_0'}^{\mathrm{wu}}(s)
\end{align}
and moreover $[\cdot, \cdot]$ defines a homeomorphism from $W_{\epsilon_0}^{\mathrm{wu}}(w) \times W_{\epsilon_0}^{\mathrm{ss}}(w)$ onto its image. Subsets $U \subset W_{\epsilon_0}^{\mathrm{su}}(w) \cap \Omega$ and $S \subset W_{\epsilon_0}^{\mathrm{ss}}(w) \cap \Omega$ for some $w \in \Omega$ are called \emph{proper} if $U = \overline{\interior(U)}$ and $S = \overline{\interior(S)}$, where the interiors and closures are with respect to the topology of $W^{\mathrm{su}}(w) \cap \Omega$ and $W^{\mathrm{ss}}(w) \cap \Omega$ respectively. For any proper subsets $U \subset W_{\epsilon_0}^{\mathrm{su}}(w) \cap \Omega$ and $S \subset W_{\epsilon_0}^{\mathrm{ss}}(w) \cap \Omega$ containing some $w \in \Omega$, we call
\begin{align*}
R = [U, S] = \{[u, s] \in \Omega: u \in U, s \in S\} \subset \Omega
\end{align*}
a \emph{rectangle of size $\hat{\delta}$} if $\diam_{d_{\mathrm{su}}}(U), \diam_{d_{\mathrm{ss}}}(S) \leq \hat{\delta}$ for some $\hat{\delta} > 0$, and $w$ the \emph{center} of $R$. For any rectangle $R = [U, S]$, we generalize the notation and define $[v_1, v_2] = [u_1, s_2]$ for all $v_1 = [u_1, s_1] \in R$ and $v_2 = [u_2, s_2] \in R$.

\begin{definition}[Complete set of rectangles]
A set $\mathcal{R} = \{R_1, R_2, \dotsc, R_N\} = \{[U_1, S_1], [U_2, S_2], \dotsc, [U_N, S_N]\}$ for some $N \in \mathbb N$ consisting of rectangles in $\Omega$ is called a \emph{complete set of rectangles of size $\hat{\delta}$} if
\begin{enumerate}
\item \label{itm:MarkovProperty1} $R_j \cap R_k = \varnothing$ for all $1 \leq j, k \leq N$ with $j \neq k$;
\item \label{itm:MarkovProperty2} $\diam_{d_{\mathrm{su}}}(U_j), \diam_{d_{\mathrm{ss}}}(S_j) \leq \hat{\delta}$ for all $1 \leq j \leq N$;
\item \label{itm:MarkovProperty3} $\Omega = \bigcup_{j = 1}^N \bigcup_{t \in [0, \hat{\delta}]} R_j a_t$.
\end{enumerate}
\end{definition}

Henceforth, we fix
\begin{align}
\label{eqn:DeltaHatCondition}
0 < \hat{\delta} < \min\left(1, \epsilon_0, \epsilon_0', \frac{1}{4}\inj(\T^1(X))\right)
\end{align}
where $\inj(\T^1(X))$ denotes the injectivity radius of $\T^1(X)$ and where $\epsilon_0$ and $\epsilon_0'$ are from \cref{eqn:BracketOfUandS}. We also fix $\mathcal{R} = \{R_1, R_2, \dotsc, R_N\} = \{[U_1, S_1], [U_2, S_2], \dotsc, [U_N, S_N]\}$ to be a complete set of rectangles of size $\hat{\delta}$ in $\Omega$. We define
\begin{align*}
R &= \bigsqcup_{j = 1}^N R_j; & U &= \bigsqcup_{j = 1}^N U_j.
\end{align*}
We introduce the distance function $d$ on $U$ defined by
\begin{align*}
d(u, v) =
\begin{cases}
d_{\mathrm{su}}(u, v), & u, v \in U_j \text{ for some } 1 \leq j \leq N \\
1, & \text{otherwise.}
\end{cases}
\qquad
\text{for all $u, v \in U$}.
\end{align*}
We will use $d_{\mathrm{su}}$ whenever further clarity is required. Define the first return time map $\tau: R \to \mathbb R$ by
\begin{align*}
\tau(u) = \inf\{t \in \mathbb R_{>0}: ua_t \in R\} \qquad \text{for all $u \in R$}.
\end{align*}
Note that $\tau$ is constant on $[u, S_j]$ for all $u \in U_j$ and $1 \leq j \leq N$. Define the Poincar\'{e} first return map $\mathcal{P}: R \to R$ by
\begin{align*}
\mathcal{P}(u) = ua_{\tau(u)} \qquad \text{for all $u \in R$}.
\end{align*}
Let $\sigma = (\proj_U \circ \mathcal{P})|_U: U \to U$ be its projection where $\proj_U: R \to U$ is the projection defined by $\proj_U([u, s]) = u$ for all $[u, s] \in R$. We define the \emph{cores}
\begin{align*}
\hat{R} &= \{u \in R: \mathcal{P}^k(u) \in \interior(R) \text{ for all } k \in \mathbb Z\}; \\
\hat{U} &= \{u \in U: \sigma^k(u) \in \interior(U) \text{ for all } k \in \mathbb Z_{\geq 0}\}.
\end{align*}
We note that the cores are both residual subsets (complements of meager sets) of $R$ and $U$ respectively.

\begin{definition}[Markov section]
Let $\hat{\delta} > 0$ and $N \in \mathbb N$. We call a complete set of rectangles $\mathcal{R}$ of size $\hat{\delta}$ a \emph{Markov section} if in addition to \cref{itm:MarkovProperty1,itm:MarkovProperty2,itm:MarkovProperty3}, the following property
\begin{enumerate}
\setcounter{enumi}{3}
\item $[\interior(U_k), \mathcal{P}(u)] \subset \mathcal{P}([\interior(U_j), u])$ and $\mathcal{P}([u, \interior(S_j)]) \subset [\mathcal{P}(u), \interior(S_k)]$ for all $u \in R$ such that $u \in \interior(R_j) \cap \mathcal{P}^{-1}(\interior(R_k)) \neq \varnothing$, for all $1 \leq j, k \leq N$
\end{enumerate}
called the \emph{Markov property}, is satisfied. This can be understood pictorially in \cref{fig:MarkovProperty}.
\end{definition}

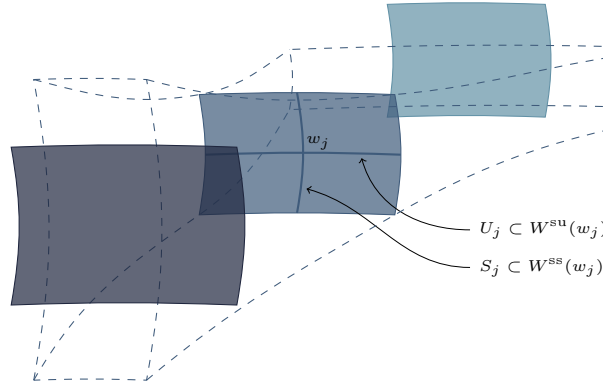
\begin{figure}
\definecolor{front}{RGB}{31, 38, 62}
\definecolor{middle}{RGB}{63,91,123}
\definecolor{back}{RGB}{98,145,166}
\begin{tikzpicture}
\coordinate (E) at (2.7,1.5);
\coordinate (F) at (2.7,3);
\coordinate (G) at (4.8,3);
\coordinate (H) at (4.8,1.5);

\draw[back, fill = back, fill opacity=0.7] (E) to[out=80,in=-80] (F) to[out=2,in=178] (G) to[out=-80,in=80] (H) to[out=178,in=2] (E) -- cycle;

\coordinate (A) at (0.2,0.2);
\coordinate (B) at (0.2,1.8);
\coordinate (C) at (2.8,1.8);
\coordinate (D) at (2.8,0.2);

\draw[middle, fill = middle, fill opacity=0.7] (A) to[out=80,in=-80] (B) to[out=2,in=178] (C) to[out=-80,in=80] (D) to[out=178,in=2] (A) -- cycle;

\coordinate (AB_Mid) at (0.28,1);
\coordinate (CD_Mid) at (2.88,1);

\coordinate (Uj_Label) at (3.8, 0);
\coordinate (Sj_Label) at (3.8, -0.5);

\node[above right, font=\tiny] at (1.5, 0.95) {$w_j$};

\draw[middle,thick] (AB_Mid) to[out=2,in=178] (CD_Mid);
\node[right, font=\tiny] at (Uj_Label) {$U_j \subset W^{\mathrm{su}}(w_j)$};
\draw[->, very thin] (Uj_Label) to[out=180,in=-75] ($0.8*(CD_Mid) + 0.2*(AB_Mid) + (0, -0.05)$);

\coordinate (BC_Mid) at (1.5,1.825);
\coordinate (AD_Mid) at (1.5,0.225);

\draw[middle,thick] (AD_Mid) to[out=80,in=-80] (BC_Mid);
\node[right, font=\tiny] at (Sj_Label) {$S_j \subset W^{\mathrm{ss}}(w_j)$};
\draw[->, very thin] (Sj_Label) to[out=180,in=-15] ($0.8*(AD_Mid) + 0.2*(BC_Mid) + (0.13, 0)$);

\coordinate (A') at (1.4,1.6);
\coordinate (B') at (1.4,2.4);
\coordinate (C') at (7,2.4);
\coordinate (D') at (7,1.6);

\draw[middle,dashed] (A') to[out=80,in=-80] (B') to[out=2,in=178] (C') to[out=-80,in=80] (D') to[out=178,in=2] (A') -- cycle;

\coordinate (A'') at (-2,-2);
\coordinate (B'') at (-2,2);
\coordinate (C'') at (-0.5,2);
\coordinate (D'') at (-0.5,-2);

\draw[middle,dashed] (A'') to[out=80,in=-80] (B'') to[out=2,in=178] (C'') to[out=-80,in=80] (D'') to[out=178,in=2] (A'') -- cycle;

\draw[middle,dashed] (A'') to[out=60,in=215] (A) to[out=35,in=240] (A');
\draw[middle,dashed] (B'') to[out=-15,in=190] (B) to[out=10,in=215] (B');
\draw[middle,dashed] (C'') to[out=-15,in=185] (C) to[out=5,in=180] (C');
\draw[middle,dashed] (D'') to[out=40,in=210] (D) to[out=30,in=190] (D');

\coordinate (I) at (-2.3,-1);
\coordinate (J) at (-2.3,1.1);
\coordinate (K) at (0.7,1.1);
\coordinate (L) at (0.7,-1);

\draw[front, fill = front, fill opacity=0.7] (I) to[out=80,in=-80] (J) to[out=2,in=178] (K) to[out=-80,in=80] (L) to[out=178,in=2] (I) -- cycle;
\end{tikzpicture}
\caption{The Markov property.}
\label{fig:MarkovProperty}
\end{figure}

The existence of Markov sections of arbitrarily small size for Anosov flows was proved by Bowen and Ratner \cite{Bow70,Rat73}. Thus, we assume henceforth that $\mathcal{R}$ is a Markov section.

\subsection{Symbolic dynamics}
\label{subsec:SymbolicDynamics}
Denote $\mathcal A = \{1, 2, \dotsc, N\}$ to be the \emph{alphabet} for the coding given by the Markov section. Define the $N \times N$ \emph{transition matrix} $T$ by
\begin{align*}
T_{j, k} =
\begin{cases}
1, & \interior(R_j) \cap \mathcal{P}^{-1}(\interior(R_k)) \neq \varnothing \\
0, & \text{otherwise}
\end{cases}
\qquad
\text{for all $1 \leq j, k \leq N$}.
\end{align*}
The transition matrix $T$ is \emph{topologically mixing} \cite[Theorem 4.3]{Rat73}, i.e., there exists $N_T \in \mathbb N$ such that $T^{N_T}$ consists only of positive entries. Define the spaces of bi-infinite and infinite \emph{admissible sequences}
\begin{align*}
\Sigma &= \{(\dotsc, x_{-1}, x_0, x_1, \dotsc) \in \mathcal A^{\mathbb Z}: T_{x_j, x_{j + 1}} = 1 \text{ for all } j \in \mathbb Z\}; \\
\Sigma^+ &= \{(x_0, x_1, \dotsc) \in \mathcal A^{\mathbb Z_{\geq 0}}: T_{x_j, x_{j + 1}} = 1 \text{ for all } j \in \mathbb Z_{\geq 0}\}
\end{align*}
respectively. We will use the term \emph{admissible sequences} for finite sequences as well in the natural way. For any $\theta \in (0, 1)$, we can endow $\Sigma$ with the distance function $d_\theta$ defined by $d_\theta(x, y) = \theta^{\inf\{|j| \in \mathbb Z_{\geq 0}: x_j \neq y_j\}}$ for all $x, y \in \Sigma$. We can similarly endow $\Sigma^+$ with a distance function which we also denote by $d_\theta$.

\begin{definition}[Cylinder]
For all admissible sequences $x = (x_0, x_1, \dotsc, x_k)$ of some \emph{length} $\len(x) = k \in \mathbb Z_{\geq 0}$, we define the corresponding \emph{cylinder} to be
\begin{align*}
\mathtt{C}[x] = \{u \in U: \sigma^j(u) \in \interior(U_{x_j}) \text{ for all } 0 \leq j \leq k\}
\end{align*}
with \emph{length} $\len(\mathtt{C}[x]) = k$. We will denote cylinders simply by $\mathtt{C}$ (or other typewriter style letters) when we do not need to specify the corresponding admissible sequence.
\end{definition}

\begin{remark}
For all admissible pairs $(j, k)$, the restricted maps $\sigma|_{\mathtt{C}[j, k]}: \mathtt{C}[j, k] \to \interior(U_k)$, $(\sigma|_{\mathtt{C}[j, k]})^{-1}: \interior(U_k) \to \mathtt{C}[j, k]$, and $\tau|_{\mathtt{C}[j, k]}: \mathtt{C}[j, k] \to \mathbb R$ are Lipschitz.
\end{remark}

By a slight abuse of notation, let $\sigma$ also denote the shift map on $\Sigma$ or $\Sigma^+$. There exist natural continuous surjections $\zeta: \Sigma \to R$ and $\zeta^+: \Sigma^+ \to U$ defined by $\zeta(x) = \bigcap_{j = -\infty}^\infty \overline{\mathcal{P}^{-j}(\interior(R_{x_j}))}$ for all $x \in \Sigma$ and $\zeta^+(x) = \bigcap_{j = 0}^\infty \overline{\sigma^{-j}(\interior(U_{x_j}))}$ for all $x \in \Sigma^+$. Define $\hat{\Sigma} = \zeta^{-1}(\hat{R})$ and $\hat{\Sigma}^+ = (\zeta^+)^{-1}(\hat{U})$. Then the restrictions $\zeta|_{\hat{\Sigma}}: \hat{\Sigma} \to \hat{R}$ and $\zeta^+|_{\hat{\Sigma}^+}: \hat{\Sigma}^+ \to \hat{U}$ are bijective and satisfy $\zeta|_{\hat{\Sigma}} \circ \sigma|_{\hat{\Sigma}} = \mathcal{P}|_{\hat{R}} \circ \zeta|_{\hat{\Sigma}}$ and $\zeta^+|_{\hat{\Sigma}^+} \circ \sigma|_{\hat{\Sigma}^+} = \sigma|_{\hat{U}} \circ \zeta^+|_{\hat{\Sigma}^+}$.

For $\theta \in (0, 1)$ sufficiently close to $1$, the maps $\zeta$ and $\zeta^+$ are Lipschitz \cite[Lemma 2.2]{Bow75} with some Lipschitz constant $C_\theta > 0$. We fix $\theta$ to be any such constant. Let $C^{\Lip(d_\theta)}(\Sigma, \mathbb R)$ denote the space of Lipschitz functions $f: \Sigma \to \mathbb R$ which is a Banach space with the norm $\|f\|_{\Lip(d_\theta)} = \|f\|_\infty + \Lip_{d_\theta}(f)$ where
\begin{align*}
\Lip_{d_\theta}(f) = \sup_{\substack{x, y \in \Sigma\\ \text{such that } x \neq y}} \frac{|f(x) - f(y)|}{d_\theta(x, y)}
\end{align*}
is the Lipschitz seminorm, for all $f \in C^{\Lip(d_\theta)}(\Sigma, \mathbb R)$. We use similar notations for function spaces with domain space $\Sigma^+$ or target space $\mathbb C$. We use the convention henceforth that for any complex-valued function space, we omit the target space $\mathbb C$.

Since $(\tau \circ \zeta)|_{\hat{\Sigma}}$ and $(\tau \circ \zeta^+)|_{\hat{\Sigma}^+}$ are Lipschitz, there are unique Lipschitz extensions $\tau_\Sigma \in C^{\Lip(d_\theta)}(\Sigma, \mathbb R)$ and $\tau_{\Sigma^+} \in C^{\Lip(d_\theta)}(\Sigma^+, \mathbb R)$ respectively. Note that the resulting maps are distinct from $\tau \circ \zeta$ and $\tau \circ \zeta^+$ because they may differ precisely on $x \in \Sigma$ for which $\zeta(x) \in \partial\mathtt{C}$ and $x \in \Sigma^+$ for which $\zeta^+(x) \in \partial\mathtt{C}$ respectively, for some cylinder $\mathtt{C} \subset U$ with $\len(\mathtt{C}) = 1$. Then the previous properties extend to $\zeta(\sigma(x)) = \zeta(x)a_{\tau_\Sigma(x)}$ for all $x \in \Sigma$ and $\zeta^+(\sigma(x)) = \proj_U(\zeta^+(x)a_{\tau_{\Sigma^+}(x)})$ for all $x \in \Sigma^+$.

\subsection{Thermodynamics}
\label{subsec:Thermodynamics}
\begin{definition}[Pressure]
\label{def:Pressure}
For all $f \in C^{\Lip(d_\theta)}(\Sigma, \mathbb R)$, called the \emph{potential}, the \emph{pressure} is defined by
\begin{align*}
\Pr_\sigma(f) = \sup_{\nu \in \mathcal{M}^1_\sigma(\Sigma)}\left\{\int_\Sigma f \, d\nu + h_\nu(\sigma)\right\}
\end{align*}
where $\mathcal{M}^1_\sigma(\Sigma)$ is the set of $\sigma$-invariant Borel probability measures on $\Sigma$ and $h_\nu(\sigma)$ is the measure theoretic entropy of $\sigma$ with respect to $\nu$.
\end{definition}

For all $f \in C^{\Lip(d_\theta)}(\Sigma, \mathbb R)$, there is in fact a unique $\sigma$-invariant Borel probability measure $\nu_f$ on $\Sigma$ which attains the supremum in \cref{def:Pressure} called the \emph{$f$-equilibrium state} \cite[Theorems 2.17 and 2.20]{Bow08} and it satisfies $\nu_f(\hat{\Sigma}) = 1$ \cite[Corollary 3.2]{Che02}.

In particular, we will consider the probability measure $\nu_{-\delta_\Gamma\tau_\Sigma}$ on $\Sigma$ which we will denote simply by $\nu_\Sigma$ and has corresponding pressure $\Pr_\sigma(-\delta_\Gamma\tau_\Sigma) = 0$. We have $\nu_\Sigma(\hat{\Sigma}) = 1$. Define the corresponding probability measure $\nu_R = \zeta_*(\nu_\Sigma)$ on $R$ and note that $\nu_R(\hat{R}) = 1$. Now consider the suspension space $R^\tau = (R \times \mathbb R_{\geq 0})/\mathord{\sim}$ where $\sim$ is the equivalence relation on $R \times \mathbb R_{\geq 0}$ defined by $(u, t + \tau(u)) \sim (\mathcal{P}(u), t)$. Then we have a bijection $R^\tau \to \Omega$ defined by $(u, t) \mapsto ua_t$. We can define the measure $\nu^\tau$ on $R^\tau$ as the product measure $\nu_R \times m^{\mathrm{Leb}}$ on $\{(u, t) \in R \times \mathbb R_{\geq 0}: 0 \leq t < \tau(u)\}$. Then using the aforementioned bijection we have the pushforward measure which, by abuse of notation, we also denote by $\nu^\tau$ on $\T^1(X)$ supported on $\Omega$. By \cite{Sul84} and \cite[Theorem 4.4]{Che02}, we have $m^{\mathrm{BMS}} = \frac{\nu^\tau}{\nu_R(\tau)}$ since they are the unique measure of maximal entropy for the geodesic flow on $\T^1(X)$. Finally, we define the probability measure $\nu_U = (\proj_U)_*(\nu_R)$. Note that $\nu_U(\hat{U}) = 1$ and $\nu_U(\tau) = \nu_R(\tau)$.

\subsection{Cocycles and compatible Markov sections}
\label{subsec:CocyclesAndCompatibleMarkovSections}
Noting that $\T^1(\mathbb H^n)$ is a locally isometric cover of $\T^1(X)$, for all $j \in \mathcal{A}$, choose homeomorphic lifts $\mathsf{R}_j = [\mathsf{U}_j, \mathsf{S}_j] \subset \T^1(\mathbb H^n) \cong G/M$ of $R_j$. Define $\mathsf{R} = \bigsqcup_{j = 1}^N \mathsf{R}_j$ and $\mathsf{U} = \bigsqcup_{j = 1}^N \mathsf{U}_j$. For all $u \in R$, let $\tilde{u} \in \mathsf{R}$ denote the unique lift in $\mathsf{R}$. We can also lift the Poincar\'{e} first return map to $\mathcal{P}: \Gamma \mathsf{R} \to \Gamma \mathsf{R}$ defined by $\mathcal{P}(\gamma \tilde{u}) = \gamma \tilde{u}a_{\tau(u)}$ for all $\gamma \in \Gamma$ and $u \in R$.

\begin{definition}[Cocycle]
The \emph{cocycle} is a map $\mathtt{c}: R \to \Gamma$ such that for all $u \in R$, we have $\mathcal{P}(\tilde{u}) \in \mathtt{c}(u)\mathsf{R}$.
\end{definition}

The following lemma is proved similar to \cite[Lemma 2.14]{OW16}.

\begin{lemma}
\label{lem:CocyclesLocallyConstant}
The cocycle $\mathtt{c}$ is \emph{locally constant}, i.e., if $u_1, u_2 \in R_x \cap \mathcal{P}^{-1}(R_y)$ for some $x, y \in \mathcal{A}$, then $\mathtt{c}(u_1) = \mathtt{c}(u_2)$.
\end{lemma}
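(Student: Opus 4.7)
The plan is a two-step argument in the spirit of Oh--Winter \cite[Lemma 2.14]{OW16}: first establish local constancy of $\mathtt{c}$ by continuity and discreteness, then upgrade to full constancy on $R_x \cap \mathcal{P}^{-1}(R_y)$ via the rectangle bracket and Anosov contraction.

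For local constancy, the hypothesis $\hat\delta < \tfrac14 \inj(\T^1(X))$ is the key geometric input. It ensures each rectangle $R_j$ lifts homeomorphically onto $\mathsf{R}_j \subset G/M$, so that $u \mapsto \tilde u$ is continuous, and that distinct $\Gamma$-translates $\gamma_1 \mathsf{R}_j, \gamma_2 \mathsf{R}_j$ are separated in $G/M$ by more than $6\hat\delta$---since the torsion-free discrete $\Gamma$ acts freely on $G/M$ with every nontrivial displacement $d(p, \gamma p) \geq 2\inj(\T^1(X))$. By the Markov property, $R_x \cap \mathcal{P}^{-1}(R_y)$ coincides with $[A_{x,y}, S_x]$ where $A_{x,y} = \mathtt{C}[x, y]$ is the first-step cylinder; on this set $\tau$ is constant along stable strips and Lipschitz across them (by the Lipschitz remark on cylinders), so both $\tau$ and $\mathcal{P}$ are continuous. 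Composing with the continuous lift then yields continuity of $u \mapsto \tilde u\, a_{\tau(u)}$ and $u \mapsto \widetilde{\mathcal{P}(u)}$ into $G/M$, and the defining relation $\mathtt{c}(u)\,\widetilde{\mathcal{P}(u)} = \tilde u\, a_{\tau(u)}$ together with the free, properly discontinuous $\Gamma$-action makes $\mathtt{c}$ continuous and therefore locally constant on $R_x \cap \mathcal{P}^{-1}(R_y)$.

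To upgrade local constancy to constancy, given $u_i = [p_i, s_i] \in R_x \cap \mathcal{P}^{-1}(R_y)$ for $i = 1, 2$, I introduce the bracket $u_3 = [u_1, u_2] = [p_1, s_2]$, which again lies in $R_x \cap \mathcal{P}^{-1}(R_y)$ by the Markov property. The pair $(u_1, u_3)$ shares the strong stable leaf $[p_1, S_x]$ with common return time $\tau(p_1)$, and Anosov contraction bounds
\[
d\!\left(\tilde u_1 a_{\tau(p_1)},\, \tilde u_3 a_{\tau(p_1)}\right) \leq C_{\mathrm{Ano}}\, e^{-\tau(p_1)}\, \hat\delta,
\]
which is strictly less than the $6\hat\delta$ separation between distinct translates $\{\gamma \mathsf{R}_y\}$, forcing both flowed endpoints into the same $\Gamma$-translate and hence $\mathtt{c}(u_1) = \mathtt{c}(u_3)$. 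The remaining equality $\mathtt{c}(u_3) = \mathtt{c}(u_2)$ is the main obstacle, since $u_3$ and $u_2$ differ only in the unstable direction where Anosov expansion prevents a direct distance bound; it is resolved by exploiting the flow-invariance of weak unstable leaves to transport the problem to the image side in $R_y$ and closing up with local constancy on a suitably connected ambient neighborhood, following Oh--Winter's strategy.
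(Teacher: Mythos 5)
Your proposal breaks the problem into (i) local constancy, (ii) constancy along strong stable fibers via the bracket $u_3 = [u_1, u_2]$, and (iii) constancy along strong unstable fibers. Part (i) is fine in spirit, and the bracket device in part (ii) is a reasonable reduction (modulo the caveat that the Markov property, as stated, applies to interior points, so one should first argue for $u_1, u_2$ in $\hat{R}_x$ and then pass to the closure). However, there are two genuine gaps.

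First, in step (ii), the inequality $d(\tilde u_1 a_{\tau(p_1)}, \tilde u_3 a_{\tau(p_1)}) \leq C_{\mathrm{Ano}} e^{-\tau(p_1)} \hat\delta < 6\hat\delta$ is asserted without justification. Since $C_{\mathrm{Ano}}$ is only assumed to be \emph{some} positive constant and $\tau(p_1)$ need only exceed $\underline\tau$, there is no reason a priori that $C_{\mathrm{Ano}} e^{-\underline\tau} < 6$. You should either invoke the fact that for the geodesic flow on a hyperbolic manifold, with the chosen left-$G$-invariant metric, the contraction along strong stable leaves is exactly $e^{-t}$ (so $C_{\mathrm{Ano}} = 1$), or impose a size constraint on $\hat\delta$ in terms of $C_{\mathrm{Ano}}$ — but the latter would modify the hypothesis on $\hat\delta$, which the paper does not do.

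Second, and more seriously, step (iii) — showing $\mathtt{c}(u_3) = \mathtt{c}(u_2)$ — is not a proof; it is a gesture toward one. You identify it as the main obstacle and then defer to ``Oh--Winter's strategy'' without making it precise. The difficulty is real: $u_2$ and $u_3$ lie on the same strong unstable fiber but have \emph{different} return times $\tau(p_2) \ne \tau(p_1)$, since $\tau$ depends on the unstable coordinate. When one tries to ``transport to the image side'' by flowing $\mathcal P(u_2)$ and $\mathcal P(u_3)$ backward (where the strong unstable direction does contract), the flow times do not match, and a naive triangle-inequality estimate picks up an uncontrolled error of size $|\tau(p_1) - \tau(p_2)|$, which can be as large as $\bar\tau - \underline\tau$ — not comparable to $\hat\delta$. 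Resolving this requires a genuine argument (for instance, working on a connected ambient neighborhood inside the strong unstable manifold where the smooth extension of $\tau$ and the first return map can be used, and where the map $u \mapsto \tilde u \, a_{\tau(u)}$ is continuous into the disjoint union $\bigsqcup_\gamma \gamma \mathsf{R}_y^{\mathrm{amb}}$), not just a citation. As written, the proposal leaves the heart of the lemma unproved.
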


\begin{corollary}
\label{cor:CocyclesLocallyConstantCorollary}
If $u_1, u_2 \in \mathtt{C}$ for some cylinder $\mathtt{C} \subset U$ with $\len(\mathtt{C}) = k \in \mathbb N$, then $\mathtt{c}^k(u_1) = \mathtt{c}^k(u_2)$.
\end{corollary}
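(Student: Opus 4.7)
The plan is to prove this by induction on $k$, using \cref{lem:CocyclesLocallyConstant} as the base case and the multiplicative cocycle property as the inductive step. First, I would fix the convention that for $u \in U$ with admissible forward itinerary $x = (x_0, x_1, \dotsc)$, the iterated cocycle is $\mathtt{c}^k(u) = \mathtt{c}(u)\,\mathtt{c}(\sigma(u)) \dotsb \mathtt{c}(\sigma^{k-1}(u))$, which is the element of $\Gamma$ satisfying $\mathcal{P}^k(\tilde{u}) = \mathtt{c}^k(u)\,\widetilde{\sigma^k(u)}$ in $\Gamma \mathsf{R}$; this follows by iterating the defining relation $\mathcal{P}(\tilde{v}) = \mathtt{c}(v)\,\widetilde{\sigma(v)}$ together with the left $\Gamma$-equivariance of $\mathcal{P}$ on $\Gamma \mathsf{R}$.

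For the base case $k = 1$, suppose $u_1, u_2 \in \mathtt{C}[x_0, x_1]$. By \cref{lem:CocyclesLocallyConstant} applied to the rectangle points $[u_i, s] \in R_{x_0} \cap \mathcal{P}^{-1}(R_{x_1})$ (any $s \in S_{x_0}$ works), and the fact that $\mathtt{c}$ descends to a well-defined function on $U$ because it is constant along strong stable leaves within each such intersection, we obtain $\mathtt{c}(u_1) = \mathtt{c}(u_2)$. For the inductive step, assume the claim holds for cylinders of length $k - 1$ and take $u_1, u_2 \in \mathtt{C}[x_0, x_1, \dotsc, x_k]$. Then in particular $u_1, u_2 \in \mathtt{C}[x_0, x_1]$, so $\mathtt{c}(u_1) = \mathtt{c}(u_2)$ by the base case. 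Moreover, $\sigma(u_1)$ and $\sigma(u_2)$ both lie in the shifted cylinder $\mathtt{C}[x_1, x_2, \dotsc, x_k]$ of length $k - 1$, so by the inductive hypothesis $\mathtt{c}^{k-1}(\sigma(u_1)) = \mathtt{c}^{k-1}(\sigma(u_2))$. Combining these via
\begin{align*}
\mathtt{c}^k(u_i) = \mathtt{c}(u_i)\,\mathtt{c}^{k-1}(\sigma(u_i)) \qquad \text{for } i \in \{1, 2\}
\end{align*}
yields the desired equality.

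The only mild subtlety, and the one point I would take care to justify explicitly, is that $\mathtt{c}$ (originally defined on $R$) is being applied to points of $U$; this is legitimate because \cref{lem:CocyclesLocallyConstant} shows $\mathtt{c}$ is constant on each slice $[\interior(U_j) \cap \sigma^{-1}(\interior(U_k)),\, S_j]$, so it pushes forward unambiguously to $U$ under $\proj_U$. Beyond this bookkeeping, the argument is purely formal, so I do not expect a genuine obstacle; the result is essentially the statement that local constancy of a cocycle propagates to its Birkhoff-type products along the symbolic dynamics.
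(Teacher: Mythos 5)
Your proof is correct, and the induction on the cylinder length via the base relation $\mathtt{c}^k(u) = \mathtt{c}(u)\,\mathtt{c}^{k-1}(\sigma(u))$ is exactly the natural argument; the paper simply states this corollary without proof, treating it as an immediate consequence of \cref{lem:CocyclesLocallyConstant}. One very small remark: you speak of $\mathtt{c}$ ``descending'' from $R$ to $U$ via $\proj_U$, but in this paper's setup $U = [U, w]$ is literally a subset of $R$, so $\mathtt{c}$ is already defined on $U$ by restriction; the genuine use of stable-leaf constancy is in ensuring $\mathtt{c}(\sigma(u)) = \mathtt{c}(\mathcal{P}(u))$, i.e., that the $\sigma$-indexed Birkhoff product agrees with the $\mathcal{P}$-indexed one coming from iterating $\mathcal{P}(\tilde v) = \mathtt{c}(v)\,\widetilde{\mathcal{P}(v)}$, which you correctly note.
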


For all ideals $\mathfrak{q} \subset \mathcal{O}_{\mathbb K}$, we have the canonical quotient map $\pi_{\mathfrak{q}}: \tilde{\mathbf{G}}(\mathcal{O}_{\mathbb K}) \to \tilde{\mathbf{G}}(\mathcal{O}_{\mathbb K}/\mathfrak{q})$ and we define the principal congruence subgroup of level $\mathfrak{q}$ to be $\ker(\pi_{\mathfrak{q}})$. We would like to define the congruence subgroup of $\tilde{\Gamma}$ of level $\mathfrak{q}$ to be the normal subgroup $\tilde{\Gamma}_{\mathfrak{q}} = \ker(\pi_{\mathfrak{q}}|_{\tilde{\Gamma}}) \vartriangleleft \tilde{\Gamma}$. However, we make a minor modification and assume as before that $\tilde{\Gamma}_{\mathfrak{q}} \vartriangleleft \tilde{\Gamma}$ contains $\ker(\tilde{\pi})$ as the only torsion elements, i.e., we define $\tilde{\Gamma}_{\mathfrak{q}} = \langle\ker(\pi_{\mathfrak{q}}|_{\tilde{\Gamma}}), \ker(\tilde{\pi})\rangle \vartriangleleft \tilde{\Gamma}$. Again we define $\Gamma_{\mathfrak{q}} = \tilde{\pi}(\tilde{\Gamma}_{\mathfrak{q}})$. For all \emph{nontrivial} $\mathfrak{q} \subset \mathcal{O}_{\mathbb K}$, also define the finite group $F_{\mathfrak{q}} = \Gamma_{\mathfrak{q}} \backslash \Gamma \cong \tilde{\Gamma}_{\mathfrak{q}} \backslash \tilde{\Gamma}$. By the strong approximation theorem of Weisfeiler (see \cite[Theorem 10.1]{Wei84} and its proof), there exists a nontrivial proper ideal $\mathfrak{q}_0 \subset \mathcal{O}_{\mathbb K}$ such that for all ideals $\mathfrak{q} \subset \mathcal{O}_{\mathbb K}$ coprime to $\mathfrak{q}_0$, the map $\pi_{\mathfrak{q}}|_{\tilde{\Gamma}}$ is in fact surjective and hence induces the isomorphism $\overline{\pi_{\mathfrak{q}}|_{\tilde{\Gamma}}}: F_{\mathfrak{q}} \to \tilde{G}_{\mathfrak{q}}$ where we define the finite group $\tilde{G}_{\mathfrak{q}} = \pi_{\mathfrak{q}}(\ker(\tilde{\pi})) \backslash \tilde{\mathbf{G}}(\mathcal{O}_{\mathbb K}/\mathfrak{q})$. Without loss of generality, we assume that the same ideal $\mathfrak{q}_0 \subset \mathcal{O}_{\mathbb K}$ is sufficient to apply both the strong approximation theorem and \cite[Corollary 6]{GV12} for the return trajectory subgroups introduced in \cref{sec:ZariskiDensityAndTraceFieldOfTheReturnTrajectorySubgroups}.

\begin{definition}[Congruence cocycle]
For all nontrivial ideals $\mathfrak{q} \subset \mathcal{O}_{\mathbb K}$, define the \emph{congruence cocycle} $\mathtt{c}_{\mathfrak{q}}: R \to F_{\mathfrak{q}}$ by $\mathtt{c}_{\mathfrak{q}}(u) = \Gamma_{\mathfrak{q}}\mathtt{c}(u)$ for all $u \in R$.
\end{definition}

Let $\mathfrak{q} \subset \mathcal{O}_{\mathbb K}$ be a nontrivial ideal. Denote $X_\mathfrak{q} = \Gamma_\mathfrak{q} \backslash \mathbb H^n$ to be the \emph{congruence cover} of $X$ of level $\mathfrak{q}$. We have the isometries $\T^1(X_\mathfrak{q}) \cong \Gamma_\mathfrak{q} \backslash G/M$ and $\F(X_\mathfrak{q}) \cong \Gamma_\mathfrak{q} \backslash G$. Recall that $m^{\mathrm{BMS}}$ is left $\Gamma$-invariant, so in particular it is left $\Gamma_\mathfrak{q}$-invariant. Thus, it descends to the BMS measure $m^{\mathrm{BMS}}_\mathfrak{q}$ on $\Gamma_\mathfrak{q} \backslash G$ which, by right $M$-invariance, descends once more to the BMS measure $m^{\mathrm{BMS}}_\mathfrak{q}$ on $\Gamma_\mathfrak{q} \backslash G/M$ by abuse of notation. Note that $m^{\mathrm{BMS}}_\mathfrak{q}(\Gamma_\mathfrak{q} \backslash G/M) = \#F_{\mathfrak{q}}$. Define $p_\mathfrak{q}: \T^1(X_\mathfrak{q}) \to \T^1(X)$ to be the locally isometric covering map and $\Omega_\mathfrak{q} = p_\mathfrak{q}^{-1}(\Omega) = \supp\bigl(m^{\mathrm{BMS}}_\mathfrak{q}\bigr) \subset \Gamma_\mathfrak{q} \backslash G/M$. Clearly,
\begin{align*}
\mathcal{R}^\mathfrak{q} = \{g\mathsf{R}_x \subset \Omega_\mathfrak{q}: x \in \mathcal{A}, g \in F_\mathfrak{q}\}
\end{align*}
is a complete set of rectangles of size $\hat{\delta}$. Define $R^\mathfrak{q} = F_\mathfrak{q}\mathsf{R}$ and $U^\mathfrak{q} = F_\mathfrak{q}\mathsf{U}$. We make the identification $R \times F_\mathfrak{q} \cong R^\mathfrak{q}$ via the isometry $(u, g) \mapsto g\tilde{u}$. Similarly $U \times F_\mathfrak{q} \cong U^\mathfrak{q}$. In fact, $\mathcal{R}^\mathfrak{q}$ is a Markov section with the first return time map $\tau_\mathfrak{q} = \tau \circ p_\mathfrak{q}$ and the Poincar\'{e} first return map $\mathcal{P}_\mathfrak{q}: R^\mathfrak{q} \to R^\mathfrak{q}$ defined by $\mathcal{P}_\mathfrak{q}(u, g) = (\mathcal{P}(u), g\mathtt{c}_\mathfrak{q}(u))$ for all $(u, g) \in R^\mathfrak{q}$. The induced shift map $\sigma_\mathfrak{q}: U^\mathfrak{q} \to U^\mathfrak{q}$ is defined similarly by $\sigma_\mathfrak{q}(u, g) = (\sigma(u), g\mathtt{c}_\mathfrak{q}(u))$ for all $(u, g) \in U^\mathfrak{q}$. Define the measure $\nu_{R^\mathfrak{q}} = \nu_R \times m_{F_\mathfrak{q}}$ on $R^\mathfrak{q}$ where $m_{F_\mathfrak{q}}$ is the counting measure on $F_\mathfrak{q}$. Like $R^\tau$ and $\nu^\tau$, we define $R^{\mathfrak{q}, \tau}$ and the measure $\nu^{\mathfrak{q}, \tau}$ in a similar fashion. Again, we regard $\nu^{\mathfrak{q}, \tau}$ as a measure on $\T^1(X_\mathfrak{q})$ supported on $\Omega_\mathfrak{q}$. Then $\nu^{\mathfrak{q}, \tau}(\T^1(X_\mathfrak{q})) = \#F_\mathfrak{q} \cdot \nu_R(\tau)$ and so $m^{\mathrm{BMS}}_\mathfrak{q} = \frac{\nu^{\mathfrak{q}, \tau}}{\nu_R(\tau)}$.

\section{Holonomy and representation theory}
\label{sec:HolonomyAndRepresentationTheory}
In this section, we define the holonomy which is required in addition to the Markov section to deal with the frame flow. Since it is $M$-valued, we are naturally lead to consider $L^2(M)$ and so we also cover the required representation theory.

We do not have a Markov section available for the frame flow. Thus, similar to $\tau$ and $\mathtt{c}$, we need a map $\vartheta$ which ``keeps track of the $M$-coordinate''. Similar to defining $\mathtt{c}$, we first require an appropriate choice of a section $F$ on $R$ of the frame bundle $\F(X)$ over $\T^1(X)$. Let $w_j$ be the center of $R_j$ for all $j \in \mathcal{A}$. For convenience later on, we will actually define a \emph{smooth} section
\begin{align*}
F: \bigsqcup_{j = 1}^N [W_{\epsilon_0}^{\mathrm{su}}(w_j), W_{\epsilon_0}^{\mathrm{ss}}(w_j)] \to \F(X)
\end{align*}
where without loss of generality we assume $\epsilon_0$ is sufficiently small so that the union is indeed a disjoint union. Define $N^+ < G$ and $N^- < G$ to be the expanding and contracting horospherical subgroups, i.e.,
\begin{align*}
N^\pm = \Big\{n^\pm \in G: \lim_{t \to \pm\infty} a_tn^\pm a_{-t} = e\Big\}.
\end{align*}
First we choose arbitrary frames $F(w_j) \in \F(X)$ based at the tangent vector $w_j \in \T^1(X)$ for all $j \in \mathcal{A}$. Then we extend the section $F$ such that for all $j \in \mathcal{A}$ and $u, u' \in W_{\epsilon_0}^{\mathrm{su}}(w_j)$, we have that the frames $F(u)$ and $F(u')$ are backwards asymptotic, i.e., $\lim_{t \to -\infty} d(F(u)a_t, F(u')a_t) = 0$. Then we must have $F(u') = F(u)n^+$ for some unique $n^+ \in N^+$. We again extend the section $F$ such that for all $j \in \mathcal{A}$, $u \in W_{\epsilon_0}^{\mathrm{su}}(w_j)$, and $s, s' \in W_{\epsilon_0}^{\mathrm{ss}}(w_j)$, we have that the frames $F([u, s])$ and $F([u, s'])$ are forwards asymptotic, i.e., $\lim_{t \to \infty} d(F([u, s])a_t, F([u, s'])a_t) = 0$. Then we must have $F([u, s']) = F([u, s])n^-$ for some unique $n^- \in N^-$. This completes the construction.

\begin{definition}[Holonomy]
The \emph{holonomy} is a map $\vartheta: R \to M$ such that for all $u \in R$, we have $F(u)a_{\tau(u)} = F(\mathcal{P}(u))\vartheta(u)^{-1}$.
\end{definition}

We recall \cite[Lemma 4.2]{SW20} here.

\begin{lemma}
The holonomy $\vartheta$ is constant on $[u, S_j]$ for all $u \in U_j$ and $j \in \mathcal{A}$.
\end{lemma}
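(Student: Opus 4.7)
The plan is to exploit the construction of the smooth section $F$: since $v_1 := [u, s_1]$ and $v_2 := [u, s_2]$ lie on a common strong stable leaf (both are in $W^{\mathrm{ss}}_{\epsilon_0'}(u)$ by the definition of $[\cdot, \cdot]$), the final stage of the construction of $F$ supplies an element $n^- \in N^-$ with $F(v_2) = F(v_1) n^-$. By the Markov property $\mathcal{P}([u, \interior(S_j)]) \subset [\mathcal{P}(u), \interior(S_k)]$ for the successor index $k$, the images $\mathcal{P}(v_1)$ and $\mathcal{P}(v_2)$ lie in a common rectangle $R_k$ and share a common $\proj_U$-coordinate, so the same step of the construction of $F$ yields some $m^- \in N^-$ with $F(\mathcal{P}(v_2)) = F(\mathcal{P}(v_1)) m^-$.

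The core of the argument is then a direct computation using the defining relation of $\vartheta$. Since $\tau$ is constant on $[u, S_j]$, set $t := \tau(v_1) = \tau(v_2)$. Flowing the identity $F(v_2) = F(v_1) n^-$ by $a_t$ on the right, replacing each $F(v_i) a_t$ by $F(\mathcal{P}(v_i)) \vartheta(v_i)^{-1}$, substituting $F(\mathcal{P}(v_2)) = F(\mathcal{P}(v_1)) m^-$, and cancelling the common left factor $F(\mathcal{P}(v_1))$, one arrives at
\[
\vartheta(v_1) \, m^- = (a_{-t} n^- a_t) \, \vartheta(v_2),
\]
where $a_{-t} n^- a_t \in N^-$ because $A$ normalizes $N^-$.

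To conclude, I use that $M$ normalizes $N^-$ (since $M \subset K$ commutes with $A$ and $N^-$ is characterized by an $\Ad(A)$-contraction condition) together with $M \cap N^- = \{e\}$. Conjugating $m^-$ by $\vartheta(v_1) \in M$ converts the left-hand side into $(n^-)' \vartheta(v_1)$ for some $(n^-)' \in N^-$; rearranging places $\vartheta(v_2) \vartheta(v_1)^{-1}$ simultaneously in $M$ and in $N^-$, forcing it to be the identity. I do not anticipate any substantive obstacle: the whole statement is essentially a bookkeeping consequence of the defining data of $F$, $\tau$, $\vartheta$, and the Markov property. The only point requiring mild care is invoking the construction of $F$ \emph{twice}, once at the pair $(v_1, v_2)$ and once at the pair $(\mathcal{P}(v_1), \mathcal{P}(v_2))$; the Markov property is precisely what guarantees that the second pair still sits within a single rectangle so that the forward-asymptotic step of the $F$-construction applies.
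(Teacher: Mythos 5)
Your overall strategy is the right one, and it is in substance the argument from \cite[Lemma 4.2]{SW20} to which the paper defers: read off $F(v_2) = F(v_1)n^-$ and $F(\mathcal{P}(v_2)) = F(\mathcal{P}(v_1))m^-$ from the two stages of the construction of $F$ (the second justified by the Markov property), feed these into the defining relation for $\vartheta$ twice, and separate the $M$- and $N^-$-parts using that $M$ normalizes $N^-$ and $M \cap N^- = \{e\}$. Those two group-theoretic facts are indeed immediate ($M$ centralizes $A$, so $\Ad(M)$ preserves the $\Ad(A)$-eigenspace $\mathfrak{n}^-$; and $N^-$ is unipotent while $M$ is compact, or note that a nontrivial element of $N^-$ cannot commute with all of $A$).

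There is one genuine gap in the middle of the computation, at the step ``cancelling the common left factor $F(\mathcal{P}(v_1))$.'' The section $F$ takes values in $\F(X) = \Gamma\backslash G$, not in $G$, so $F(\mathcal{P}(v_1))$ is a point in a quotient and not a group element; the implication $xg_1 = xg_2 \Rightarrow g_1 = g_2$ fails for $x \in \Gamma\backslash G$ in general (the stabilizer of $\Gamma g_0$ under the right $G$-action is $g_0^{-1}\Gamma g_0$, not trivial). As written, your derivation of $\vartheta(v_1)m^- = (a_{-t}n^-a_t)\vartheta(v_2)$ therefore only gives equality up to an element of $g_0^{-1}\Gamma g_0$. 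Two standard repairs are available and either suffices. (a) Lift $F$ to a section $\tilde{F}: \mathsf{R} \to G$ over the chosen lifts $\mathsf{R}_j$; the holonomy relation then reads $\tilde{F}(\tilde{v}_i)a_t = \mathtt{c}(v_i)\,\tilde{F}\big(\widetilde{\mathcal{P}(v_i)}\big)\vartheta(v_i)^{-1}$ in $G$, with the cocycle $\mathtt{c}$ appearing, and the cancellation is now an honest group cancellation once one invokes \cref{lem:CocyclesLocallyConstant} (local constancy of $\mathtt{c}$) to see that $\mathtt{c}(v_1) = \mathtt{c}(v_2)$ — this is where the Markov property pays off a second time, since it forces $\mathcal{P}(v_1)$ and $\mathcal{P}(v_2)$ into the same $R_k$. (b) Alternatively, observe that the discrepancy $\gamma \in \Gamma$ that obstructs the cancellation would satisfy $g_0^{-1}\gamma g_0 = m^-\cdot(\vartheta(v_2)^{-1}\vartheta(v_1))\cdot n''$ with $m^-, n'' \in N^-$ small and $\vartheta(v_2)^{-1}\vartheta(v_1) \in M \subset K$ fixing the basepoint $o$, so that $d(\gamma\cdot g_0 o, g_0 o) = d(g_0^{-1}\gamma g_0\cdot o, o)$ is bounded by a constant times $\hat{\delta}$; the hypothesis $\hat{\delta} < \frac14\inj(\T^1(X))$ in \cref{eqn:DeltaHatCondition} then forces $\gamma = e$. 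Once the cancellation is secured by either route, the remainder of your argument closes the proof exactly as you describe.
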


Let $\mathfrak{q} \subset \mathcal{O}_{\mathbb K}$ be a nontrivial ideal. Define $\Omega_{\mathfrak{q}, \vartheta} = \supp\bigl(m^{\mathrm{BMS}}_\mathfrak{q}\bigr) \subset \Gamma_\mathfrak{q} \backslash G$. Denote $R^\vartheta \subset \F(X)$ and $R^{\mathfrak{q}, \vartheta} \subset \F(X_\mathfrak{q})$ to be the subsets of frames over $R$ and $R^\mathfrak{q}$ respectively. Similarly define $U^\vartheta$ and $U^{\mathfrak{q}, \vartheta}$. We lift the section $F$ to $F^\mathfrak{q}: \bigsqcup_{g \in F_\mathfrak{q}, j \in \mathcal{A}} [W_{\epsilon_0}^{\mathrm{su}}(g\overline{w}_j), W_{\epsilon_0}^{\mathrm{ss}}(g\overline{w}_j)] \to \F(X_\mathfrak{q})$ in the natural way. Via these sections, we have the identifications $R^\vartheta \cong R \times M$ and $R^{\mathfrak{q}, \vartheta} \cong R \times F_\mathfrak{q} \times M$. Similarly $U^\vartheta \cong U \times M$ and $U^{\mathfrak{q}, \vartheta} \cong U \times F_\mathfrak{q} \times M$. Define the measures $\nu_{R^\vartheta}$ on $R^\vartheta$ and $\nu_{R^{\mathfrak{q}, \vartheta}}$ on $R^{\mathfrak{q}, \vartheta}$ by lifting $\nu_{R}$ and $\nu_{R^{\mathfrak{q}}}$ using the probability Haar measure on $M$. Define the suspension space $R^{\mathfrak{q}, \vartheta, \tau} = R^{\mathfrak{q}, \vartheta} \times \mathbb R_{\geq 0}/{\sim}$ where $\sim$ is the equivalence relation on $R^{\mathfrak{q}, \vartheta} \times \mathbb R_{\geq 0}$ defined by $(u, g, m, t + \tau(u)) \sim (\mathcal{P}(u), g\mathtt{c}_{\mathfrak{q}}(u), \vartheta(u)^{-1}m, t)$. Like $\nu^\tau$, define the measure $\nu^{\mathfrak{q}, \vartheta, \tau}$ on $R^{\mathfrak{q}, \vartheta, \tau}$. We regard $\nu^{\mathfrak{q}, \vartheta, \tau}$ as a measure on $\F(X_\mathfrak{q})$ supported on $\Omega_{\mathfrak{q}, \vartheta}$. Then $\nu^{\mathfrak{q}, \vartheta, \tau}(\F(X_\mathfrak{q})) = \nu_{R^\mathfrak{q}}(\tau_\mathfrak{q}) = \#F_\mathfrak{q} \cdot \nu_R(\tau)$ and $m^{\mathrm{BMS}}_\mathfrak{q} = \frac{\nu^{\mathfrak{q}, \vartheta, \tau}}{\nu_R(\tau)}$.

Let $\mathfrak{q} \subset \mathcal{O}_{\mathbb K}$ be a nontrivial ideal. We need to deal with the function space $C(U^{\mathfrak{q}, \vartheta})$. We note that
\begin{align*}
&C(U^{\mathfrak{q}, \vartheta}) \cong C(U \times F_\mathfrak{q} \times M) \cong C(U, C(F_\mathfrak{q} \times M)) \\
\subset{}&C(U, L^2(F_\mathfrak{q} \times M)) \cong C(U, L^2(F_\mathfrak{q}) \otimes L^2(M)).
\end{align*}
Define $\varrho: M \to \U(L^2(M))$ to be the unitary left regular representation, i.e., $\varrho(h)(\phi)(m) = \phi(h^{-1}m)$ for all $m \in M$, $\phi \in L^2(M)$, and $h \in M$. Let $\widehat{M}$ denote the unitary dual of $M$. We denote the trivial irreducible representation by $1 \in \widehat{M}$. Define $\widehat{M}_0 = \widehat{M} \setminus \{1\}$. By the Peter--Weyl theorem, we obtain an orthogonal Hilbert space decomposition
\begin{align*}
L^2(M) = \operatorname*{\widehat{\bigoplus}}_{\rho \in \widehat{M}} {V_\rho}^{\oplus \dim(\rho)}
\end{align*}
corresponding to the decomposition $\varrho = \operatorname*{\widehat{\bigoplus}}_{\rho \in \widehat{M}} \rho^{\oplus \dim(\rho)}$. For brevity, we denote the isotypic components ${V_\rho}^{\oplus \dim(\rho)}$ simply by $V_\rho^\oplus$ for all $\rho \in \widehat{M}$.

Let $\rho \in \widehat{M}$, $b \in \mathbb R$, and $\mathfrak{q} \subset \mathcal{O}_{\mathbb K}$ be a nontrivial ideal. We define the tensored unitary representation $\rho_b: AM \to \U(V_\rho)$ by
\begin{align*}
\rho_b(a_tm)(z) = e^{-ibt}\rho(m)(z) \qquad \text{for all $z \in V_\rho$, $t \in \mathbb R$, and $m \in M$}.
\end{align*}
We define the tensored unitary representation $\rho_{b, \mathfrak{q}}: AM \to \U(L^2(F_\mathfrak{q}) \otimes V_\rho)$ by
\begin{align*}
\rho_{b, \mathfrak{q}}(a_tm)(z) = (\Id_{L^2(F_\mathfrak{q})} \otimes \rho_b(a_tm))(z)
\end{align*}
for all $z \in L^2(F_\mathfrak{q}) \otimes V_\rho$, $t \in \mathbb R$, and $m \in M$. The reason for simply using $\Id_{L^2(F_\mathfrak{q})}$ for the action on the $L^2(F_\mathfrak{q})$ tensor component is precisely because cocycles are locally constant which will become clear in \cref{subsec:ChangesRequiredForLemma9.10}.

We introduce some notations related to Lie algebras. We denote Lie algebras corresponding to Lie groups by the corresponding Fraktur letters, e.g., $\mathfrak{a} = \T_e(A), \mathfrak{m} = \T_e(M), \mathfrak{n}^+ = \T_e(N^+)$, and $\mathfrak{n}^- = \T_e(N^-)$. For any unitary representation $\rho: M \to \U(V)$ for some Hilbert space $V$, we denote the differential at $e \in M$ by $d\rho = (d\rho)_e: \mathfrak{m} \to \mathfrak{u}(V)$, and define the norm
\begin{align*}
\|\rho\| = \sup_{\substack{z \in \mathfrak{m}\\ \text{such that } \|z\| = 1}} \|d\rho(z)\|_{\mathrm{op}}
\end{align*}
and similarly for any unitary representation $\rho: AM \to \U(V)$.

\begin{remark}
The norms remain the same if we replace $V_\rho$ with $V_\rho^\oplus$ since the $M$-action is identical across all components.
\end{remark}

We recall \cite[Lemma 4.3]{SW20} here which records some useful facts regarding the Lie theoretic norms.

\begin{lemma}
\label{lem:LieTheoreticNormBounds}
For all $b \in \mathbb R$ and $\rho \in \widehat{M}$, we have
\begin{align*}
\sup_{a \in A, m \in M} \sup_{\substack{z \in \T_{am}(AM)\\ \textnormal{such that } \|z\| = 1}} \|(d\rho_b)_{am}(z)\|_{\mathrm{op}} = \|\rho_b\|
\end{align*}
and $\max(|b|, \|\rho\|) \leq \|\rho_b\| \leq |b| + \|\rho\|$.
\end{lemma}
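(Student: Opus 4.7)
The plan is to split the lemma into two independent claims. The first identity will follow from the homomorphism property of $\rho_b$ combined with left-invariance of the Riemannian metric; the bounds will follow from an explicit computation of $(d\rho_b)_e$ on the orthogonal decomposition $\T_e(AM) = \mathfrak{a} \oplus \mathfrak{m}$.

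For the first equality, I would fix $am \in AM$ and $z \in \T_{am}(AM)$ with $\|z\| = 1$, and set $z_0 = (L_{(am)^{-1}})_* z \in \T_e(AM) = \mathfrak{a} \oplus \mathfrak{m}$. Since the chosen metric on $G$ is left $G$-invariant and $AM$ is a subgroup (so left translation by $am$ preserves $AM$), the map $L_{(am)^{-1}}$ is an isometry on $AM$ and $\|z_0\| = 1$. Differentiating the homomorphism identity $\rho_b(am \cdot \exp(tz_0)) = \rho_b(am)\rho_b(\exp(tz_0))$ at $t = 0$ yields
\[ (d\rho_b)_{am}(z) = \rho_b(am) \circ (d\rho_b)_e(z_0). \]
Since $\rho_b(am) \in \U(V_\rho)$ is unitary, left composition by it preserves the operator norm, so $\|(d\rho_b)_{am}(z)\|_{\mathrm{op}} = \|(d\rho_b)_e(z_0)\|_{\mathrm{op}}$. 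Taking supremums over $(am, z)$ with $\|z\| = 1$ on the left and over $z_0$ with $\|z_0\| = 1$ on the right gives exactly $\|\rho_b\|$.

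For the bounds, I would use the Cartan decomposition $\mathfrak{g} = \mathfrak{k} \oplus \mathfrak{p}$ with $\mathfrak{m} \subset \mathfrak{k}$ and $\mathfrak{a} \subset \mathfrak{p}$: the $K$-invariant inner product induced from the Killing form makes $\mathfrak{a}$ and $\mathfrak{m}$ orthogonal. Let $X \in \mathfrak{a}$ be the generator satisfying $\exp(tX) = a_t$, which is a unit vector because the geodesic flow on $G/K \cong \mathbb H^n$ is arclength-parametrized under the fixed metric. Any unit $z_0 \in \mathfrak{a} \oplus \mathfrak{m}$ decomposes as $z_0 = sX + Y$ with $Y \in \mathfrak{m}$ and $s^2 + \|Y\|^2 = 1$. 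Differentiating the defining formula $\rho_b(a_t m) = e^{-ibt}\rho(m)$ at the identity yields $(d\rho_b)_e(X) = -ib\,\Id_{V_\rho}$ and $(d\rho_b)_e(Y) = d\rho(Y)$, so
\[ (d\rho_b)_e(z_0) = -ibs\,\Id_{V_\rho} + d\rho(Y). \]

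Testing $z_0 = X$ gives operator norm $|b|$, and testing unit vectors $z_0 \in \mathfrak{m}$ recovers $\|\rho\|$ upon taking the supremum; this proves $\|\rho_b\| \geq \max(|b|, \|\rho\|)$. The upper bound follows from the triangle inequality together with $|s|, \|Y\| \leq 1$:
\[ \|(d\rho_b)_e(z_0)\|_{\mathrm{op}} \leq |b||s| + \|d\rho(Y)\|_{\mathrm{op}} \leq |b||s| + \|\rho\|\|Y\| \leq |b| + \|\rho\|. \]
There is no serious obstacle here: the only subtlety is confirming that $\mathfrak{a} \perp \mathfrak{m}$ and $\|X\| = 1$ under the specific left $G$-invariant, right $K$-invariant metric fixed in \cref{sec:Preliminaries}, and both follow from the requirement that the metric descend to the hyperbolic metric on $G/K$ making $t \mapsto Kga_t$ an arclength geodesic.
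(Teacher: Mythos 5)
Your proof is correct. Note that this paper does not supply an argument of its own---the lemma is recalled verbatim from [SW20, Lemma~4.3]---so there is no internal proof to compare against, but your route is the natural one: reduce to the identity via left $G$-invariance of the metric and unitarity of $\rho_b(am)$ (so $\|(d\rho_b)_{am}(z)\|_{\mathrm{op}} = \|(d\rho_b)_e(z_0)\|_{\mathrm{op}}$ with $z_0 = (L_{(am)^{-1}})_* z$ a unit vector in $\mathfrak{a} \oplus \mathfrak{m}$), then compute $(d\rho_b)_e(sX + Y) = -ibs\,\mathrm{Id}_{V_\rho} + d\rho(Y)$ directly from the defining formula. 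You correctly flag the only substantive inputs for the exact constants in $\max(|b|, \|\rho\|) \leq \|\rho_b\| \leq |b| + \|\rho\|$: that $\|X\| = 1$ and that $\mathfrak{a} \perp \mathfrak{m}$. Both hold for the left $G$-invariant, right $K$-invariant metric fixed in \cref{sec:Preliminaries}: by construction its value at $e$ makes the Cartan decomposition $\mathfrak{g} = \mathfrak{k} \oplus \mathfrak{p}$ orthogonal (with $\mathfrak{m} \subset \mathfrak{k}$, $\mathfrak{a} \subset \mathfrak{p}$), and the parametrization of $A$ is chosen so that $t \mapsto a_t$ projects to a unit-speed geodesic in $G/K \cong \mathbb H^n$.
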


The \emph{source} of the oscillations needed in Dolgopyat's method is provided by the \emph{local non-integrability condition (LNIC)} (see \cite{SW20} for details) and the oscillations are \emph{propagated} when $\rho_b$ has a sufficiently large norm. This occurs exactly when $|b|$ is sufficiently large or $\rho \in \widehat{M}$ is nontrival. Let $b_0 > 0$ which we fix later. We define
\begin{align*}
\widehat{M}_0(b_0) = \{(b, \rho) \in \mathbb R \times \widehat{M}: |b| > b_0 \text{ or } \rho \neq 1\}.
\end{align*}
We fix some related constants. Fix $\delta_{\varrho} = \inf_{b \in \mathbb R, \rho \in \widehat{M}_0} \|\rho_b\| \geq \inf_{\rho \in \widehat{M}_0} \|\rho\|$. Note that $\delta_{\varrho} > 0$ as $M$ is compact. Furthermore, we can deduce that $\inf_{(b, \rho) \in \widehat{M}_0(b_0)} \|\rho_b\| \geq \min(b_0, \delta_{\varrho})$. Hence we fix $\delta_{1, \varrho} = \min(1, \delta_{\varrho})$.

The Killing form $B$ on $\mathfrak{m}$ is nondegenerate and negative definite because $M$ is a compact semisimple Lie group. We denote the corresponding inner product and norm on both $\mathfrak{m}$ and $\mathfrak{m}^*$ by $\langle \cdot, \cdot \rangle_B$ and $\|\cdot\|_B$. By construction of the Riemannian metric on $G$, the induced inner product on $\mathfrak{m}$ satisfies $\langle \cdot, \cdot \rangle_B = C_B\langle \cdot, \cdot \rangle$ for some constant $C_B > 0$.

We generalize \cite[Lemma 4.4]{SW20} for our setting in the following lemma.

\begin{lemma}
\label{lem:maActionLowerBound}
There exists $\delta > 0$ such that for all $b \in \mathbb R$, $\rho \in \widehat{M}$, nontrivial ideals $\mathfrak{q} \subset \mathcal{O}_{\mathbb K}$, and $\omega \in L^2(F_{\mathfrak{q}}) \otimes V_\rho^\oplus$ with $\|\omega\|_2 = 1$, there exists $z \in \mathfrak{a} \oplus \mathfrak{m}$ with $\|z\| = 1$ such that $\|d\rho_{b, \mathfrak{q}}(z)(\omega)\|_2 \geq \delta \|\rho_b\|$.
\end{lemma}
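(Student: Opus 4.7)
The strategy is to reduce to a Casimir-type computation by exploiting the trivial $M$-action on the congruence factor $L^2(F_\mathfrak{q})$. Writing $\omega = \sum_{g \in F_\mathfrak{q}} e_g \otimes \omega_g$ in the standard orthonormal basis of $L^2(F_\mathfrak{q})$, the tensor structure $d\rho_{b, \mathfrak{q}}(z) = \Id_{L^2(F_\mathfrak{q})} \otimes d\rho_b(z)$ gives
\[
\|d\rho_{b, \mathfrak{q}}(z)(\omega)\|_2^2 = \sum_{g \in F_\mathfrak{q}} \|d\rho_b(z)\omega_g\|^2 = \operatorname{tr}\bigl(d\rho_b(z)^* d\rho_b(z) T_\omega\bigr),
\]
where $T_\omega = \sum_g \omega_g \omega_g^* \in \End(V_\rho^\oplus)$ is a positive trace-one operator (the partial trace of $\omega\omega^*$ over the congruence factor). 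The problem thus becomes finding $z$ with $\|z\| = 1$ making this trace at least $\delta^2 \|\rho_b\|^2$ for an arbitrary density operator $T_\omega$ on $V_\rho^\oplus$; crucially, the size of $F_\mathfrak{q}$ has now dropped out.

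Next, I average over a basis. Fix a unit vector $H_0 \in \mathfrak{a}$ and extend to an orthonormal basis $\{H_0, X_1, \ldots, X_r\}$ of $\mathfrak{a} \oplus \mathfrak{m}$ with $r = \dim M$. Using $d\rho_b(H_0) = -ib\, \Id$ and $d\rho_b(X_i) = d\rho(X_i)$,
\[
\sum_{i} d\rho_b(e_i)^* d\rho_b(e_i) = b^2\, \Id + \sum_{i=1}^{r} d\rho(X_i)^* d\rho(X_i).
\]
The second summand equals $C_B$ times the Casimir of $\mathfrak{m}$, accounting for the conversion $\langle \cdot, \cdot\rangle_B = C_B\langle \cdot, \cdot\rangle$. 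Since the Casimir lies in the center of $U(\mathfrak{m}_{\mathbb{C}})$ and $\rho$ is irreducible, Schur's lemma gives $\sum_{i=1}^r d\rho(X_i)^* d\rho(X_i) = C_B c_\rho\, \Id$ on $V_\rho$, and hence on $V_\rho^\oplus$ by isotypicity, where $c_\rho \geq 0$ denotes the Casimir eigenvalue. Summing the traces against $T_\omega$ and using $\operatorname{tr}(T_\omega) = 1$ shows that some basis vector $z = e_i$ satisfies $\|d\rho_{b,\mathfrak{q}}(z)(\omega)\|_2^2 \geq (b^2 + C_B c_\rho)/(r + 1)$.

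The final step is the uniform lower bound $b^2 + C_B c_\rho \geq c\,\|\rho_b\|^2$ for some constant $c > 0$ independent of $b$ and $\rho$. When $\rho = 1$ this reduces to $b^2 \geq c\|\rho_b\|^2$, immediate from $\|\rho_b\| = |b|$. When $\rho \neq 1$, highest weight theory on the compact semisimple $M$ expresses $c_\rho$ in terms of the dominant highest weight $\lambda$, and after converting Killing-dual norms via $\|\cdot\|_B^* = C_B^{-1/2}\|\cdot\|^*$ yields $C_B c_\rho \geq \|\rho\|^2$. Combined with $\|\rho_b\|^2 \leq 2(b^2 + \|\rho\|^2)$ (a consequence of \cref{lem:LieTheoreticNormBounds}), the lemma follows with $\delta$ depending only on $C_B$ and $\dim M$. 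The principal technical point is precisely this last comparison: tracking the Killing-form normalization through the Schur reduction so that the bound is genuinely uniform in $b$, $\rho$, and $\mathfrak{q}$.
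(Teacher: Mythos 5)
Your proof is correct and takes a genuinely different route from the paper. The paper splits into cases: it first observes that taking $z \in \mathfrak a$ yields $\|d\rho_{b,\mathfrak q}(z)(\omega)\|_2 = |b|$ exactly, which settles the case $|b| \geq \|\rho\|$; in the complementary case $|b| < \|\rho\|$ it applies the Casimir element $\varsigma \in U(\mathfrak m)$ to $\omega$, uses the triangle inequality to bound $\sum_j \|d\rho_{b,\mathfrak q}(z_j^2)(\omega)\|_2$ from below by the Casimir eigenvalue, pigeonholes to find a good $z_0 \in \mathfrak m$ with a quadratic lower bound, and then cancels one factor using the operator-norm bound from \cref{eqn:dRhoZ_OperatorNormBound}. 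Your approach dispenses with the case split entirely: by passing to the density operator $T_\omega$ (which collapses the $L^2(F_\mathfrak q)$ factor and the multiplicity in $V_\rho^\oplus$ in one stroke) and summing $d\rho_b(e_i)^*d\rho_b(e_i)$ over an orthonormal basis of $\mathfrak a \oplus \mathfrak m$, Schur's lemma makes the total a scalar $(b^2 + C_B c_\rho)\Id$, so a single pigeonhole handles the whole range of $(b,\rho)$ at once and the comparison with $\|\rho_b\|^2 \leq 2(b^2 + \|\rho\|^2)$ closes the argument. The advantage of your route is that it avoids both the case split and the slightly awkward step in the paper where the triangle inequality is applied to $\sum_j d\rho_{b,\mathfrak q}(z_j^2)(\omega)$ before pigeonholing; the paper's route has the advantage of being elementary in the sense of never needing to name or manipulate the density operator.

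One small bookkeeping caveat: your asserted intermediate bound ``$C_B c_\rho \geq \|\rho\|^2$'' is not quite what \cref{eqn:dRhoZ_OperatorNormBound} delivers. That estimate gives $\|\rho\| \leq C_B\|\lambda\|_B$ and hence $C_B\|\lambda\|_B^2 \geq \|\rho\|^2/C_B$, so your lower bound should carry an extra factor depending on $C_B$. This is harmless — all you need is some positive constant depending only on $C_B$ and $\dim \mathfrak m$, and the final inequality $b^2 + C_B c_\rho \geq c\,\|\rho_b\|^2$ still holds — but the exponent of $C_B$ in your displayed constants would need to be adjusted if you wrote out the full details.
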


\begin{proof}
Fix $\delta = \frac{1}{2}$ if $M$ is trivial and $\delta = \frac{1}{2 \dim(\mathfrak{m})}$ otherwise. Let $b \in \mathbb R$, $\rho \in \widehat{M}$, $\mathfrak{q} \subset \mathcal{O}_{\mathbb K}$ be a nontrivial ideal, and $\omega = \sum_{g \in F_{\mathfrak{q}}} \delta_g \otimes \omega_g \in L^2(F_{\mathfrak{q}}) \otimes V_\rho^\oplus$ with $\|\omega\|_2^2 = \sum_{g \in F_{\mathfrak{q}}} \|\omega_g\|_2^2 = 1$. For any $z \in \mathfrak{a} \subset \mathfrak{a} \oplus \mathfrak{m}$ with $\|z\| = 1$, we have
\begin{align*}
\|d\rho_{b, \mathfrak{q}}(z)(\omega)\|_2 = \left\|\sum_{g \in F_{\mathfrak{q}}} \delta_g \otimes d\rho_b(z)(\omega_g)\right\|_2 = \left(\sum_{g \in F_{\mathfrak{q}}} \|ib\omega_g\|_2^2\right)^{\frac{1}{2}} = |b|.
\end{align*}
If $M$ is trivial, then $|b| = \|\rho_b\| \geq \delta \|\rho_b\|$ so the lemma follows. Otherwise, first consider the case $|b| \geq \|\rho\|$. By \cref{lem:LieTheoreticNormBounds}, we have $|b| \geq \frac{1}{2}(|b| + \|\rho\|) \geq \delta \|\rho_b\|$ which proves the lemma in this case.

Now consider the case $|b| \leq \|\rho\|$. By \cref{lem:LieTheoreticNormBounds}, we have $\|\rho_b\| \leq 2\|\rho\|$. Let $\Phi_\rho$ be the set of weights corresponding to the Lie algebra representation $d\rho$ and $\lambda \in \Phi_\rho$ be the highest weight. By the same argument as in \cite[Lemma 4.4]{SW20}, we have
\begin{align}
\label{eqn:dRhoZ_OperatorNormBound}
\|d\rho(z)\|_{\mathrm{op}} \leq \max_{\eta \in \Phi_\rho} \|\eta\|_B \|z\|_B \leq C_B \|\lambda\|_B
\end{align}
for all $z \in \mathfrak{m} \subset \mathfrak{a} \oplus \mathfrak{m}$ with $\|z\| = 1$. Hence, $\|\rho\| \leq C_B \|\lambda\|_B$ which implies $\|\rho_b\| \leq 2C_B\|\lambda\|_B$. Now, with respect to the inner product $\langle \cdot, \cdot \rangle_B$, let $(z_1, z_2, \dotsc, z_{\dim(\mathfrak{m})})$ be an orthonormal basis of $\mathfrak{m}$ so that it is its own dual basis. Then the negative Casimir element in the center of the universal enveloping algebra of $\mathfrak{m}$ is given by $\varsigma = \sum_{j = 1}^{\dim(\mathfrak{m})} z_j^2 \in Z(\mathfrak{m}) \subset U(\mathfrak{m})$. Its action on $V_\rho$ via $d\rho$ and hence also via $d\rho_b$ is simply by the scalar $\|\lambda\|_B^2 + 2\langle \lambda, \upsilon \rangle_B$ where $\upsilon = \frac{1}{2}\sum_{\eta \in R^+} \eta$ and $R^+$ is the set of positive roots. But $\langle \lambda, \upsilon \rangle_B \geq 0$ since $\lambda \in \Phi_\rho$ is the highest weight. Thus, we have
\begin{align*}
\|d\rho_{b, \mathfrak{q}}(\varsigma)(\omega)\|_2 = \left\|\sum_{g \in F_{\mathfrak{q}}} \delta_g \otimes d\rho_b(\varsigma)(\omega_g)\right\|_2 \geq  \|\lambda\|_B^2 \left\|\sum_{g \in F_{\mathfrak{q}}} \delta_g \otimes \omega_g\right\|_2 = \|\lambda\|_B^2.
\end{align*}
This implies that $\sum_{j = 1}^{\dim(\mathfrak{m})} \|d\rho_{b, \mathfrak{q}}(z_j^2)(\omega)\|_2 \geq \|\lambda\|_B^2$. Hence, there exists $z_0 \in \{z_1, z_2, \dotsc, z_{\dim(\mathfrak{m})}\}$ such that $\|d\rho_{b, \mathfrak{q}}(z_0^2)(\omega)\|_2 \geq \frac{\|\lambda\|_B^2}{\dim(\mathfrak{m})}$. Using $\|z_0\|_B = 1$ and a similar bound as in \cref{eqn:dRhoZ_OperatorNormBound}, we have $\|d\rho_{b, \mathfrak{q}}(z_0)(\omega)\|_2 \geq \frac{\|\lambda\|_B}{\dim(\mathfrak{m})}$. Let $z = \frac{z_0}{\|z_0\|} \in \mathfrak{m} \subset \mathfrak{a} \oplus \mathfrak{m}$ so that $\|z\| = 1$. Along with the above bound $\|\rho_b\| \leq 2C_B\|\lambda\|_B$, we have
\begin{align*}
\|d\rho_{b, \mathfrak{q}}(z)(\omega)\|_2 \geq \frac{\|\lambda\|_B}{\dim(\mathfrak{m})\|z_0\|} \geq \frac{1}{2 \dim(\mathfrak{m})}\|\rho_b\| \geq \delta \|\rho_b\|
\end{align*}
which proves the lemma in this case also.
\end{proof}

Fix $\varepsilon_1 > 0$ to be the $\delta$ provided by \cref{lem:maActionLowerBound}.

\section{Congruence transfer operators with holonomy and their uniform spectral bounds}
\label{sec:CongruenceTransferOperatorsWithHolonomyAndTheirUniformSpectralBounds}
The goal of this section is to define the congruence transfer operators with holonomy and state the main technical theorems about their uniform spectral bounds.

\subsection{Modified constructions using the smooth structure on $G$}
\label{sec:ModifiedConstructionsUsingTheSmoothStructureOnG}

Since we would like to follow \cite{SW20} to use Dolgopyat's method, we need to enlarge $U$ to an open set in the strong unstable foliation which allows us to use the smooth structure from $G$ and define smooth counterparts to $\sigma$, $\tau$, $\mathtt{c}$, and $\vartheta$. Except for $\mathtt{c}$, we recall the notations and refer to \cite[Subsection 5.1]{SW20} for details of the constructions.

Thanks to \cite[Lemma 1.2]{Rue89}, there exist open sets $U_j \subset \tilde{U}_j$ such that $\overline{\tilde{U}_j} \subset W_{\epsilon_0}^{\mathrm{su}}(w_j)$ with $\diam_{d_{\mathrm{su}}}(\tilde{U}_j) \leq \hat{\delta}$ for all $j \in \mathcal{A}$ such that for all admissible pairs $(j, k)$, the inverse $(\sigma|_{\mathtt{C}[j, k]})^{-1}: \interior(U_k) \to \mathtt{C}[j, k]$ can be extended to a smooth injective map $\sigma^{-(j, k)}: \tilde{U}_k \to \tilde{U}_j$. We define $\tilde{U} = \bigsqcup_{j = 1}^N \tilde{U}_j$. Also define the measure $\nu_{\tilde{U}}$ on $\tilde{U}$ simply by $\nu_{\tilde{U}}(B) = \nu_U(B \cap U)$ for all Borel sets $B \subset \tilde{U}$. Let $j \in \mathbb Z_{\geq 0}$ and $\alpha = (\alpha_0, \alpha_1, \dotsc, \alpha_j)$ be an admissible sequence. Define $\sigma^{-\alpha} = \sigma^{-(\alpha_0, \alpha_1)} \circ \sigma^{-(\alpha_1, \alpha_2)} \circ \cdots \circ \sigma^{-(\alpha_{j - 1}, \alpha_j)}: \tilde{U}_{\alpha_j} \to \tilde{U}_{\alpha_0}$ if $j > 0$ and $\sigma^{-\alpha} = \Id_{\tilde{U}_{\alpha_0}}$ if $j = 0$. Define the cylinder $\tilde{\mathtt{C}}[\alpha] = \sigma^{-\alpha}(\tilde{U}_{\alpha_j}) \supset \mathtt{C}[\alpha]$. Define the smooth maps $\sigma^\alpha = (\sigma^{-\alpha})^{-1}: \tilde{\mathtt{C}}[\alpha] \to \tilde{U}_{\alpha_j}$.

Let $(j, k)$ be an admissible pair. The maps $\tau|_{\mathtt{C}[j, k]}$ and $\vartheta|_{\mathtt{C}[j, k]}$ extend to smooth maps $\tau_{(j, k)}: \tilde{\mathtt{C}}[j, k] \to \mathbb R$ and $\vartheta^{(j, k)}: \tilde{\mathtt{C}}[j, k] \to M$. Since $\mathtt{c}$ is locally constant by \cref{lem:CocyclesLocallyConstant}, we can extend $\mathtt{c}|_{\mathtt{C}[j, k]}$ to $\mathtt{c}^{(j, k)}: \tilde{\mathtt{C}}[j, k] \to \Gamma$ as a constant map. For all $k \in \mathbb N$ and admissible sequences $\alpha = (\alpha_0, \alpha_1, \dotsc, \alpha_k)$, we define the smooth maps $\tau_\alpha: \tilde{\mathtt{C}}[\alpha] \to \mathbb R_{>0}$, $\mathtt{c}^\alpha: \tilde{\mathtt{C}}[\alpha] \to \Gamma$, $\vartheta^\alpha: \tilde{\mathtt{C}}[\alpha] \to M$, and $\Phi^\alpha: \tilde{\mathtt{C}}[\alpha] \to AM$ by
\begin{align*}
\tau_\alpha(u) &= \sum_{j = 0}^{k - 1} \tau_{(\alpha_j, \alpha_{j + 1})}(\sigma^{(\alpha_0, \alpha_1, \dotsc, \alpha_j)}(u)); \\
\mathtt{c}^\alpha(u) &= \prod_{j = 0}^{k - 1} \mathtt{c}^{(\alpha_j, \alpha_{j + 1})}(\sigma^{(\alpha_0, \alpha_1, \dotsc, \alpha_j)}(u)); \\
\vartheta^\alpha(u) &= \prod_{j = 0}^{k - 1} \vartheta^{(\alpha_j, \alpha_{j + 1})}(\sigma^{(\alpha_0, \alpha_1, \dotsc, \alpha_j)}(u)); \\
\Phi^\alpha(u) &= a_{\tau_\alpha(u)}\vartheta^\alpha(u) = \prod_{j = 0}^{k - 1} \Phi^{(\alpha_j, \alpha_{j + 1})}(\sigma^{(\alpha_0, \alpha_1, \dotsc, \alpha_j)}(u))
\end{align*}
for all $u \in \tilde{\mathtt{C}}[\alpha] \subset \tilde{U}$, where the terms of the products are to be in \emph{ascending} order from left to right. For all admissible sequences $\alpha$ with $\len(\alpha) = 0$, we define $\tau_\alpha(u) = 0$ and $\mathtt{c}^\alpha(u) = \vartheta^\alpha(u) = \Phi^\alpha(u) = e \in G$ for all $u \in \tilde{\mathtt{C}}[\alpha]$. For all $u \in U$, there is a unique corresponding admissible sequence in $\Sigma^+$ and hence we can instead use the notations $\tau_k(u)$, $\mathtt{c}^k(u)$, $\vartheta^k(u)$, and $\Phi^k(u)$ for all $k \in \mathbb Z_{\geq 0}$.

\begin{remark}
Since $\mathtt{c}^\alpha$ is constant for all admissible sequences $\alpha$, we will often omit writing the argument. We can also construct the extended congruence cocycles $\mathtt{c}_{\mathfrak{q}}^\alpha: \tilde{\mathtt{C}}[\alpha] \to F_{\mathfrak{q}}$ for all nontrivial ideals $\mathfrak{q} \subset \mathcal{O}_{\mathbb K}$ in a similar way as before.
\end{remark}

\subsection{Transfer operators}
For all nontrivial ideals $\mathfrak{q} \subset \mathcal{O}_{\mathbb K}$, let $\Gamma$ and $\Gamma_{\mathfrak{q}}$ act on $L^2(F_\mathfrak{q})$ from the left by the \emph{right} regular representation which we will simply denote by juxtaposition. Throughout the paper, we will use the notation $\xi = a + ib \in \mathbb C$ for the complex parameter for the transfer operators and use the convention that sums over sequences will be understood to be sums \emph{only} over \emph{admissible} sequences.

\begin{definition}[Congruence transfer operator with holonomy]
For all $\xi \in \mathbb C$, $\rho \in \widehat{M}$, and nontrivial ideals $\mathfrak{q} \subset \mathcal{O}_{\mathbb K}$, the \emph{congruence transfer operator with holonomy} $\tilde{\mathcal{M}}_{\xi\tau, \mathfrak{q}, \rho}: C(\tilde{U}, L^2(F_\mathfrak{q}) \otimes V_\rho^\oplus) \to C(\tilde{U}, L^2(F_\mathfrak{q}) \otimes V_\rho^\oplus)$ is defined by
\begin{align*}
\tilde{\mathcal{M}}_{\xi\tau, \mathfrak{q}, \rho}(H)(u) = \sum_{\substack{(j, k)\\ u' = \sigma^{-(j, k)}(u)}} e^{\xi\tau_{(j, k)}(u')} ((\mathtt{c}_{\mathfrak{q}}^{(j, k)})^{-1} \otimes \rho(\vartheta^{(j, k)}(u'))^{-1}) H(u')
\end{align*}
for all $u \in \tilde{U}$ and $H \in C(\tilde{U}, L^2(F_\mathfrak{q}) \otimes V_\rho^\oplus)$.
\end{definition}

Let $\xi \in \mathbb C$. We denote $\tilde{\mathcal{L}}_{\xi\tau} = \tilde{\mathcal{M}}_{\xi\tau, \mathcal{O}_{\mathbb K}, 1}$ and simply call it the \emph{transfer operator}. Let $\rho \in \widehat{M}$ and $\mathfrak{q} \subset \mathcal{O}_{\mathbb K}$ be a nontrivial ideal, and denote $|_U: C(\tilde{U}, L^2(F_\mathfrak{q}) \otimes V_\rho^\oplus) \to C(U, L^2(F_\mathfrak{q}) \otimes V_\rho^\oplus)$ to be the restriction map. Then we also define the \emph{congruence transfer operator with holonomy} $\mathcal{M}_{\xi\tau, \mathfrak{q}, \rho} = |_U \circ \tilde{\mathcal{M}}_{\xi\tau, \mathfrak{q}, \rho} \circ (|_U)^{-1}$ where $(|_U)^{-1}$ denotes taking any preimage using Tietze extension theorem and denote the \emph{transfer operator} $\mathcal{L}_{\xi\tau} = \mathcal{M}_{\xi\tau, \mathcal{O}_{\mathbb K}, 1}$. We also denote $\mathcal{M}_{\xi\tau, \mathfrak{q}} = \mathcal{M}_{\xi\tau, \mathfrak{q}, 1}$.

\begin{remark}
Let $\xi \in \mathbb C$, $\rho \in \widehat{M}$, and $\mathfrak{q} \subset \mathcal{O}_{\mathbb K}$ be a nontrivial ideal. Then $\tilde{\mathcal{M}}_{\xi\tau, \mathfrak{q}, \rho}$ preserves $C^k(\tilde{U}, L^2(F_\mathfrak{q}) \otimes V_\rho^\oplus)$ for all $k \in \mathbb N$, viewing the target space as a real vector space, and $\mathcal{M}_{\xi\tau, \mathfrak{q}, \rho}$ preserves $C^{\Lip(d)}(U, L^2(F_\mathfrak{q}) \otimes V_\rho^\oplus)$.
\end{remark}

We recall the Ruelle--Perron--Frobenius (RPF) theorem along with the theory of Gibbs measures in this setting \cite{Bow08,PP90}.

\begin{theorem}
\label{thm:RPFonU}
For all $a \in \mathbb R$, the operator $\mathcal{L}_{a\tau}: C(U) \to C(U)$ and its dual $\mathcal{L}_{a\tau}^*: C(U)^* \to C(U)^*$ has eigenvectors with the following properties. There exist a unique positive function $h \in C^{\Lip(d)}(U, \mathbb R)$ and a unique Borel probability measure $\nu$ on $U$ such that
\begin{enumerate}
\item	$\mathcal{L}_{a\tau}(h) = e^{\Pr_\sigma(a\tau_{\Sigma})}h$;
\item	$\mathcal{L}_{a\tau}^*(\nu) = e^{\Pr_\sigma(a\tau_{\Sigma})}\nu$;
\item	the eigenvalue $e^{\Pr_\sigma(a\tau_{\Sigma})}$ is maximal simple and the rest of the spectrum of $\mathcal{L}_f|_{C^{\Lip(d)}(U)}$ is contained in a disk of radius strictly less than $e^{\Pr_\sigma(a\tau_{\Sigma})}$;
\item	$\nu(h) = 1$ and the Borel probability measure $\mu$ defined by $d\mu = h \, d\nu$ is $\sigma$-invariant and is the projection of the $a\tau_{\Sigma}$-equilibrium state to $U$, i.e., $\mu = (\proj_U \circ \zeta)_*(\nu_{a\tau_{\Sigma}})$.
\end{enumerate}
\end{theorem}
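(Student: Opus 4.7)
The plan is to transfer the problem to the symbolic space $\Sigma^+$, invoke the classical Ruelle--Perron--Frobenius theorem for topologically mixing subshifts of finite type with Lipschitz potential, and then push the resulting objects back down to $U$ via $\zeta^+$. First I would introduce on $\Sigma^+$ the transfer operator $\mathcal{L}_{a\tau_{\Sigma^+}}$ associated to the Lipschitz potential $a\tau_{\Sigma^+} \in C^{\Lip(d_\theta)}(\Sigma^+, \mathbb R)$ defined by
\begin{align*}
\mathcal{L}_{a\tau_{\Sigma^+}}(f)(x) = \sum_{y \in \sigma^{-1}(x)} e^{a\tau_{\Sigma^+}(y)} f(y).
\end{align*}
Since the transition matrix $T$ is topologically mixing (cf.\ \cite[Theorem 4.3]{Rat73}) and $\tau_{\Sigma^+}$ is Lipschitz, the classical RPF theorem as presented in \cite[Theorem 1.7]{Bow08} or \cite[Chapter 2]{PP90} applies directly and yields a unique positive Lipschitz eigenfunction $h_{\Sigma^+}$ and a unique Borel probability eigenmeasure $\nu_{\Sigma^+}$ with eigenvalue $e^{\Pr_\sigma(a\tau_\Sigma)}$, the spectral gap, and the identification of $h_{\Sigma^+} \, d\nu_{\Sigma^+}$ as (the projection of) the $a\tau_\Sigma$-equilibrium state.

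Next I would transfer these objects back to $U$. Using the bijection $\zeta^+|_{\hat{\Sigma}^+}: \hat{\Sigma}^+ \to \hat{U}$ together with the conjugacy $\zeta^+ \circ \sigma = \sigma \circ \zeta^+$ (on the cores, modulo the discrepancy on the boundary of cylinders), I would define $h = h_{\Sigma^+} \circ (\zeta^+|_{\hat{\Sigma}^+})^{-1}$ on $\hat{U}$ and $\nu = \zeta^+_*(\nu_{\Sigma^+})$ on $U$. The key observation is that $\tau_{\Sigma^+}$ and $\tau \circ \zeta^+$ agree off the preimages of cylinder boundaries, which are $\nu_{\Sigma^+}$-null (the equilibrium state gives no mass to the meager set $\Sigma^+ \setminus \hat{\Sigma}^+$, cf.\ \cite[Corollary 3.2]{Che02}). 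Consequently $h$ extends (uniquely) to a positive Lipschitz function on all of $U$ with $\mathcal{L}_{a\tau}(h) = e^{\Pr_\sigma(a\tau_\Sigma)} h$, and the pushforward measure satisfies the dual eigenvalue equation $\mathcal{L}_{a\tau}^*(\nu) = e^{\Pr_\sigma(a\tau_\Sigma)} \nu$ by changing variables in the defining integral and reindexing preimages along $\sigma$.

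For the spectral gap on $C^{\Lip(d)}(U)$, I would establish the standard Doeblin--Fortet/Lasota--Yorke inequality: there exist $0 < r < 1$ and $C > 0$ such that for all $f \in C^{\Lip(d)}(U)$,
\begin{align*}
\Lip_d\bigl(\mathcal{L}_{a\tau}^n(f)\bigr) \leq C r^n e^{n\Pr_\sigma(a\tau_\Sigma)} \Lip_d(f) + C e^{n\Pr_\sigma(a\tau_\Sigma)} \|f\|_\infty,
\end{align*}
which follows from the exponential contraction of inverse branches $\sigma^{-\alpha}$ (supplied by the Anosov hyperbolicity constant $C_{\mathrm{Ano}}$) and the Lipschitz regularity of $\tau$. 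Combined with the Arzel\`a--Ascoli theorem, this yields quasi-compactness via the Ionescu-Tulcea--Marinescu theorem; simplicity and isolation of the leading eigenvalue then follow from the topological mixing of $T$ by a standard positivity argument. Finally, the normalization $\nu(h) = 1$ is arranged by rescaling $h$, and the $\sigma$-invariance of $h \, d\nu$ together with its identification with $(\proj_U \circ \zeta)_*(\nu_{a\tau_\Sigma})$ follows from the uniqueness of the equilibrium state.

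The main obstacle is the mismatch between the symbolic and geometric pictures along cylinder boundaries: $\zeta^+$ is only a bijection on cores, and $\tau_{\Sigma^+}$ need not equal $\tau \circ \zeta^+$ there. The resolution is that all relevant measures charge only the cores, so the transfer between $\Sigma^+$ and $U$ is measure-theoretically faithful; the cleanest way to package this is to argue everything on $\hat{U}$ and $\hat{\Sigma}^+$ and appeal to continuity/density for the remaining null set.
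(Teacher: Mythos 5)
The paper does not actually prove this theorem; it is recorded as a recollection of the Ruelle--Perron--Frobenius theorem with citations to \cite{Bow08,PP90}, both of which concern one-sided subshifts of finite type rather than the geometric space $U$ directly. Your proposal is therefore a genuine attempt to supply the implicit transfer argument, and the general scaffolding---classical RPF on $\Sigma^+$, the Lasota--Yorke estimate on $C^{\Lip(d)}(U)$, quasi-compactness via Ionescu-Tulcea--Marinescu, uniqueness from topological mixing, identification of $h\,d\nu$ with the projected equilibrium state---is the right one.

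There is, however, a genuine gap in the step where you produce the Lipschitz eigenfunction $h$ on $U$ by pulling back $h_{\Sigma^+}$. You set $h = h_{\Sigma^+} \circ (\zeta^+|_{\hat{\Sigma}^+})^{-1}$ on $\hat{U}$ and claim it extends to a positive Lipschitz function on all of $U$. But the only regularity you have is that $\zeta^+$ is Lipschitz from $(\Sigma^+, d_\theta)$ to $(U, d)$; the inverse direction is not automatic and in general fails. Two points $u, v \in \hat{U}_j$ both lying very close to the boundary of a common cylinder of length $k$ can have $d(u,v)$ arbitrarily small while their symbolic codings already disagree at index $k$, so $d_\theta\bigl((\zeta^+)^{-1}(u), (\zeta^+)^{-1}(v)\bigr) = \theta^k$ is not controlled by $d(u,v)$. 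Thus $(\zeta^+)^{-1}$ is continuous on $\hat{U}$ but not Lipschitz (nor uniformly continuous), and $h_{\Sigma^+} \circ (\zeta^+)^{-1}$ is only a priori continuous. Since the theorem explicitly requires $h \in C^{\Lip(d)}(U, \mathbb R)$, this is not a cosmetic point.

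The fix is to run the RPF construction of $h$ directly on $U$ rather than importing it from $\Sigma^+$. You already wrote down the crucial Lasota--Yorke inequality for $\mathcal{L}_{a\tau}$ acting on $C^{\Lip(d)}(U)$; applying it to the constant function $\mathbf 1$ shows that the Ces\`aro averages of $e^{-n\Pr_\sigma(a\tau_\Sigma)}\mathcal{L}_{a\tau}^n(\mathbf 1)$ form a bounded (hence precompact, by Arzel\`a--Ascoli) family in $C^{\Lip(d)}(U)$, and any limit point is a nonnegative Lipschitz fixed vector. Strict positivity follows from the topological mixing of $T$ (so that $\mathcal{L}_{a\tau}^{N_T}$ has strictly positive kernel). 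This produces $h$ intrinsically in $C^{\Lip(d)}(U)$ with no appeal to $(\zeta^+)^{-1}$. By contrast, pushing $\nu_{\Sigma^+}$ forward by $\zeta^+$ is unproblematic, since pushforward of measures only uses the Lipschitz direction you have; the eigenvalue equation for $\nu = \zeta^+_*(\nu_{\Sigma^+})$ then follows as you indicate, using that $\tau_{\Sigma^+} = \tau\circ\zeta^+$ off a $\nu_{\Sigma^+}$-null set (by \cite[Corollary 3.2]{Che02}) and that preimages of points in $\hat U$ under $\sigma$ correspond bijectively to preimages of points in $\hat\Sigma^+$. In short: keep the $\Sigma^+$ picture for the measure and for identifying the pressure, but build the eigenfunction directly on $U$ from your Lasota--Yorke estimate.
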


In light of \cref{thm:RPFonU}, it is convenient to normalize the transfer operators defined above. Let $a \in \mathbb R$. Define $\lambda_a = e^{\Pr_\sigma(-(\delta_\Gamma + a)\tau_{\Sigma})}$ which is the maximal simple eigenvalue of $\mathcal{L}_{-(\delta_\Gamma + a)\tau}$ by \cref{thm:RPFonU} and recall that $\lambda_0 = 1$. Define the eigenvectors, the unique positive function $h_a \in C^{\Lip(d)}(U, \mathbb R)$ and the unique probability measure $\nu_a$ on $U$ with $\nu_a(h_a) = 1$ such that
\begin{align*}
\mathcal{L}_{-(\delta_\Gamma + a)\tau}(h_a) &= \lambda_a h_a; & \mathcal{L}_{-(\delta_\Gamma + a)\tau}^*(\nu_a) &= \lambda_a \nu_a
\end{align*}
provided by \cref{thm:RPFonU}. Note that $d\nu_U = h_0 \, d\nu_0$. Now by \cite[Theorem A.2]{SW20}, the eigenvector $h_a \in C^{\Lip(d)}(U, \mathbb R)$ extends to an eigenvector $h_a \in C^\infty(\tilde{U}, \mathbb R)$ with bounded derivatives for $\tilde{\mathcal{L}}_{-(\delta_\Gamma + a)\tau}$. For all admissible pairs $(j, k)$, we define the smooth map
\begin{align}
\label{eqn:f^(a)}
f_{(j, k)}^{(a)} = -(a + \delta_\Gamma)\tau_{(j, k)} + \log(h_0) - \log(h_0 \circ \sigma^{(j, k)}) - \log(\lambda_a).
\end{align}
For all $k \in \mathbb N$ and admissible sequences $\alpha = (\alpha_0, \alpha_1, \dotsc, \alpha_k)$, we define the smooth map $f_\alpha^{(a)}: \tilde{\mathtt{C}}[\alpha] \to \mathbb R$ by
\begin{align*}
f_\alpha^{(a)}(u) = \sum_{j = 0}^{k - 1} f_{(\alpha_j, \alpha_{j + 1})}^{(a)}(\sigma^{(\alpha_0, \alpha_1, \dotsc, \alpha_j)}(u)) \qquad \text{for all $u \in \tilde{\mathtt{C}}[\alpha]$}.
\end{align*}
For all admissible sequences $\alpha$ with $\len(\alpha) = 0$, we define $f_\alpha^{(a)}(u) = 0$. As before, for all $u \in U$, we can also use the notation $f_k^{(a)}(u)$ for any $k \in \mathbb Z_{\geq 0}$.

We now normalize the transfer operators. Let $\xi \in \mathbb C$, $\rho \in \widehat{M}$, and $\mathfrak{q} \subset \mathcal{O}_{\mathbb K}$ be a nontrivial ideal. We define $\tilde{\mathcal{M}}_{\xi, \mathfrak{q}, \rho}: C(\tilde{U}, L^2(F_\mathfrak{q}) \otimes V_\rho^\oplus) \to C(\tilde{U}, L^2(F_\mathfrak{q}) \otimes V_\rho^\oplus)$ by
\begin{align*}
\tilde{\mathcal{M}}_{\xi, \mathfrak{q}, \rho}(H)(u) = \sum_{\substack{(j, k)\\ u' = \sigma^{-(j, k)}(u)}} e^{(f_{(j, k)}^{(a)} + ib\tau_{(j, k)})(u')} ((\mathtt{c}_{\mathfrak{q}}^{(j, k)})^{-1} \otimes \rho(\vartheta^{(j, k)}(u'))^{-1}) H(u')
\end{align*}
for all $u \in \tilde{U}$ and $H \in C(\tilde{U}, L^2(F_q) \otimes V_\rho^\oplus)$. For all $k \in \mathbb N$, its $k$\textsuperscript{th} iteration is
\begin{align}
\label{eqn:k^thIterationOfCongruenceTransferOperatorOfType_rho}
\tilde{\mathcal{M}}_{\xi, \mathfrak{q}, \rho}^k(H)(u) = \sum_{\substack{\alpha: \len(\alpha) = k\\ u' = \sigma^{-\alpha}(u)}} e^{f_\alpha^{(a)}(u')} ((\mathtt{c}_{\mathfrak{q}}^\alpha)^{-1} \otimes \rho_b(\Phi^\alpha(u'))^{-1}) H(u')
\end{align}
for all $u \in \tilde{U}$ and $H \in C(\tilde{U}, L^2(F_q) \otimes V_\rho^\oplus)$. Denote $\tilde{\mathcal{L}}_\xi = \tilde{\mathcal{M}}_{\xi, \mathcal{O}_{\mathbb K}, 1}$. Using the restriction map $|_U$, we get the corresponding normalized operator $\mathcal{M}_{\xi, \mathfrak{q}, \rho}: C(U, V_\rho^\oplus) \to C(U, V_\rho^\oplus)$. Denote $\mathcal{M}_{\xi, \mathfrak{q}} = \mathcal{M}_{\xi, \mathfrak{q}, 1}$ and $\mathcal{L}_\xi = \mathcal{M}_{\xi, \mathcal{O}_{\mathbb K}, 1}$. With this normalization, for all $a \in \mathbb R$, the maximal simple eigenvalue of $\mathcal{L}_a$ is $1$ with normalized eigenvector $\frac{h_a}{h_0}$. Moreover, we have $\mathcal{L}_0^*(\nu_U) = \nu_U$.

\begin{remark}
For all nontrivial ideals $\mathfrak{q} \subset \mathcal{O}_{\mathbb K}$, since the left actions of $\Gamma$ and $\Gamma_{\mathfrak{q}}$ on $L^2(F_\mathfrak{q})$ coincide, we can drop the subscript of the cocycle in the definition of the congruence transfer operators with holonomy whenever required. We prefer to keep the subscript in \cref{sec:ReductionToNewInvariantFunctionsAtLevel_q,sec:ApproximatingTheTransferOperator,sec:ZariskiDensityAndTraceFieldOfTheReturnTrajectorySubgroups,sec:L2FlatteningLemma,sec:SupremumAndLipschitzBounds} where we use the expander machinery because it is important that the cocycle takes values in a finite group and prefer to drop the subscript in \cref{sec:Dolgopyat'sMethod} where we use Dolgopyat's method because it is unnecessary.
\end{remark}

We fix some related constants. By perturbation theory for operators as in \cite[Chapter 8]{Kat95} and \cite[Proposition 4.6]{PP90}, we can fix $a_0' > 0$ such that the map $[-a_0', a_0'] \to \mathbb R$ defined by $a \mapsto \lambda_a$ and the map $[-a_0', a_0'] \to C(\tilde{U}, \mathbb R)$ defined by $a \mapsto h_a$ are Lipschitz. We then fix $A_f > 0$ such that $|f^{(a)}(u) - f^{(0)}(u)| \leq A_f|a|$ for all $u \in U$ and $|a| \leq a_0'$. Fix $\overline{\tau} = \max_{(j, k)} \sup_{u \in \tilde{\mathtt{C}}[j, k]} \tau_{(j, k)}(u)$ and $\underline{\tau} = \min_{(j, k)} \inf_{u \in \tilde{\mathtt{C}}[j, k]} \tau_{(j, k)}(u)$. Fix
\begin{align*}
T_0 > \max(C_\theta, 1) \cdot \max\bigg(\max_{(j, k)} \|\tau_{(j, k)}\|_{C^1}, \max_{(j, k)} \sup_{|a| \leq a_0'} \left\|f_{(j, k)}^{(a)}\right\|_{C^1}, \max_{(j, k)} \|\vartheta^{(j, k)}\|_{C^1}\bigg)
\end{align*}
which is possible by \cite[Lemma 4.1]{PS16}.

\subsection{Uniform spectral bounds with holonomy}
\label{sec:UniformSpectralBoundsWithHolonomy}
We first introduce some inner products, norms, and seminorms. Let $\rho \in \widehat{M}$, $\mathfrak{q} \subset \mathcal{O}_{\mathbb K}$ be a nontrivial ideal, and $H \in C(U, L^2(F_\mathfrak{q}) \otimes V_\rho^\oplus)$. We will denote $\|H\| \in C(U, \mathbb R)$ to be the function defined by $\|H\|(u) = \|H(u)\|_2$ for all $u \in U$, and if $\rho = 1$, we will denote $|H| \in C(U, L^2(F_\mathfrak{q}, \mathbb R))$ to be the function defined by $|H|(u) = |H(u)| \in L^2(F_\mathfrak{q}, \mathbb R)$ for all $u \in U$. We define $\|H\|_\infty = \sup \|H\|$. We use similar notations if the domain is $\tilde{U}$. We define the Lipschitz seminorm and the Lipschitz norm by
\begin{align*}
\Lip_d(H) &= \sup_{\substack{u, u' \in U\\ \text{such that } u \neq u'}} \frac{\|H(u) - H(u')\|_2}{d(u, u')}; & \|H\|_{\Lip(d)} &= \|H\|_\infty + \Lip_d(H)
\end{align*}
respectively. We also denote $\|H\|_{\Lip(d_\theta)} = \|H \circ \zeta^+\|_{\Lip(d_\theta)}$.

Let $Y$ be a Riemannian manifold and $H \in C^1(\tilde{U}, Y)$. We define the $C^1$ seminorm and the $C^1$ norm by
\begin{align*}
|H|_{C^1} &= \sup_{u \in \tilde{U}}\|(dH)_u\|_{\mathrm{op}}; & \|H\|_{C^1} &= \|H\|_\infty + |H|_{C^1}
\end{align*}
respectively. It is also useful to define the norm
\begin{align*}
\|H\|_{1, b} &= \|H\|_\infty + \frac{1}{\max(1, |b|)} |H|_{C^1}.
\end{align*}
Henceforth, by differentiable function spaces on $\tilde{U}$ or its derived suspension spaces, such as $C^1(\tilde{U}, Y)$, we will always mean the space of $C^1$ functions whose $C^1$ norm is \emph{bounded}.

Let $\rho \in \widehat{M}$ and $\mathfrak{q} \subset \mathcal{O}_{\mathbb K}$ be a nontrivial ideal. We define the Banach spaces
\begin{align*}
\mathcal{V}_{\mathfrak{q}, \rho}(U) &= C^{\Lip(d)}(U, L^2(F_\mathfrak{q}) \otimes V_\rho^\oplus);
&
\mathcal{V}_{\mathfrak{q}, \rho}(\tilde{U}) &= C^1(\tilde{U}, L^2(F_\mathfrak{q}) \otimes V_\rho^\oplus).
\end{align*}
We define $\mathcal{W}_{\mathfrak{q}, \rho}(U) \subset \mathcal{V}_{\mathfrak{q}, \rho}(U)$ and $\mathcal{W}_{\mathfrak{q}, \rho}(\tilde{U}) \subset \mathcal{V}_{\mathfrak{q}, \rho}(\tilde{U})$ in a similar fashion with $L^2(F_\mathfrak{q})$ above replaced by $L_0^2(F_\mathfrak{q}) = \big\{\phi \in L^2(F_\mathfrak{q}): \sum_{g \in F_\mathfrak{q}} \phi(g) = 0\big\}$. We denote $\mathcal{V}_\mathfrak{q}(U) = \mathcal{V}_{\mathfrak{q}, 1}(U)$ and $\mathcal{W}_\mathfrak{q}(U) = \mathcal{W}_{\mathfrak{q}, 1}(U)$.

For all ideals $\mathfrak{q} \subset \mathcal{O}_{\mathbb K}$, we denote the norm of the ideal by $N_{\mathbb K}(\mathfrak{q}) = \#(\mathcal{O}_{\mathbb K}/\mathfrak{q})$ and we say $\mathfrak{q}$ is \emph{square-free} if it is a nontrivial proper ideal without any square prime ideal factors.

Now we can state \cref{thm:TheoremFrameFlow} which is the main technical theorem regarding uniform spectral bounds of congruence transfer operators with holonomy.

\begin{theorem}
\label{thm:TheoremFrameFlow}
There exist $\eta > 0$, $C \geq 1$, $a_0 > 0$, and a nontrivial proper ideal $\mathfrak{q}_0' \subset \mathcal{O}_{\mathbb K}$ such that for all $\xi \in \mathbb C$ with $|a| < a_0$, $\rho \in \widehat{M}$, square-free ideals $\mathfrak{q} \subset \mathcal{O}_{\mathbb K}$ coprime to $\mathfrak{q}_0\mathfrak{q}_0'$, $k \in \mathbb N$, and $H \in \mathcal{W}_{\mathfrak{q}, \rho}(U)$ with an extension $\tilde{H} \in \mathcal{W}_{\mathfrak{q}, \rho}(\tilde{U})$, we have
\begin{align*}
\left\|\mathcal{M}_{\xi, \mathfrak{q}, \rho}^k(H)\right\|_2 \leq CN_{\mathbb K}(\mathfrak{q})^C e^{-\eta k} \|\tilde{H}\|_{1, \|\rho_b\|}.
\end{align*}
\end{theorem}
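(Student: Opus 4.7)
The plan is to partition the parameter space and handle each regime by a distinct method, then combine. Fix a constant $b_0 > 0$ (to be chosen later). I split $(b, \rho) \in \mathbb{R} \times \widehat{M}$ into the low regime, where $|b| \leq b_0$ and $\rho = 1$, and the high regime, where $(b, \rho) \in \widehat{M}_0(b_0)$. In the low regime, $\mathcal{M}_{\xi, \mathfrak{q}}$ acts on $\mathcal{V}_\mathfrak{q}(U) \cong C^{\Lip(d)}(U) \otimes L^2(F_\mathfrak{q})$ and decomposes according to the splitting $L^2(F_\mathfrak{q}) = \mathbb{C}\cdot \mathbf{1} \oplus L_0^2(F_\mathfrak{q})$; on the invariant factor it agrees with the scalar operator $\mathcal{L}_\xi$, and the hypothesis $H \in \mathcal{W}_\mathfrak{q}(U)$ puts $H$ in the orthogonal complement, where the cocycle $\mathtt{c}_\mathfrak{q}$ produces the required cancellation. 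The high regime is where the oscillatory character of $\rho_{b,\mathfrak{q}}(\Phi^\alpha)$ drives contraction.

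For the low regime I would follow the Bourgain--Gamburd--Sarnak / Golsefidy--Varj\'{u} / Oh--Winter route. First, approximate high iterates $\mathcal{M}_{\xi, \mathfrak{q}}^k$ (or $\mathcal{M}_{\xi, \mathfrak{q}}^k - \mathcal{L}_\xi^k$ on the right piece) by convolutions with measures $\mu_k$ on the finite group quotient $\tilde{G}_{\mathfrak{q}} \cong F_\mathfrak{q}$ built from sums over admissible sequences $\alpha$ of length $k$ with weights $e^{f_\alpha^{(a)}}\delta_{\mathtt{c}^\alpha}$. Next, prove an $L^2$-flattening lemma for $\mu_k$ by invoking Golsefidy--Varj\'{u} to show that products $\mu_k^{*\ell}$ equidistribute on $F_\mathfrak{q}$ polynomially fast in $N_{\mathbb{K}}(\mathfrak{q})$. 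The key hypothesis for Golsefidy--Varj\'{u} is Zariski density and full trace field $\mathbb{K}$ of the generating set, which here are the \emph{return trajectory subgroups} of $\Gamma$; I would establish these (Section 7 in the outline) by proving that they have finite index in $\Gamma$, so both properties are inherited from $\Gamma$. Finally, convert the flattening bound to sup-norm and Lipschitz bounds on $\mathcal{M}_{\xi, \mathfrak{q}}^k|_{\mathcal{W}_\mathfrak{q}(U)}$ by a standard doubling/iteration argument, then to the stated $L^2$ bound with the polynomial factor $N_{\mathbb{K}}(\mathfrak{q})^C$.

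For the high regime I would run the SW20 frame-flow version of Dolgopyat's method, modified to carry the congruence cocycle uniformly. Using the LNIC and NCP from SW20 and the lower bound $\|d\rho_{b,\mathfrak{q}}(z)\omega\|_2 \geq \varepsilon_1 \|\rho_b\|$ from \cref{lem:maActionLowerBound}, I would construct Dolgopyat operators $\mathcal{N}_{\xi, \rho, J}$ on positive cylinder-indexed functions majorising $\|\mathcal{M}_{\xi, \mathfrak{q}, \rho}^n(H)\|$ for $n$ comparable to $\log\|\rho_b\|$, and prove an $L^2(U, \nu_U)$-contraction estimate $\|\mathcal{N}_{\xi, \rho, J}^m\|_{L^2} \leq e^{-\eta' m}$. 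Since $\mathtt{c}_\mathfrak{q}$ is locally constant on cylinders (\cref{lem:CocyclesLocallyConstant} and \cref{cor:CocyclesLocallyConstantCorollary}), the unitary operator $(\mathtt{c}_\mathfrak{q}^\alpha)^{-1} \otimes \rho(\vartheta^\alpha)^{-1}$ factors through the cylinder-wise cutoffs used in the Dolgopyat construction; unitarity then passes the $L^2(F_\mathfrak{q})$-norm through pointwise, and one recovers a bound depending only on the $L^2(U, V_\rho^\oplus)$-norms. Converting from the Lipschitz/$\|\cdot\|_{1,b}$ side to $L^2$, together with the $a$-perturbation absorbed through the Lipschitz dependence of $\lambda_a$ and $h_a$, yields the bound $Ce^{-\eta k}\|\tilde{H}\|_{1, \|\rho_b\|}$, with constants independent of $\mathfrak{q}$.

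The main obstacle is the Zariski density and trace-field step for the return trajectory subgroups in the low regime; in higher dimensions this is nontrivial (unlike the $\SL_2(\mathbb{Z})$ setting of \cite{MOW17}, where it was immediate from Schottky/continued-fraction generators), and the paper's strategy is to establish finite index in $\Gamma$ directly. After the two regimes are handled separately — with small enough $a_0$ for perturbation of the RPF leading eigendata, and with $b_0$, $\mathfrak{q}_0'$ chosen to make both arguments applicable simultaneously — the theorem follows by taking $\eta = \min(\eta_{\mathrm{low}}, \eta_{\mathrm{high}})$ and $C$ the maximum of the constants produced in each case.
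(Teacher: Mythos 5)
Your proposal mirrors the paper's own proof of this theorem: it splits the parameter space at a threshold $b_0$ (with $\rho=1$ and $|b|\le b_0$ handled by the Bourgain--Gamburd--Sarnak / Golsefidy--Varj\'u expansion machinery via the finite-index return trajectory subgroups, reducing to $\mathcal{W}_\mathfrak{q}(U)$ and the $L^2$-flattening lemma, and $(b,\rho)\in\widehat{M}_0(b_0)$ handled by the congruence version of the SW20 Dolgopyat argument using \cref{lem:maActionLowerBound} and local constancy of $\mathtt{c}_\mathfrak{q}$), exactly as in \cref{thm:TheoremSmall|b|} and \cref{thm:TheoremLarge|b|OrNontrivial_rho}. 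The combination step and the norm comparisons you sketch are the same as the paper's, so this is essentially the paper's own route.
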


\cref{thm:TheoremFrameFlow} follows from the following \cref{thm:TheoremSmall|b|,thm:TheoremLarge|b|OrNontrivial_rho}.

\begin{theorem}
\label{thm:TheoremSmall|b|}
There exist $\eta > 0$, $C \geq 1$, $a_0 > 0$, $b_0 > 0$, and a nontrivial proper ideal $\mathfrak{q}_0' \subset \mathcal{O}_{\mathbb K}$ such that for all $\xi \in \mathbb C$ with $|a| < a_0$ and $|b| \leq b_0$, square-free ideals $\mathfrak{q} \subset \mathcal{O}_{\mathbb K}$ coprime to $\mathfrak{q}_0\mathfrak{q}_0'$, $k \in \mathbb N$, and $H \in \mathcal{W}_\mathfrak{q}(U)$, we have
\begin{align*}
\left\|\mathcal{M}_{\xi, \mathfrak{q}}^k(H)\right\|_2 \leq CN_{\mathbb K}(\mathfrak{q})^C e^{-\eta k} \|H\|_{\Lip(d)}.
\end{align*}
\end{theorem}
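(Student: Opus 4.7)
The plan is to prove this via the Bourgain--Gamburd--Sarnak/Golsefidy--Varj\'u expander machinery, as sketched in the introduction. Since $|b|$ is bounded and $\rho$ is trivial, Dolgopyat's oscillatory method cannot produce cancellation, and the decay of $\|\mathcal{M}_{\xi, \mathfrak{q}}^k(H)\|_2$ must instead come from equidistribution of the cocycle values $\mathtt{c}_\mathfrak{q}^\alpha \in F_\mathfrak{q}$.

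First, following the reductions of \cref{sec:ReductionToNewInvariantFunctionsAtLevel_q}, I would use square-freeness of $\mathfrak{q}$ to decompose $L_0^2(F_\mathfrak{q})$ into ``new'' isotypic components indexed by divisors of $\mathfrak{q}$. Since the congruence cocycle is compatible with each quotient $F_\mathfrak{q} \to F_{\mathfrak{q}'}$, the transfer operator preserves this decomposition, so it suffices to bound $\mathcal{M}_{\xi, \mathfrak{q}}^k$ on the new-at-$\mathfrak{q}$ part. Next, using \cref{cor:CocyclesLocallyConstantCorollary}, the cocycle $\mathtt{c}^\alpha$ is constant on each cylinder $\mathtt{C}[\alpha]$, so the iterated formula \cref{eqn:k^thIterationOfCongruenceTransferOperatorOfType_rho} (with $\rho = 1$) identifies $\mathcal{M}_{\xi, \mathfrak{q}}^k$, up to the slowly varying weight $e^{(f_\alpha^{(a)} - f_\alpha^{(0)})(u') + i b \tau_\alpha(u')}$, with convolution on the $F_\mathfrak{q}$-fibre against the pushforward of a probability measure $\mu_k$ on $\Gamma$ supported on $\{\mathtt{c}^\alpha : \len(\alpha) = k\}$ and weighted by the cylinder masses $\int_{\mathtt{C}[\alpha]} e^{f_\alpha^{(0)}} \, d\nu_0$. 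For $|a|, |b|$ small, the phase factor introduces a multiplicative error of at most $e^{k (A_f a_0 + b_0 \overline{\tau})}$, which is harmless since the flattening argument only requires $k$ of order $\log N_{\mathbb{K}}(\mathfrak{q})$, provided $a_0, b_0$ are taken small relative to the rate $\eta$ we want. This is the content of \cref{sec:ApproximatingTheTransferOperator}.

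The core input is the $L^2$-flattening lemma of \cref{sec:L2FlatteningLemma}: for some $\ell$ of size $\asymp \log N_{\mathbb{K}}(\mathfrak{q})$, the convolution $\pi_\mathfrak{q}(\mu_k)^{*\ell}$ lies within $N_{\mathbb{K}}(\mathfrak{q})^{-\eta'}$ in $\ell^2(F_\mathfrak{q})$ of the uniform measure. Because $H$ takes values in $L_0^2(F_\mathfrak{q})$, convolution against the uniform piece annihilates $H$, yielding the desired $L^2$ decay at the threshold $k = \ell k_0$ for an appropriate block length $k_0$. One then interpolates to all intermediate $k$ and converts the $L^2$ bound into a bound against $\|H\|_{\Lip(d)}$ using the supremum and Lipschitz estimates of \cref{sec:SupremumAndLipschitzBounds}, producing the claimed polynomial loss $N_{\mathbb{K}}(\mathfrak{q})^C$.

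The principal obstacle is the verification of the hypotheses of the Golsefidy--Varj\'u expansion theorem that drives the flattening lemma: the subgroup of $\tilde{\mathbf{G}}(\mathcal{O}_{\mathbb{K}})$ generated by the (lifted) support of $\mu_k$ must be Zariski dense in $\tilde{\mathbf{G}}$ and have trace field equal to $\mathbb{K}$. These \emph{return trajectory subgroups}, generated by closed admissible loops based at a single rectangle, are not manifestly large in dimension $n \geq 3$. \cref{sec:ZariskiDensityAndTraceFieldOfTheReturnTrajectorySubgroups} handles this by showing that they actually have finite index in $\Gamma$, from which Zariski density and the trace-field identity follow at once by \cite[Corollary 1.4.8]{Mar91}. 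This higher-dimensional finite-index argument is the main new geometric contribution of the paper and is the step I expect to be hardest, since the analogous property was essentially trivial in the $\SL_2$ settings of \cite{OW16,MOW17}.
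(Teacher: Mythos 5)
Your proposal captures the paper's strategy faithfully: reduce to new-at-$\mathfrak{q}$ invariant functions, approximate the congruence transfer operator by convolutions against measures on $\tilde{G}_\mathfrak{q}$, invoke $L^2$-flattening via Golsefidy--Varj\'u (with the Zariski density and trace field input supplied by the finite-index property of the return trajectory subgroups), then close the argument via the $L^\infty$ and Lipschitz bounds of \cref{sec:SupremumAndLipschitzBounds} and iteration.

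Two minor inaccuracies are worth flagging. First, the claim that the phase factor ``introduces a multiplicative error of at most $e^{k(A_f a_0 + b_0\overline{\tau})}$'' and that this forces $a_0, b_0$ to be ``small relative to the rate $\eta$'' is not how the paper handles things. The oscillatory factor $e^{ib\tau_\alpha}$ has unit modulus, so it contributes no mass growth at all; its only role is in the Lipschitz estimate, where \cref{lem:Small|b|Bound} bounds $|1 - e^{(f_s^{(a)}+ib\tau_s)(\alpha^s,y)-(f_s^{(a)}+ib\tau_s)(\alpha^s,x)}|$ by a fixed constant times $d_\theta(x,y)$, with the constant merely depending on (not shrinking with) $b_0$. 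In fact $b_0$ is taken to be $1$, not small. As for the $a$-dependence, the paper works with the RPF-normalized potential $f^{(a)}$, so \cref{lem:SumExpf^aBound} gives $\sum_{\alpha^k} e^{f_k^{(a)}(\alpha^k,x)} \leq C_f$ \emph{uniformly} in $k$; there is no $e^{kA_f a_0}$ factor to absorb, and $a_0 = a_0'$ is just the radius of validity of the perturbation theory, with no coupling to $\eta$. If you instead worked with the un-normalized operator you would pick up $\lambda_a^k$ and would genuinely need $a_0$ small, but that is avoidable. Second, the trace field identity for the (finite-index) return trajectory subgroups is cited from Vinberg \cite[Theorem 3]{Vin71} in the paper (see \cref{cor:Z-DenseInSimplyConnectedCoverGAndTraceFieldK}), not from \cite[Corollary 1.4.8]{Mar91}, which is used earlier for a different purpose. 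Neither issue affects the validity of the overall plan.
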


\begin{theorem}
\label{thm:TheoremLarge|b|OrNontrivial_rho}
There exist $\eta > 0$, $C > 0$, $a_0 > 0$, and $b_0 > 0$ such that for all $\xi \in \mathbb C$ with $|a| < a_0$, if $(b, \rho) \in \widehat{M}_0(b_0)$, then for all nontrivial ideals $\mathfrak{q} \subset \mathcal{O}_{\mathbb K}$, $k \in \mathbb N$, and $H \in \mathcal{V}_{\mathfrak{q}, \rho}(\tilde{U})$, we have
\begin{align*}
\big\|\tilde{\mathcal{M}}_{\xi, \mathfrak{q}, \rho}^k(H)\big\|_2 \leq Ce^{-\eta k} \|H\|_{1, \|\rho_b\|}.
\end{align*}
\end{theorem}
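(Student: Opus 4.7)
My plan is to adapt the frame-flow Dolgopyat argument of Sarkar--Winter \cite{SW20} to the congruence setting, relying on the structural observation that the congruence cocycle is locally constant (\cref{cor:CocyclesLocallyConstantCorollary}) while the representation factors as $\rho_{b,\mathfrak{q}} = \Id_{L^2(F_\mathfrak{q})} \otimes \rho_b$. At each point $u \in \tilde{U}$, the action of $\mathtt{c}^\alpha_\mathfrak{q}$ on $L^2(F_\mathfrak{q})$ is by a unitary permutation, so the pointwise norm $\|H(u)\|_2$ is unchanged when we extract the cocycle. Consequently the Dolgopyat estimates, which are fundamentally pointwise and scalar-valued, transfer verbatim and uniformly in $\mathfrak{q}$. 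No expander input is needed in this regime, which is consistent with the statement allowing arbitrary nontrivial ideals.

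The first step is to establish the usual preliminary ingredients: pointwise $\sup$-norm boundedness of the iterates (which is immediate from the normalization $\mathcal{L}_0^*(\nu_U) = \nu_U$ and the Ruelle--Perron--Frobenius framework of \cref{thm:RPFonU}), and a Lasota--Yorke inequality controlling $|\tilde{\mathcal{M}}_{\xi,\mathfrak{q},\rho}^k(H)|_{C^1}$ in terms of $\|\rho_b\|$, $\|H\|_\infty$, and $|H|_{C^1}/\max(1,|b|)$. The $\|\rho_b\|$-weighted norm $\|\cdot\|_{1,\|\rho_b\|}$ is tailored precisely so that this inequality closes: the derivative growth from $e^{ib\tau_\alpha}$ and $\rho_b(\Phi^\alpha)$ factors is bounded by $\|\rho_b\|$ via \cref{lem:LieTheoreticNormBounds}. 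After $n_0 \sim \log\|\rho_b\|$ iterations, $|\tilde{\mathcal{M}}_{\xi,\mathfrak{q},\rho}^{n_0} H|_{C^1}$ is controlled by $\|H\|_\infty$ alone, placing $\|\tilde{\mathcal{M}}_{\xi,\mathfrak{q},\rho}^{n_0}H\|$ into a Dolgopyat cone $K_{A\|\rho_b\|}$ of positive Lipschitz functions $h$ on $U$ satisfying $|\log h(u_1) - \log h(u_2)| \leq A\|\rho_b\|\,d(u_1,u_2)$ on each $U_j$.

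Next, I would construct Dolgopyat operators $\mathcal{N}_{E,J}$ indexed by a large parameter $E$ comparable to $\|\rho_b\|$ and by finite index sets $J$ chosen from a family $\mathcal{J}$ of sufficiently ``dense'' configurations of cylinder pairs. Each $\mathcal{N}_{E,J}$ is a modification of $\mathcal{L}_a^{n_0}$ that damps certain summands by a factor $(1-\varepsilon)$ on neighborhoods of the cylinders in $J$. The central domination lemma asserts that for any $H \in \mathcal{V}_{\mathfrak{q},\rho}(\tilde{U})$ and any $h \in K_{A\|\rho_b\|}$ with $h \geq \|H\|$ pointwise, some $J \in \mathcal{J}$ yields $\mathcal{N}_{E,J}(h) \geq \|\tilde{\mathcal{M}}_{\xi,\mathfrak{q},\rho}^{n_0} H\|$ pointwise and $\mathcal{N}_{E,J}(h) \in K_{A\|\rho_b\|}$. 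The cancellation mechanism is the LNIC together with the NCP from \cite{SW20}, which ensure that on a definite set of paired cylinders the vector sums $\sum e^{i b \tau_\alpha}\rho_b(\Phi^\alpha)^{-1} \omega_\alpha$ cannot align; the necessary lower bound on $\|d\rho_{b,\mathfrak{q}}(z)\|$ in an $\mathfrak{a}\oplus\mathfrak{m}$ direction of size comparable to $\|\rho_b\|$ is exactly \cref{lem:maActionLowerBound}. An $L^2$-contraction lemma, proved as in \cite{SW20} by integrating against $\nu_U$ and using the approximate eigenequation for $h$, then gives $\|\mathcal{N}_{E,J}(h)\|_2 \leq \theta\|h\|_2$ with $\theta<1$ independent of $\mathfrak{q}$, $\rho$, and $b$. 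Iterating and returning to the transfer operator yields the stated exponential decay.

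The main obstacle is the book-keeping around the $L^2(F_\mathfrak{q})$ factor: one must verify that the pointwise Dolgopyat dichotomy (``on each paired cylinder either the $V_\rho$-summands cancel or the cone bound tightens'') remains valid when applied to $L^2(F_\mathfrak{q})\otimes V_\rho^\oplus$-valued functions. This comes down to checking that (i) the cocycle, being locally constant on each cylinder, permutes the $L^2(F_\mathfrak{q})$ basis uniformly across the cylinder and thus does not destroy the LNIC-based phase cancellation, and (ii) the cancellation constants depend only on $\|\rho_b\|$ through \cref{lem:maActionLowerBound} and not on the dimension of $F_\mathfrak{q}$. Both points use only that permutations are unitary and commute with $\Id_{L^2(F_\mathfrak{q})}\otimes \rho_b$, so the entire \cite{SW20} argument goes through with the pointwise norm $\|H(u)\|_2$ replacing the scalar $|h(u)|$, delivering uniformity in $\mathfrak{q}$ at no extra cost.
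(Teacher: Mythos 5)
Your proposal takes essentially the same approach as the paper: reduce to a Dolgopyat-type domination/contraction theorem (the paper's \cref{thm:FrameFlowDolgopyat}), run the frame-flow Dolgopyat argument of \cite{SW20} with $L^2(F_\mathfrak{q})\otimes V_\rho^\oplus$-valued functions, and exploit the facts that the congruence cocycle is locally constant and unitary, that $\rho_{b,\mathfrak{q}} = \Id_{L^2(F_\mathfrak{q})}\otimes\rho_b$, and that the lower bound of \cref{lem:maActionLowerBound} is uniform in $\mathfrak{q}$. The paper carries out the same plan, making explicit the needed adjustments to \cite[Lemmas 7.3, 8.1, 9.10]{SW20}.
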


We will first prove \cref{thm:TheoremSmall|b|}. We fix $b_0 > 0$ to be the one from \cref{thm:TheoremLarge|b|OrNontrivial_rho} where it is clear from \cref{sec:Dolgopyat'sMethod} that we can assume $b_0 = 1$ from \cite[Eq. (11)]{SW20}. To prove \cref{thm:TheoremSmall|b|}, we first make some reductions as in \cite{OW16}.

\section{Reduction to new invariant functions at level $\mathfrak{q}$}
\label{sec:ReductionToNewInvariantFunctionsAtLevel_q}
In this section, we reduce \cref{thm:TheoremSmall|b|} to \cref{thm:ReducedTheoremSmall|b|}. This is done by introducing the concept of new invariant functions.

Let $\mathfrak{q} \subset \mathfrak{q}' \subset \mathcal{O}_{\mathbb K}$ be ideals. The canonical quotient map $\pi_{\mathfrak{q}, \mathfrak{q}'}: \tilde{\mathbf{G}}(\mathcal{O}_{\mathbb K}/\mathfrak{q}) \to \tilde{\mathbf{G}}(\mathcal{O}_{\mathbb K}/\mathfrak{q}')$ induces the pull back $\pi_{\mathfrak{q}, \mathfrak{q}'}^*: L^2(\tilde{\mathbf{G}}(\mathcal{O}_{\mathbb K}/\mathfrak{q}')) \to L^2(\tilde{\mathbf{G}}(\mathcal{O}_{\mathbb K}/\mathfrak{q}))$. Define $\hat{E}_{\mathfrak{q}'}^\mathfrak{q} = \pi_{\mathfrak{q}, \mathfrak{q}'}^*(L^2(\tilde{\mathbf{G}}(\mathcal{O}_{\mathbb K}/\mathfrak{q}')))$ and $\dot{E}_{\mathfrak{q}'}^\mathfrak{q} = \hat{E}_{\mathfrak{q}'}^\mathfrak{q} \cap \big(\bigoplus_{\mathfrak{q}' \subsetneq \mathfrak{q}''} \hat{E}_{\mathfrak{q}''}^\mathfrak{q}\big)^\perp$. Then, we have the orthogonal decomposition
\begin{align*}
L_0^2(\tilde{\mathbf{G}}(\mathcal{O}_{\mathbb K}/\mathfrak{q})) = \bigoplus_{\mathfrak{q} \subset \mathfrak{q}' \subsetneq \mathcal{O}_{\mathbb K}} \dot{E}_{\mathfrak{q}'}^\mathfrak{q} \qquad \text{for all ideals $\mathfrak{q} \subset \mathcal{O}_{\mathbb K}$}.
\end{align*}
Similarly, using the induced quotient map $\overline{\pi_{\mathfrak{q}, \mathfrak{q}'}}: \tilde{G}_\mathfrak{q} \to \tilde{G}_{\mathfrak{q}'}$, we have the orthogonal decomposition
\begin{align*}
L_0^2(\tilde{G}_\mathfrak{q}) = \bigoplus_{\mathfrak{q} \subset \mathfrak{q}' \subsetneq \mathcal{O}_{\mathbb K}} E_{\mathfrak{q}'}^\mathfrak{q} \qquad \text{for all ideals $\mathfrak{q} \subset \mathcal{O}_{\mathbb K}$}.
\end{align*}

\begin{remark}
We exclude $\mathfrak{q}' = \mathcal{O}_{\mathbb K}$ above because the subspaces $\dot{E}_{\mathcal{O}_{\mathbb K}}^\mathfrak{q} \subset L^2(\tilde{\mathbf{G}}(\mathcal{O}_{\mathbb K}/\mathfrak{q}))$ and $E_{\mathcal{O}_{\mathbb K}}^\mathfrak{q} \subset L^2(\tilde{G}_{\mathfrak{q}})$ consists of constant functions.
\end{remark}

Let $\mathfrak{q} \subset \mathfrak{q}' \subset \mathcal{O}_{\mathbb K}$ be ideals. For all $\phi \in L^2(\tilde{G}_\mathfrak{q})$, define $\dot{\phi} \in L^2(\tilde{\mathbf{G}}(\mathcal{O}_{\mathbb K}/\mathfrak{q}))$ by $\dot{\phi}(g) = \phi(\pi_{\mathfrak{q}}(\ker(\tilde{\pi}))g)$ for all $g \in \tilde{\mathbf{G}}(\mathcal{O}_{\mathbb K}/\mathfrak{q})$. Then $E_{\mathfrak{q}'}^\mathfrak{q} = \{\phi \in L^2(\tilde{G}_\mathfrak{q}): \dot{\phi} \in \dot{E}_{\mathfrak{q}'}^\mathfrak{q}\}$. In the relevant case when $\mathfrak{q}$ is coprime to $\mathfrak{q}_0$, the subspace $E_{\mathfrak{q}'}^\mathfrak{q} \subset L^2(\tilde{G}_\mathfrak{q})$ can be thought of as consisting of new functions which are invariant under $\Gamma_{\mathfrak{q}'}$ but not invariant under $\Gamma_{\mathfrak{q}''}$ for any $\mathfrak{q}' \subsetneq \mathfrak{q}''$, using the isomorphism $\overline{\pi_\mathfrak{q}|_{\tilde{\Gamma}}}: F_\mathfrak{q} \to \tilde{G}_\mathfrak{q}$. We define $\mathcal{W}_{\mathfrak{q}'}^\mathfrak{q}(U) = \{H \in \mathcal{W}_\mathfrak{q}(U): H(u) \in E_{\mathfrak{q}'}^\mathfrak{q} \text{ for all } u \in U\}$, so that we have the orthogonal decomposition
\begin{align*}
\mathcal{W}_\mathfrak{q}(U) = \bigoplus_{\mathfrak{q} \subset \mathfrak{q}' \subsetneq \mathcal{O}_{\mathbb K}} \mathcal{W}_{\mathfrak{q}'}^\mathfrak{q}(U) \qquad \text{for all ideals $\mathfrak{q} \subset \mathcal{O}_{\mathbb K}$}.
\end{align*}
We have the canonical projection operator $e_{\mathfrak{q}, \mathfrak{q}'}: \mathcal{W}_\mathfrak{q}(U) \to \mathcal{W}_{\mathfrak{q}'}^\mathfrak{q}(U)$ and we can define the canonical projection map $\proj_{\mathfrak{q}, \mathfrak{q}'} = (\overline{\pi_{\mathfrak{q}, \mathfrak{q}'}}^*)^{-1}\big|_{E_{\mathfrak{q}'}^\mathfrak{q}}: E_{\mathfrak{q}'}^\mathfrak{q} \to E_{\mathfrak{q}'}^{\mathfrak{q}'}$ since $\overline{\pi_{\mathfrak{q}, \mathfrak{q}'}}^*$ is injective and $E_{\mathfrak{q}'}^\mathfrak{q} \subset \overline{\pi_{\mathfrak{q}, \mathfrak{q}'}}^*(L^2(\tilde{G}_{\mathfrak{q}'}))$. This is equivalent to defining $\proj_{\mathfrak{q}, \mathfrak{q}'}(\phi)(g) = \phi(\tilde{g})$ where $\tilde{g}$ is any lift of $g$ with respect to $\overline{\pi_{\mathfrak{q}, \mathfrak{q}'}}: \tilde{G}_\mathfrak{q} \to \tilde{G}_{\mathfrak{q}'}$, for all $g \in \tilde{G}_{\mathfrak{q}'}$ and $\phi \in E_{\mathfrak{q}'}^\mathfrak{q}$. We use the same notation $\proj_{\mathfrak{q}, \mathfrak{q}'}: \mathcal{W}_{\mathfrak{q}'}^\mathfrak{q}(U) \to \mathcal{W}_{\mathfrak{q}'}^{\mathfrak{q}'}(U)$ for the induced projection map defined pointwise. The congruence transfer operator $\mathcal{M}_{\xi, \mathfrak{q}}$ preserves $\mathcal{W}_{\mathfrak{q}'}^\mathfrak{q}(U)$ for all $\xi \in \mathbb C$. The projection operator commutes with the congruence transfer operator, i.e., $e_{\mathfrak{q}, \mathfrak{q}'} \circ \mathcal{M}_{\xi, \mathfrak{q}} = \mathcal{M}_{\xi, \mathfrak{q}} \circ e_{\mathfrak{q}, \mathfrak{q}'}$ for all $\xi \in \mathbb C$, and the projection map is equivariant with respect to the congruence transfer operator, i.e., $\proj_{\mathfrak{q}, \mathfrak{q}'} \circ \mathcal{M}_{\xi, \mathfrak{q}} = \mathcal{M}_{\xi, \mathfrak{q}'} \circ \proj_{\mathfrak{q}, \mathfrak{q}'}$ for all $\xi \in \mathbb C$. By surjectivity of $\overline{\pi_{\mathfrak{q}, \mathfrak{q}'}}$, we can denote $\spadesuit_{\mathfrak{q}, \mathfrak{q}'} = \#\ker(\overline{\pi_{\mathfrak{q}, \mathfrak{q}'}}) = \frac{\#\tilde{G}_\mathfrak{q}}{\#\tilde{G}_{\mathfrak{q}'}} = [\Gamma_{\mathfrak{q}'}: \Gamma_\mathfrak{q}]$ and by direct calculation it can be checked that $\|H\|_2 = \sqrt{\spadesuit_{\mathfrak{q}, \mathfrak{q}'}} \|\proj_{\mathfrak{q}, \mathfrak{q}'}(H)\|_2$ and $\|H\|_{\Lip(d)} = \sqrt{\spadesuit_{\mathfrak{q}, \mathfrak{q}'}} \|\proj_{\mathfrak{q}, \mathfrak{q}'}(H)\|_{\Lip(d)}$.

\begin{theorem}
\label{thm:Reduced'TheoremSmall|b|}
There exist $\eta > 0$, $C \geq 1$, $a_0 > 0$, and $q_1 \in \mathbb N$ such that for all $\xi \in \mathbb C$ with $|a| < a_0$ and $|b| \leq b_0$, square-free ideals $\mathfrak{q} \subset \mathcal{O}_{\mathbb K}$ coprime to $\mathfrak{q}_0$ with $N_{\mathbb K}(\mathfrak{q}) > q_1$, $k \in \mathbb N$, and $H \in \mathcal{W}_\mathfrak{q}^\mathfrak{q}(U)$, we have
\begin{align*}
\left\|\mathcal{M}_{\xi, \mathfrak{q}}^k(H)\right\|_2 \leq CN_{\mathbb K}(\mathfrak{q})^C e^{-\eta k} \|H\|_{\Lip(d)}.
\end{align*}
\end{theorem}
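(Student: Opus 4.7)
The plan is to prove \cref{thm:Reduced'TheoremSmall|b|} via the Bourgain--Gamburd--Sarnak expansion machinery, as adapted to arbitrary perfect algebraic groups by Golsefidy--Varj\'u, combined with an approximation of the congruence transfer operator by a convolution operator on $F_\mathfrak{q}$. First, I would fix a sufficiently large $n_0 \in \mathbb{N}$ and iterate \cref{eqn:k^thIterationOfCongruenceTransferOperatorOfType_rho} in the trivial representation case to express
\begin{align*}
\mathcal{M}_{\xi, \mathfrak{q}}^{n_0}(H)(u) = \sum_{\alpha:\,\len(\alpha) = n_0} e^{f_\alpha^{(a)}(\sigma^{-\alpha}(u)) + ib\tau_\alpha(\sigma^{-\alpha}(u))}\,(\mathtt{c}_\mathfrak{q}^\alpha)^{-1} H(\sigma^{-\alpha}(u)).
\end{align*}
Since $|b| \leq b_0$ is bounded and $\mathtt{c}_\mathfrak{q}^\alpha$ is locally constant by \cref{cor:CocyclesLocallyConstantCorollary}, the operator $\mathcal{M}_{\xi, \mathfrak{q}}^{n_0}$ should be approximable, with error bounded by $\|H\|_{\Lip(d)}$ times a small quantity, by a convolution operator $v \mapsto \mu_{n_0} \ast v$ on $L^2(F_\mathfrak{q})$, where $\mu_{n_0}$ is a nonnegative measure on $F_\mathfrak{q}$ supported on the projection of $\{\mathtt{c}^\alpha : \alpha\text{ admissible of length }n_0\}$. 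Carrying out this approximation uniformly in $\mathfrak{q}$ is the content of \cref{sec:ApproximatingTheTransferOperator}.

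Iterating the approximation $m$ times with $k \approx m n_0$, the desired bound reduces to estimating the action of $\mu_{n_0}^{\ast m}$ on $E_\mathfrak{q}^\mathfrak{q} \subset L^2(F_\mathfrak{q})$. I would then apply the $L^2$-flattening lemma of \cref{sec:L2FlatteningLemma} to obtain
\begin{align*}
\|\mu_{n_0}^{\ast m} \ast v\|_2 \leq C N_\mathbb{K}(\mathfrak{q})^C e^{-\eta m}\|v\|_2 \qquad \text{for all } v \in E_\mathfrak{q}^\mathfrak{q}.
\end{align*}
This decay comes from a uniform spectral gap on the Cayley graphs $\Cay(\tilde G_\mathfrak{q}, \pi_\mathfrak{q}(S))$, where $S$ is a generating set containing the cocycle values, which is precisely the content of Golsefidy--Varj\'u's theorem. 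To apply their theorem, I must verify (a) Zariski density in $\mathbf{G}$ and (b) trace field equal to $\mathbb{K}$ for the subgroup generated by these cocycle values, which is the job of \cref{sec:ZariskiDensityAndTraceFieldOfTheReturnTrajectorySubgroups}. The passage from the convolution bound to the stated bound on $\|\mathcal{M}_{\xi, \mathfrak{q}}^k(H)\|_2$, incorporating the Lipschitz norm of $H$ and the accumulated approximation errors, is then supplied by \cref{sec:SupremumAndLipschitzBounds}.

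The main obstacle is verifying the Zariski density and trace field hypotheses of Golsefidy--Varj\'u in arbitrary dimension $n \geq 2$. In the $\SL_2(\mathbb{Z})$ setting of \cite{OW16, MOW17} these held essentially tautologically for the Schottky-type semigroups considered there, but in the higher dimensional convex cocompact setting they require genuine work---specifically, showing that the relevant return trajectory subgroups have finite index in $\Gamma$---and form the chief new technical contribution of this paper. Secondary concerns include choosing $n_0$ large enough that $\mu_{n_0}$ is sufficiently equidistributed to trigger the $L^2$-flattening, while keeping the approximation error small and uniform in $\mathfrak{q}$. The restriction $N_\mathbb{K}(\mathfrak{q}) > q_1$ will arise naturally from the effectivity of the Golsefidy--Varj\'u spectral gap, with the finitely many excluded small ideals absorbed in the subsequent reduction to \cref{thm:TheoremSmall|b|}.
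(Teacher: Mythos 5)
Your overall route through Sections~\ref{sec:ApproximatingTheTransferOperator}--\ref{sec:SupremumAndLipschitzBounds} is the right one, and you correctly identify the chief technical contribution (finite index of the return trajectory subgroups). However, the specific mechanism you describe has a gap. You propose approximating $\mathcal{M}_{\xi,\mathfrak{q}}^{n_0}$ by a single convolution $v \mapsto \mu_{n_0}\ast v$ for a fixed $n_0$ and then iterating to reduce to $\mu_{n_0}^{\ast m}$. This does not work directly: a convolution power $\mu_{n_0}^{\ast m}$ ignores the Markov admissibility constraint, since the products $\mathtt{c}^{n_0}(\alpha)\cdot\mathtt{c}^{n_0}(\beta)$ that appear in $\mu_{n_0}\ast\mu_{n_0}$ need not arise from an admissible concatenation of $\alpha$ and $\beta$. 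Moreover, each application of the approximation in \cref{lem:TransferOperatorConvolutionApproximation} incurs an additive error of order $\Lip_{d_\theta}(H)\theta^{s-r}$, so iterating the approximation $m$ times accumulates errors that you would then need to control against a quantity shrinking in $N_\mathbb{K}(\mathfrak{q})$.

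What the paper actually does is pick a single $s = s_\mathfrak{q} \asymp \log N_\mathbb{K}(\mathfrak{q})$ (\cref{thm:ReducedTheoremSmall|b|}), apply the convolution approximation \emph{once} to $\mathcal{M}_{\xi,\mathfrak{q}}^s$, and prove that $\mathcal{M}_{\xi,\mathfrak{q}}^s$ contracts both the $L^\infty$ and Lipschitz (semi)norms by a factor $N_\mathbb{K}(\mathfrak{q})^{-\kappa}$ (\cref{lem:ReducedTheoremEstimate1,lem:ReducedTheoremEstimate2}); the exponential decay in $k$ then comes from iterating this $s$-step \emph{contraction}, not the approximation, via $k = js + m$. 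The Markov structure is handled inside the $L^2$-flattening lemma (\cref{lem:L2FlatteningLemma}) by decomposing the single long measure $\nu_0^{a,\mathfrak{q},x,r}$ into a convolution of ``nearly flat'' block measures $\eta^\mathfrak{q}$ indexed by block boundaries (\cref{lem:EstimateNu}), to which the Golsefidy--Varj\'u expander input is applied block by block (\cref{lem:EtaOperatorBound}). Finally, the generating set one feeds to Golsefidy--Varj\'u is not the set of raw cocycle values $\mathtt{c}^\alpha$ but the symmetric set $S^p(y,z)$ of differences $\mathtt{c}^{p+1}(\alpha)\mathtt{c}^{p+1}(\tilde\alpha)^{-1}$ with matching endpoints---this is exactly why return trajectory subgroups, rather than the cocycle image, are the object whose Zariski density and trace field must be verified. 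Your sketch attributes the Zariski density check to ``the subgroup generated by these cocycle values,'' which is not quite what needs to be proved.
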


\begin{proof}[Proof that \cref{thm:Reduced'TheoremSmall|b|} implies \cref{thm:TheoremSmall|b|}]
Fix $\eta > 0$, $C_1 \geq 1$, $a_0 > 0$, and $q_1 \in \mathbb N$ to be the $\eta$, $C$, $a_0$, and $q_1$ from \cref{thm:Reduced'TheoremSmall|b|}. Fix $C = C_1 + \frac{1}{2}$. Set $\mathfrak{q}_0' \subset \mathcal{O}_{\mathbb K}$ to be the product of all nontrivial prime ideals $\mathfrak{p} \subset \mathcal{O}_{\mathbb K}$ with $N_{\mathbb K}(\mathfrak{p}) \leq q_1$ so that if $\mathfrak{q} \subset \mathcal{O}_{\mathbb K}$ is an ideal coprime to $\mathfrak{q}_0\mathfrak{q}_0'$ and $\mathfrak{q} \subset \mathfrak{q}' \subsetneq \mathcal{O}_{\mathbb K}$ is a proper ideal, then $N_{\mathbb K}(\mathfrak{q}') > q_1$. Let $\xi \in \mathbb C$ with $|a| < a_0$ and $|b| \leq b_0$. Let $\mathfrak{q} \subset \mathcal{O}_{\mathbb K}$ be a square-free ideal coprime to $\mathfrak{q}_0\mathfrak{q}_0'$, $k \in \mathbb N$, and $H \in \mathcal{W}_\mathfrak{q}(U)$. We have
\begin{align*}
\left\|\mathcal{M}_{\xi, \mathfrak{q}}^k(H)\right\|_2^2 &= \sum_{\mathfrak{q} \subset \mathfrak{q}' \subsetneq \mathcal{O}_{\mathbb K}} \left\|e_{\mathfrak{q}, \mathfrak{q}'}(\mathcal{M}_{\xi, \mathfrak{q}}^k (H))\right\|_2^2 \\
&= \sum_{\mathfrak{q} \subset \mathfrak{q}' \subsetneq \mathcal{O}_{\mathbb K}} \spadesuit_{\mathfrak{q}, \mathfrak{q}'} \left\|\proj_{\mathfrak{q}, \mathfrak{q}'}(e_{\mathfrak{q}, \mathfrak{q}'}(\mathcal{M}_{\xi, \mathfrak{q}}^k(H)))\right\|_2^2 \\
&= \sum_{\mathfrak{q} \subset \mathfrak{q}' \subsetneq \mathcal{O}_{\mathbb K}} \spadesuit_{\mathfrak{q}, \mathfrak{q}'} \left\|\mathcal{M}_{\xi, \mathfrak{q}'}^k(\proj_{\mathfrak{q}, \mathfrak{q}'}(e_{\mathfrak{q}, \mathfrak{q}'}(H)))\right\|_2^2.
\end{align*}
Since $\proj_{\mathfrak{q}, \mathfrak{q}'}(e_{\mathfrak{q}, \mathfrak{q}'}(H)) \in \mathcal{W}_{\mathfrak{q}'}^{\mathfrak{q}'}(U)$, we can now apply \cref{thm:Reduced'TheoremSmall|b|} to get
\begin{align*}
\left\|\mathcal{M}_{\xi, \mathfrak{q}}^k(H)\right\|_2^2 &\leq \sum_{\mathfrak{q} \subset \mathfrak{q}' \subsetneq \mathcal{O}_{\mathbb K}} C_1^2N_{\mathbb K}(\mathfrak{q}')^{2C_1} e^{-2\eta k} \spadesuit_{\mathfrak{q}, \mathfrak{q}'} \left\|\proj_{\mathfrak{q}, \mathfrak{q}'}(e_{\mathfrak{q}, \mathfrak{q}'}(H))\right\|_{\Lip(d)}^2 \\
&= \sum_{\mathfrak{q} \subset \mathfrak{q}' \subsetneq \mathcal{O}_{\mathbb K}} C_1^2N_{\mathbb K}(\mathfrak{q}')^{2C_1} e^{-2\eta k} \left\|e_{\mathfrak{q}, \mathfrak{q}'}(H)\right\|_{\Lip(d)}^2 \\
&\leq C_1^2N_{\mathbb K}(\mathfrak{q})^{2C_1} e^{-2\eta k} \left\|H\right\|_{\Lip(d)}^2 \sum_{\mathfrak{q} \subset \mathfrak{q}' \subsetneq \mathcal{O}_{\mathbb K}} 1 \\
&\leq C^2N_{\mathbb K}(\mathfrak{q})^{2C}e^{-2\eta k} \left\|H\right\|_{\Lip(d)}^2.
\end{align*}
\end{proof}

\begin{theorem}
\label{thm:ReducedTheoremSmall|b|}
There exist $C_s > 0$, $a_0 > 0$, $\kappa \in (0, 1)$, and $q_1 \in \mathbb N$ such that for all $\xi \in \mathbb C$ with $|a| < a_0$ and $|b| \leq b_0$, square-free ideals $\mathfrak{q} \subset \mathcal{O}_{\mathbb K}$ coprime to $\mathfrak{q}_0$ with $N_{\mathbb K}(\mathfrak{q}) > q_1$, there exists an integer $s \in (0, C_s\log(N_{\mathbb K}(\mathfrak{q})))$ such that for all $j \in \mathbb Z_{\geq 0}$, and $H \in \mathcal{W}_\mathfrak{q}^\mathfrak{q}(U)$, we have
\begin{align*}
\big\|\mathcal{M}_{\xi, \mathfrak{q}}^{js}(H)\big\|_2 \leq N_{\mathbb K}(\mathfrak{q})^{-j\kappa} \|H\|_{\Lip(d_\theta)}.
\end{align*}
\end{theorem}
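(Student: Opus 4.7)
The plan is to follow the Bourgain--Kontorovich--Magee strategy used in \cite{MOW17}, which combines the convolution-approximation of \cref{sec:ApproximatingTheTransferOperator} with the $L^2$-flattening lemma of \cref{sec:L2FlatteningLemma}, iterated by means of a Doeblin--Fortet/Lasota--Yorke-type inequality from \cref{sec:SupremumAndLipschitzBounds}. Set $s = \lfloor C_s \log N_{\mathbb K}(\mathfrak{q})\rfloor$ for some $C_s>0$ to be chosen. The heart of the matter is the single-step estimate
\begin{align*}
\left\|\mathcal{M}_{\xi,\mathfrak{q}}^s H\right\|_2 \le N_{\mathbb K}(\mathfrak{q})^{-\kappa}\|H\|_{\Lip(d_\theta)} \qquad \text{for all } H\in\mathcal{W}_\mathfrak{q}^\mathfrak{q}(U);
\end{align*}
once it is proved, induction on $j$ delivers the stated bound.

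For the single-step estimate, \eqref{eqn:k^thIterationOfCongruenceTransferOperatorOfType_rho} at trivial $\rho$ expresses $\mathcal{M}_{\xi,\mathfrak{q}}^s H(u)$ as a sum over admissible $\alpha$ of length $s$ weighted by $e^{(f_\alpha^{(a)}+ib\tau_\alpha)(\sigma^{-\alpha}(u))}$ and twisted by $(\mathtt{c}_\mathfrak{q}^\alpha)^{-1}$. Because $\mathtt{c}_\mathfrak{q}^\alpha$ is constant on each cylinder $\mathtt{C}[\alpha]$ by \cref{cor:CocyclesLocallyConstantCorollary}, one may (as in \cref{sec:ApproximatingTheTransferOperator}) replace $\sigma^{-\alpha}(u)$ by a fixed base point $u_\alpha\in \tilde{\mathtt{C}}[\alpha]$ at the cost of an error of order $\theta^s\|H\|_{\Lip(d_\theta)}$ arising from the Lipschitz oscillation of $H$, $f_\alpha^{(a)}$, and $\tau_\alpha$ across cylinders. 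The leading term then acts on $H$ via convolution by the finite complex measure $\mu_{\xi,s} = \sum_\alpha e^{(f_\alpha^{(a)}+ib\tau_\alpha)(u_\alpha)}\delta_{(\mathtt{c}_\mathfrak{q}^\alpha)^{-1}}$ on $F_\mathfrak{q}$. Because $H(u)\in E_\mathfrak{q}^\mathfrak{q}$ lies in the ``new'' isotypic component, the $L^2$-flattening lemma of \cref{sec:L2FlatteningLemma}---whose hypothesis of Zariski density and full trace field of the return trajectory subgroups is furnished by \cref{sec:ZariskiDensityAndTraceFieldOfTheReturnTrajectorySubgroups} and which then invokes Golsefidy--Varj\'u---produces a gain $\|\mu_{\xi,s}\ast H\|_2\le N_{\mathbb K}(\mathfrak{q})^{-\kappa_0}\|H\|_2$ once $s$ is at least a fixed multiple of $\log N_{\mathbb K}(\mathfrak{q})$. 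Choosing $C_s$ so large that both the flattening hypothesis holds and the approximation error $\theta^s\|H\|_{\Lip(d_\theta)}$ is comfortably dominated by the expander gain, and setting $\kappa<\kappa_0$, delivers the single-step estimate.

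Since $\mathcal{M}_{\xi,\mathfrak{q}}$ preserves $\mathcal{W}_\mathfrak{q}^\mathfrak{q}(U)$, applying the single-step estimate to $\mathcal{M}_{\xi,\mathfrak{q}}^{(j-1)s}H$ gives
\begin{align*}
\left\|\mathcal{M}_{\xi,\mathfrak{q}}^{js}H\right\|_2 \le N_{\mathbb K}(\mathfrak{q})^{-\kappa}\left\|\mathcal{M}_{\xi,\mathfrak{q}}^{(j-1)s}H\right\|_{\Lip(d_\theta)},
\end{align*}
so closing the induction requires the Lasota--Yorke-type bound of \cref{sec:SupremumAndLipschitzBounds} controlling the Lipschitz norm of a long iterate by a strictly contracting multiple of the previous Lipschitz norm plus its sup norm; telescoping and absorbing constants into a slightly smaller $\kappa$ completes the argument. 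The expected main obstacle is the calibration of scales in the single-step estimate: the symbolic-dynamical error rate (governed by $\theta$ and by the $C^1$ norms of $\tau$ and $f^{(a)}$) must be strictly beaten by the expander gain (with the Golsefidy--Varj\'u exponent), while $s=O(\log N_{\mathbb K}(\mathfrak{q}))$ is forced so that the outer constant $C$ in \cref{thm:Reduced'TheoremSmall|b|} remains polynomial in $N_{\mathbb K}(\mathfrak{q})$. Matching these constraints, while maintaining the orthogonal decomposition into new-function subspaces that enabled the reduction in \cref{sec:ReductionToNewInvariantFunctionsAtLevel_q}, is where the delicate bookkeeping resides.
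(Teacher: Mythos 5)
Your overall architecture --- approximate the congruence transfer operator by convolutions, apply the $L^2$-flattening lemma, iterate with control of the Lipschitz norm --- is the right one and matches \cref{sec:ApproximatingTheTransferOperator,sec:L2FlatteningLemma,sec:SupremumAndLipschitzBounds}. However, the approximation step as you describe it does not work. Replacing $\sigma^{-\alpha}(u)$ by a base point $u_\alpha\in\tilde{\mathtt{C}}[\alpha]$ does not turn the sum $\sum_\alpha e^{(f_\alpha^{(a)}+ib\tau_\alpha)(u_\alpha)}(\mathtt{c}_\mathfrak{q}^\alpha)^{-1}H(u_\alpha)$ into a convolution $\mu_{\xi,s}*H$: the $L^2(F_\mathfrak{q})$-valued vectors $H(u_\alpha)$ still vary with $\alpha$, since the base points sit in distinct cylinders, so there is no fixed element of $L^2(F_\mathfrak{q})$ being convolved. \cref{lem:TransferOperatorConvolutionApproximation} resolves this by splitting each length-$s$ word into a tail $(\alpha_s,\dots,\alpha_{r+1})$ and an initial segment $\alpha^r$, and freezing the continuation of the tail to $\omega(\alpha_{r+1})$; for each fixed tail the inner sum over $\alpha^r$ produces a genuine convolution $\mu_{(\alpha_s,\dots,\alpha_{r+1})}^{\xi,\mathfrak{q},x}*\phi_{(\alpha_s,\dots,\alpha_{r+1})}^{\mathfrak{q},H}$ of a measure on $\tilde{G}_\mathfrak{q}$ against a fixed element of $E_\mathfrak{q}^\mathfrak{q}$, with error $O(\theta^{s-r}\Lip_{d_\theta}(H))$. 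Both $r$ and $s-r$ must be calibrated separately --- $r\ge C_0\log N_{\mathbb K}(\mathfrak{q})$ so the expander gain in \cref{lem:L2FlatteningLemma} applies, and $s-r\gtrsim\log N_{\mathbb K}(\mathfrak{q})$ so that $\theta^{s-r}$ dominates $N_{\mathbb K}(\mathfrak{q})^{-1}$ --- and the flattening lemma is then applied tail-by-tail, not to a single measure. Without the two-level split the flattening lemma cannot even be invoked.

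The iteration is also misframed. Your single-step estimate has $\|\cdot\|_2$ on the left and $\|\cdot\|_{\Lip(d_\theta)}$ on the right, which does not close under composition, and the ``Lasota--Yorke plus telescoping'' fix you invoke must supply a Lipschitz-norm bound in any case, so the $L^2$ inequality is not the right object to aim for. What the paper proves in \cref{lem:ReducedTheoremEstimate1,lem:ReducedTheoremEstimate2} is a strict one-step contraction of the \emph{full} Lipschitz norm: both $\|\mathcal{M}_{\xi,\mathfrak{q}}^{s_\mathfrak{q}}(H)\|_\infty$ and $\Lip_{d_\theta}(\mathcal{M}_{\xi,\mathfrak{q}}^{s_\mathfrak{q}}(H))$ are bounded by $\tfrac12 N_{\mathbb K}(\mathfrak{q})^{-\kappa}(\|H\|_\infty+\Lip_{d_\theta}(H))$, so $\|\mathcal{M}_{\xi,\mathfrak{q}}^{s_\mathfrak{q}}(H)\|_{\Lip(d_\theta)}\le N_{\mathbb K}(\mathfrak{q})^{-\kappa}\|H\|_{\Lip(d_\theta)}$ with no constants to absorb, and the stated bound follows by iterating and bounding $\|\cdot\|_2\le\|\cdot\|_\infty$ at the end. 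Finally, the Lipschitz half of the contraction hinges on the small-$|b|$ estimate of \cref{lem:Small|b|Bound}, which you do not mention; it is the precise reason the expander-side argument is confined to $|b|\le b_0$, and a plan that omits it does not explain why the hypothesis $|b|\le b_0$ is needed at all.
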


\begin{proof}[Proof that \cref{thm:ReducedTheoremSmall|b|} implies \cref{thm:Reduced'TheoremSmall|b|}]
Fix $C_s$, $a_0 > 0$, $\kappa \in (0, 1)$, and $q_1 \in \mathbb N$ to be the ones from \cref{thm:ReducedTheoremSmall|b|}. Fix
\begin{align*}
B' =  \sup_{|a| \leq a_0, \{0\} \subsetneq \mathfrak{q} \subset \mathcal{O}_{\mathbb K}} \log\left\|\mathcal{M}_{\xi, \mathfrak{q}}\right\|_{\mathrm{op}} \leq \sup_{|a| \leq a_0} \log\left\|\mathcal{L}_\xi\right\|_{\mathrm{op}} \leq \log(Ne^{T_0})
\end{align*}
viewing the transfer operators as operators on $L^2(U, L^2(F_\mathfrak{q}))$ and $L^2(U, \mathbb R)$ respectively. Fix $B = \max(0, B')$, $C = \max(BC_s + 1, C_\theta) \geq 1$, and $\eta = \frac{\kappa}{C_s} > 0$, recalling $C_\theta$ from \cref{subsec:SymbolicDynamics}. Let $\xi \in \mathbb C$ with $|a| < a_0$ and $|b| \leq b_0$. Let $\mathfrak{q} \subset \mathcal{O}_{\mathbb K}$ be a square-free ideal coprime to $\mathfrak{q}_0$ with $N_{\mathbb K}(\mathfrak{q}) > q_1$ and $s \in (0, C_s\log(N_{\mathbb K}(\mathfrak{q})))$ be the corresponding integer provided by \cref{thm:ReducedTheoremSmall|b|}. Then $\left\|\mathcal{M}_{\xi, \mathfrak{q}}\right\|_{\mathrm{op}}^m \leq N_{\mathbb K}(\mathfrak{q})^{BC_s}$ for all $0 \leq m < s$. Let $k \in \mathbb N$ and $H \in \mathcal{W}_\mathfrak{q}^\mathfrak{q}(U)$. Writing $k = js + m$ for some $j \in \mathbb Z_{\geq 0}$ and $0 \leq m < s$, and using \cref{thm:ReducedTheoremSmall|b|}, we have
\begin{align*}
\left\|\mathcal{M}_{\xi, \mathfrak{q}}^k(H)\right\|_2 &\leq \left\|\mathcal{M}_{\xi, \mathfrak{q}}\right\|^m \cdot \big\|\mathcal{M}_{\xi, \mathfrak{q}}^{js}(H)\big\|_2 \\
&\leq N_{\mathbb K}(\mathfrak{q})^{BC_s} N_{\mathbb K}(\mathfrak{q})^{-j\kappa} \|H\|_{\Lip(d_\theta)} \\
&\leq N_{\mathbb K}(\mathfrak{q})^{BC_s + \frac{m\kappa}{s}} e^{-(js + m) \frac{\kappa}{s}\log(N_{\mathbb K}(\mathfrak{q}))} \|H\|_{\Lip(d_\theta)} \\
&\leq CN_{\mathbb K}(\mathfrak{q})^C e^{-\eta k} \|H\|_{\Lip(d)}.
\end{align*}
\end{proof}

The rest of the section is devoted to obtaining strong bounds which are crucial for the proof of \cref{thm:ReducedTheoremSmall|b|} in \cref{sec:SupremumAndLipschitzBounds}.

\section{Approximating the congruence transfer operators}
\label{sec:ApproximatingTheTransferOperator}
The aim of this section is to approximate the congruence transfer operators by convolutions with measures to mimic a random walk.

Let $\mathfrak{q} \subset \mathcal{O}_{\mathbb K}$ be an ideal coprime to $\mathfrak{q}_0$. For all $H \in C(U, L^2(\tilde{G}_\mathfrak{q}))$, we define $H_{\Sigma^+} \in C(\Sigma^+, L^2(\tilde{G}_\mathfrak{q}))$ by $H_{\Sigma^+} = H \circ \zeta^+$. Define $f_{\Sigma^+}^{(a)}$ as in \cref{eqn:f^(a)} by replacing $\sigma|_U$ and $\tau$ with $\sigma|_{\Sigma^+}$ and $\tau_{\Sigma^+}$. In light of \cref{cor:CocyclesLocallyConstantCorollary}, we define $\mathtt{c}_{\mathfrak{q}, \Sigma^+}: \Sigma^+ \to \tilde{G}_\mathfrak{q}$ simply by $\mathtt{c}_{\mathfrak{q}, \Sigma^+}(x) = \mathtt{c}_\mathfrak{q}(u)$ for all $x \in \Sigma^+$, for any choice of $u \in \mathtt{C}[x_0, x_1]$. As there is dependence only on the first two entries, we use the notation $\mathtt{c}_{\mathfrak{q}, \Sigma^+}(j, k)$ for all admissible pairs $(j, k)$ in the natural way. For all $k \in \mathbb N$, we then have
\begin{align*}
\mathtt{c}_{\mathfrak{q}, \Sigma^+}^k(x) = \mathtt{c}_{\mathfrak{q}, \Sigma^+}(x_0, x_1) \mathtt{c}_{\mathfrak{q}, \Sigma^+}(x_1, x_2) \dotsb \mathtt{c}_{\mathfrak{q}, \Sigma^+}(x_{k - 1}, x_k)
\end{align*}
and define $\mathtt{c}_{\mathfrak{q}, \Sigma^+}^0(x) = e \in \Gamma$, for all $x \in \Sigma^+$. We now define another \emph{congruence transfer operator} $\mathcal{M}_{\xi, \mathfrak{q}, \Sigma^+}: C(\Sigma^+, L^2(\tilde{G}_\mathfrak{q})) \to C(\Sigma^+, L^2(\tilde{G}_\mathfrak{q}))$ as in \cref{eqn:k^thIterationOfCongruenceTransferOperatorOfType_rho} by replacing $\sigma|_U$, $\tau$, $f^{(a)}$, and $\mathtt{c}_{\mathfrak{q}}$ with $\sigma|_{\Sigma^+}$, $\tau_{\Sigma^+}$, $f_{\Sigma^+}^{(a)}$, and $\mathtt{c}_{\mathfrak{q}, \Sigma^+}$, and taking $k = 1$ and $\rho = 1$. By a slight abuse of notation, we will drop the subscripts $\Sigma^+$ henceforth.

We introduce some useful notations. Let $j \in \mathbb N$ and $(\alpha_j, \alpha_{j - 1}, \dotsc, \alpha_1)$ be an admissible sequence. We denote $\alpha^j = (\alpha_j, \alpha_{j - 1}, \dotsc, \alpha_1)$. Also, when sequences are themselves written in a sequence, they are to be concatenated. For all $y \in \mathcal A$, denote $\omega(y) \in \Sigma^+$ to be any sequence such that $(y, \omega(y))$ is admissible. We naturally extend the notation for admissible sequences as well so that $\omega(\alpha^j) = \omega(\alpha_1)$.

Let $\mathfrak{q} \subset \mathcal{O}_{\mathbb K}$ be an ideal coprime to $\mathfrak{q}_0$. For any measure $\mu$ on $\tilde{G}_\mathfrak{q}$ and $\phi \in L^2(\tilde{G}_\mathfrak{q})$ the convolution $\mu * \phi \in L^2(\tilde{G}_\mathfrak{q})$ is defined by
\begin{align*}
(\mu * \phi)(g) = \sum_{h \in \tilde{G}_\mathfrak{q}} \mu(h) \phi(gh^{-1}) \qquad \text{for all $g \in \tilde{G}_\mathfrak{q}$}.
\end{align*}

For all $\xi \in \mathbb C$, for all ideals $\mathfrak{q} \subset \mathcal{O}_{\mathbb K}$ coprime to $\mathfrak{q}_0$, $x \in \Sigma^+$, integers $0 < r < s$, and admissible sequences $(\alpha_s, \alpha_{s - 1}, \dotsc, \alpha_{r + 1})$, we define the measures
\begin{align*}
\mu_{(\alpha_s, \alpha_{s - 1}, \dotsc, \alpha_{r + 1})}^{\xi, \mathfrak{q}, x} &= \sum_{\alpha^r} e^{(f_s^{(a)} + ib\tau_s)(\alpha^s, x)} \delta_{\mathtt{c}_\mathfrak{q}^{r + 1}(\alpha_{r + 1}, \alpha^r, x)}; \\
\nu_0^{a, \mathfrak{q}, x, r} &= \sum_{\alpha^r} e^{f_r^{(a)}(\alpha^r, x)} \delta_{\mathtt{c}_\mathfrak{q}^{r + 1}(\alpha_{r + 1}, \alpha^r, x)}; \\
\hat{\mu}_{(\alpha_s, \alpha_{s - 1}, \dotsc, \alpha_{r + 1})}^{a, \mathfrak{q}, x} &= \sum_{\alpha^r} e^{f_s^{(a)}(\alpha^s, x)} \delta_{\mathtt{c}_\mathfrak{q}^{r + 1}(\alpha_{r + 1}, \alpha^r, x)} = e^{f_{s - r}^{(a)}(\alpha^s, x)} \nu_0^{a, \mathfrak{q}, x, r}; \\
\nu_{(\alpha_s, \alpha_{s - 1}, \dotsc, \alpha_{r + 1})}^{a, \mathfrak{q}, x} &= e^{f_{s - r}^{(a)}(\alpha_s, \alpha_{s - 1}, \dotsc, \alpha_{r + 1}, \omega(\alpha_{r + 1}))} \nu_0^{a, \mathfrak{q}, x, r}
\end{align*}
on $\tilde{G}_\mathfrak{q}$ and also for all $H \in C(U, L^2(\tilde{G}_\mathfrak{q}))$, define the function
\begin{align*}
\phi_{(\alpha_s, \alpha_{s - 1}, \dotsc, \alpha_{r + 1})}^{\mathfrak{q}, H} = \delta_{\mathtt{c}_\mathfrak{q}^{s - r - 1}(\alpha_s, \alpha_{s - 1}, \dotsc, \alpha_{r + 1}, \omega(\alpha_{r + 1}))} * H(\alpha_s, \alpha_{s - 1}, \dotsc, \alpha_{r + 1}, \omega(\alpha_{r + 1}))
\end{align*}
in $L^2(\tilde{G}_\mathfrak{q})$ where we note that $\big\|\phi_{(\alpha_s, \alpha_{s - 1}, \dotsc, \alpha_{r + 1})}^{\mathfrak{q}, H}\big\|_2 \leq \|H\|_\infty$.

Now we present a bound which will be used often.

\begin{lemma}
\label{lem:SumExpf^aBound}
There exists $C > 1$ such that for all $|a| < a_0'$, $x \in \Sigma^+$, and $k \in \mathbb N$, we have $\sum_{\alpha^k} e^{f_k^{(a)}(\alpha^k, x)} \leq C$.
\end{lemma}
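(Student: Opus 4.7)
The plan is to recognize the sum as a ratio of iterates of the (unnormalized) transfer operator applied to $h_0$, and then control it using the Ruelle--Perron--Frobenius theorem together with the perturbation-theoretic estimates already fixed earlier in the paper.

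\textbf{Telescoping step.} Summing the defining formula
\begin{align*}
f_{(j,k)}^{(a)} = -(a+\delta_\Gamma)\tau_{(j,k)} + \log h_0 - \log(h_0 \circ \sigma^{(j,k)}) - \log \lambda_a
\end{align*}
along a $\sigma$-trajectory $y = (\alpha^k, x) \in \Sigma^+$ with $\sigma^k(y) = x$ causes the interior $\log h_0$ contributions to collapse in pairs, leaving
\begin{align*}
f_k^{(a)}(y) = -(a+\delta_\Gamma)\tau_k(y) + \log h_0(y) - \log h_0(x) - k\log \lambda_a.
\end{align*}
Exponentiating and summing over all preimages $y$ of $x$ under $\sigma^k$ yields the key identity
\begin{align*}
\sum_{\alpha^k} e^{f_k^{(a)}(\alpha^k,x)} = \frac{1}{\lambda_a^k\, h_0(x)} \sum_{\sigma^k(y)=x} e^{-(a+\delta_\Gamma)\tau_k(y)} h_0(y) = \frac{\mathcal{L}_{-(a+\delta_\Gamma)\tau}^k(h_0)(x)}{\lambda_a^k\, h_0(x)}.
\end{align*}

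\textbf{RPF step.} By \cref{thm:RPFonU}, $\lambda_a$ is the maximal simple eigenvalue of $\mathcal{L}_{-(a+\delta_\Gamma)\tau}$ with strictly positive Lipschitz eigenvector $h_a$ and dual eigenmeasure $\nu_a$ (normalized so $\nu_a(h_a)=1$), with the remaining spectrum strictly inside a disk of smaller radius. Hence $\lambda_a^{-k}\,\mathcal{L}_{-(a+\delta_\Gamma)\tau}^k(h_0) \to \nu_a(h_0)\, h_a$ exponentially fast in $C^{\Lip(d)}(U,\mathbb R)$, and in particular uniformly on $U$. For each fixed $a$, this bounds the ratio above uniformly in $k$ and $x$.

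\textbf{Uniformity in $a$.} Here I would invoke the Lipschitz dependence of $a \mapsto \lambda_a$ and $a \mapsto h_a$ on $[-a_0',a_0']$ already fixed in the paper, together with standard analytic perturbation theory (see \cite{Kat95} and \cite[Proposition 4.6]{PP90}), which provides a uniform spectral gap for the family $\{\mathcal{L}_{-(a+\delta_\Gamma)\tau}\}_{|a| \leq a_0'}$ and continuous dependence of the associated spectral projection. Since $h_0 \in C^{\Lip(d)}(U,\mathbb R)$ is strictly positive on the compact set $U$, we have $\inf_U h_0 > 0$, while $\|h_a\|_\infty$ and $\nu_a(h_0)$ are uniformly bounded for $|a|\leq a_0'$. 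Combining these inputs yields a constant $C>1$, independent of $a$, $k$, and $x$, such that the ratio above is at most $C$.

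The only genuinely delicate point is guaranteeing that the exponential convergence above is uniform across the parameter family; this is precisely the continuity of the spectral projection under the analytic family $a \mapsto \mathcal{L}_{-(a+\delta_\Gamma)\tau}$, which has already been implicitly used in the paper's choice of $a_0'$, so no new work is needed.
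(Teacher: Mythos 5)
Your argument is correct and takes a genuinely different route from the paper's one-line proof. Your telescoping identity $\sum_{\alpha^k} e^{f_k^{(a)}(\alpha^k,x)} = \lambda_a^{-k}\,\mathcal{L}_{-(a+\delta_\Gamma)\tau}^k(h_0)(x)/h_0(x) = \mathcal{L}_a^k(\chi_{\Sigma^+})(x)$ is right, and combined with the RPF theorem and the Lipschitz dependence of $a \mapsto (\lambda_a, h_a)$ on $[-a_0',a_0']$ already fixed by the paper, it does deliver a $k$-independent constant. The paper instead bounds $\sum_{\alpha^k}e^{f_k^{(a)}} \leq e^{A_fa_0'}\sum_{\alpha^k}e^{f_k^{(0)}}$ using the pointwise estimate $|f^{(a)}-f^{(0)}| \leq A_f|a|$ on the one-step weight; but after a Birkhoff sum over $k$ steps this only gives $|f_k^{(a)}-f_k^{(0)}| \leq kA_f|a|$ (indeed $f_k^{(a)}-f_k^{(0)} = -a\tau_k - k\log\lambda_a$, which grows linearly in $k$ along orbits whose time-average of $\tau$ differs from $-\log\lambda_a/a$), so as written the displayed inequality would only yield a $k$-dependent bound $e^{kA_fa_0'}$. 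Your spectral route is what actually supplies the uniform constant. That said, the spectral gap and the exponential convergence $\lambda_a^{-k}\mathcal{L}_{-(a+\delta_\Gamma)\tau}^k(h_0) \to \nu_a(h_0)h_a$ are more than you need: since the normalized operator $\mathcal{L}_a$ is a positive operator with positive fixed point $\hat h_a = h_a/h_0$ of eigenvalue $1$, the monotone comparison $\chi_{\Sigma^+} \leq (\min \hat h_a)^{-1}\hat h_a$ already gives $\mathcal{L}_a^k(\chi_{\Sigma^+}) \leq (\max \hat h_a)/(\min \hat h_a)$ for every $k$, and this ratio is uniformly bounded over $|a| \leq a_0'$ by continuity of $a \mapsto h_a$, compactness of $U$, and strict positivity of $h_0$ and $h_a$. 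That is essentially your argument with the spectral projection replaced by positivity alone.
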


\begin{proof}
Fix $C = e^{A_fa_0'} > 1$. Let $|a| < a_0'$, $x \in \Sigma^+$, and $k \in \mathbb N$. Then $|f^{(a)} - f^{(0)}| \leq A_f|a| < A_fa_0'$ and so we have
\begin{align*}
\sum_{\alpha^k} e^{f_k^{(a)}(\alpha^k, x)} \leq e^{A_fa_0'}\sum_{\alpha^k} e^{f_k^{(0)}(\alpha^k, x)} = e^{A_fa_0'}\mathcal{L}_0^k(\chi_{\Sigma^+})(x) = e^{A_fa_0'} = C.
\end{align*}
\end{proof}

Fix $C_f$ to be the $C$ provided by \cref{lem:SumExpf^aBound}.

We record an easy lemma here which relate the measures defined above.

\begin{lemma}
\label{lem:muHatLessThanCnu}
There exists $C > 0$ such that for all $\xi \in \mathbb C$ with $|a| < a_0'$, ideals $\mathfrak{q} \subset \mathcal{O}_{\mathbb K}$ coprime to $\mathfrak{q}_0$, $x \in \Sigma^+$, integers $0 < r < s$, and admissible sequences $(\alpha_s, \alpha_{s - 1}, \dotsc, \alpha_{r + 1})$, we have $\left|\mu_{(\alpha_s, \alpha_{s - 1}, \dotsc, \alpha_{r + 1})}^{\xi, \mathfrak{q}, x}\right| \leq \hat{\mu}_{(\alpha_s, \alpha_{s - 1}, \dotsc, \alpha_{r + 1})}^{a, \mathfrak{q}, x}$ and
\begin{align*}
C^{-1}\nu_{(\alpha_s, \alpha_{s - 1}, \dotsc, \alpha_{r + 1})}^{a, \mathfrak{q}, x} \leq \hat{\mu}_{(\alpha_s, \alpha_{s - 1}, \dotsc, \alpha_{r + 1})}^{a, \mathfrak{q}, x} \leq C\nu_{(\alpha_s, \alpha_{s - 1}, \dotsc, \alpha_{r + 1})}^{a, \mathfrak{q}, x}.
\end{align*}
\end{lemma}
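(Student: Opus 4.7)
The first inequality $\bigl|\mu_{(\alpha_s, \ldots, \alpha_{r+1})}^{\xi, \mathfrak{q}, x}\bigr| \leq \hat{\mu}_{(\alpha_s, \ldots, \alpha_{r+1})}^{a, \mathfrak{q}, x}$ is immediate from the triangle inequality applied summand by summand: since $\bigl|e^{(f_s^{(a)} + ib\tau_s)(\alpha^s, x)}\bigr| = e^{f_s^{(a)}(\alpha^s, x)}$, comparing term by term against the definition of $\hat{\mu}$ yields the bound (with equality in total variation).

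For the second inequality, I would use the two equalities already recorded in the statement, namely
\[
\hat{\mu}_{(\alpha_s, \ldots, \alpha_{r+1})}^{a, \mathfrak{q}, x} = e^{f_{s-r}^{(a)}(\alpha^s, x)} \, \nu_0^{a, \mathfrak{q}, x, r}, \qquad \nu_{(\alpha_s, \ldots, \alpha_{r+1})}^{a, \mathfrak{q}, x} = e^{f_{s-r}^{(a)}(\alpha_s, \ldots, \alpha_{r+1}, \omega(\alpha_{r+1}))} \, \nu_0^{a, \mathfrak{q}, x, r},
\]
so that the desired double inequality reduces to showing that the scalar ratio
\[
\exp\Bigl(f_{s-r}^{(a)}(\alpha^s, x) - f_{s-r}^{(a)}(\alpha_s, \alpha_{s-1}, \ldots, \alpha_{r+1}, \omega(\alpha_{r+1}))\Bigr)
\]
is bounded above and below by positive constants independent of $\xi$, $\mathfrak{q}$, $x$, $r$, $s$, and the chosen admissible sequence. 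This is a uniform bounded-distortion estimate for the normalized potential $f^{(a)}$, which is the standard workhorse of Ruelle theory.

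The key step is to expand $f_{s-r}^{(a)}$ as a Birkhoff sum and exploit the fact that the two input sequences agree on their first $s-r$ coordinates $(\alpha_s, \alpha_{s-1}, \ldots, \alpha_{r+1})$. Writing the difference as
\[
f_{s-r}^{(a)}(y) - f_{s-r}^{(a)}(z) = \sum_{j=0}^{s-r-1} \Bigl(f^{(a)}_{(\alpha_{s-j}, \alpha_{s-j-1})}(\sigma^j y) - f^{(a)}_{(\alpha_{s-j}, \alpha_{s-j-1})}(\sigma^j z)\Bigr),
\]
I would bound the $j$-th term by $T_0 \cdot d_{\mathrm{su}}(\sigma^j y, \sigma^j z)$ using the Lipschitz bound on $f^{(a)}_{(\cdot, \cdot)}$ coming from the constant $T_0$ fixed in \cref{sec:UniformSpectralBoundsWithHolonomy}. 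Because both shifted points still lie in a common cylinder of length $(s-r) - j$, the Anosov contraction along inverse branches (as encoded through $\zeta^+$ with Lipschitz constant $C_\theta$ and the metric $d_\theta$ with $\theta \in (0,1)$) gives $d_{\mathrm{su}}(\sigma^j y, \sigma^j z) \leq C_\theta \theta^{(s-r)-j}$. Summing yields a geometric series bounded by $C_\theta T_0 \cdot \theta / (1-\theta)$, a constant independent of all parameters. Exponentiating gives the desired uniform constant $C$.

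The only thing worth flagging is uniformity in $a$: the constant $T_0$ was chosen to dominate $\sup_{|a| \leq a_0'} \|f^{(a)}_{(j,k)}\|_{C^1}$, so the estimate is uniform over $|a| < a_0'$ and, since $f^{(a)}$ is real-valued, the choice of imaginary part $b$ and the choice of congruence cocycle $\mathtt{c}_{\mathfrak{q}}^{r+1}(\alpha_{r+1}, \alpha^r, x)$ appearing in the Dirac masses do not enter the distortion bound at all. The main (mild) subtlety is therefore just to verify that the argument is oblivious to $\xi$ and to $\mathfrak{q}$, which it is since only the common factor $\nu_0^{a,\mathfrak{q},x,r}$ carries that data.
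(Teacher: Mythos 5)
Your proof is correct and follows essentially the same route as the paper: decompose $f_s^{(a)}(\alpha^s, x) = f_{s-r}^{(a)}(\alpha^s, x) + f_r^{(a)}(\alpha^r, x)$, reduce the double inequality to a uniform (over $\alpha^r$) bounded-distortion estimate for the Birkhoff sum $f_{s-r}^{(a)}$ evaluated at two points sharing the initial block $(\alpha_s, \ldots, \alpha_{r+1})$, and sum a geometric series. The only cosmetic difference is that you bound differences via $d_{\mathrm{su}}$ on $\tilde{U}$ and bring in an explicit $C_\theta$, whereas the paper works with $\Lip_{d_\theta}(f^{(a)}) \leq T_0$ directly on $\Sigma^+$ (recall $T_0$ is defined with a $\max(C_\theta,1)$ factor already built in), giving the slightly cleaner constant $C = e^{T_0\theta/(1-\theta)}$ — but either constant is valid for the statement.
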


\begin{proof}
Fix $C = e^{\frac{T_0 \theta}{1 - \theta}}$. Let $\xi \in \mathbb C$ with $|a| < a_0'$, $\mathfrak{q} \subset \mathcal{O}_{\mathbb K}$ be an ideal coprime to $\mathfrak{q}_0$, $x \in \Sigma^+$, $0 < r < s$ be integers, and $(\alpha_s, \alpha_{s - 1}, \dotsc, \alpha_{r + 1})$ be an admissible sequence. Denote $\mu_{(\alpha_s, \alpha_{s - 1}, \dotsc, \alpha_{r + 1})}^{\xi, \mathfrak{q}, x}$ by $\mu$, $\hat{\mu}_{(\alpha_s, \alpha_{s - 1}, \dotsc, \alpha_{r + 1})}^{a, \mathfrak{q}, x}$ by $\hat{\mu}$, and $\nu_{(\alpha_s, \alpha_{s - 1}, \dotsc, \alpha_{r + 1})}^{a, \mathfrak{q}, x}$ by $\nu$. It is easy to check $f_s^{(a)}(\alpha^s, x) = f_{s - r}^{(a)}(\alpha^s, x) + f_r^{(a)}(\alpha^r, x)$ from which $|\mu| \leq \hat{\mu}$ follows. We also have
\begin{align*}
&\big|f_s^{(a)}(\alpha^s, x) - \big(f_{s - r}^{(a)}(\alpha_s, \alpha_{s - 1}, \dotsc, \alpha_{r + 1}, \omega(\alpha_{r + 1})) + f_r^{(a)}(\alpha^r, x)\big)\big| \\
={}&\big|f_{s - r}^{(a)}(\alpha^s, x) - f_{s - r}^{(a)}(\alpha_s, \alpha_{s - 1}, \dotsc, \alpha_{r + 1}, \omega(\alpha_{r + 1}))\big| \\
\leq{}&\sum_{k = 0}^{s - r - 1} \big|f^{(a)}(\sigma^k(\alpha^s, x)) - f^{(a)}(\sigma^k(\alpha_s, \alpha_{s - 1}, \dotsc, \alpha_{r + 1}, \omega(\alpha_{r + 1})))\big| \\
\leq{}&\sum_{k = 0}^{s - r - 1}\Lip_{d_\theta}(f^{(a)}) \cdot d_\theta(\sigma^k(\alpha^s, x), \sigma^k(\alpha_s, \alpha_{s - 1}, \dotsc, \alpha_{r + 1}, \omega(\alpha_{r + 1}))) \\
\leq{}&\Lip_{d_\theta}(f^{(a)}) \sum_{k = 0}^{s - r - 1} \theta^{s - r - k} \leq \frac{T_0 \theta}{1 - \theta}.
\end{align*}
Hence, the lemma follows by comparing $\hat{\mu}$ and $\nu$.
\end{proof}

Now, returning to our goal of approximating the transfer operator, the following lemma is proved as in \cite[Lemma 4.21]{OW16}.

\begin{lemma}
\label{lem:TransferOperatorConvolutionApproximation}
For all $\xi \in \mathbb C$ with $|a| < a_0'$, for all ideals $\mathfrak{q} \subset \mathcal{O}_{\mathbb K}$ coprime to $\mathfrak{q}_0$, $x \in \Sigma^+$, integers $0 < r < s$, and $H \in \mathcal{V}_\mathfrak{q}(U)$, we have
\begin{multline*}
\left\|\mathcal{M}_{\xi, \mathfrak{q}}^s(H)(x) - \sum_{\alpha_{r + 1}, \alpha_{r + 2}, \dotsc, \alpha_s} \mu_{(\alpha_s, \alpha_{s - 1}, \dotsc, \alpha_{r + 1})}^{\xi, \mathfrak{q}, x} * \phi_{(\alpha_s, \alpha_{s - 1}, \dotsc, \alpha_{r + 1})}^{\mathfrak{q}, H}\right\|_2 \\
\leq C_f \Lip_{d_\theta}(H)\theta^{s - r}.
\end{multline*}
\end{lemma}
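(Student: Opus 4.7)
The plan is to expand both sides directly and identify the source of the error. First I would write the $s$-fold iterate as
\begin{equation*}
\mathcal{M}_{\xi, \mathfrak{q}}^s(H)(x) = \sum_{\alpha^s} e^{(f_s^{(a)} + ib\tau_s)(\alpha^s, x)} \, \bigl(\mathtt{c}_\mathfrak{q}^s(\alpha^s, x)\bigr)^{-1} H(\alpha^s, x),
\end{equation*}
where $\tilde{G}_\mathfrak{q}$ acts on $L^2(\tilde{G}_\mathfrak{q})$ by the right regular representation, equivalently by convolution with a delta measure. Splitting $\alpha^s$ as the concatenation of $(\alpha_s, \dotsc, \alpha_{r+1})$ with $\alpha^r$, the cocycle property combined with the fact that each factor of $\mathtt{c}_\mathfrak{q}$ depends only on the first two symbols of its argument yields the factorization
\begin{equation*}
\mathtt{c}_\mathfrak{q}^s(\alpha^s, x) = \mathtt{c}_\mathfrak{q}^{s-r-1}(\alpha_s, \dotsc, \alpha_{r+1}, \omega(\alpha_{r+1})) \cdot \mathtt{c}_\mathfrak{q}^{r+1}(\alpha_{r+1}, \alpha^r, x),
\end{equation*}
whose left factor depends only on $(\alpha_s, \dotsc, \alpha_{r+1})$.

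Next I would expand the approximating sum. Unpacking the definitions of $\mu^{\xi,\mathfrak{q},x}_{(\alpha_s, \dotsc, \alpha_{r+1})}$ and $\phi^{\mathfrak{q}, H}_{(\alpha_s, \dotsc, \alpha_{r+1})}$ and combining the two delta-convolutions via the cocycle factorization above, I recover the same weight $e^{(f_s^{(a)} + ib\tau_s)(\alpha^s, x)}$ and the same cocycle action $(\mathtt{c}_\mathfrak{q}^s(\alpha^s, x))^{-1}$, but applied to the \emph{tail-replaced} evaluation $H(\alpha_s, \dotsc, \alpha_{r+1}, \omega(\alpha_{r+1}))$ instead of $H(\alpha^s, x)$. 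The difference of the two sides therefore collapses into
\begin{equation*}
\sum_{\alpha^s} e^{(f_s^{(a)} + ib\tau_s)(\alpha^s, x)} \bigl(\mathtt{c}_\mathfrak{q}^s(\alpha^s, x)\bigr)^{-1} \bigl[H(\alpha^s, x) - H(\alpha_s, \dotsc, \alpha_{r+1}, \omega(\alpha_{r+1}))\bigr].
\end{equation*}

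To estimate this in $L^2$, I would use that the group action is unitary and $|e^{ib\tau_s}| = 1$ to bound the norm by $\sum_{\alpha^s} e^{f_s^{(a)}(\alpha^s, x)} \, \|H(\alpha^s, x) - H(\alpha_s, \dotsc, \alpha_{r+1}, \omega(\alpha_{r+1}))\|_2$. The two sequences agree in their first $s - r$ entries, so $d_\theta$ between them is at most $\theta^{s-r}$, which produces the pointwise estimate $\Lip_{d_\theta}(H)\, \theta^{s-r}$; then \cref{lem:SumExpf^aBound} controls the remaining exponential sum by $C_f$ and closes the argument. The main delicate point I anticipate is the bookkeeping in the first two paragraphs --- tracking the convention for the regular representation, the placement of inverses, and the order of convolution so that the cocycle factorization composes correctly with the delta-measures appearing in $\mu$ and $\phi$. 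Once this alignment is in place, the estimate itself is essentially a single application of Lipschitz continuity.
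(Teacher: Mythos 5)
Your proposal is correct and follows the same line of argument as the paper, which refers the proof to \cite[Lemma~4.21]{OW16}: expand the $s$-fold iterate, factor the cocycle across the split $\alpha^s = (\alpha_s,\dotsc,\alpha_{r+1})\cdot\alpha^r$, observe the approximating sum differs only in replacing the tail $(\alpha^r,x)$ by $\omega(\alpha_{r+1})$ in the argument of $H$, bound that difference by $\Lip_{d_\theta}(H)\theta^{s-r}$ using unitarity of the cocycle action, and close with \cref{lem:SumExpf^aBound}. The bookkeeping you flag does work out: with the paper's convention $(\mu*\phi)(g)=\sum_h\mu(h)\phi(gh^{-1})$ one has $\delta_a*\delta_b*\phi=\delta_{ba}*\phi$, which combines $\delta_{\mathtt{c}^{r+1}_\mathfrak{q}}*\delta_{\mathtt{c}^{s-r-1}_\mathfrak{q}}*H$ into $\delta_{\mathtt{c}^{s}_\mathfrak{q}}*H$ exactly as needed.
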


We will use this approximation to study the convolution, rather than dealing with the transfer operator directly, and obtain strong bounds. This is the objective of \cref{sec:L2FlatteningLemma} but first we need to establish an important fact in \cref{sec:ZariskiDensityAndTraceFieldOfTheReturnTrajectorySubgroups}.

\section{Zariski density and trace field of the return trajectory subgroups}
\label{sec:ZariskiDensityAndTraceFieldOfTheReturnTrajectorySubgroups}
The aim of this section is to prove that the return trajectory subgroups have finite index in $\Gamma$ as stated in \cref{thm:H(yz)FiniteIndexInGamma}. The important consequence is that the return trajectory subgroups are then Zariski dense and have full trace field $\mathbb K$. This will be required in \cref{sec:L2FlatteningLemma} in order to use the strong approximation theorem together with the expander machinery of Golsefidy--Varj\'{u} \cite{GV12}.

\begin{definition}[Return trajectory subgroup]
Let $(y, z) \in \mathcal{A}^2$. For all $p \in \mathbb N$, we define the \emph{return trajectory subgroup of level $p$} to be the subgroup $H^p(y, z) < \Gamma$ generated by the subset $S^p(y, z)$ which consists of the elements
\begin{align*}
\mathtt{c}^{p + 1}(\alpha)\mathtt{c}^{p + 1}(\tilde{\alpha})^{-1} = \prod_{j = 0}^p \mathtt{c}(\alpha_j, \alpha_{j + 1}) \prod_{j = 0}^p \mathtt{c}(\tilde{\alpha}_{p - j}, \tilde{\alpha}_{p + 1 - j})^{-1} \in \Gamma
\end{align*}
for all admissible sequences $\alpha = (\alpha_0, \alpha_1, \dotsc, \alpha_{p + 1})$ and $\tilde{\alpha} = (\tilde{\alpha}_0, \tilde{\alpha}_1, \dotsc, \tilde{\alpha}_{p + 1})$ such that $\alpha_0 = \tilde{\alpha}_0 = y$ and $\alpha_{p + 1} = \tilde{\alpha}_{p + 1} = z$. We simply call the subgroup $H(y, z) < \Gamma$ generated by the subset $S(y, z) = \bigcup_{p = 1}^\infty S^p(y, z)$ the \emph{return trajectory subgroup}.
\end{definition}

For all $p \in \mathbb N$ and $(y, z) \in \mathcal{A}^2$, we denote $\tilde{S}^p(y, z) = \tilde{\pi}^{-1}(S^p(y, z)) \subset \tilde{\Gamma}$ and $\tilde{H}^p(y, z) = \langle \tilde{S}^p(y, z) \rangle = \tilde{\pi}^{-1}(H^p(y, z)) < \tilde{\Gamma}$.

The following lemma gives a useful property of the return trajectory subgroups. Recall the constant $N_T \in \mathbb N$ associated with the topologically mixing property of $T$ from \cref{subsec:SymbolicDynamics}.

\begin{lemma}
\label{lem:ShortWordGeneratorAsLongWordGeneratorTrick}
For all $(y, z) \in \mathcal{A}^2$, $p_0 \in \mathbb N$, and $p \geq p_0 + N_T$, we have $S^{p_0}(y, z) \subset S^p(y, z)$.
\end{lemma}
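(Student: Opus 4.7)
The plan is to show that every generator of $S^{p_0}(y,z)$ can be realized as a generator of $S^{p}(y,z)$ by \emph{padding} both admissible sequences with the same closed loop at $z$, so that the padding cancels in the product defining the generator.

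Let $\gamma \in S^{p_0}(y,z)$, so $\gamma = \mathtt{c}^{p_0+1}(\alpha)\,\mathtt{c}^{p_0+1}(\tilde\alpha)^{-1}$ for admissible sequences $\alpha = (y,\alpha_1,\dotsc,\alpha_{p_0},z)$ and $\tilde\alpha = (y,\tilde\alpha_1,\dotsc,\tilde\alpha_{p_0},z)$. Set $m = p - p_0 \geq N_T$. I would next invoke topological mixing to produce an admissible closed path at $z$ of length $m$: since $T^{N_T}$ has all strictly positive entries and $T$ has no zero row (each symbol is followed by some admissible symbol), the entry $(T^m)_{z,z}$ is strictly positive for every $m \geq N_T$, whence there exists an admissible sequence $w = (z = w_0, w_1, \dotsc, w_{m-1}, w_m = z)$.

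Having fixed such a $w$, define the extended sequences
\begin{align*}
\beta &= (\alpha_0,\alpha_1,\dotsc,\alpha_{p_0},\alpha_{p_0+1},w_1,w_2,\dotsc,w_m), \\
\tilde\beta &= (\tilde\alpha_0,\tilde\alpha_1,\dotsc,\tilde\alpha_{p_0},\tilde\alpha_{p_0+1},w_1,w_2,\dotsc,w_m),
\end{align*}
which are admissible because the junction symbol is $z = \alpha_{p_0+1} = \tilde\alpha_{p_0+1} = w_0$ and $(z,w_1)$ is admissible. Both $\beta$ and $\tilde\beta$ have length $p+2$, start at $y$, and end at $z$, so they are eligible for generating $S^p(y,z)$.

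Writing the cocycle product in ascending order and grouping the first $p_0+1$ factors separately, one gets
\[
\mathtt{c}^{p+1}(\beta) = \mathtt{c}^{p_0+1}(\alpha) \cdot \mathtt{c}(z,w_1)\mathtt{c}(w_1,w_2)\cdots\mathtt{c}(w_{m-1},z),
\]
and the analogous identity for $\tilde\beta$ with the \emph{same} tail factor $\mathtt{c}(z,w_1)\mathtt{c}(w_1,w_2)\cdots\mathtt{c}(w_{m-1},z)$. Consequently the tail factor cancels in the product
\[
\mathtt{c}^{p+1}(\beta)\,\mathtt{c}^{p+1}(\tilde\beta)^{-1} = \mathtt{c}^{p_0+1}(\alpha)\,\mathtt{c}^{p_0+1}(\tilde\alpha)^{-1} = \gamma,
\]
so $\gamma \in S^p(y,z)$, completing the inclusion. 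The only nontrivial ingredient is the existence of the closed loop $w$ of the exact length $m$, which is handled by topological mixing; the rest is a direct concatenation and cancellation argument.
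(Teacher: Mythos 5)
Your proposal is correct and follows essentially the same approach as the paper's proof: both pad $\alpha$ and $\tilde\alpha$ by the same admissible word of length $p - p_0$ that loops from $z$ back to $z$ (guaranteed by topological mixing since $p - p_0 \geq N_T$), and observe that the common tail factor in $\mathtt{c}^{p+1}(\cdot)$ cancels in the product. The only cosmetic difference is that you count entries rather than transitions when you say the extended sequences have ``length $p+2$'' — in the paper's convention these have $\len = p+1$ — but this does not affect the argument.
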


\begin{proof}
Let $(y, z) \in \mathcal A^2$. Let $s = \mathtt{c}^{p_0 + 1}(\alpha)\mathtt{c}^{p_0 + 1}(\tilde{\alpha})^{-1} \in S^{p_0}(y, z)$ for some $p_0 \in \mathbb N$ and admissible sequences $\alpha = (\alpha_0, \alpha_1, \dotsc, \alpha_{p_0 + 1})$ and $\tilde{\alpha} = (\tilde{\alpha}_0, \tilde{\alpha}_1, \dotsc, \tilde{\alpha}_{p_0 + 1})$ such that $\alpha_0 = \tilde{\alpha}_0 = y$ and $\alpha_{p_0 + 1} = \tilde{\alpha}_{p_0 + 1} = z$. Let $p \geq p_0 + N_T$ be an integer. Then $p - p_0 \geq N_T$, and since $T$ is topologically mixing, we can fix an admissible sequence $\beta = (\beta_1, \beta_2, \dotsc, \beta_{p - p_0})$ such that $(z, \beta)$ is admissible and $\beta_{p - p_0} = z$. We can then extend the admissible sequences to $\alpha = (\alpha_0, \alpha_1, \dotsc, \alpha_{p_0 + 1}, \beta)$ and $\tilde{\alpha} = (\tilde{\alpha}_0, \tilde{\alpha}_1, \dotsc, \tilde{\alpha}_{p_0 + 1}, \beta)$. Due to cancellations of the middle factors, we have
\begin{align*}
s = \mathtt{c}^{p_0 + 1}(\alpha)\mathtt{c}^{p_0 + 1}(\tilde{\alpha})^{-1} &= \prod_{j = 0}^{p_0} \mathtt{c}(\alpha_j, \alpha_{j + 1}) \prod_{j = 0}^{p_0} \mathtt{c}(\tilde{\alpha}_{p_0 - j}, \tilde{\alpha}_{p_0 + 1 - j})^{-1} \\
&= \prod_{j = 0}^p \mathtt{c}(\alpha_j, \alpha_{j + 1}) \prod_{j = 0}^p \mathtt{c}(\tilde{\alpha}_{p - j}, \tilde{\alpha}_{p + 1 - j})^{-1} \\
&= \mathtt{c}^{p + 1}(\alpha)\mathtt{c}^{p + 1}(\tilde{\alpha})^{-1} \in S^p(y, z).
\end{align*}
\end{proof}

\begin{remark}
From \cref{lem:ShortWordGeneratorAsLongWordGeneratorTrick} we can derive that $H(y, z) = H(y, z')$ for all $y, z, z' \in \mathcal{A}$. Further using similar tricks, we can in fact derive that $H(y, z) = \gamma H(y', z')\gamma^{-1}$ for some $\gamma \in \Gamma$ for all $(y, z), (y', z') \in \mathcal{A}^2$.
\end{remark}

We now present the main theorem of this section and its corollaries.

\begin{theorem}
\label{thm:H(yz)FiniteIndexInGamma}
For all $(y, z) \in \mathcal A^2$, the subgroup $H(y, z) < \Gamma$ is of finite index.
\end{theorem}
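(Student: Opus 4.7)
The plan is to exhibit $\Gamma$ as a finite union of cosets of $H(y,z)$, one for each symbol $z \in \mathcal{A}$. The argument proceeds in three geometric-dynamical stages.

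First, I would reduce the problem by showing that $H(y,z)$ depends only weakly on $z$. Given any two targets $z, z' \in \mathcal{A}$, topological mixing of $T$ (\cref{subsec:SymbolicDynamics}) provides admissible segments from $z$ to $z'$ of all sufficiently large lengths. Appending a common such segment $\beta$ to same-length admissible sequences $\alpha, \tilde\alpha$ from $y$ to $z$ produces same-length admissible sequences from $y$ to $z'$, and the cocycle difference is preserved by telescoping: $\mathtt{c}(\alpha\cdot\beta)\mathtt{c}(\tilde\alpha\cdot\beta)^{-1} = \mathtt{c}(\alpha)\mathtt{c}(\tilde\alpha)^{-1}$. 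Using \cref{lem:ShortWordGeneratorAsLongWordGeneratorTrick} to reconcile differing lengths, this gives $H(y,z) = H(y,z')$, so one can write $H_y := H(y,z)$ unambiguously. I would verify along similar lines that if $\alpha, \tilde\alpha$ from $y$ to $z$ have different lengths, then $\mathtt{c}^{|\alpha|}(\alpha)\mathtt{c}^{|\tilde\alpha|}(\tilde\alpha)^{-1}$ still lies in $H_y$ after padding both sequences to a common length using a single fixed admissible loop at $z$; this shows that the ``reachability set''
\[
\Gamma(y,z) := \{\mathtt{c}^{|\alpha|}(\alpha) : \alpha \text{ admissible from } y \text{ to } z\}
\]
is contained in a single right coset of $H_y$, say $H_y \cdot \gamma_z$ for any fixed $\gamma_z \in \Gamma(y,z)$.

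Second, I would establish the key geometric fact that every element of $\Gamma$ is reachable: $\Gamma = \bigcup_{z \in \mathcal A} \Gamma(y,z)$. The idea is that the lifted Poincar\'{e} map $\tilde{\mathcal{P}}$ acts on $\Gamma\mathsf{R} \subset \T^1(\mathbb{H}^n)$, and the forward orbit of $\mathsf{R}_y$ under $\tilde{\mathcal{P}}$ determines the set of reachable pairs $(z,\gamma) \in \mathcal{A} \times \Gamma$. Since $\Gamma$ is convex cocompact and Zariski dense, the geodesic flow on $\Omega$ is topologically mixing, so there exist $v \in R_y$ with dense forward $\{a_t\}$-orbit in $\Omega$. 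Lifting to $\tilde v \in \mathsf{R}_y$, the lifted orbit enters every open set $\gamma \interior(\mathsf{R}_z)$: density of the projected orbit means that for any target $\tilde w \in \gamma\,\interior(\mathsf{R}_z)$, some time $t$ (which is necessarily a return time since $\tilde w \in \mathsf{R}$) satisfies $\tilde v a_t \in \gamma\mathsf{R}_z$. Reading off the admissible sequence of rectangles visited up to time $t$ exhibits $\gamma \in \Gamma(y,z)$.

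Combining the two steps yields $\Gamma = \bigcup_{z \in \mathcal A} \Gamma(y,z) \subset \bigcup_{z \in \mathcal A} H_y \gamma_z$, a union of at most $N = \#\mathcal{A}$ right cosets, so $[\Gamma : H_y] \leq N < \infty$. The main obstacle I anticipate is the second step, namely upgrading topological mixing on $\Omega$ into genuine intersection of lifted Poincar\'{e} iterates with $\gamma\mathsf{R}_z$ rather than mere proximity; here one must use that the rectangles $\mathsf{R}_z$ are proper (have nonempty relative interior in $\Omega$) together with the fact that distinct translates $\gamma \mathsf{R}_z$ lie in disjoint fundamental domain copies thanks to the choice of size $\hat\delta < \tfrac{1}{4}\inj(\T^1(X))$ in \cref{eqn:DeltaHatCondition}, so approaching a point of $\gamma\,\interior(\mathsf{R}_z)$ from within $\Omega$ forces entry into $\gamma\mathsf{R}_z$ itself.
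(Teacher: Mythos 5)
Your overall strategy — exhibit $\Gamma$ as a finite union of cosets of $H(y,z)$ by covering it with reachability sets — is appealing but breaks down at the second step, and the gap is not reparable by the means you propose. Topological mixing of the flow on $\Omega$ does give a point $v \in R_y$ with dense forward orbit, but density downstairs says only that for any target $w \in \Omega$ and $\epsilon > 0$ there is $t$ with $d(va_t, w) < \epsilon$; lifting, $\tilde v a_t$ is then close to $\gamma' \tilde w$ for \emph{some} $\gamma' \in \Gamma$, over which you have no control. The lifted forward orbit $\{\tilde v a_t\}$ is a single geodesic converging to one endpoint $\tilde v^+ \in \partial_\infty\mathbb H^n$, so it only meets translates $\gamma\mathsf R_z$ lying along that geodesic ray; for a prescribed $\gamma$ there is no reason it should ever do so. Worse, the claim $\Gamma = \bigcup_z \Gamma(y,z)$ is actually false in general: in a Schottky example with $\Gamma = \mathbb F_2$ and rectangles indexed by the generators and inverses, the cocycle $\mathtt{c}^{|\alpha|}(\alpha)$ of an admissible sequence starting at $y$ always produces a reduced word whose first letter is forced by $y$, so $\bigcup_z \Gamma(y,z)$ is a proper subset of $\Gamma$ (it misses, e.g., the identity and all words beginning with other generators). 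So no refinement of the dense-orbit argument can establish the covering you need.

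Your first step also has a gap: for $\alpha,\tilde\alpha$ of different lengths, padding each to a common length by a loop $\beta$ at $z$ yields
\begin{align*}
\mathtt{c}^{k}(\alpha)\,\mathtt{c}^{|\beta|}(\beta)\,\mathtt{c}^{k'}(\tilde\alpha)^{-1} \in H(y,z),
\end{align*}
not $\mathtt{c}^{k}(\alpha)\,\mathtt{c}^{k'}(\tilde\alpha)^{-1} \in H(y,z)$, and the extra loop cocycle $\mathtt{c}^{|\beta|}(\beta)$ is not visibly in $H(y,z)$. So $\Gamma(y,z)$ need not sit in a single coset of $H(y,z)$, and the final coset count would not follow even if the covering claim held. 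The paper avoids both problems by taking a quite different route: it proves (\cref{pro:LimitSetH(yy)RadialEqualsLimitSetH(yy)EqualsLimitSetGamma}) that $\Lambda_{\mathrm r}(H(y,z)) = \Lambda(H(y,z)) = \Lambda(\Gamma)$ via an explicit radial approximation construction using the Poincar\'e map and \cref{lem:ProducingElementsofH,lem:ChooseAFarAway_v_SuchThat0IsInView}; Bowditch's criterion (\cref{thm:ConvexCocompactIFFLimitSetIsRadial}) then gives that $H(y,z)$ is convex cocompact, and Susskind--Swarup's theorem converts equality of limit sets for nonelementary convex cocompact subgroups into finite index. This limit-set route is essential precisely because the naive reachability covering fails.
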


\begin{corollary}
\label{cor:H^p(yz)FiniteIndexInGamma}
For all $(y, z) \in \mathcal A^2$, there exists $p_0 \in \mathbb N$ such that for all $p > p_0$, we have $H^p(y, z) = H(y, z)$; in particular, $H^p(y, z) < \Gamma$ is of finite index.
\end{corollary}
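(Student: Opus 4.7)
My plan is to derive the corollary directly from \cref{thm:H(yz)FiniteIndexInGamma} combined with the monotonicity of the chain $\{H^p(y,z)\}_p$ implicit in \cref{lem:ShortWordGeneratorAsLongWordGeneratorTrick}. Since all the substantive geometric work sits in the theorem itself, the corollary should reduce to a soft finite-generation argument; I do not anticipate any serious obstacle beyond assembling the two ingredients cleanly.

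First, I would fix $(y,z) \in \mathcal A^2$ and observe the trivial inclusion $H^p(y,z) \subseteq H(y,z)$ for every $p$, since $S^p(y,z) \subseteq S(y,z)$ by construction. The content of the corollary is therefore the reverse inclusion for all sufficiently large $p$. By \cref{thm:H(yz)FiniteIndexInGamma}, $H(y,z)$ has finite index in $\Gamma$. Since $\Gamma$ is convex cocompact it is finitely generated (standard for convex cocompact Kleinian groups, e.g.\ via the \v{S}varc--Milnor lemma applied to the cocompact action on $\Hull(\Lambda(\Gamma))$), and finite-index subgroups of finitely generated groups are finitely generated. Consequently $H(y,z)$ admits a finite generating set $\{h_1, \dotsc, h_k\}$.

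Next, each generator $h_i$ can be expressed as a word in $S(y,z) \cup S(y,z)^{-1}$; enumerate the finitely many letters appearing in these words as $s_{i,1}, \dotsc, s_{i,m_i} \in S(y,z)$. Because $S(y,z) = \bigcup_{q \in \mathbb N} S^q(y,z)$, each $s_{i,j}$ belongs to $S^{q_{i,j}}(y,z)$ for some $q_{i,j} \in \mathbb N$. Set $P = \max_{i,j} q_{i,j}$, which is finite since there are only finitely many letters. Then, invoking \cref{lem:ShortWordGeneratorAsLongWordGeneratorTrick}, for every $p \geq P + N_T$ we have $S^{q_{i,j}}(y,z) \subseteq S^p(y,z)$ simultaneously for all $i,j$. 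In particular every $h_i$ is a word in $S^p(y,z) \cup S^p(y,z)^{-1}$, giving $h_i \in H^p(y,z)$ and therefore $H(y,z) \subseteq H^p(y,z)$. Combined with the trivial reverse inclusion, $H^p(y,z) = H(y,z)$ for all $p > p_0 := P + N_T - 1$, and finite index in $\Gamma$ follows at once from \cref{thm:H(yz)FiniteIndexInGamma}.
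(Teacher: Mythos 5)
Your proof is correct and follows essentially the same route as the paper: reduce to finite generation of $H(y,z)$, express a finite generating set as words in $S(y,z)$, and push all the letters into a single $S^p(y,z)$ via \cref{lem:ShortWordGeneratorAsLongWordGeneratorTrick}. The only cosmetic difference is that the paper deduces finite generation directly from the convex cocompactness of $H(y,z)$ established in the proof of \cref{thm:H(yz)FiniteIndexInGamma}, whereas you deduce it from finite index inside the finitely generated group $\Gamma$; both are fine.
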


\begin{proof}
Let $(y, z) \in \mathcal A^2$. Since $H(y, z)$ is convex cocompact by \cref{thm:H(yz)FiniteIndexInGamma}, it is finitely generated. Thus, we can choose $h_1, h_2, \dotsc, h_{k_0} \in H(y, z)$ for some $k_0 \in \mathbb N$ such that $H(y, z) = \langle h_1, h_2, \dotsc, h_{k_0} \rangle$. For all $1 \leq k \leq k_0$, we have $h_k = s_{k, 1}s_{k, 2} \dotsb s_{k, j_k}$ as a product of generators in $S(y, z)$. But then $s_{k, j} \in S^{p_{k, j}}(y, z)$ for some $p_{k, j} \in \mathbb N$, for all $1 \leq k \leq k_0$ and $1 \leq j \leq j_k$. Fix $p_0 = \max_{1 \leq k \leq k_0, 1 \leq j \leq j_k} p_{k, j} + N_T$ and let $p > p_0$ be an integer. By \cref{lem:ShortWordGeneratorAsLongWordGeneratorTrick}, we can conclude that $s_{k, j} \in S^{p}(y, z)$ for all $1 \leq k \leq k_0$ and $1 \leq j \leq j_k$. Thus, $H(y, z) = \langle h_1, h_2, \dotsc, h_{k_0} \rangle \subset H^p(y, z)$ and so in fact $H^p(y, z) = H(y, z)$. The corollary now follows from \cref{thm:H(yz)FiniteIndexInGamma}.
\end{proof}

\begin{corollary}
\label{cor:Z-DenseInSimplyConnectedCoverGAndTraceFieldK}
There exists $p_0 \in \mathbb N$ such that for all $p > p_0$ and $(y, z) \in \mathcal A^2$, we have
\begin{enumerate}
\item $\tilde{H}^p(y, z)$ is Zariski dense in $\tilde{\mathbf{G}}$;
\item $\mathbb Q(\tr(\Ad(\tilde{H}^p(y, z)))) = \mathbb K$.
\end{enumerate}
\end{corollary}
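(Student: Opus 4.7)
The plan is to derive \cref{cor:Z-DenseInSimplyConnectedCoverGAndTraceFieldK} as a formal consequence of \cref{cor:H^p(yz)FiniteIndexInGamma}. I would choose $p_0$ to be the maximum, over the finite set $\mathcal A^2$, of the thresholds provided by \cref{cor:H^p(yz)FiniteIndexInGamma}; for any $p > p_0$ and $(y,z) \in \mathcal A^2$, $H^p(y,z) = H(y,z)$ has finite index in $\Gamma$, so $\tilde H^p(y,z) = \tilde\pi^{-1}(H^p(y,z))$ has finite index in $\tilde\Gamma = \tilde\pi^{-1}(\Gamma)$.

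For (1), I would first recall the standard fact that a finite-index subgroup of a Zariski dense subgroup of a connected algebraic group is itself Zariski dense (its Zariski closure is a closed subgroup of finite index in the ambient connected group, hence equals the group). Applied to $H^p(y,z) < \Gamma < \mathbf G$, this gives $\overline{H^p(y,z)}^{\mathrm{Zar}} = \mathbf G$. To lift Zariski density across the isogeny, set $\mathbf H = \overline{\tilde H^p(y,z)}^{\mathrm{Zar}}$; since $\tilde \pi$ is a finite morphism of algebraic groups, $\tilde\pi(\mathbf H)$ is Zariski closed in $\mathbf G$ and contains $H^p(y,z)$, so $\tilde\pi(\mathbf H) = \mathbf G$. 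Combined with $\ker(\tilde\pi) \subset \tilde H^p(y,z) \subset \mathbf H$---which follows from the definition $\tilde H^p(y,z) = \tilde\pi^{-1}(H^p(y,z))$---this forces $\mathbf H = \tilde\pi^{-1}(\mathbf G) = \tilde{\mathbf G}$.

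For (2), the inclusion $\mathbb Q(\tr(\Ad(\tilde H^p(y,z)))) \subset \mathbb Q(\tr(\Ad(\tilde\Gamma))) = \mathbb K$ is immediate. For the reverse inclusion, I would invoke the Zariski-density (equivalently, commensurability) invariance of the adjoint trace field for subgroups of a connected semisimple algebraic group---the same principle used in \cref{sec:Preliminaries} via \cite[Corollary 1.4.8]{Mar91} to identify $\mathbb Q(\tr(\Ad(\Gamma)))$ with $\mathbb Q(\tr(\Ad(\tilde\Gamma)))$---now applied to the Zariski dense subgroup $\tilde H^p(y,z) \subset \tilde{\mathbf G}$ produced by (1).

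The entire difficulty of the section is concentrated in \cref{thm:H(yz)FiniteIndexInGamma}; once the finite-index statement is in hand, the corollary is a soft argument using only general properties of algebraic groups, isogenies, and Zariski-dense subgroups of semisimple groups. The only temptation to resist is trying to deduce (2) elementarily from the observation that for each $\gamma \in \tilde \Gamma$ some power $\gamma^n$ lies in $\tilde H^p(y,z)$: the corresponding power-sum identities $p_n, p_{2n}, \ldots$ do not in general recover $p_1 = \tr(\Ad(\gamma))$, so one must really go through the Zariski-density route above.
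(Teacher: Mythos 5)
Your proof is correct, and the two parts track the paper's proof closely in substance; the difference worth noting is in how you lift Zariski density across the isogeny $\tilde\pi$. The paper argues by contradiction: if $\mathbf H = \overline{\tilde H^p(y,z)}^{\mathrm{Zar}} \subsetneq \tilde{\mathbf G}$, then $\tilde\pi(\mathbf H)$ is a constructible set of dimension $< \dim\mathbf G$, hence contained in a proper subvariety, contradicting the Zariski density of $H^p(y,z)$ in $\mathbf G$. You instead argue directly: $\tilde\pi(\mathbf H)$ is a closed subgroup of $\mathbf G$ (true either because $\tilde\pi$ is finite, as you say, or simply because any morphism of algebraic groups has closed image) containing the dense $H^p(y,z)$, hence equals $\mathbf G$; then $\ker\tilde\pi \subset \tilde H^p(y,z) \subset \mathbf H$ forces $\mathbf H = \tilde\pi^{-1}(\mathbf G) = \tilde{\mathbf G}$. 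Your version is arguably cleaner, and the key observation you exploit — that $\tilde H^p(y,z) = \tilde\pi^{-1}(H^p(y,z))$ already contains $\ker\tilde\pi$ by definition, so "pullback of dense is dense" can be made precise via the preimage formula — is not explicitly invoked in the paper. For (2), you and the paper both reduce to the finite-index (commensurability) invariance of the adjoint trace field; the paper cites \cite[Theorem 3]{Vin71} for this, whereas you point back to \cite[Corollary 1.4.8]{Mar91}. Vinberg's theorem is the sharper reference for finite-index invariance, while the Margulis corollary is what the paper used in \cref{sec:Preliminaries} to relate the trace fields of $\tilde\Gamma$ and $\Gamma$ across the isogeny — so you should cite Vinberg here, but this is a bibliographic quibble, not a gap. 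One phrasing caution: "Zariski-density invariance" is not a standard or quite accurate name for the phenomenon — two Zariski dense subgroups of $\tilde{\mathbf G}$ can certainly have different trace fields — and you should say "commensurability invariance" (or "finite-index invariance") throughout, which is what you actually use since you have the inclusion $\tilde H^p(y,z) < \tilde\Gamma$ of finite index.
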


\begin{proof}
For all $(y, z) \in \mathcal A^2$, let $p_{(y, z)} \in \mathbb N$ be the $p_0$ provided by \cref{cor:H^p(yz)FiniteIndexInGamma}. Fix $p_0 = \max_{(y, z) \in \mathcal A^2} p_{(y, z)}$ and let $p > p_0$ be an integer. Suppose $\tilde{H}^p(y, z)$ is not Zariski dense in $\tilde{\mathbf{G}}$. The image of the Zariski closure of $\tilde{H}^p(y, z)$ under $\tilde{\pi}$, which is a Zariski constructible set, cannot contain any Zariski open subset of $\mathbf{G}$ for dimensional reasons. Thus, it is contained in a finite union of proper subvarieties of $\mathbf{G}$ which contradicts the Zariski density of $H^p(y, z)$ in $\mathbf{G}$ guaranteed by \cref{cor:H^p(yz)FiniteIndexInGamma}. From \cref{cor:H^p(yz)FiniteIndexInGamma}, we derive that $\tilde{H}^p(y, z) < \tilde{\Gamma}$ is also a finite index subgroup and hence the trace field property follows from \cite[Theorem 3]{Vin71}.
\end{proof}

Our goal is now to prove \cref{thm:H(yz)FiniteIndexInGamma}. We prepare by first fixing some notations for the rest of the section. Through an isometry, we will view the hyperbolic space in the upper half space model $\mathbb H^n \cong \{(x_1, x_2, \dotsc, x_n) \in \mathbb R^n: x_n > 0\}$ with boundary at infinity $\partial_\infty\mathbb H^n \cong \{(x_1, x_2, \dotsc, x_n) \in \mathbb R^n: x_n = 0\} \cup \{\infty\} \cong \mathbb R^{n - 1} \cup \{\infty\}$. We also use the isometry $\T(\mathbb H^n) \cong \mathbb H^n \times \mathbb R^n$ and denote by $\pi_{\mathbb H^n}: \mathbb H^n \times \mathbb R^n \to \mathbb H^n$ the tangent bundle projection map. Let $(e_1, e_2, \dotsc, e_n)$ be the standard basis on $\mathbb R^n$ and $\pi_{\mathbb R^{n - 1}}: \mathbb R^n \to \mathbb R^{n - 1}$ be the orthogonal projection onto $\mathbb R^{n - 1} \cong \langle e_1, e_2, \dotsc, e_{n - 1}\rangle$. Let $B^{\mathrm{E}}_\epsilon(u) \subset \mathbb R^{n - 1} \subset \partial_\infty\mathbb H^n$ denote the open Euclidean ball of radius $\epsilon > 0$ centered at $u \in \mathbb R^{n - 1}$. We reserve the notation $B_\epsilon(V) \subset \mathbb H^n$ for the $\epsilon$-neighborhood of $V \subset \mathbb H^n$ for $\epsilon > 0$ with respect to the hyperbolic metric. For all $j \in \mathcal{A}$, denote by $\hat{\mathsf{R}}_j$, $\hat{\mathsf{U}}_j$, and $\hat{\mathsf{S}}_j$ the cores of $\mathsf{R}_j$, $\mathsf{U}_j$, and $\mathsf{S}_j$ respectively. Denote by $\mathsf{\Omega}$ the preimage of $\Omega$ under the covering map $\T^1(\mathbb H^n) \to \T^1(X)$. Recall the trajectory isomorphism $\psi$ from \cite[Definition 1.1]{Rat73} defined such that for all $y \in \mathcal{A}$ and $u, u' \in [U_y, u]$, we have $u' = \psi_u^{-1}(u')a_t$ for some unique $\psi_u^{-1}(u') \in W_{\epsilon_0}^{\mathrm{su}}(u)$ and $t \in (\underline{\tau}, \underline{\tau})$. Define the map $\Phi: \T^1(\mathbb H^n) \to \partial_\infty \mathbb H^n$ by $\Phi(u) = u^+$ for all $u \in \T^1(\mathbb H^n)$. Consider the set of unit tangent vectors $V = \{(u, -e_n) \in \T^1(\mathbb H^n): \langle u, e_n \rangle = 1\}$ perpendicular to a horosphere. Setting $C_{\mathrm{E}} > 0$ to be the image of the constant map $V \to \mathbb R$ defined by $u \mapsto \|(d\Phi)_u\|_{\mathrm{op}}$, we have $\frac{1}{C_{\mathrm{E}}}d_{\mathrm{su}}(u, v)\leq \|u^+ - v^+\| \leq C_{\mathrm{E}}d_{\mathrm{su}}(u, v)$ for all $u, v \in V$.

\begin{figure}
\begin{tikzpicture}[>=stealth]
\draw[thick] (-3, 0) to (3, 0);
\draw[very thin] (-3, 5) to (3, 5);
\draw[very thick] (-1.5, 5) to (2.5, 5);
\draw[thick, dotted] (0,5) to (0, 0);
\draw[thick, dotted] (1,5) to (1, 0);
\draw[thick, dotted] (2,5) to (2, 0);

\draw[very thin] (2, 0.5) circle  [radius = 0.5, fill = white];
\draw[very thick] ([shift=(20:0.5)]2,0.5) arc (20:160:0.5);
\draw[thick, dotted] (2, 0) arc(0:180:1);

\draw[very thin] (1, 0.25) circle  [radius = 0.25, fill = white];
\draw[very thick] ([shift=(20:0.25)]1,0.25) arc (20:160:0.25);
\draw[thick, dotted] (1, 0) arc(0:180:0.5);


\draw[->, thick] (0,5) -- (0,4);
\draw[->, thick] (1,5) -- (1,4);
\draw[->, thick] (2,5) -- (2,4);
\draw[->, thick] (-1,5) -- (-1,4);

\draw[->, thick] (2,1) -- (2,0.7);
\draw[->, thick] (2 - 0.4, 0.8) -- (2 - 0.16,0.62);
\draw[->, thick] (0.4, 0.8) -- (0.64,0.98);
\draw[->, thick] (2 + 0.235294, 0.941176) -- (2 + 0.094118, 0.67647);

\draw[->, thick] (1,0.5) -- (1,0.3);
\draw[->, thick] (0.8, 0.4) -- (0.8 + 0.16,0.4 - 0.12);
\draw[->, thick] (0.5, 0.5) -- (0.7, 0.5);

\fill[gray, fill opacity = 0.2] (0, 0) to (-3, 0.75) to (-3, 5) to (3, 5) to (3, 0.75);

\node[font=\tiny] at (-1.25, 5.12) {1};
\node[font=\tiny] at (-0.12, 4.5) {2};
\node[font=\tiny] at (2-0.12, 4.5) {3};
\node[font=\tiny] at ([shift=(35:0.6)]2, 0.5) {4};
\node[font=\tiny] at (2 - 0.14, 0.86) {5};
\node[font=\tiny] at (2- 0.38, 0.65) {6};
\node[font=\tiny] at (0.42, 0.94) {7};
\node[font=\tiny] at (0, -0.12) {8};
\node[font=\tiny] at (2, -0.12) {9};


\coordinate (Legend) at (current bounding box.west);

\fontsize{5}{2}\selectfont
\matrix[right] at ([shift={(0,-0.6)}]Legend) {
\node [label=right:$\mathsf{U}_y \subset W^{\mathrm{su}}(u)$] {1.}; \\
\node [label=right:$u$] {2.}; \\
\node [label=right:$u_q$] {3.}; \\
\node [label=right:{$[v_q, g_z\mathsf{S}_z] \subset W^{\mathrm{ss}}(v_q)$}] {4.}; \\
\node [label=right:$\mathcal{P}^{q + r}(u_q)$] {5.}; \\
\node [label=right:$v_q$] {6.}; \\
\node [label=right:$\mathcal{P}^{-(q + r)}(v_q)$] {7.}; \\
\node [label=right:$u^+$] {8.}; \\
\node [label=right:$u_q^+$] {9.}; \\
};
\end{tikzpicture}
\caption{This illustrates the idea of the proof of \cref{pro:LimitSetH(yy)RadialEqualsLimitSetH(yy)EqualsLimitSetGamma}. Note that the actual positions of $v_q$ and $\mathcal{P}^{-(q + r)}(v_q)$ are perturbations of what is shown.}
\label{fig:OrbitVectorCloseToOrigin}
\end{figure}
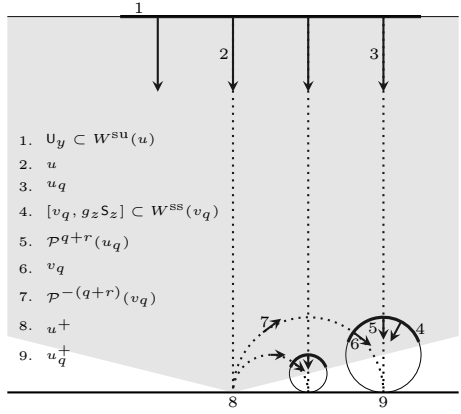

The following \cref{lem:ProducingElementsofH} gives a description of the elements in the generating set of the return trajectory subgroup using the Poincar\'{e} map.

\begin{lemma}
\label{lem:ProducingElementsofH}
Let $p \in \mathbb N$, and $h \in \Gamma$. If there exist $g_z \in \Gamma$, $u_0 \in \hat{\mathsf{R}}_y$, and $v_0 \in g_z\hat{\mathsf{R}}_z$ such that $\mathcal P^{p + 1}(u_0) \in g_z\hat{\mathsf{R}}_z$ and $\mathcal P^{-(p + 1)}(v_0) \in h\hat{\mathsf{R}}_y$, then $h \in S^p(y, z)$.
\end{lemma}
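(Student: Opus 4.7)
The plan is to unpack both hypotheses via the lifted Poincar\'{e} return map and read off $h$ as an explicit product of cocycle values of the form appearing in $S^p(y,z)$. The key identities to exploit are the forward cocycle identity $\mathcal{P}^k(\tilde{u}) = \mathtt{c}^k(u)\widetilde{\mathcal{P}^k(u)}$, obtained by iterating $\mathcal{P}(\tilde{u}) = \mathtt{c}(u)\widetilde{\mathcal{P}(u)}$, and its backward counterpart $\mathcal{P}^{-k}(\tilde{v}) = \bigl(\prod_{j=1}^k \mathtt{c}(\mathcal{P}^{-j}(v))\bigr)^{-1}\widetilde{\mathcal{P}^{-k}(v)}$, obtained by solving $\mathcal{P}(\eta\tilde{w}) = \tilde{v}$ for $\eta \in \Gamma$.

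First I would apply the forward identity to $u_0$. Since $u_0 \in \hat{\mathsf{R}}_y$, its image $u \in \hat{R}_y$ has a well-defined admissible code $\alpha = (\alpha_0, \alpha_1, \dotsc, \alpha_{p+1})$ with $\alpha_0 = y$, and by local constancy of the cocycle (\cref{cor:CocyclesLocallyConstantCorollary}) one has $\mathtt{c}^{p+1}(u) = \mathtt{c}^{p+1}(\alpha)$, so $\mathcal{P}^{p+1}(u_0) = \mathtt{c}^{p+1}(\alpha)\widetilde{\mathcal{P}^{p+1}(u)}$. Comparing this with the hypothesis $\mathcal{P}^{p+1}(u_0) \in g_z\hat{\mathsf{R}}_z$ and using that the translates $\{\gamma\mathsf{R}_j : (\gamma,j) \in \Gamma \times \mathcal{A}\}$ are pairwise disjoint on their cores forces $\alpha_{p+1} = z$ and $\mathtt{c}^{p+1}(\alpha) = g_z$.

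Second I would do the analogous backward computation for $v_0$. Writing $v' = g_z^{-1}v_0 \in \hat{\mathsf{R}}_z$ with base image $v \in \hat{R}_z$ and admissible backward code $\tilde{\alpha} = (\tilde{\alpha}_0, \dotsc, \tilde{\alpha}_{p+1})$ satisfying $\tilde{\alpha}_{p+1} = z$, each factor in the backward identity reads $\mathtt{c}(\mathcal{P}^{-j}(v)) = \mathtt{c}(\tilde{\alpha}_{p+1-j}, \tilde{\alpha}_{p+2-j})$ for $j = 1, 2, \dotsc, p+1$, and the resulting product telescopes to $\mathtt{c}^{p+1}(\tilde{\alpha})$. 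This gives $\mathcal{P}^{-(p+1)}(v_0) = g_z\mathtt{c}^{p+1}(\tilde{\alpha})^{-1}\widetilde{\mathcal{P}^{-(p+1)}(v')}$, and the hypothesis $\mathcal{P}^{-(p+1)}(v_0) \in h\hat{\mathsf{R}}_y$, together with the same disjointness, then forces $\tilde{\alpha}_0 = y$ and $h = g_z\mathtt{c}^{p+1}(\tilde{\alpha})^{-1}$.

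Combining the two steps yields $h = \mathtt{c}^{p+1}(\alpha)\mathtt{c}^{p+1}(\tilde{\alpha})^{-1}$ with $\alpha,\tilde{\alpha}$ admissible, both starting at $y$ and ending at $z$, which is exactly an element of $S^p(y,z)$. The only mildly delicate point is the disjointness argument used to extract the $\Gamma$-label and alphabet index of a point of $\Gamma\hat{\mathsf{R}}$ from its location, but this is immediate from the definition of a Markov section: the cores $\interior(R_j)$ are pairwise disjoint in $\Omega$, so their $\Gamma$-translates in $\mathsf{\Omega}$ are too, and two labels producing the same lifted point must coincide.
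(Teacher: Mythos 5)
Your proof is correct and is essentially the same as the paper's, differing only in a minor technical choice: where you iterate the cocycle relation backward from $v_0$ directly, the paper first translates the hypothesis $\mathcal{P}^{-(p+1)}(v_0) \in h\hat{\mathsf{R}}_y$ by $h^{-1}$ to set $\tilde{u}_0 = h^{-1}\mathcal{P}^{-(p+1)}(v_0) \in \hat{\mathsf{R}}_y$, and then applies the forward cocycle identity to $\tilde{u}_0$ (landing in $h^{-1}g_z\hat{\mathsf{R}}_z$) so that both admissible sequences $\alpha$, $\tilde{\alpha}$ are extracted by the same forward mechanism. Both routes produce identical codes and yield $h = \mathtt{c}^{p+1}(\alpha)\mathtt{c}^{p+1}(\tilde{\alpha})^{-1}$.
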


\begin{proof}
Let $g_z$, $u_0$, and $v_0$ be as in the lemma. Let $\tilde{u}_0 = h^{-1} \mathcal P^{-(p + 1)}(v_0) \in \hat{\mathsf{R}}_y$. Then $\mathcal P^{p + 1}(\tilde{u}_0) \in \tilde{g}_z\hat{\mathsf{R}}_z$ where $\tilde{g}_z = h^{-1}g_z$. Now from definitions, there exist admissible sequences $\alpha = (\alpha_0, \alpha_1, \dotsc, \alpha_{p + 1})$ and $\tilde{\alpha} = (\tilde{\alpha}_0, \tilde{\alpha}_1, \dotsc, \tilde{\alpha}_{p + 1})$ with $\alpha_0 = \tilde{\alpha}_0 = y$ and $\alpha_{p + 1} = \tilde{\alpha}_{p + 1} = z$ such that $g_z = \mathtt{c}^{p + 1}(\alpha)$ and $\tilde{g}_z = \mathtt{c}^{p + 1}(\tilde{\alpha})$. Thus, $h = g_z\tilde{g}_z^{-1} = \mathtt{c}^{p + 1}(\alpha) \mathtt{c}^{p + 1}(\tilde{\alpha})^{-1} \in S^p(y, z)$ by definition.
\end{proof}

\cref{lem:ProducingElementsofH} provides a procedure to produce elements in the return trajectory subgroup. Our strategy is to use this procedure to construct a sequence of orbit points which has limit $0 \in \partial_\infty \mathbb H^n$. Recalling \cref{def:RadialLimitSet}, we also ensure the crucial property that such a sequence can be constructed inside a cone, as depicted in \cref{fig:OrbitVectorCloseToOrigin}, to show that $0 \in \partial_\infty \mathbb H^n$ is a radial limit point.

\begin{lemma}
\label{lem:ChooseAFarAway_v_SuchThat0IsInView}
Let $z \in \mathcal{A}$. There exist $\delta_1 > 0$, $\delta_2 > 0$, and $r \in \mathbb N$ such that for all $y \in \mathcal{A}$ and $u \in \mathsf{R}_y$, there exists $v \in [\hat{\mathsf{U}}_y, u]$ such that
\begin{enumerate}
\item\label{itm:ChooseAFarAway_v} $d_{\mathrm{su}}(u, \psi_u^{-1}(v)) > \delta_1$;
\item\label{itm:0IsInView} $\mathcal{P}^r(v) \in g_z[\hat{\mathsf{U}}_z, \mathsf{S}_z]$ such that $W_{\delta_2}^{\mathrm{ss}}(\mathcal{P}^r(v)) \cap \mathsf{\Omega} \subset [\mathcal{P}^r(v), g_z\mathsf{S}_z]$ for some $g_z \in \Gamma$ and if $g \in G$ such that $gv = (e_n, -e_n)$, then we have $\Phi(W_{\delta_2}^{\mathrm{ss}}(g\mathcal{P}^r(v))) \supset \partial_\infty \mathbb H^n \setminus B_{C_{\mathrm{E}}^{-1}\delta_1e^{-\underline{\tau}}}^{\mathrm{E}}(0)$.
\end{enumerate}
\end{lemma}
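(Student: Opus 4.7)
The plan is to exploit the topological mixing of the transition matrix $T$ to pre-select, for each $y \in \mathcal{A}$, a pair of well-separated cylinders in $\hat{\mathsf{U}}_y$ whose $\sigma^r$-images both land deep inside a translate of the target rectangle $\mathsf{R}_z$. A pigeonhole on the unstable coordinate of the given $u$ picks one of these two cylinders; coupling any point of it with the stable coordinate of $u$ yields the desired $v$.

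\medskip

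Fixing $r$ large (to be constrained further below), topological mixing guarantees, for every $y \in \mathcal{A}$, at least two distinct admissible sequences of length $r$ from $y$ to $z$. Refining each such sequence by appending an appropriate tail (again possible by mixing) produces a sub-cylinder $\mathtt{C}_y^{(i)} \subset \hat{\mathsf{U}}_y$ of some length $\geq r$ whose $\sigma^r$-image is a small cylinder contained in a pre-chosen compact subset of $\interior(\hat{\mathsf{U}}_z)$. The Markov property together with the $e^{-r\underline{\tau}}$-contraction of $\mathcal{P}$ in the strong stable direction then forces $\mathcal{P}^r(v) \in g_z[\hat{\mathsf{U}}_z, \mathsf{S}_z]$ for the cocycle element $g_z \in \Gamma$ associated with the chosen sequence, and keeps $W^{\mathrm{ss}}_{\delta_2}(\mathcal{P}^r(v)) \cap \mathsf{\Omega}$ inside $[\mathcal{P}^r(v), g_z\mathsf{S}_z]$ once $\delta_2 > 0$ is chosen small compared with $\diam_{d_{\mathrm{ss}}}(\mathsf{S}_z)$. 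Since $\mathcal{A}$ is finite and distinct admissible paths yield disjoint cylinders, $\delta_1 > 0$ may be fixed small enough that the two cylinders $\mathtt{C}_y^{(1)}, \mathtt{C}_y^{(2)}$ have pairwise $d_{\mathrm{su}}$-distance at least $3\delta_1$ uniformly in $y$.

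\medskip

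For the final assertion, translating by $g \in G$ with $gv = (e_n, -e_n)$ places $g\mathcal{P}^r(v)$ on the downward vertical geodesic at Euclidean basepoint $(0, \dotsc, 0, e^{-s})$ with $s = \tau_r(v) \geq r\underline{\tau}$; a direct planar horocircle calculation, extended to $n$-dimensional $\partial_\infty \mathbb{H}^n$ via $\SO(n-1)$-symmetry about this geodesic, shows that $\Phi(W^{\mathrm{ss}}_{\delta_2}(g\mathcal{P}^r(v)))$ covers all of $\partial_\infty \mathbb{H}^n$ outside a Euclidean ball about $0$ of radius proportional to $e^{-s}/\delta_2$, so enlarging $r$ makes this radius at most $C_{\mathrm{E}}^{-1}\delta_1 e^{-\underline{\tau}}$. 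Given $u = [u_0, s_0] \in \mathsf{R}_y$, the $3\delta_1$-separation forces at most one of $\mathtt{C}_y^{(1)}, \mathtt{C}_y^{(2)}$ to lie within $\delta_1$ of $u_0$; pick $v' \in \mathtt{C}_y^{(i)}$ from the far one and set $v = [v', s_0] \in [\hat{\mathsf{U}}_y, u]$. Then $\psi_u^{-1}(v) \in W^{\mathrm{su}}_{\epsilon_0}(u)$ is at $d_{\mathrm{su}}$-distance from $u$ comparable to $d_{\mathrm{su}}(u_0, v')$, since $\psi_u^{-1}$ differs from $[u_0, s_0] \mapsto u_0$ only by a flow of bounded time $|t| \leq \overline{\tau}$ which is bi-Lipschitz for $d_{\mathrm{su}}$, so condition (1) follows after absorbing constants into $\delta_1$.

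\medskip

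\textbf{Main obstacle.} The principal subtlety is juggling the three conditions with a single tuple $(r, \delta_1, \delta_2)$ uniformly over $y \in \mathcal{A}$ and $u \in \mathsf{R}_y$: thinning cylinders for condition (2) shrinks them and threatens the $3\delta_1$-separation needed for the pigeonhole, while increasing $r$ for the $\Phi$-condition both helps (2) via stronger stable contraction and tightens the cylinder geometry. The correct order of choices is to first fix a base $r_0$ and a pair of separated cylinders per $y$ using mixing, then $\delta_1$ from those separations, then $\delta_2$ from the rectangle widths, and finally to enlarge $r \geq r_0$ as needed for the $\Phi$-bound, refining the cylinders by appending common tail symbols so that the established separation is preserved.
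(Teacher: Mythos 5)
The outline you propose shares the key idea with the paper (manufacture two well-separated options in the unstable direction and pigeonhole against $u$; use contraction and a dilation argument at infinity for the $\Phi$-bound), but there is a genuine gap in the way you dispose of the condition $W^{\mathrm{ss}}_{\delta_2}(\mathcal{P}^r(v)) \cap \mathsf{\Omega} \subset [\mathcal{P}^r(v), g_z\mathsf{S}_z]$.

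You justify this by saying the Markov property plus the $e^{-r\underline{\tau}}$ stable contraction force the inclusion ``once $\delta_2$ is chosen small compared with $\diam_{d_{\mathrm{ss}}}(\mathsf{S}_z)$.'' That is not enough. The stable coordinate of $\mathcal{P}^r(v)$ is the $\mathcal{P}^r$-image of the stable coordinate of $u$, and as $u$ ranges over $\mathsf{R}_y$ the image ranges over a compact stable cylinder inside $g_z\mathsf{S}_z$ whose \emph{location} in $g_z\mathsf{S}_z$ is dictated by the fixed admissible sequence defining your $\mathtt{C}_y^{(i)}$, not by $\delta_2$. Nothing prevents that stable cylinder from sitting arbitrarily close to (or meeting) $\partial(g_z\mathsf{S}_z)$ within $\mathsf{\Omega}$; if it does, no positive $\delta_2$ works uniformly in $u$. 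Shrinking cylinders in the unstable direction --- which is all your ``small cylinder contained in a pre-chosen compact subset of $\interior(\hat{\mathsf{U}}_z)$'' controls --- does not move the stable cylinder away from $\partial(g_z\mathsf{S}_z)$. Moreover the order of choices you lay out in the ``main obstacle'' paragraph (fix $\delta_2$ from the rectangle widths, then enlarge $r$) makes the problem worse: as $r$ grows, the stable cylinder changes position with the sequence, so any margin established at $r_0$ gives no information at larger $r$.

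The paper resolves exactly this by choosing, once and for all, two $\mathcal{P}$-periodic points $\omega_1, \omega_2 \in \hat{\mathsf{R}}_z$ (so they lie in the core, uniformly inside the rectangle) with $[\omega_1,w_z]\neq[\omega_2,w_z]$, and routing $v$ through them: setting $v=[g_y^{-1}\omega_{y,j},u]$ where $\omega_{y,j}=\sigma^{-\alpha_y}([\omega_j,w_z])$. Periodicity then makes $\mathcal{P}^{N_T+kr_0}(v)$ contract in the stable direction toward $\omega_j$ for \emph{every} $k$, uniformly in $u$, so $\delta_2$ can be fixed first as the margin around $\omega_j$ (via properness, $W^{\mathrm{ss}}_{2\delta_2}(\omega_j)\cap\mathsf{\Omega}\subset[\omega_j,\mathsf{S}_z]$) and then $r=N_T+kr_0$ chosen large enough for both the containment and the $\Phi$-bound. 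Your construction has no such fixed core target for the stable coordinate to converge to, which is the missing ingredient. (Separately, the estimate giving condition (1) is handled in the paper by the compactness-based infimum $\delta_y''$; your ``absorbing constants into $\delta_1$'' is fine in spirit but would need the same uniformization.)
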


\begin{proof}
Let $z \in \mathcal{A}$. Let $w_y \in \mathsf{R}_y$ be the center for all $y \in \mathcal{A}$. Using the fact that $\Gamma$ is nonelementary and $T$ is topologically mixing, we can fix two $\mathcal{P}$-periodic points $\omega_1, \omega_2 \in \hat{\mathsf{R}}_z$ with common period $r_0 \in \mathbb N$, by which we mean $\mathcal{P}^{r_0}(\omega_1) = \gamma_1\omega_1$ and $\mathcal{P}^{r_0}(\omega_2) = \gamma_2\omega_2$ for some $\gamma_1, \gamma_2 \in \Gamma$ (they correspond to closed geodesics on $X$), such that $[\omega_1, w_z] \neq [\omega_2, w_z]$. For all $y \in \mathcal{A}$, fix any admissible sequence $\alpha_y = (\alpha_{y, 0}, \alpha_{y, 1}, \dotsc, \alpha_{y, N_T})$ with $\len(\alpha_y) = N_T$ such that $\alpha_{y, 0} = y$ and $\alpha_{y, N_T} = z$. Define $\omega_{y, 1} = \sigma^{-\alpha_y}([\omega_1, w_z]) \in g_y\hat{\mathsf{U}}_y$ and $\omega_{y, 2} = \sigma^{-\alpha_y}([\omega_2, w_z]) \in g_y\hat{\mathsf{U}}_y$ for some $g_y \in \Gamma$ and $\delta_y' = \frac{1}{2} d_{\mathrm{su}}(\omega_{y, 1}, \omega_{y, 2})$, for all $y \in \mathcal{A}$. Since $[\cdot, \cdot]: W_{\epsilon_0}^{\mathrm{su}}(w_y) \times W_{\epsilon_0}^{\mathrm{ss}}(w_y) \to [W_{\epsilon_0}^{\mathrm{su}}(w_y), W_{\epsilon_0}^{\mathrm{ss}}(w_y)]$ is a homeomorphism and $\mathsf{U}_y$ and $\mathsf{S}_y$ are compact for all $y \in \mathcal{A}$, there exists $\delta_1 \in (0, \min_{y \in \mathcal{A}} \delta_y'')$ where
\begin{align*}
\delta_y'' = \inf\bigl\{d_{\mathrm{su}}\bigl([u', v'], \psi_{[u', v']}^{-1}([u'', v'])\bigr): u', u'' \in \mathsf{U}_y \text{ with } d_{\mathrm{su}}(u', u'') \geq \delta_y', v' \in \mathsf{S}_y\bigr\}
\end{align*}
for all $y \in \mathcal{A}$.

By properness of $\mathsf{S}_z$, there exists $\delta_2 > 0$ such that $W_{2\delta_2}^{\mathrm{ss}}(\omega_j) \cap \mathsf{\Omega} \subset [\omega_j, \mathsf{S}_z]$ for all $j \in \{1, 2\}$. Thus, for all $w \in W_{\delta_2}^{\mathrm{ss}}(\omega_j)$, we have $W_{\delta_2}^{\mathrm{ss}}(w) \cap \mathsf{\Omega} \subset [\omega_j, \mathsf{S}_z]$.

Let $u \in \mathsf{R}_y$ for some $y \in \mathcal{A}$. We now fix $v = [g_y^{-1}\omega_{y, j}, u]$ for some $j \in \{1, 2\}$ as appropriate since one of them satisfies \cref{itm:ChooseAFarAway_v} by construction. Since $\omega_1$ and $\omega_2$ are $\mathcal{P}$-periodic points, by the Markov property, there exists $k_1 \in \mathbb N$ independent of the choice of $u$ or $v$ such that for all $k > k_1$, we have $\mathcal{P}^{N_T + kr_0}(v) \in W_{\delta_2}^{\mathrm{ss}}(\omega_j)$ and hence $W_{\delta_2}^{\mathrm{ss}}(\mathcal{P}^{N_T + kr_0}(v)) \cap \mathsf{\Omega} \subset [\mathcal{P}^{N_T + kr_0}(v), g_z\mathsf{S}_z]$ for some $g_z \in \Gamma$. Now, suppose $g \in G$ such that $gv = (e_n, -e_n)$. Then, for all $k > k_1$, we also have $\iota_k g \mathcal{P}^{N_T + kr_0}(v) = (e_n, -e_n)$ where $\iota_k \in G$ is a dilation by a factor of $e^{\tau_{N_T + kr_0}(v)}$. In fact, $\iota_k g W_{\delta_2}^{\mathrm{ss}}(\mathcal{P}^{N_T + kr_0}(v)) = W_{\delta_2}^{\mathrm{ss}}(\iota_k g \mathcal{P}^{N_T + kr_0}(v)) = W_{\delta_2}^{\mathrm{ss}}((e_n, -e_n))$ are independent of $k > k_1$. Thus, $\Phi(\iota_k g W_{\delta_2}^{\mathrm{ss}}(\mathcal{P}^{N_T + kr_0}(v))) = \partial_\infty \mathbb H^n \setminus B_B^{\mathrm{E}}(0)$ for some constant $B > 0$. Since $\Phi$ is equivariant with respect to isometries, we can apply $\iota_k^{-1}$ to get $\Phi(gW_{\delta_2}^{\mathrm{ss}}(\mathcal{P}^{N_T + kr_0}(v))) = \partial_\infty \mathbb H^n \setminus B_{B_k}^{\mathrm{E}}(0)$ where $B_k = Be^{-\tau_{N_T + kr_0}(v)} \leq Be^{-(N_T + kr_0)\underline{\tau}}$, for all $k > k_1$. Thus, there exists an integer $k_2 > k_1$ such that we have the additional property $\Phi(gW_{\delta_2}^{\mathrm{ss}}(\mathcal{P}^{N_T + kr_0}(v))) \supset \partial_\infty \mathbb H^n \setminus B_{C_{\mathrm{E}}^{-1}\delta_1e^{-\underline{\tau}}}^{\mathrm{E}}(0)$ for all $k > k_2$. Fixing any such $k > k_2$ and $r = N_T + kr_0$, we can also guarantee \cref{itm:0IsInView}.
\end{proof}

\begin{proposition}
\label{pro:LimitSetH(yy)RadialEqualsLimitSetH(yy)EqualsLimitSetGamma}
For all $(y, z) \in \mathcal{A}^2$, we have $\Lambda_{\mathrm{r}}(H(y, z)) = \Lambda(H(y, z)) = \Lambda(\Gamma)$.
\end{proposition}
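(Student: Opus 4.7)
Since $\Lambda_{\mathrm{r}}(H(y, z)) \subset \Lambda(H(y, z)) \subset \Lambda(\Gamma)$ trivially, and different return trajectory subgroups are $\Gamma$-conjugate by the remark following \cref{lem:ShortWordGeneratorAsLongWordGeneratorTrick} (so their limit sets and radial limit sets transform equivariantly), it suffices to prove the reverse inclusion $\Lambda(\Gamma) \subset \Lambda_{\mathrm{r}}(H(y, z))$ for a single fixed $(y, z) \in \mathcal{A}^2$. The plan is, for each $\xi \in \Lambda(\Gamma)$, to produce a sequence $\{h_q\}_{q \in \mathbb{N}} \subset H(y, z)$ with $h_q o \to \xi$ remaining inside a bounded tube around a geodesic ray to $\xi$.

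After applying an isometry $g \in G$, we normalize so that $\xi = 0 \in \partial_\infty \mathbb{H}^n$ and the target ray is the downward vertical line through $e_n$; this matches the setup of \cref{lem:ChooseAFarAway_v_SuchThat0IsInView}. Starting from an arbitrary $u_0 \in \hat{\mathsf{R}}_y$, we iteratively apply \cref{lem:ChooseAFarAway_v_SuchThat0IsInView}: at step $q$, it yields $v_q \in [\hat{\mathsf{U}}_y, u_q]$ transverse to $u_q$ in $d_{\mathrm{su}}$ by at least $\delta_1$, an integer $r$ and $g_z^{(q)} \in \Gamma$ with $\mathcal{P}^r(v_q) \in g_z^{(q)} \hat{\mathsf{R}}_z$, and the coverage statement $\Phi(W^{\mathrm{ss}}_{\delta_2}(g_q \mathcal{P}^r(v_q))) \supset \partial_\infty \mathbb{H}^n \setminus B^{\mathrm{E}}_{C_{\mathrm{E}}^{-1} \delta_1 e^{-\underline{\tau}}}(0)$. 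This coverage lets us pick a stable-leaf neighbor whose backward trajectory lands in a translate $h_q \hat{\mathsf{R}}_y$ sitting on the downward vertical geodesic to $0$, serving as the next base point $u_{q + 1}$ for the induction.

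At every stage, \cref{lem:ProducingElementsofH} then identifies $h_q \in S^{p_q}(y, z) \subset H(y, z)$ from the matching forward trajectory of $u_q$ (landing in $g_z^{(q)} \hat{\mathsf{R}}_z$) and the backward trajectory into $h_q \hat{\mathsf{R}}_y$. The Euclidean radius $C_{\mathrm{E}}^{-1} \delta_1 e^{-\underline{\tau}}$ is calibrated, via the bi-Lipschitz comparison between $d_{\mathrm{su}}$ and the boundary Euclidean metric through $C_{\mathrm{E}}$, so that the basepoints $h_q o$ stay within a bounded hyperbolic tube around the vertical geodesic while dropping geometrically in height by a factor $\gtrsim e^{-q\underline{\tau}}$. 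Hence $h_q o \to 0$ radially, placing $0$ — and after undoing the normalization $g$, any prescribed $\xi \in \Lambda(\Gamma)$ — in $\Lambda_{\mathrm{r}}(H(y, z))$.

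The main obstacle is sustaining the Markov structure through the induction: each chosen stable-leaf neighbor must lie in a genuine $\Gamma$-translate of $\hat{\mathsf{R}}_y$ for \cref{lem:ProducingElementsofH} to apply. This is guaranteed by the containment $W^{\mathrm{ss}}_{\delta_2}(\mathcal{P}^r(v_q)) \cap \mathsf{\Omega} \subset [\mathcal{P}^r(v_q), g_z^{(q)} \mathsf{S}_z]$ from \cref{lem:ChooseAFarAway_v_SuchThat0IsInView}, which via the Markov property lifts to a compatible admissible return sequence and closes the induction. Combining this with the freedom in choosing the normalizing isometry $g$ delivers radial limit points at arbitrary $\xi \in \Lambda(\Gamma)$, completing the reverse inclusion.
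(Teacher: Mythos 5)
Your overall blueprint is right---use \cref{lem:ChooseAFarAway_v_SuchThat0IsInView} to see $\xi$ from a stable leaf, use \cref{lem:ProducingElementsofH} to manufacture elements of $H(y,z)$, and show the resulting orbit points approach $\xi$ inside a fixed cone---but the implementation diverges from the paper's, and the divergence creates a real gap.

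The paper does \emph{not} iterate a new base point $u_{q+1}$ out of the previous step; instead, for each $q$ it works from scratch with the cylinder $\mathtt{C}_q$ of \emph{length} $q$ around the single fixed vector $u$ with $u^+ = 0$. The purpose of going $q$ steps deep is a precise scale-matching: after flowing forward through $\mathcal{P}^q$, the point $u_q' = \mathcal{P}^q(u_q)$ sits at height $\approx e^{-\tau_\alpha(u)}$, so the normalizing isometry $\iota$ carries a dilation by $e^{\tau_\alpha(u_q)}$. Undoing $\iota$ shrinks the forbidden ball of \cref{lem:ChooseAFarAway_v_SuchThat0IsInView}\ref{itm:0IsInView} down to radius $\approx C_{\mathrm{E}}^{-1}\delta_1 e^{-\tau_\alpha(u_q)-\underline{\tau}}$, while the transversality $d_{\mathrm{su}}\bigl(ua_{\tau_\alpha(u)}, \psi_{ua_{\tau_\alpha(u)}}^{-1}(u_q')\bigr) > \delta_1$ of item \ref{itm:ChooseAFarAway_v}, pushed through the bi-Lipschitz constant $C_{\mathrm{E}}$, gives the matching \emph{lower} bound $\|u_q^+ - 0\| \geq C_{\mathrm{E}}^{-1}\delta_1 e^{-\tau_\alpha(u)}$ recorded in \cref{eqn:u_q^+_HorizontalCoordinateBounds}. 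The two quantities shrink by the same factor, and the extra $e^{-\underline{\tau}}$ is what keeps $0$ outside the forbidden ball. Your sketch contains no analogue of this two-sided estimate. In your iterative scheme the base point $u_{q}$ descends and so does the scale, but you never verify that the distance from $0$ to the center of the forbidden ball stays strictly larger than the (shrinking) radius at each stage; this is exactly the inequality that makes the whole argument close, and it does not follow for free from the statement of \cref{lem:ChooseAFarAway_v_SuchThat0IsInView} alone. Moreover, in an induction you would need to maintain a geometric invariant relating the new base point to $0$ (its forward or backward endpoint, its offset, its scale), and your proposal does not specify which invariant you carry.

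A second, smaller gap: \cref{lem:ProducingElementsofH} requires the backward trajectory to land in $h_q\hat{\mathsf{R}}_y$---a translate of the \emph{specific} rectangle $\mathsf{R}_y$ and in its \emph{core}. The stable-leaf neighbor you pick (with backward endpoint $0$) generically lands the backward orbit in some $g_{z'}\mathsf{R}_{z'}$, not $h_q\hat{\mathsf{R}}_y$, and it may sit on the boundary of a rectangle rather than the core. The paper fixes both issues by first perturbing $v_q'$ to a core point $v_q''$ (using density of $\hat{\mathsf{S}}_z$), and then spending $N_T$ extra Poincar\'{e} steps via topological mixing and the bracket $[w_q, w_q']$ to steer the backward orbit into a translate of $\hat{\mathsf{R}}_y$. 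Your phrase ``the Markov property lifts to a compatible admissible return sequence'' gestures at this but skips the actual mechanism; you would need both the core adjustment and the $N_T$-step detour, and would have to re-check that these adjustments keep the base point in the thickened cone $\mathcal{C} = B_{N_T\overline{\tau} + 4\hat{\delta}}(B_{2\hat{\delta}}(\mathcal{C}_0))$.
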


\begin{proof}
Let $(y, z) \in \mathcal{A}^2$. Clearly, $\Lambda_{\mathrm{r}}(H(y, z)) \subset \Lambda(H(y, z)) \subset \Lambda(\Gamma)$. Thus, it suffices to show that $\Lambda(\Gamma) \subset \Lambda_{\mathrm{r}}(H(y, z))$. Let $u \in g_y\mathsf{R}_y$ for some $g_y \in \Gamma$ be a tangent vector with an arbitrary forward limit point $u^+ \in \Lambda(\Gamma)$. Without loss of generality, we can assume $g_y = e \in \Gamma$ and also that $u \in \mathsf{U}_y$. There exists $g \in G$ such that $gu = (e_n, -e_n)$ and $gu^+ = 0 \in \partial_\infty \mathbb H^n$. Without loss of generality, we can assume $g = e \in G$. We will now construct a sequence of $H(y, z)$-orbit points of $\pi_{\mathbb H^n}(u) = e_n \in \mathbb H^n$ inside some cone whose limit is $u^+ = 0 \in \partial_\infty \mathbb H^n$.

Fix $\delta_1$, $\delta_2$, and $r \in \mathbb N$ from \cref{lem:ChooseAFarAway_v_SuchThat0IsInView}. Let $q \in \mathbb N$. Let $\mathtt{C}_q \subset \mathsf{U}_y$ be a cylinder and $\alpha$ be a corresponding admissible sequence with $\len(\mathtt{C}_q) = \len(\alpha) = q$ such that $u \in \overline{\mathtt{C}_q}$. Then, $\overline{\mathcal{P}^q(\mathtt{C}_q)} \subset g_{y'}\mathsf{R}_{y'}$ for some $y' \in \mathcal{A}$ and $g_{y'} \in \Gamma$. Note that $ua_{\tau_\alpha(u)} \in \overline{\mathcal{P}^q(\mathtt{C}_q)}$, corresponding to which we can fix $u_q' \in \mathcal{P}^q(\mathtt{C}_q) \cap g_{y'}[\hat{\mathsf{U}}_{y'}, \mathsf{S}_{y'}]$ provided by \cref{lem:ChooseAFarAway_v_SuchThat0IsInView}. Let $u_q = \mathcal{P}^{-q}(u_q') \in \hat{\mathsf{U}}_y$. By \cref{lem:ChooseAFarAway_v_SuchThat0IsInView}, we have $\mathcal{P}^{q + r}(u_q) \in g_z[\hat{\mathsf{U}}_z, \mathsf{S}_z]$ for some $g_z \in \Gamma$. Since $\|\pi_{\mathbb R^{n - 1}}(\pi_{\mathbb H^n}(\mathcal{P}^{q + r}(u_q)))\| = \|\pi_{\mathbb R^{n - 1}}(\pi_{\mathbb H^n}(u_q'))\|$, so $\pi_{\mathbb R^{n - 1}}(\pi_{\mathbb H^n}(\mathcal{P}^{q + r}(u_q)))$ satisfies the bound
\begin{align}
\label{eqn:u_q^+_HorizontalCoordinateBounds}
\frac{1}{C_{\mathrm{E}}} \delta_1 e^{-\tau_\alpha(u)} \leq \|\pi_{\mathbb R^{n - 1}}(\pi_{\mathbb H^n}(\mathcal{P}^{q + r}(u_q)))\| \leq C_{\mathrm{E}}\hat{\delta} e^{-\tau_\alpha(u)}
\end{align}
using $d_{\mathrm{su}}\bigl(ua_{\tau_\alpha(u)}, \psi_{ua_{\tau_\alpha(u)}}^{-1}(u_q')\bigr) > \delta_1$ from \cref{itm:ChooseAFarAway_v} of \cref{lem:ChooseAFarAway_v_SuchThat0IsInView}. Moreover, it lies at height
\begin{align*}
\langle \pi_{\mathbb H^n}(\mathcal{P}^{q + r}(u_q)), e_n \rangle = e^{-\tau_{q + r}(u_q)} = e^{-(\tau_\alpha(u_q) + \tau_r(u_q'))} \geq e^{-(\tau_\alpha(u) + \underline{\tau} + r\overline{\tau})}.
\end{align*}
We then have the calculation
\begin{align*}
\frac{\langle \pi_{\mathbb H^n}(\mathcal{P}^{q + r}(u_q)), e_n \rangle}{\|\pi_{\mathbb R^{n - 1}}(\pi_{\mathbb H^n}(\mathcal{P}^{q + r}(u_q)))\|} \geq \frac{e^{-(\tau_\alpha(u) + \underline{\tau} + r\overline{\tau})}}{C_{\mathrm{E}}\hat{\delta} e^{-\tau_\alpha(u)}} = \frac{e^{-(\underline{\tau} + r\overline{\tau})}}{C_{\mathrm{E}}\hat{\delta}}
\end{align*}
which is a constant. Thus, $\mathcal{P}^{q + r}(u_q) \in \mathcal{C}_0$ where we define the cone
\begin{align*}
\mathcal{C}_0 = \{(\tilde{w}, w_n) \in \mathbb R^{n - 1} \times \mathbb R: w_n > 2C_{\mathrm{E}}^{-1}\hat{\delta}^{-1}e^{-(\underline{\tau} + r\overline{\tau})} \|\tilde{w}\|\}.
\end{align*}
Let $\iota \in G$ be a translation by $-\pi_{\mathbb R^{n - 1}}(\pi_{\mathbb H^n}(u_q')) = -u_q^+$ followed by a dilation by a factor of $\langle \pi_{\mathbb H^n}(u_q'), e_n \rangle^{-1} = e^{\tau_\alpha(u_q)}$. Then, $\iota u_q' = (e_n, -e_n)$ and hence by \cref{itm:0IsInView} of \cref{lem:ChooseAFarAway_v_SuchThat0IsInView}, we have $W_{\delta_2}^{\mathrm{ss}}(\mathcal{P}^{q + r}(u_q)) \cap \mathsf{\Omega} \subset [\mathcal{P}^{q + r}(u_q), g_z\mathsf{S}_z]$ and $\Phi(W_{\delta_2}^{\mathrm{ss}}(\iota \mathcal{P}^{q + r}(u_q))) \supset \partial_\infty \mathbb H^n \setminus B_{C_{\mathrm{E}}^{-1}\delta_1e^{-\underline{\tau}}}^{\mathrm{E}}(0)$. Applying $\iota^{-1}$, we get
\begin{align*}
\Phi(W_{\delta_2}^{\mathrm{ss}}(\mathcal{P}^{q + r}(u_q))) \supset \partial_\infty \mathbb H^n \setminus B_{C_{\mathrm{E}}^{-1}\delta_1 e^{-\tau_\alpha(u_q) - \underline{\tau}}}^{\mathrm{E}}(u_q^+) \supset \partial_\infty \mathbb H^n \setminus B_{C_{\mathrm{E}}^{-1}\delta_1 e^{-\tau_\alpha(u)}}^{\mathrm{E}}(u_q^+).
\end{align*}
Recalling \cref{eqn:u_q^+_HorizontalCoordinateBounds}, we have $0 \in \Phi(W_{\delta_2}^{\mathrm{ss}}(\mathcal{P}^{q + r}(u_q)))$. Also recalling $0, u_q^+ \in \Lambda(\Gamma)$, there exists $v_q' \in W_{\delta_2}^{\mathrm{ss}}(\mathcal{P}^{q + r}(u_q)) \cap \mathsf{\Omega} \subset [\mathcal{P}^{q + r}(u_q), g_z\mathsf{S}_z]$ with $(v_q')^+ = u_q^+$ and $(v_q')^- = 0$. Then, $\pi_{\mathbb H^n}(v_q') \in B_{\hat{\delta}}(\mathcal{C}_0)$. It follows that $\gamma \subset B_{\hat{\delta}}(\mathcal{C}_0)$ for the geodesic ray $\gamma = \{\pi_{\mathbb H^n}(v_q' a_{-t}) \in \mathbb H^n: t \geq \underline{\tau}\}$ because $(v_q')^- = 0$. Since $\hat{\mathsf{S}}_y \subset \mathsf{S}_y$ is dense, using the Anosov property, there exists $v_q'' \in [\mathcal{P}^{q + r}(u_q), g_z\hat{\mathsf{S}}_z]$ such that $\pi_{\mathbb H^n}(w_q) \in B_{\hat{\delta}}(\gamma) \subset B_{2\hat{\delta}}(\mathcal{C}_0)$ where $w_q = \mathcal{P}^{-(q + r - N_T)}(v_q'') \in g_{z'}\mathsf{R}_{z'}$ for some $g_{z'} \in \Gamma$. By the topologically mixing property of $T$, there exists $w_q' \in g_{z'}\hat{\mathsf{R}}_{z'}$ such that $\mathcal{P}^{- N_T}(w_q') \in h_q\hat{\mathsf{R}}_y$ for some $h_q \in \Gamma$. Let $v_q = \mathcal{P}^{q + r - N_T}([w_q, w_q']) \in [\mathcal{P}^{q + r}(u_q), g_z\hat{\mathsf{S}}_z]$. Then, $\mathcal{P}^{-(q + r - N_T)}(v_q) \in g_{z'}\hat{\mathsf{R}}_{z'}$ and $\mathcal{P}^{-(q + r)}(v_q) \in h_q\hat{\mathsf{R}}_y$. The first fact we can conclude is $h_q \in H(y, z)$ by \cref{lem:ProducingElementsofH}. The second fact we can conclude is $h_q\pi_{\mathbb H^n}(u) \in B_{2\hat{\delta}}(\mathcal{P}^{-(q + r)}(v_q))$ and hence $h_q\pi_{\mathbb H^n}(u) \in B_{N_T\overline{\tau} + 4\hat{\delta}}(B_{\hat{\delta}}(\gamma)) \subset B_{N_T\overline{\tau} + 4\hat{\delta}}(B_{2\hat{\delta}}(\mathcal{C}_0))$.

Define the cone $\mathcal{C} = B_{N_T\overline{\tau} + 4\hat{\delta}}(B_{2\hat{\delta}}(\mathcal{C}_0))$. Then, $\{h_q\}_{q \in \mathbb N} \subset H(y, z)$ is a sequence such that $\{h_q\pi_{\mathbb H^n}(u)\}_{q \in \mathbb N} \subset \mathcal{C}$. In fact, since $h_q\pi_{\mathbb H^n}(u) \in B_{N_T\overline{\tau} + 4\hat{\delta}}(B_{\hat{\delta}}(\gamma))$ for all $q \in \mathbb N$ and $\lim_{q \to \infty} u_q^+ = 0$, we also have $\lim_{q \to \infty} h_q\pi_{\mathbb H^n}(u) = 0$. Hence, $u^+ = 0 \in \Lambda_{\mathrm{r}}(H(y, z))$.
\end{proof}

\begin{proof}[Proof of \cref{thm:H(yz)FiniteIndexInGamma}]
First, $\Lambda_{\mathrm{r}}(H(y, z)) = \Lambda(H(y, z))$ from \cref{pro:LimitSetH(yy)RadialEqualsLimitSetH(yy)EqualsLimitSetGamma} implies that $H(y, z)$ is convex cocompact by \cref{thm:ConvexCocompactIFFLimitSetIsRadial}. Since $\Gamma$ is nonelementary and $\Lambda(H(y, z)) = \Lambda(\Gamma)$ by \cref{pro:LimitSetH(yy)RadialEqualsLimitSetH(yy)EqualsLimitSetGamma}, we can apply \cite[Theorem 1]{SS92} to conclude that $H(y, z) < \Gamma$ is a finite index subgroup.
\end{proof}

\section{\texorpdfstring{$L^2$}{L-2}-flattening lemma}
\label{sec:L2FlatteningLemma}
In this section, we prove the following $L^2$-flattening lemma. We generalize the arguments in \cite[Appendix]{MOW17} due to Bourgain--Kontorovich--Magee.

\begin{lemma}
\label{lem:L2FlatteningLemma}
There exist $C > 0$, $C_0 > 0$, and $l \in \mathbb N$ such that for all $\xi \in \mathbb C$ with $|a| < a_0'$, square-free ideals $\mathfrak{q} \subset \mathcal{O}_{\mathbb K}$ coprime to $\mathfrak{q}_0$, $x \in \Sigma^+$, integers $C_0 \log(N_{\mathbb K}(\mathfrak{q})) \leq r < s$ with $r \in l\mathbb Z$, admissible sequences $(\alpha_s, \alpha_{s - 1}, \dotsc, \alpha_{r + 1})$, and $\phi \in E_\mathfrak{q}^\mathfrak{q}$ with $\|\phi\|_2 = 1$, we have
\begin{align*}
\left\|\mu_{(\alpha_s, \alpha_{s - 1}, \dotsc, \alpha_{r + 1})}^{\xi, \mathfrak{q}, x} * \phi\right\|_2 &\leq C N_{\mathbb K}(\mathfrak{q})^{-\frac{1}{3}} \left\|\nu_{(\alpha_s, \alpha_{s - 1}, \dotsc, \alpha_{r + 1})}^{a, \mathfrak{q}, x}\right\|_1; \\
\left\|\hat{\mu}_{(\alpha_s, \alpha_{s - 1}, \dotsc, \alpha_{r + 1})}^{a, \mathfrak{q}, x} * \phi\right\|_2 &\leq C N_{\mathbb K}(\mathfrak{q})^{-\frac{1}{3}} \left\|\nu_{(\alpha_s, \alpha_{s - 1}, \dotsc, \alpha_{r + 1})}^{a, \mathfrak{q}, x}\right\|_1.
\end{align*}
\end{lemma}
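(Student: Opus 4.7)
The plan is to adapt the expander-based argument of Bourgain--Kontorovich--Magee in \cite[Appendix]{MOW17}, generalizing it from the $\SL_2$ setting to our $\SO(n, 1)$ setting. The two essential new inputs are the Zariski density and full trace field of the return trajectory subgroups proved in \cref{cor:Z-DenseInSimplyConnectedCoverGAndTraceFieldK}, which together with the strong approximation theorem allow us to invoke \cite[Corollary 6]{GV12} uniformly in $\mathfrak{q}$.

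First I would reduce both inequalities to controlling a positive, essentially product-type measure. By \cref{lem:muHatLessThanCnu}, $\bigl|\mu^{\xi, \mathfrak{q}, x}_{(\alpha_s, \dotsc, \alpha_{r + 1})}\bigr| \leq \hat{\mu}^{a, \mathfrak{q}, x}_{(\alpha_s, \dotsc, \alpha_{r + 1})}$ and $\hat{\mu} \asymp \nu$, and $\nu$ differs from $\nu_0^{a, \mathfrak{q}, x, r}$ only by the scalar factor $e^{f_{s - r}^{(a)}(\alpha_s, \dotsc, \alpha_{r + 1}, \omega(\alpha_{r + 1}))}$ that precisely accounts for the $\|\nu\|_1$ on the right-hand side. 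Hence both inequalities reduce to showing $\|\nu_0^{a, \mathfrak{q}, x, r} * \phi\|_2 \leq C' N_{\mathbb K}(\mathfrak{q})^{-\frac{1}{3}} \|\nu_0^{a, \mathfrak{q}, x, r}\|_1$ for $\phi \in E_\mathfrak{q}^\mathfrak{q}$ with $\|\phi\|_2 = 1$. To recover the complex-measure statement I would use either the pointwise bound $|\mu * \phi| \leq \hat{\mu} * |\phi|$ together with a dyadic control of $|\phi|$ across the $E_{\mathfrak{q}'}^\mathfrak{q}$ pieces, or the Cauchy--Schwarz trick $\|\mu * \phi\|_2^2 \leq \|(\tilde{\mu} * \mu) * \phi\|_2$ with $\tilde{\mu}(g) = \overline{\mu(g^{-1})}$, which replaces $\mu$ by a positive self-adjoint measure whose total mass and support are comparable to those of $\hat{\mu} * \hat{\mu}$.

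Second I would set up the walk on $\tilde{G}_\mathfrak{q}$ that the expander machinery can digest. The support of $\nu_0^{a, \mathfrak{q}, x, r}$ consists of one-sided cocycle products $\mathtt{c}_\mathfrak{q}^{r + 1}(\alpha_{r + 1}, \alpha^r, x)$, which are not generators of a subgroup. After convolving with $\tilde{\nu}_0$ or iterating $l$ times as in \cite[Appendix]{MOW17}, the resulting measure is supported on return-type products $\mathtt{c}^{r + 1}(\alpha)\mathtt{c}^{r + 1}(\tilde{\alpha})^{-1}$ which lie in the return trajectory subgroup $H^p(\alpha_{r + 1}, z)$ for a suitable $z \in \mathcal{A}$ and $p \geq r$. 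The constant $l$ exists precisely so that this iteration lines up cleanly, and the condition $r \geq C_0 \log(N_{\mathbb K}(\mathfrak{q}))$ ensures the walk has spread sufficiently to enter the spectral gap regime. By \cref{cor:Z-DenseInSimplyConnectedCoverGAndTraceFieldK}, for $p$ exceeding the fixed $p_0$ the lift $\tilde{H}^p(\alpha_{r + 1}, z)$ is Zariski dense in $\tilde{\mathbf{G}}$ with trace field $\mathbb K$, and strong approximation together with coprimality of $\mathfrak{q}$ with $\mathfrak{q}_0$ guarantees that $\pi_\mathfrak{q}$ maps $\tilde{H}^p(\alpha_{r + 1}, z)$ onto $\tilde{G}_\mathfrak{q}$. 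Applying \cite[Corollary 6]{GV12} to a finite symmetric generating set supplied by these return trajectories then yields an expander bound uniformly in square-free $\mathfrak{q}$, and restricting to the new subspace $E_\mathfrak{q}^\mathfrak{q}$ converts this into an $L^2$-flattening with the $N_{\mathbb K}(\mathfrak{q})^{-\varepsilon}$ gain; the specific exponent $\frac{1}{3}$ emerges from the standard $\ell^2$--$\ell^\infty$ balance argument in BGS-type flattening.

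The main obstacle is bridging the gap between the clean symmetric random-walk setting of \cite{GV12} and our weighted, one-sided measures carrying the Ruelle potentials $e^{f^{(a)}}$. Concretely, one must verify that these weights are sufficiently flat --- a non-concentration input that follows from the Gibbs property of $\nu_R$, from $|f^{(a)} - f^{(0)}| \leq A_f |a|$, and from \cref{lem:SumExpf^aBound} --- and that after symmetrization the resulting measure satisfies the support and non-degeneracy hypotheses of \cite[Corollary 6]{GV12} uniformly in $x$, in the suffix $(\alpha_s, \dotsc, \alpha_{r + 1})$, and in $a \in (-a_0', a_0')$. Establishing this uniformity, rather than the expansion theorem itself, is where the real work lies, and it is precisely at this point that the finite-index conclusion of \cref{thm:H(yz)FiniteIndexInGamma} --- and hence \cref{cor:Z-DenseInSimplyConnectedCoverGAndTraceFieldK} --- becomes indispensable.
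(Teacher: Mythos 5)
Your outline captures only half of the paper's proof, and the half you capture is the less delicate one. The argument in \cref{sec:L2FlatteningLemma} combines \emph{two} independent inputs, and you have identified only the expander one; the other one --- quasi-randomness --- is in fact the source of the exponent $\frac{1}{3}$ that your proposal hand-waves past.

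Concretely, the paper first proves \cref{lem:ConvolutionBoundOnE_q^q}, which says that for any complex measure $\mu$ on $\tilde{G}_\mathfrak{q}$, the convolution operator $\tilde{\mu}$ restricted to the new subspace $E_\mathfrak{q}^\mathfrak{q}$ satisfies $\|\tilde{\mu}|_{E_\mathfrak{q}^\mathfrak{q}}\|_{\mathrm{op}} \leq C N_{\mathbb K}(\mathfrak{q})^{-\frac{1}{3}} (\#\tilde{G}_\mathfrak{q})^{\frac{1}{2}} \|\mu\|_2$. This is pure quasi-randomness: the eigenspaces of $\tilde{\dot{\mu}}^*\tilde{\dot{\mu}}|_{\dot{E}_\mathfrak{q}^\mathfrak{q}}$ contain irreducible $\mathbb C[\tilde{\mathbf{G}}(\mathcal{O}_{\mathbb K}/\mathfrak{q})]$-submodules whose factors are nontrivial irreducibles of Chevalley groups over residue fields, and the minimal-degree bound of \cite[Proposition 4.2]{KS13} gives $\dim(\rho) \geq C_1 N_{\mathbb K}(\mathfrak{q})^{\frac{2}{3}}$, which together with a trace bound forces every eigenvalue down by a factor of $N_{\mathbb K}(\mathfrak{q})^{-\frac{2}{3}}$. \emph{This} is where $\frac{1}{3}$ comes from, not from any ``$\ell^2$--$\ell^\infty$ balance argument''; and it uses no dynamics, no expanders, and no Zariski density at all. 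Your proposal does not mention this step, and without it the expander input alone yields nothing: the spectral gap from Golsefidy--Varj\'{u} is a fixed $\epsilon > 0$, and flattening $\|\nu\|_2 \lesssim \|\nu\|_1 / (\#\tilde{G}_\mathfrak{q})^{\frac{1}{2}}$ combined with the trivial Young bound $\|\nu * \phi\|_2 \leq \|\nu\|_2 (\#\tilde{G}_\mathfrak{q})^{\frac{1}{2}}$ only returns $\|\nu\|_1$, which is no gain.

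The role of the expander machinery in the paper is the complementary, second input: \cref{lem:ExpanderMachineryBound} shows that $\nu_0^{a, \mathfrak{q}, x, r}$ is nearly uniform, i.e.\ $\|\nu\|_2 \leq 2 \|\nu\|_1 / (\#\tilde{G}_\mathfrak{q})^{\frac{1}{2}}$, and this kills the $(\#\tilde{G}_\mathfrak{q})^{\frac{1}{2}}$ factor left over from the quasi-randomness bound. Getting there requires the block decomposition that you only gesture at: the paper replaces $\nu_0^\mathfrak{q}$ by a comparable measure $\nu_1^\mathfrak{q}$ which factors explicitly as a convolution of measures $\eta^\mathfrak{q}\bigl(\alpha_{j+1}^{(l-p)_2}, \alpha_j^{(l-p)_2}\bigr)$ that are \emph{nearly flat} (\cref{lem:NearlyFlat}), uses your Cauchy--Schwarz/$\mu^* * \mu$ trick at the level of each individual block (\cref{lem:EtaOperatorBound}), and only then iterates (\cref{lem:ExpanderMachineryBound}). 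The constant $l$ is the block length, chosen large enough that the comparison error $e^{r' C_1 \theta^l}$ between $\nu_0$ and $\nu_1$ is dominated by the block-by-block gain $C_2^{r'}$; and $r \geq C_0 \log N_{\mathbb K}(\mathfrak{q})$ is chosen so that the iterated gain $C_3^r$ beats $(\#\tilde{G}_\mathfrak{q})^{-\frac{1}{2}}$. Your identification of the return trajectory subgroup input (\cref{cor:Z-DenseInSimplyConnectedCoverGAndTraceFieldK} feeding \cite[Corollary 6]{GV12}) is correct, but it enters at \cref{lem:GV_Expander}, inside the block estimate, not at the top level. Finally, your proposed devices for the complex measure $\mu^{\xi, \mathfrak{q}, x}$ (dyadic control of $|\phi|$, or a top-level Cauchy--Schwarz) are both unnecessary: \cref{lem:ConvolutionBoundOnE_q^q} already controls the operator norm of $\tilde{\mu}$ for arbitrary complex $\mu$ in terms of $\|\mu\|_2$, and \cref{lem:muHatLessThanCnu} transfers $\|\mu\|_2$ to $\|\nu\|_2$.
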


The proof uses two tools. The first is \cref{lem:ConvolutionBoundOnE_q^q} derived from lower bounds of nontrivial irreducible representations of Chevalley groups. The second is the expander machinery of Golsefidy--Varj\'{u} \cite{GV12} which we cannot use directly in its raw form but culminates in \cref{lem:ExpanderMachineryBound}. Due to \cref{lem:muHatLessThanCnu}, we focus on $\nu_0^{a, \mathfrak{q}, x, r}$ and our goal is to use the expander machinery to obtain bounds on the operator norm which requires the measure to be ``nearly flat''. Although $\nu_0^{a, \mathfrak{q}, x, r}$ is not nearly flat, it suffices to estimate $\nu_0^{a, \mathfrak{q}, x, r}$ by $\nu_1^{a, \mathfrak{q}, x, r}$ which \emph{breaks up} into convolutions of nearly flat measures. The following is the procedure to do exactly that.

In this section, we fix any $p > p_0$ from \cref{cor:Z-DenseInSimplyConnectedCoverGAndTraceFieldK} so that the corollary applies when it is needed in \cref{lem:GV_Expander}. For the purposes of proving \cref{lem:L2FlatteningLemma}, we will fix $\xi \in \mathbb C$ with $|a| < a_0'$, $x \in \Sigma^+$, $r \in \mathbb N$ with factorization $r = r'l$ for some fixed $r' \in \mathbb N$ and some fixed integer $l > p$ henceforth in this section. For all ideals $\mathfrak{q} \subset \mathcal{O}_{\mathbb K}$ coprime to $\mathfrak{q}_0$, for all admissible sequences $(\alpha_s, \alpha_{s - 1}, \dotsc, \alpha_{r + 1})$, denote the measures $\mu_{(\alpha_s, \alpha_{s - 1}, \dotsc, \alpha_{r + 1})}^{\xi, \mathfrak{q}, x}$, $\nu_0^{a, \mathfrak{q}, x, r}$, $\hat{\mu}_{(\alpha_s, \alpha_{s - 1}, \dotsc, \alpha_{r + 1})}^{a, \mathfrak{q}, x}$, and $\nu_{(\alpha_s, \alpha_{s - 1}, \dotsc, \alpha_{r + 1})}^{a, \mathfrak{q}, x}$ by $\mu_{(\alpha_s, \alpha_{s - 1}, \dotsc, \alpha_{r + 1})}^\mathfrak{q}$, $\nu_0^\mathfrak{q}$, $\hat{\mu}_{(\alpha_s, \alpha_{s - 1}, \dotsc, \alpha_{r + 1})}^\mathfrak{q}$, and $\nu_{(\alpha_s, \alpha_{s - 1}, \dotsc, \alpha_{r + 1})}^\mathfrak{q}$ respectively.

Let $\alpha^r$ be an admissible sequence. To better facilitate manipulations of sequences, we introduce the following additional notations. Define
\begin{align*}
\alpha_j^l &= (\alpha_{jl}, \alpha_{jl - 1}, \dotsc, \alpha_{(j - 1)l + 1}); \\
\alpha_j^{(p)_1} &= (\alpha_{jl}, \alpha_{jl - 1}, \dotsc, \alpha_{jl - p + 1}); \\
\alpha_j^{(l - p)_2} &= (\alpha_{jl - p}, \alpha_{jl - p - 1}, \dotsc, \alpha_{(j - 1)l + 1})
\end{align*}
for all $1 \leq j \leq r'$. For example, with these notations and conventions we have $\alpha^r = \big(\alpha_{r'}^l, \alpha_{r' - 1}^l, \dotsc,  \alpha_1^l\big) = (\alpha_r, \alpha_{r - 1}, \dotsc, \alpha_1)$ and $\alpha_j^l = \big(\alpha_j^{(p)_1}, \alpha_j^{(l - p)_2}\big)$ for all $1 \leq j \leq r'$. We also have $\sigma^k(\alpha^j) = \alpha^{j - k}$ for all $1 \leq j \leq r$ and $0 \leq k \leq j - 1$.

For all admissible sequences $\alpha^r$, we compute that
\begin{align*}
&f_r^{(a)}(\alpha^r, x) = \sum_{k = 0}^{r - 1} f^{(a)}(\sigma^k(\alpha^r, x)) = \sum_{k = 0}^{r - 1} f^{(a)}(\alpha^{r - k}, x) \\
={}&\sum_{k = 0}^{2l - p - 1} f^{(a)}(\alpha^{2l - p - k}, x) + \sum_{j = 0}^{r' - 3} \sum_{k = 0}^{l - 1} f^{(a)}(\alpha^{r - p - (jl + k)}, x) + \sum_{k = 0}^{p - 1} f^{(a)}(\alpha^{r - k}, x) \\
={}&\sum_{k = 0}^{2l - p - 1} f^{(a)}(\sigma^k(\alpha^{2l - p}, x)) + \sum_{j = 0}^{r' - 3} \sum_{k = 0}^{l - 1} f^{(a)}(\sigma^k(\alpha^{r - p - jl}, x)) + \sum_{k = 0}^{p - 1} f^{(a)}(\sigma^k(\alpha^r, x)) \\
={}&f_{2l - p}^{(a)}(\alpha^{2l - p}, x) + \sum_{j = 2}^{r' - 1} f_l^{(a)}(\alpha^{(j + 1)l - p}, x) + f_p^{(a)}(\alpha^r, x).
\end{align*}
We can estimate each term in the sum above so that in the $j$\textsuperscript{th} term, we remove dependence on $\alpha_k^{(p)_1}$ for all distinct $1 \leq j, k \leq r'$.

\begin{remark}
Such an estimate is not required for $j = 1$ since the first term does not have any dependence on $\alpha_k^{(p)_1}$ for all $2 \leq k \leq r'$.
\end{remark}

\begin{lemma}
\label{lem:Estimate_f_ToRemoveDependence}
There exists $C > 0$ such that for all admissible sequences $\alpha^r$, we have
\begin{align*}
\big|f_l^{(a)}(\alpha^{(j + 1)l - p}, x) - f_l^{(a)}\big(\alpha_{j + 1}^{(l - p)_2}, \alpha_j^l, \omega\big(\alpha_j^l\big)\big)\big| &\leq C \theta^l \qquad \text{for all $2 \leq j \leq r' - 1$}; \\
\big|f_p^{(a)}(\alpha^r, x) - f_p^{(a)}\big(\alpha_{r'}^l, \omega\big(\alpha_{r'}^l\big)\big)\big| &\leq C \theta^l
\end{align*}
where $C$ is independent of $|a| < a_0'$, $x \in \Sigma^+$, $r \in \mathbb N$ and its factorization $r = r'l$ with $l > p$.
\end{lemma}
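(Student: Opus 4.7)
The plan is to use that $f^{(a)}$ is Lipschitz with respect to $d_\theta$ on $\Sigma^+$, with Lipschitz constant uniform in $|a| < a_0'$ (which follows from the fact that $a \mapsto f^{(a)}$ varies continuously and from $\|f^{(a)}_{(j,k)}\|_{C^1} \leq T_0$ together with the choice of $\theta$ making $\zeta^+$ Lipschitz). Both inequalities are of the same flavor: each compares $f_m^{(a)}$ evaluated at two points in $\Sigma^+$ that share a long common prefix, where $m \in \{l, p\}$.

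The first step is to identify how many leading coordinates the two sequences share. For the first inequality, the point $(\alpha^{(j+1)l-p}, x)$ begins with $\alpha_{(j+1)l-p}, \alpha_{(j+1)l-p-1}, \dotsc, \alpha_1$, while the point $(\alpha_{j+1}^{(l-p)_2}, \alpha_j^l, \omega(\alpha_j^l))$ begins with $\alpha_{(j+1)l-p}, \alpha_{(j+1)l-p-1}, \dotsc, \alpha_{(j-1)l+1}$. Matching coordinates position-by-position, the two sequences agree on the first $2l - p$ entries, so their $d_\theta$-distance is at most $\theta^{2l-p}$. For the second inequality, $(\alpha^r, x)$ and $(\alpha_{r'}^l, \omega(\alpha_{r'}^l))$ agree on the first $l$ entries (since $\alpha_{r'}^l = (\alpha_{r'l}, \dotsc, \alpha_{(r'-1)l+1})$ and $r = r'l$), giving $d_\theta$-distance at most $\theta^l$.

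The second step expands $f_m^{(a)}$ telescopically and applies the Lipschitz bound on each term. Writing $f_m^{(a)}(y) = \sum_{k=0}^{m-1} f^{(a)}(\sigma^k(y))$ and noting that $\sigma^k$ shortens the common prefix by $k$, we get $d_\theta(\sigma^k y_1, \sigma^k y_2) \leq \theta^{N - k}$ where $N$ is the length of the common prefix. For the first inequality this gives
\begin{align*}
\big|f_l^{(a)}(y_1) - f_l^{(a)}(y_2)\big| \leq \Lip_{d_\theta}(f^{(a)}) \sum_{k=0}^{l-1} \theta^{2l - p - k} \leq \frac{\Lip_{d_\theta}(f^{(a)})\, \theta^{1-p}}{1 - \theta}\, \theta^l,
\end{align*}
and for the second,
\begin{align*}
\big|f_p^{(a)}(y_1) - f_p^{(a)}(y_2)\big| \leq \Lip_{d_\theta}(f^{(a)}) \sum_{k=0}^{p-1} \theta^{l - k} \leq \frac{p\, \Lip_{d_\theta}(f^{(a)})}{\theta^{p-1}}\, \theta^l.
\end{align*}
Since $p$ is fixed independently of $r, r', l, j$, and the Lipschitz constant is uniform over $|a| < a_0'$, taking $C$ to be the maximum of the two resulting constants completes the proof. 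There is no serious obstacle; the only thing to verify carefully is the counting of agreeing coordinates, which is straightforward once the indexing conventions are unwound.
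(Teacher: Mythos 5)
Your proof is correct and follows the same approach as the paper: identify the length of the shared prefix in the two $\Sigma^+$-points ($2l-p$ in the first case, $l$ in the second), telescope $f_m^{(a)}$ via the shift, and apply the uniform $d_\theta$-Lipschitz bound on $f^{(a)}$ to sum the geometric series. The only cosmetic difference is that you bound the second sum by $p\,\theta^{1-p}\theta^l$ rather than $\frac{\theta^{1-p}}{1-\theta}\theta^l$ as in the paper, both of which are fine.
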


\begin{proof}
Fix $C = \frac{T_0 \theta^{1 - p}}{1 - \theta}$. Let $\alpha^r$ be an admissible sequence. We make the estimate
\begin{align*}
&\big|f_l^{(a)}(\alpha^{(j + 1)l - p}, x) - f_l^{(a)}\big(\alpha_{j + 1}^{(l - p)_2}, \alpha_j^l, \omega\big(\alpha_j^l\big)\big)\big| \\
\leq{}&\sum_{k = 0}^{l - 1} \big|f^{(a)}\big(\sigma^k\big(\alpha_{j + 1}^{(l - p)_2}, \alpha_j^l, \alpha_{j - 1}^{l}, \dotsc, \alpha_1^{l}, x\big)\big) - f^{(a)}\big(\sigma^k\big(\alpha_{j + 1}^{(l - p)_2}, \alpha_j^l, \omega\big(\alpha_j^l\big)\big)\big)\big| \\
\leq{}&\sum_{k = 0}^{l - 1} \Lip_{d_\theta}(f^{(a)}) \cdot d_\theta\big(\sigma^k\big(\alpha_{j + 1}^{(l - p)_2}, \alpha_j^l, \alpha_{j - 1}^{l}, \dotsc, \alpha_1^{l}, x\big), \sigma^k\big(\alpha_{j + 1}^{(l - p)_2}, \alpha_j^l, \omega\big(\alpha_j^l\big)\big)\big) \\
\leq{}&\Lip_{d_\theta}(f^{(a)}) \sum_{k = 0}^{l - 1} \theta^{2l - p - k} \leq \frac{T_0 \theta^{l - p + 1}}{1 - \theta} = C \theta^l
\end{align*}
for all $2 \leq j \leq r' - 1$. The second inequality follows from a similar calculation.
\end{proof}

To make sense of the notations in what follows, we make the convention that $\alpha_j^{(l - p)_2}$ is the empty sequence for all admissible sequences $\alpha^r$ and $j \in \{0, r' + 1\}$. In light of the calculations and \cref{lem:Estimate_f_ToRemoveDependence} above, for all ideals $\mathfrak{q} \subset \mathcal{O}_{\mathbb K}$ coprime to $\mathfrak{q}_0$ and admissible sequences $\alpha^r$, define the coefficients
\begin{align*}
E\big(\alpha_{j + 1}^{(l - p)_2}, \alpha_j^l\big) =
\begin{cases}
e^{f_{2l - p}^{(a)}(\alpha^{2l - p}, x)}, & j = 1 \\
e^{f_l^{(a)}(\alpha_{j + 1}^{(l - p)_2}, \alpha_j^l, \omega(\alpha_j^l))}, & 2 \leq j \leq r' - 1 \\
e^{f_p^{(a)}(\alpha_{r'}^l, \omega(\alpha_{r'}^l))}, & j = r'
\end{cases}
\end{align*}
and the measures
\begin{align*}
\eta^\mathfrak{q}\big(\alpha_{j + 1}^{(l - p)_2}, \alpha_j^{(l - p)_2}\big) =
\begin{cases}
\delta_{\mathtt{c}_\mathfrak{q}(\alpha_1, x)}, & j = 0 \\
\sum_{\alpha_j^{(p)_1}} E\big(\alpha_{j + 1}^{(l - p)_2}, \alpha_j^l\big) \delta_{\mathtt{c}_\mathfrak{q}^l(\alpha_{jl + 1}, \alpha^{jl}, x)}, & 1 \leq j \leq r'
\end{cases}
\end{align*}
where we show the dependence of the admissible choices of $\alpha_j^{(p)_1}$ on $\alpha_{j + 1}^{(l - p)_2}$ and $\alpha_j^{(l - p)_2}$ (or more precisely only on the last entry of $\alpha_{j + 1}^{(l - p)_2}$ and the first entry of $\alpha_j^{(l - p)_2}$). The measures above satisfy a property as shown in \cref{lem:NearlyFlat} which we call \emph{nearly flat}.

\begin{lemma}
\label{lem:NearlyFlat}
There exists $C > 1$ such that for all $1 \leq j \leq r'$, and for all pairs of admissible sequences $\big(\alpha_{j + 1}^{(l - p)_2}, \alpha_j^l\big)$ and $\big(\tilde{\alpha}_{j + 1}^{(l - p)_2}, \tilde{\alpha}_j^l\big)$ with $\big(\alpha_{j + 1}^{(l - p)_2}, \alpha_j^{(l - p)_2}\big) = \big(\tilde{\alpha}_{j + 1}^{(l - p)_2}, \tilde{\alpha}_j^{(l - p)_2}\big)$, we have
\begin{align*}
\frac{E\big(\tilde{\alpha}_{j + 1}^{(l - p)_2}, \tilde{\alpha}_j^l\big)}{E\big(\alpha_{j + 1}^{(l - p)_2}, \alpha_j^l\big)} \leq C
\end{align*}
where $C$ is independent of $|a| < a_0'$, $x \in \Sigma^+$, $r \in \mathbb N$ and its factorization $r = r'l$ with $l > p$.
\end{lemma}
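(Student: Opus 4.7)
The plan is to unpack the definition of $E$ and write the log-ratio as a difference of $f^{(a)}$-sums, then bound this difference by a constant using Lipschitz continuity of $f^{(a)}$ combined with the fact that the two sequences agree outside a window of size $p$.

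Concretely, consider first the generic case $2 \leq j \leq r'-1$. The log-ratio equals $f_l^{(a)}(\tilde{y}) - f_l^{(a)}(y)$, where $y = \big(\alpha_{j+1}^{(l-p)_2}, \alpha_j^l, \omega(\alpha_j^l)\big) \in \Sigma^+$ and $\tilde{y}$ is its tilde counterpart. The key structural observation is that these two sequences agree at \emph{every} position outside the window $\{l-p, l-p+1, \dotsc, l-1\}$: positions $0, \dotsc, l-p-1$ match because $\alpha_{j+1}^{(l-p)_2} = \tilde{\alpha}_{j+1}^{(l-p)_2}$; positions $l, \dotsc, 2l-p-1$ match because $\alpha_j^{(l-p)_2} = \tilde{\alpha}_j^{(l-p)_2}$; and positions from $2l-p$ onward match because $\omega(\alpha_j^l) = \omega(\tilde{\alpha}_j^l)$, since $\omega$ depends only on the last entry of $\alpha_j^l$, which lies in $\alpha_j^{(l-p)_2}$.

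Expanding $f_l^{(a)}$ as a telescoping sum along the shift orbit (exactly as in the proof of \cref{lem:Estimate_f_ToRemoveDependence}) and applying the Lipschitz bound $\Lip_{d_\theta}(f^{(a)}) \leq T_0$, one obtains
\begin{align*}
\big|f_l^{(a)}(\tilde{y}) - f_l^{(a)}(y)\big| \leq T_0 \sum_{k = 0}^{l - 1} d_\theta(\sigma^k y, \sigma^k \tilde{y}).
\end{align*}
For $k \in \{0, 1, \dotsc, l-p-1\}$ the shifted sequences match on their first $l-p-k$ coordinates, giving $d_\theta(\sigma^k y, \sigma^k \tilde{y}) \leq \theta^{l-p-k}$ and a geometric contribution $\leq \theta/(1-\theta)$; for $k \in \{l-p, \dotsc, l-1\}$ we merely use $d_\theta \leq 1$, contributing at most $p$. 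The total is bounded by $T_0\bigl(p + \theta/(1-\theta)\bigr)$, which is independent of $l$, $r'$, $|a| < a_0'$, and $x$. Exponentiating yields the desired constant $C$.

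The boundary cases $j = 1$ and $j = r'$ are handled by the same argument with trivial modifications. For $j = 1$ one compares $f_{2l-p}^{(a)}$ evaluated at $(\alpha_2^{(l-p)_2}, \alpha_1^l, x)$ and its tilde version; here the mismatch window is again $\{l-p, \dotsc, l-1\}$, and shifts $k \geq l$ contribute zero since the shifted sequences are identical. For $j = r'$ one has an $f_p^{(a)}$ sum over shifts $k \in \{0, \dotsc, p-1\}$, each trivially bounded by $1$. In every case the same $C$ works. There is no real obstacle: the argument is a direct, uniform-in-$l$ analogue of the Lipschitz estimate already proved in \cref{lem:Estimate_f_ToRemoveDependence}, and the crucial input is simply the overlap structure that makes $\omega(\alpha_j^l)$ independent of the $(p)_1$-block.
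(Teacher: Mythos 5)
Your proof is correct and follows essentially the same argument as the paper: expand the log-ratio as a telescoping sum of $f^{(a)}$ along the shift orbit, identify via the block structure that the two admissible sequences agree everywhere outside a window of $p$ consecutive positions, and bound the resulting sum by $T_0\bigl(\frac{\theta}{1-\theta} + p\bigr)$, giving a constant $C$ independent of $l$, $r'$, $|a|$, and $x$. The only cosmetic difference is that you carry out the generic case $2 \leq j \leq r'-1$ in detail and dispatch $j=1$ briefly, whereas the paper does the $j=1$ computation explicitly and declares the rest ``similar''; the underlying estimate and the resulting constant $C = e^{T_0(\frac{\theta}{1-\theta} + p)}$ are identical.
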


\begin{proof}
Fix $C = e^{T_0\left(\frac{\theta}{1 - \theta} + p\right)} > 1$. Let $1 \leq j \leq r'$ be an integer and $\big(\alpha_{j + 1}^{(l - p)_2}, \alpha_j^l\big)$ and $\big(\tilde{\alpha}_{j + 1}^{(l - p)_2}, \tilde{\alpha}_j^l\big)$ be two pairs of admissible sequences with $\big(\alpha_{j + 1}^{(l - p)_2}, \alpha_j^{(l - p)_2}\big) = \big(\tilde{\alpha}_{j + 1}^{(l - p)_2}, \tilde{\alpha}_j^{(l - p)_2}\big)$. For the $j = 1$ case, we have the calculation
\begin{align*}
&\big|f_{2l - p}^{(a)}(\alpha^{2l - p}, x) - f_{2l - p}^{(a)}(\tilde{\alpha}^{2l - p}, x)\big| \\
\leq{}&\sum_{k = 0}^{2l - p - 1} \big|f^{(a)}(\sigma^k(\alpha^{2l - p}, x)) - f^{(a)}(\sigma^k(\tilde{\alpha}^{2l - p}, x))\big| \\
\leq{}&\sum_{k = 0}^{l - p - 1} \Lip_{d_\theta}(f^{(a)}) \cdot d_\theta(\sigma^k(\alpha^{2l - p}, x), \sigma^k(\tilde{\alpha}^{2l - p}, x)) \\
{}&+ \sum_{k = l - p}^{l - 1} \Lip_{d_\theta}(f^{(a)}) \cdot d_\theta(\sigma^k(\alpha^{2l - p}, x), \sigma^k(\tilde{\alpha}^{2l - p}, x)) \\
{}&+ \sum_{k = l}^{2l - p - 1} \Lip_{d_\theta}(f^{(a)}) \cdot d_\theta(\sigma^k(\alpha^{2l - p}, x), \sigma^k(\tilde{\alpha}^{2l - p}, x)) \\
\leq{}&\Lip_{d_\theta}(f^{(a)}) \left(\sum_{k = 0}^{l - p - 1} \theta^{l - p - k} + \sum_{k = l - p}^{l - 1} 1\right) \leq T_0\left(\frac{\theta}{1 - \theta} + p\right) = \log(C).
\end{align*}
We have similar calculations for the $2 \leq j \leq r' - 1$ and $j = r'$ cases. The lemma now follows from definitions.
\end{proof}

For all ideals $\mathfrak{q} \subset \mathcal{O}_{\mathbb K}$ coprime to $\mathfrak{q}_0$, we also define the measure
\begin{align*}
\nu_1^\mathfrak{q} = \sum_{\alpha_1^{(l - p)_2}, \alpha_2^{(l - p)_2}, \dotsc, \alpha_{r'}^{(l - p)_2}} \mathop{\bigast}\limits_{j = 0}^{r'} \eta^\mathfrak{q}\big(\alpha_{j + 1}^{(l - p)_2}, \alpha_j^{(l - p)_2}\big)
\end{align*}
which in particular consists of convolutions of nearly flat measures. \cref{lem:EstimateNu} shows that we can estimate $\nu_0^\mathfrak{q}$ with $\nu_1^\mathfrak{q}$ up to a multiplicative constant and vice versa where the constant has exponential dependence on $r'$ in the factorization $r = r'l$.

\begin{lemma}
\label{lem:EstimateNu}
There exists $C > 0$ such that for all ideals $\mathfrak{q} \subset \mathcal{O}_{\mathbb K}$ coprime to $\mathfrak{q}_0$, we have $\nu_0^\mathfrak{q} \leq e^{r' C\theta^l}\nu_1^\mathfrak{q}$ and $\nu_1^\mathfrak{q} \leq e^{r' C\theta^l} \nu_0^\mathfrak{q}$ where $C$ is independent of $|a| < a_0'$, $x \in \Sigma^+$, $r \in \mathbb N$ and its factorization $r = r'l$ with $l > p$.
\end{lemma}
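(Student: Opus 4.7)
The plan is to expand both measures as sums of Dirac masses indexed by admissible sequences $\alpha^r$ and compare weights term by term. Introducing the abbreviation
\begin{align*}
E_{\mathrm{tot}}(\alpha^r) = E\bigl(\alpha_2^{(l-p)_2}, \alpha_1^l\bigr) \cdot \prod_{j=2}^{r'-1} E\bigl(\alpha_{j+1}^{(l-p)_2}, \alpha_j^l\bigr) \cdot E\bigl(\alpha_{r'}^l\bigr),
\end{align*}
the lemma reduces to establishing both the structural identity $\nu_1^\mathfrak{q} = \sum_{\alpha^r} E_{\mathrm{tot}}(\alpha^r) \delta_{\mathtt{c}_\mathfrak{q}^{r+1}(\alpha_{r+1}, \alpha^r, x)}$ and the pointwise exponent estimate $\bigl|f_r^{(a)}(\alpha^r, x) - \log E_{\mathrm{tot}}(\alpha^r)\bigr| \leq r' C_0 \theta^l$, where $C_0$ is the constant furnished by \cref{lem:Estimate_f_ToRemoveDependence}. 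Both claimed inequalities then follow immediately with $C = C_0$.

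For the exponent estimate I would invoke the telescoping identity displayed just before \cref{lem:Estimate_f_ToRemoveDependence}:
\begin{align*}
f_r^{(a)}(\alpha^r, x) = f_{2l-p}^{(a)}(\alpha^{2l-p}, x) + \sum_{j=2}^{r'-1} f_l^{(a)}(\alpha^{(j+1)l-p}, x) + f_p^{(a)}(\alpha^r, x).
\end{align*}
The $j=1$ summand coincides with $\log E(\alpha_2^{(l-p)_2}, \alpha_1^l)$ by definition, while \cref{lem:Estimate_f_ToRemoveDependence} replaces each of the remaining $r'-1$ terms by its ``decoupled'' counterpart at cost at most $C_0 \theta^l$ each, and summation gives the bound.

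For the structural identity I would expand the convolution defining $\nu_1^\mathfrak{q}$. The $j=0$ factor contributes $\delta_{\mathtt{c}_\mathfrak{q}(\alpha_1, x)}$ with weight $1$, while for $1 \leq j \leq r'$ each factor $\eta^\mathfrak{q}\bigl(\alpha_{j+1}^{(l-p)_2}, \alpha_j^{(l-p)_2}\bigr)$ is a sum over admissible interior blocks $\alpha_j^{(p)_1}$ of Dirac masses located at $\mathtt{c}_\mathfrak{q}^l(\alpha_{jl+1}, \alpha_j^l)$ with weight $E(\alpha_{j+1}^{(l-p)_2}, \alpha_j^l)$. Since convolution of Dirac masses multiplies by the group law of $F_\mathfrak{q}$, and since by \cref{cor:CocyclesLocallyConstantCorollary} the partial cocycles $\mathtt{c}_\mathfrak{q}^l$ concatenate into the full cocycle $\mathtt{c}_\mathfrak{q}^{r+1}(\alpha_{r+1}, \alpha^r, x)$, the outer sum over the $\alpha_j^{(l-p)_2}$ together with the inner sums over the $\alpha_j^{(p)_1}$ parametrize each admissible $\alpha^r$ exactly once, carrying the total weight $E_{\mathrm{tot}}(\alpha^r)$. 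Non-admissible combinations contribute nothing because the relevant inner sum is empty. The whole argument is pure bookkeeping; the only mild subtlety is matching the group-law ordering in $F_\mathfrak{q}$ with the direction in which the partial cocycles are concatenated, and this follows directly from the definitions in \cref{subsec:CocyclesAndCompatibleMarkovSections}.
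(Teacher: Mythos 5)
Your proposal is correct and takes essentially the same route as the paper: you split the argument into the structural identity $\nu_1^\mathfrak{q} = \sum_{\alpha^r} E_{\mathrm{tot}}(\alpha^r)\,\delta_{\mathtt{c}_\mathfrak{q}^{r+1}(\alpha_{r+1},\alpha^r,x)}$ and the pointwise exponent comparison via the telescoping decomposition of $f_r^{(a)}$ and \cref{lem:Estimate_f_ToRemoveDependence}, which is exactly the paper's calculation reorganized. Two cosmetic remarks: the convolution convention $\delta_{g_1}*\delta_{g_2}=\delta_{g_2 g_1}$ comes from the definition in \cref{sec:ApproximatingTheTransferOperator} rather than \cref{subsec:CocyclesAndCompatibleMarkovSections}, and the relevant locality fact is really that $\mathtt{c}_\mathfrak{q}^l(\alpha_{jl+1},\alpha^{jl},x)$ depends only on the first $l+1$ entries, which is built into the symbolic definition of $\mathtt{c}^l$; \cref{cor:CocyclesLocallyConstantCorollary} is the geometric reason that definition is consistent, so your citation is defensible but not quite the load-bearing step here.
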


\begin{proof}
Fix $C > 0$ to be the one from \cref{lem:Estimate_f_ToRemoveDependence}. Let $\mathfrak{q} \subset \mathcal{O}_{\mathbb K}$ be an ideal coprime to $\mathfrak{q}_0$. First we have
\begin{align*}
\nu_0^\mathfrak{q} &= \sum_{\alpha^r} e^{f_r^{(a)}(\alpha^r, x)} \delta_{\mathtt{c}_\mathfrak{q}^{r + 1}(\alpha_{r + 1}, \alpha^r, x)} \\
&= \sum_{\alpha^r} e^{f_{2l - p}^{(a)}(\alpha^{2l - p}, x) + \sum_{j = 2}^{r' - 1} f_l^{(a)}(\alpha^{(j + 1)l - p}, x) + f_p^{(a)}(\alpha^r, x)} \delta_{\mathtt{c}_\mathfrak{q}^{r + 1}(\alpha_{r + 1}, \alpha^r, x)}.
\end{align*}
Now by construction, for fixed sequences $\alpha_1^{(l - p)_2}, \alpha_2^{(l - p)_2}, \dotsc, \alpha_{r'}^{(l - p)_2}$, the terms $E\big(\alpha_{j + 1}^{(l - p)_2}, \alpha_j^l\big)$ and $\mathtt{c}_\mathfrak{q}^l\big(\alpha_{jl + 1}, \alpha^{jl}, x\big) = \mathtt{c}_\mathfrak{q}^l\big(\alpha_{jl + 1}, \alpha_j^{(p)_1}, \alpha_j^{(l - p)_2}, \omega\big(\alpha_j^{(l - p)_2}\big)\big)$ depend only on the choice of $\alpha_j^{(p)_1}$ and not on $\alpha_k^{(p)_1}$ for all distinct $1 \leq j, k \leq r'$. We also note that $\mathtt{c}_\mathfrak{q}(\alpha_1, x)$ depends only on the choice of $\alpha_1^{(l - p)_2}$ since $l - p \geq 1$. So we can do the manipulations
\begin{align*}
\nu_1^\mathfrak{q} ={}&\sum_{\alpha_1^{(l - p)_2}, \alpha_2^{(l - p)_2}, \dotsc, \alpha_{r'}^{(l - p)_2}} \delta_{\mathtt{c}_\mathfrak{q}(\alpha_1, x)} * \left(\mathop{\bigast}\limits_{j = 1}^{r'} \sum_{\alpha_j^{(p)_1}} E\big(\alpha_{j + 1}^{(l - p)_2}, \alpha_j^l\big) \delta_{\mathtt{c}_\mathfrak{q}^l(\alpha_{jl + 1}, \alpha^{jl}, x)}\right) \\
={}&\sum_{\alpha_1^{(l - p)_2}, \alpha_2^{(l - p)_2}, \dotsc, \alpha_{r'}^{(l - p)_2}} \delta_{\mathtt{c}_\mathfrak{q}(\alpha_1, x)} \\
{}&* \sum_{\alpha_1^{(p)_1}, \alpha_2^{(p)_1}, \dotsc, \alpha_{r'}^{(p)_1}} \mathop{\bigast}\limits_{j = 1}^{r'} E\big(\alpha_{j + 1}^{(l - p)_2}, \alpha_j^l\big) \delta_{\mathtt{c}_\mathfrak{q}^l(\alpha_{jl + 1}, \alpha^{jl}, x)} \\
={}&\sum_{\alpha_1^{(l - p)_2}, \alpha_2^{(l - p)_2}, \dotsc, \alpha_{r'}^{(l - p)_2}} \left(\sum_{\alpha_1^{(p)_1}, \alpha_2^{(p)_1}, \dotsc, \alpha_{r'}^{(p)_1}} \left(\prod_{j = 1}^{r'} E\big(\alpha_{j + 1}^{(l - p)_2}, \alpha_j^l\big) \right)\right. \\
{}&\left.\cdot \delta_{\mathtt{c}_\mathfrak{q}(\alpha_1, x)} * \delta_{\mathtt{c}_\mathfrak{q}^l(\alpha_{l + 1}, \alpha^{l}, x)} * \delta_{\mathtt{c}_\mathfrak{q}^l(\alpha_{2l + 1}, \alpha^{2l}, x)} * \dotsb * \delta_{\mathtt{c}_\mathfrak{q}^l(\alpha_{r + 1}, \alpha^{r'l}, x)} \rule{0cm}{0.8cm}\right) \\
={}&\sum_{\alpha^r} \left(\prod_{j = 1}^{r'} E\big(\alpha_{j + 1}^{(l - p)_2}, \alpha_j^l\big)\right) \delta_{\mathtt{c}_\mathfrak{q}^{r + 1}(\alpha_{r + 1}, \alpha^r, x)}.
\end{align*}
Hence the lemma follows by comparing the above two expressions for $\nu_0^\mathfrak{q}$ and $\nu_1^\mathfrak{q}$ and using \cref{lem:Estimate_f_ToRemoveDependence}.
\end{proof}

Let $\mathfrak{q} \subset \mathcal{O}_{\mathbb K}$ be an ideal coprime to $\mathfrak{q}_0$, $1 \leq j \leq r'$ be an integer, and $\big(\alpha_{j + 1}^{(l - p)_2}, \alpha_j^l\big)$ and $\big(\tilde{\alpha}_{j + 1}^{(l - p)_2}, \tilde{\alpha}_j^l\big)$ be pairs of admissible sequences such that $\big(\alpha_{j + 1}^{(l - p)_2}, \alpha_j^{(l - p)_2}\big) = \big(\tilde{\alpha}_{j + 1}^{(l - p)_2}, \tilde{\alpha}_j^{(l - p)_2}\big)$. We have
\begin{align*}
\mathtt{c}_\mathfrak{q}^l\big(\alpha_{jl + 1}, \alpha^{jl}, x\big) = \prod_{k = 0}^p \mathtt{c}_\mathfrak{q}(\alpha_{jl + 1 - k}, \alpha_{jl - k}) \prod_{k = p + 1}^{l - 1} \mathtt{c}_\mathfrak{q}(\alpha_{jl + 1 - k}, \alpha_{jl - k})
\end{align*}
for the first sequence and similarly for the second one. We write it in this fashion because $\mathtt{c}_\mathfrak{q}(\alpha_{jl + 1 - k}, \alpha_{jl - k}) = \mathtt{c}_\mathfrak{q}(\tilde{\alpha}_{jl + 1 - k}, \tilde{\alpha}_{jl - k})$ for all $p + 1 \leq k \leq l - 1$. We then calculate that
\begin{align*}
&\mathtt{c}_\mathfrak{q}^l\big(\alpha_{jl + 1}, \alpha^{jl}, x\big) \mathtt{c}_\mathfrak{q}^l\big(\tilde{\alpha}_{jl + 1}, \tilde{\alpha}^{jl}, x\big)^{-1} \\
={}&\prod_{k = 0}^p \mathtt{c}_\mathfrak{q}(\alpha_{jl + 1 - k}, \alpha_{jl - k}) \prod_{k = p + 1}^{l - 1} \mathtt{c}_\mathfrak{q}(\alpha_{jl + 1 - k}, \alpha_{jl - k}) \\
{}&\cdot \left(\prod_{k = p + 1}^{l - 1} \mathtt{c}_\mathfrak{q}(\alpha_{jl + 1 - k}, \alpha_{jl - k})\right)^{-1} \left(\prod_{k = 0}^p \mathtt{c}_\mathfrak{q}(\tilde{\alpha}_{jl + 1 - k}, \tilde{\alpha}_{jl - k})\right)^{-1} \\
={}&\prod_{k = 0}^p \mathtt{c}_\mathfrak{q}(\alpha_{jl + 1 - k}, \alpha_{jl - k}) \prod_{k = 0}^p \mathtt{c}_\mathfrak{q}(\tilde{\alpha}_{jl - p + 1 + k}, \tilde{\alpha}_{jl - p + k})^{-1}.
\end{align*}

Now, by the Zariski density and full trace field properties of the return trajectory subgroups from \cref{cor:Z-DenseInSimplyConnectedCoverGAndTraceFieldK}, we can use the strong approximation theorem together with the expander machinery of Golsefidy--Varj\'{u} \cite{GV12} to obtain the following lemma regarding spectral gap.

\begin{lemma}
\label{lem:GV_Expander}
There exists $\epsilon \in (0, 1)$ such that for all $1 \leq j \leq r'$, pairs of admissible sequences $\big(\alpha_{j + 1}^{(l - p)_2}, \alpha_j^{(l - p)_2}\big)$, square-free ideals $\mathfrak{q} \subset \mathcal{O}_{\mathbb K}$ coprime to $\mathfrak{q}_0$, and $\phi \in L_0^2(\tilde{G}_\mathfrak{q})$ with $\|\phi\|_2 = 1$, there exist admissible sequences $\big(\beta_{jl + 1}, \beta_j^{(p)_1}, \beta_{jl - p}\big)$ and $\big(\tilde{\beta}_{jl + 1}, \tilde{\beta}_j^{(p)_1}, \tilde{\beta}_{jl - p}\big)$ with $\beta_{jl + 1} = \tilde{\beta}_{jl + 1} = \alpha_{jl + 1}$ and $\beta_{jl - p} = \tilde{\beta}_{jl - p} = \alpha_{jl - p}$ such that
\begin{align*}
\|\delta_g * \phi - \phi\|_2 \geq \epsilon
\end{align*}
where $g = \prod_{k = 0}^p \mathtt{c}_\mathfrak{q}(\beta_{jl + 1 - k}, \beta_{jl - k}) \prod_{k = 0}^p \mathtt{c}_\mathfrak{q}(\tilde{\beta}_{jl - p + 1 + k}, \tilde{\beta}_{jl - p + k})^{-1}$ and $\epsilon$ is independent of $r \in \mathbb N$ and its factorization $r = r'l$ with $l > p$.
\end{lemma}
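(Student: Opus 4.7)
The plan is to recognize the group element $g$ as lying in the image of a return trajectory subgroup $S^p(y,z)$, and then to obtain the desired bound from the expansion spectral gap produced by the Golsefidy--Varj\'{u} expander machinery. First I would unwind the definition of $g$. Setting $y = \alpha_{jl+1}$ and $z = \alpha_{jl-p}$, I will verify that for any admissible extensions $\bigl(\beta_{jl+1}, \beta_j^{(p)_1}, \beta_{jl-p}\bigr)$ and $\bigl(\tilde{\beta}_{jl+1}, \tilde{\beta}_j^{(p)_1}, \tilde{\beta}_{jl-p}\bigr)$ with the required endpoint conditions, the element $g = \prod_{k=0}^{p} \mathtt{c}_\mathfrak{q}(\beta_{jl+1-k},\beta_{jl-k}) \prod_{k=0}^{p} \mathtt{c}_\mathfrak{q}(\tilde{\beta}_{jl-p+1+k},\tilde{\beta}_{jl-p+k})^{-1}$ is exactly $\pi_\mathfrak{q}$ applied to an element of $S^p(y,z)$, obtained by the substitutions $\alpha_k = \beta_{jl+1-k}$ and $\tilde{\alpha}_k = \tilde{\beta}_{jl+1-k}$. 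Note also that $S^p(y,z)$ is symmetric: swapping $\alpha$ and $\tilde{\alpha}$ inverts the generator.

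Next I would invoke \cref{cor:Z-DenseInSimplyConnectedCoverGAndTraceFieldK} to ensure that $\tilde{H}^p(y,z)$ is Zariski dense in $\tilde{\mathbf{G}}$ and has trace field $\mathbb{Q}(\tr(\Ad(\tilde{H}^p(y,z)))) = \mathbb{K}$. Together with the choice of $\mathfrak{q}_0$ (chosen, as declared in \cref{subsec:CocyclesAndCompatibleMarkovSections}, so that both the strong approximation theorem and \cite[Corollary 6]{GV12} apply simultaneously to all the return trajectory subgroups), this has two consequences for every square-free ideal $\mathfrak{q} \subset \mathcal{O}_\mathbb{K}$ coprime to $\mathfrak{q}_0$ and every $(y,z) \in \mathcal{A}^2$: (i) the Weisfeiler strong approximation theorem gives $\pi_\mathfrak{q}(\tilde{H}^p(y,z)) = \tilde{G}_\mathfrak{q}$, so the generators of $S^p(y,z) \bmod \mathfrak{q}$ are a symmetric generating set of $\tilde{G}_\mathfrak{q}$; and (ii) the Golsefidy--Varj\'{u} theorem yields a uniform spectral gap for the Cayley graphs $\Cay\bigl(\tilde{G}_\mathfrak{q}, \pi_\mathfrak{q}(\tilde{S}^p(y,z))\bigr)$, i.e.\ there exists $\epsilon_{y,z} > 0$, independent of $\mathfrak{q}$, such that the averaging operator $T_{y,z,\mathfrak{q}} \colon L_0^2(\tilde{G}_\mathfrak{q}) \to L_0^2(\tilde{G}_\mathfrak{q})$ defined by $T_{y,z,\mathfrak{q}}(\phi) = \frac{1}{|\tilde{S}^p(y,z)|} \sum_{h \in \pi_\mathfrak{q}(\tilde{S}^p(y,z))} \delta_h * \phi$ satisfies $\|T_{y,z,\mathfrak{q}}(\phi)\|_2 \leq (1-\epsilon_{y,z}) \|\phi\|_2$.

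Finally, setting $\epsilon = \min_{(y,z) \in \mathcal{A}^2} \epsilon_{y,z} > 0$, which is positive since $\mathcal{A}$ is finite, I would conclude as follows. For $\phi \in L_0^2(\tilde{G}_\mathfrak{q})$ with $\|\phi\|_2 = 1$, the triangle inequality gives
\begin{align*}
\max_{h \in \pi_\mathfrak{q}(\tilde{S}^p(y,z))} \|\delta_h * \phi - \phi\|_2 \geq \|T_{y,z,\mathfrak{q}}(\phi) - \phi\|_2 \geq \|\phi\|_2 - \|T_{y,z,\mathfrak{q}}(\phi)\|_2 \geq \epsilon_{y,z} \geq \epsilon.
\end{align*}
Choosing an $h$ realizing the maximum and unpacking it back into admissible sequences $\bigl(\beta_{jl+1}, \beta_j^{(p)_1}, \beta_{jl-p}\bigr)$ and $\bigl(\tilde{\beta}_{jl+1}, \tilde{\beta}_j^{(p)_1}, \tilde{\beta}_{jl-p}\bigr)$ (with the specified endpoints $\alpha_{jl+1}$ and $\alpha_{jl-p}$) produces the element $g$ of the lemma. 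The key technical obstacle is ensuring that the hypotheses of \cite[Theorem 1]{GV12} are met with constants uniform in $\mathfrak{q}$ and in $(y,z)$; this is precisely where \cref{cor:Z-DenseInSimplyConnectedCoverGAndTraceFieldK} (whose proof occupies \cref{sec:ZariskiDensityAndTraceFieldOfTheReturnTrajectorySubgroups} and is the main new technical contribution underlying this section) enters, and where the finiteness of $\mathcal{A}^2$ allows us to pass from pointwise constants $\epsilon_{y,z}$ to the single constant $\epsilon$.
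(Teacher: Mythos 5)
Your proposal is correct and follows essentially the same route as the paper: identify $g$ as lying in $\pi_\mathfrak{q}(\tilde{S}^p(y,z))$, invoke \cref{cor:Z-DenseInSimplyConnectedCoverGAndTraceFieldK} together with strong approximation to get surjectivity, apply the Golsefidy--Varj\'{u} expander machinery for a uniform spectral gap, and conclude via an averaging argument (you use convexity/max; the paper uses the graph Laplacian and a pigeonhole-style sum, which is equivalent). The only detail the paper handles that you elide is that the Golsefidy--Varj\'{u} theorem a priori gives expansion for $\Cay(\tilde{\mathbf{G}}(\mathcal{O}_{\mathbb K}/\mathfrak{q}), \pi_\mathfrak{q}(\tilde{S}^p))$ rather than for the quotient $\tilde{G}_\mathfrak{q} = \pi_\mathfrak{q}(\ker(\tilde{\pi})) \backslash \tilde{\mathbf{G}}(\mathcal{O}_{\mathbb K}/\mathfrak{q})$, so one must note (as the paper does in a remark via the Cheeger constant) that passing to the left quotient by $\pi_\mathfrak{q}(\ker(\tilde{\pi}))$ preserves the expander property.
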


\begin{proof}
Uniformity of $\epsilon$ with respect to $r \in \mathbb N$ with factorization $r = r'l$ with $l > p$, integers $1 \leq j \leq r'$, and pairs of admissible sequences $\big(\alpha_{j + 1}^{(l - p)_2}, \alpha_j^{(l - p)_2}\big)$ is trivial since $\epsilon$ only depends on the first entry $\alpha_{jl + 1} \in \mathcal A$ of $\alpha_{j + 1}^{(l - p)_2}$ and the last entry $\alpha_{jl - p} \in \mathcal A$ of $\alpha_j^{(l - p)_2}$ and there are only a finite number of such elements. So let $1 \leq j \leq r'$ be an integer and $\big(\alpha_{j + 1}^{(l - p)_2}, \alpha_j^{(l - p)_2}\big)$ be a pair of admissible sequences. Denote $\tilde{S}^p(\alpha_{jl + 1}, \alpha_{jl - p})$ by $\tilde{S}^p$ and $\tilde{H}^p(\alpha_{jl + 1}, \alpha_{jl - p})$ by $\tilde{H}^p$. For all ideals $\mathfrak{q} \subset \mathcal{O}_{\mathbb K}$, let $\tilde{S}^p_\mathfrak{q} = \pi_\mathfrak{q}(\ker(\tilde{\pi})) \backslash \pi_\mathfrak{q}(\tilde{S}^p)$ and $\tilde{H}^p_\mathfrak{q} = \pi_\mathfrak{q}(\ker(\tilde{\pi})) \backslash \pi_\mathfrak{q}(\tilde{H}^p) = \langle\tilde{S}^p_\mathfrak{q}\rangle$. Recalling the strong approximation theorem of Weisfeiler (see \cite[Theorem 10.1]{Wei84} and its proof), \cref{cor:Z-DenseInSimplyConnectedCoverGAndTraceFieldK} implies that $\tilde{H}^p_\mathfrak{q} = \tilde{G}_\mathfrak{q}$ for all ideals $\mathfrak{q} \subset \mathcal{O}_{\mathbb K}$ coprime to $\mathfrak{q}_0$. By \cite[Corollary 6]{GV12}, we can furthermore conclude that the Cayley graphs $\Cay(\tilde{G}_\mathfrak{q}, \tilde{S}^p_\mathfrak{q}) = \Cay(\pi_\mathfrak{q}(\ker(\tilde{\pi})) \backslash \tilde{\mathbf{G}}(\mathcal{O}_\mathbb{K}/\mathfrak{q}), \pi_\mathfrak{q}(\ker(\tilde{\pi})) \backslash \pi_\mathfrak{q}(\tilde{S}^p))$ form expanders with respect to \emph{square-free} ideals $\mathfrak{q} \subset \mathcal{O}_{\mathbb K}$ coprime to $\mathfrak{q}_0$. For all square-free ideals $\mathfrak{q} \subset \mathcal{O}_{\mathbb K}$ coprime to $\mathfrak{q}_0$, let $A_\mathfrak{q}: L^2(\tilde{G}_\mathfrak{q}) \to L^2(\tilde{G}_\mathfrak{q})$ be the self-adjoint adjacency operator defined by
\begin{align*}
A_\mathfrak{q}(\phi) = \sum_{h \in \tilde{S}^p_\mathfrak{q}} \delta_h * \phi \qquad \text{for all $\phi \in L^2(\tilde{G}_\mathfrak{q})$}.
\end{align*}
Its largest eigenvalue is $\lambda_1(A_\mathfrak{q}) = \#\tilde{S}^p_\mathfrak{q}$ with the corresponding eigenvectors being the constant functions. We can choose $\epsilon \in (0, 1)$ coming from the expander property so that for all square-free ideals $\mathfrak{q} \subset \mathcal{O}_{\mathbb K}$ coprime to $\mathfrak{q}_0$, the next largest eigenvalue is $\lambda_2(A_\mathfrak{q}) \leq (1 - \epsilon)\#\tilde{S}^p_\mathfrak{q}$. This corresponds to the graph Laplacian $\Delta_\mathfrak{q}  = \Id_{L^2(\tilde{G}_\mathfrak{q})} - \frac{1}{\#\tilde{S}^p_\mathfrak{q}}A_\mathfrak{q}$ having the smallest eigenvalue $\lambda_1(\Delta_\mathfrak{q}) = 0$ with the corresponding eigenvectors being the constant functions and having the next smallest eigenvalue $\lambda_2(\Delta_\mathfrak{q}) \geq \epsilon$ for all square-free ideals $\mathfrak{q} \subset \mathcal{O}_{\mathbb K}$ coprime to $\mathfrak{q}_0$. Thus, for all square-free ideals $\mathfrak{q} \subset \mathcal{O}_{\mathbb K}$ coprime to $\mathfrak{q}_0$ and $\phi \in L_0^2(\tilde{G}_\mathfrak{q})$ with $\|\phi\|_2 = 1$, we have $\|\Delta_\mathfrak{q}(\phi)\|_2 = \left\|\frac{1}{\#\tilde{S}^p_\mathfrak{q}}A_\mathfrak{q}(\phi) - \phi\right\|_2 \geq \epsilon$ which implies
\begin{align*}
\sum_{h \in \tilde{S}^p_\mathfrak{q}} \|\delta_h * \phi - \phi\|_2 \geq \epsilon \cdot \#\tilde{S}^p_\mathfrak{q}.
\end{align*}
So, there exists $g \in \tilde{S}^p_\mathfrak{q}$ such that $\|\delta_g * \phi - \phi\|_2 \geq \epsilon$. But $\tilde{S}^p \subset \tilde{H}^p < \tilde{\Gamma}$ and recall the induced isomorphisms $\overline{\pi_{\mathfrak{q}}|_{\tilde{\Gamma}}}: \tilde{\Gamma}_\mathfrak{q} \backslash \tilde{\Gamma} \to \tilde{G}_\mathfrak{q}$ and $\overline{\tilde{\pi}|_{\tilde{\Gamma}}}: \tilde{\Gamma}_\mathfrak{q} \backslash \tilde{\Gamma} \to \Gamma_\mathfrak{q} \backslash \Gamma$. Following these isomorphisms, we can see that $g$ is in fact of the form
\begin{align*}
g = \prod_{k = 0}^p \mathtt{c}_\mathfrak{q}(\beta_{jl + 1 - k}, \beta_{jl - k}) \prod_{k = 0}^p \mathtt{c}_\mathfrak{q}(\tilde{\beta}_{jl - p + 1 + k}, \tilde{\beta}_{jl - p + k})^{-1}.
\end{align*}
\end{proof}

\begin{remark}
Although in the above proof we only know a priori that the Cayley graphs $\Cay(\tilde{\mathbf{G}}(\mathcal{O}_\mathbb{K}/\mathfrak{q}), \pi_\mathfrak{q}(\tilde{S}^p))$ form expanders with respect to square-free ideals $\mathfrak{q} \subset \mathcal{O}_{\mathbb K}$ coprime to $\mathfrak{q}_0$, it is easy to see using the Cheeger constant formulation for expanders that left quotients by $\pi_\mathfrak{q}(\ker(\tilde{\pi}))$ preserve this property.
\end{remark}

Let $\mathfrak{q} \subset \mathcal{O}_{\mathbb K}$ be an ideal coprime to $\mathfrak{q}_0$. For all measures $\eta$ on $\tilde{G}_\mathfrak{q}$, we define $\tilde{\eta}$ to be the operator $\tilde{\eta}: L^2(\tilde{G}_\mathfrak{q}) \to L^2(\tilde{G}_\mathfrak{q})$ acting by the convolution $\tilde{\eta}(\phi) = \eta * \phi$ for all $\phi \in L^2(\tilde{G}_\mathfrak{q})$ and $\tilde{\eta}^*$ will be its adjoint operator as usual. We also define $\eta^*$ to be the measure on $\tilde{G}_\mathfrak{q}$ defined by $\eta^*(g) = \overline{\eta(g^{-1})}$ for all $g \in \tilde{G}_\mathfrak{q}$. It can be checked that $\tilde{\eta}^* = \widetilde{\eta^*}$, i.e., $\tilde{\eta}^*(\phi) = \eta^* * \phi$ for all $\phi \in L^2(\tilde{G}_\mathfrak{q})$.

\begin{lemma}
\label{lem:EtaOperatorBound}
There exists $C \in (0, 1)$ such that for all square-free ideals $\mathfrak{q} \subset \mathcal{O}_{\mathbb K}$ coprime to $\mathfrak{q}_0$, integers $1 \leq j \leq r'$, pairs of admissible sequences $\big(\alpha_{j + 1}^{(l - p)_2}, \alpha_j^{(l - p)_2}\big)$, and $\phi \in L_0^2(\tilde{G}_\mathfrak{q})$ with $\|\phi\|_2 = 1$, we have
\begin{align*}
\left\|\eta^\mathfrak{q}\big(\alpha_{j + 1}^{(l - p)_2}, \alpha_j^{(l - p)_2}\big) * \phi\right\|_2 \leq C \left\|\eta^\mathfrak{q}\big(\alpha_{j + 1}^{(l - p)_2}, \alpha_j^{(l - p)_2}\big)\right\|_1
\end{align*}
where $C$ is independent of $|a| < a_0'$, $x \in \Sigma^+$, $r \in \mathbb N$ and its factorization $r = r'l$ with $l > p$.
\end{lemma}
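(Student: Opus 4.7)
The plan is to expand $\|\eta^\mathfrak{q} \ast \phi\|_2^2$ in terms of inner products of $\delta_g \ast \phi$'s, apply the polarization identity to convert these into quantities of the form $\|\delta_h \ast \phi - \phi\|_2$, and then use \cref{lem:GV_Expander} to exhibit a single dominant pair of summation indices for which this quantity is bounded below by $\epsilon$. Nearly-flatness (\cref{lem:NearlyFlat}) will let me promote that one-pair lower bound to a gain proportional to $\|\eta^\mathfrak{q}\|_1^2$.

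Fix $\mathfrak{q}$, $j$, $\big(\alpha_{j+1}^{(l-p)_2},\alpha_j^{(l-p)_2}\big)$, and $\phi$ as in the statement, and write
\begin{align*}
\eta := \eta^\mathfrak{q}\big(\alpha_{j+1}^{(l-p)_2},\alpha_j^{(l-p)_2}\big) = \sum_\beta E(\beta)\, \delta_{g(\beta)},
\end{align*}
where $\beta$ ranges over admissible sequences $\alpha_j^{(p)_1}$ (a finite set of cardinality at most $N^p$), and $g(\beta) = \mathtt{c}_\mathfrak{q}^l(\alpha_{jl+1},\alpha^{jl},x)$. Since the right regular representation is unitary and $\delta_g \ast \phi = \pi(g^{-1})\phi$, a direct computation together with the polarization identity gives
\begin{align*}
\|\eta \ast \phi\|_2^2
= \|\eta\|_1^2 - \frac{1}{2}\sum_{\beta,\tilde\beta} E(\beta)E(\tilde\beta)\, \bigl\|\delta_{g(\beta)g(\tilde\beta)^{-1}} \ast \phi - \phi\bigr\|_2^2,
\end{align*}
using that $\|\delta_g \ast \phi\|_2 = 1$ for all $g$.

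Now I apply \cref{lem:GV_Expander} to $\phi \in L_0^2(\tilde G_\mathfrak{q})$: it yields a pair of admissible choices $\beta^\ast,\tilde\beta^\ast$ of $\alpha_j^{(p)_1}$ (the sequences $\big(\beta_j^{(p)_1}\big)$ and $\big(\tilde\beta_j^{(p)_1}\big)$ with prescribed first and last entries $\alpha_{jl+1}$ and $\alpha_{jl-p}$) such that the corresponding $g = g(\beta^\ast)g(\tilde\beta^\ast)^{-1}$, which by the calculation immediately preceding \cref{lem:GV_Expander} equals
\begin{align*}
\prod_{k=0}^p \mathtt{c}_\mathfrak{q}(\beta^\ast_{jl+1-k},\beta^\ast_{jl-k}) \prod_{k=0}^p \mathtt{c}_\mathfrak{q}(\tilde\beta^\ast_{jl-p+1+k},\tilde\beta^\ast_{jl-p+k})^{-1},
\end{align*}
satisfies $\|\delta_g \ast \phi - \phi\|_2 \geq \epsilon$. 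By \cref{lem:NearlyFlat} all coefficients $E(\beta)$ differ by a bounded multiplicative factor, so denoting by $C_1$ the constant from that lemma and by $N^p$ the cardinality bound on the index set, I obtain $E(\beta^\ast), E(\tilde\beta^\ast) \geq \|\eta\|_1/(C_1 N^p)$. Retaining only the $(\beta^\ast,\tilde\beta^\ast)$ term in the sum gives
\begin{align*}
\|\eta \ast \phi\|_2^2 \leq \|\eta\|_1^2 - \frac{\epsilon^2}{2(C_1 N^p)^2}\|\eta\|_1^2,
\end{align*}
and setting $C = \sqrt{1 - \epsilon^2/(2(C_1 N^p)^2)} \in (0,1)$ concludes the proof, with the required uniformity since $C_1$, $N$, $p$, and $\epsilon$ are all independent of the data $(a,x,r,r',l,j,\mathfrak{q},\phi)$.

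The only subtlety is making sure \cref{lem:GV_Expander} is applicable, namely that the element $g(\beta^\ast)g(\tilde\beta^\ast)^{-1}$ actually has the exact form appearing there with matching boundary entries $\beta^\ast_{jl+1} = \tilde\beta^\ast_{jl+1} = \alpha_{jl+1}$ and $\beta^\ast_{jl-p} = \tilde\beta^\ast_{jl-p} = \alpha_{jl-p}$; this is precisely ensured by the fact that the factors in $g(\beta)$ indexed by $k \geq p+1$ cancel between $g(\beta)$ and $g(\tilde\beta)^{-1}$ thanks to the shared sequences $\alpha_{j+1}^{(l-p)_2}$ and $\alpha_j^{(l-p)_2}$, as exhibited in the computation just before \cref{lem:GV_Expander}. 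No other step presents serious difficulty.
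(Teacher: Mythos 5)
Your proof is correct and takes essentially the same route as the paper: after expanding $\|\eta*\phi\|_2^2$ over pairs of indices, you apply \cref{lem:GV_Expander} to isolate a single pair contributing a gain of $\epsilon^2/2$, and then use \cref{lem:NearlyFlat} to convert that one-pair gain into a gain proportional to $\|\eta\|_1^2$. The paper phrases the intermediate step via the positive semidefinite operator $\tilde\Pi = \tilde\eta^*\tilde\eta$ and then bounds all non-dominant terms of $\Re\langle\tilde\Pi(\phi),\phi\rangle$ by $1$ via Cauchy--Schwarz, whereas you use the exact polarization identity $\Re\langle\delta_h*\phi,\phi\rangle = 1 - \tfrac12\|\delta_h*\phi-\phi\|_2^2$ to rewrite the whole sum before dropping the nonnegative off-diagonal terms; the two bookkeepings yield the identical penultimate inequality and the identical constant $C$.
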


\begin{proof}
Fix $\epsilon \in (0, 1)$ to be the one from \cref{lem:GV_Expander} and $C_0 > 1$ to be the $C$ from \cref{lem:NearlyFlat}. Fix $C = \sqrt{1 - \frac{\epsilon^2}{2C_0^2N^{2p}}} \in (0, 1)$. Let $\mathfrak{q} \subset \mathcal{O}_{\mathbb K}$ be a square-free ideal coprime to $\mathfrak{q}_0$, $1 \leq j \leq r'$ be an integer, $\big(\alpha_{j + 1}^{(l - p)_2}, \alpha_j^{(l - p)_2}\big)$ be a pair of admissible sequences, and $\phi \in L_0^2(\tilde{G}_\mathfrak{q})$ with $\|\phi\|_2 = 1$. Denote $\eta^\mathfrak{q}\big(\alpha_{j + 1}^{(l - p)_2}, \alpha_j^{(l - p)_2}\big)$ by $\eta$ and $E\big(\alpha_{j + 1}^{(l - p)_2}, \alpha_j^l\big)$ by $E\big(\alpha_j^{(p)_1}\big)$ for all admissible sequences $\big(\alpha_{jl + 1}, \alpha_j^{(p)_1}, \alpha_{jl - p}\big)$. Recalling that $E$ is real-valued, we can define the measure
\begin{align*}
\Pi &= \eta^* * \eta = \sum_{\alpha_j^{(p)_1}, \tilde{\alpha}_j^{(p)_1}} E\big(\alpha_j^{(p)_1}\big) E\big(\tilde{\alpha}_j^{(p)_1}\big) \delta_{\mathtt{c}_\mathfrak{q}^l(\tilde{\alpha}_{jl + 1}, \tilde{\alpha}^{jl}, x)^{-1}} * \delta_{\mathtt{c}_\mathfrak{q}^l(\alpha_{jl + 1}, \alpha^{jl}, x)} \\
&= \sum_{\alpha_j^{(p)_1}, \tilde{\alpha}_j^{(p)_1}} E\big(\alpha_j^{(p)_1}\big) E\big(\tilde{\alpha}_j^{(p)_1}\big) \delta_{\mathtt{c}_\mathfrak{q}^l(\alpha_{jl + 1}, \alpha^{jl}, x) \mathtt{c}_\mathfrak{q}^l(\tilde{\alpha}_{jl + 1}, \tilde{\alpha}^{jl}, x)^{-1}}
\end{align*}
where $\big(\tilde{\alpha}_{jl + 1}, \tilde{\alpha}_j^{(p)_1}, \tilde{\alpha}_j^{(l - p)_2}, \tilde{\alpha}^{(j - 1)l}\big) = \big(\alpha_{jl + 1}, \tilde{\alpha}_j^{(p)_1}, \alpha_j^{(l - p)_2}, \alpha^{(j - 1)l}\big)$ henceforth. To begin estimating $\|\eta * \phi\|_2$, we calculate that $0 \leq \|\eta * \phi\|_2^2 = \langle \eta * \phi, \eta * \phi \rangle = \langle \eta^* * \eta * \phi, \phi \rangle = \langle\tilde{\Pi}(\phi), \phi \rangle$. The calculations also show that $\tilde{\Pi} = \tilde{\eta}^* \tilde{\eta}$ is a self-adjoint positive semidefinite operator. It suffices to show $\langle\tilde{\Pi}(\phi), \phi \rangle \leq C^2\|\Pi\|_1$ since
\begin{align*}
\|\Pi\|_1 = \sum_{\alpha_j^{(p)_1}, \tilde{\alpha}_j^{(p)_1}} E\big(\alpha_j^{(p)_1}\big) E\big(\tilde{\alpha}_j^{(p)_1}\big) = \left(\sum_{\alpha_j^{(p)_1}} E\big(\alpha_j^{(p)_1}\big)\right)^2 = \|\eta\|_1^2.
\end{align*}
Before we estimate $\langle\tilde{\Pi}(\phi), \phi \rangle$, we first use \cref{lem:GV_Expander} to obtain an element
\begin{align*}
g &= \mathtt{c}_\mathfrak{q}^l\big(\beta_{jl + 1}, \beta^{jl}, x\big) \mathtt{c}_\mathfrak{q}^l\big(\tilde{\beta}_{jl + 1}, \tilde{\beta}^{jl}, x\big)^{-1} \\
&= \prod_{k = 0}^p \mathtt{c}_\mathfrak{q}(\beta_{jl + 1 - k}, \beta_{jl - k}) \prod_{k = 0}^p \mathtt{c}_\mathfrak{q}(\tilde{\beta}_{jl - p + 1 + k}, \tilde{\beta}_{jl - p + k})^{-1}
\end{align*}
for some admissible sequences $\big(\beta_{jl + 1}, \beta_j^{(p)_1}, \beta_{jl - p}\big)$ and $\big(\tilde{\beta}_{jl + 1}, \tilde{\beta}_j^{(p)_1}, \tilde{\beta}_{jl - p}\big)$ with $\beta_{jl + 1} = \tilde{\beta}_{jl + 1} = \alpha_{jl + 1}$ and $\beta_{jl - p} = \tilde{\beta}_{jl - p} = \alpha_{jl - p}$ such that $\|\delta_g * \phi - \phi\|_2 \geq \epsilon$. Expanding the norm, we have $\|\delta_g * \phi\|_2^2 - 2\Re \langle \delta_g * \phi, \phi\rangle + \|\phi\|_2^2 \geq \epsilon^2$. Thus
\begin{align*}
\Re \langle \delta_g * \phi, \phi\rangle \leq 1 - \frac{\epsilon^2}{2} \in (0, 1)
\end{align*}
using the fact that $\|\delta_g * \phi\|_2 = \|\phi\|_2 = 1$. Now we use this inequality to begin estimating $\langle\tilde{\Pi}(\phi), \phi \rangle$. Since $\tilde{\Pi}$ is self-adjoint, we have
\begin{align*}
&\langle \tilde{\Pi}(\phi), \phi\rangle = \Re \langle \tilde{\Pi}(\phi), \phi\rangle \\
={}& \Re \left\langle\sum_{\alpha_j^{(p)_1}, \tilde{\alpha}_j^{(p)_1}} E\big(\alpha_j^{(p)_1}\big) E\big(\tilde{\alpha}_j^{(p)_1}\big) \delta_{\mathtt{c}_\mathfrak{q}^l(\alpha_{jl + 1}, \alpha^{jl}, x) \mathtt{c}_\mathfrak{q}^l(\tilde{\alpha}_{jl + 1}, \tilde{\alpha}^{jl}, x)^{-1}} * \phi, \phi \right\rangle \\
={}&\sum_{\alpha_j^{(p)_1}, \tilde{\alpha}_j^{(p)_1}} E\big(\alpha_j^{(p)_1}\big) E\big(\tilde{\alpha}_j^{(p)_1}\big) \Re \langle \delta_{\mathtt{c}_\mathfrak{q}^l(\alpha_{jl + 1}, \alpha^{jl}, x) \mathtt{c}_\mathfrak{q}^l(\tilde{\alpha}_{jl + 1}, \tilde{\alpha}^{jl}, x)^{-1}} * \phi, \phi\rangle \\
\leq{}&E\big(\beta_j^{(p)_1}\big) E\big(\tilde{\beta}_j^{(p)_1}\big) \Re \langle \delta_g * \phi, \phi\rangle + \sum_{(\alpha_j^{(p)_1}, \tilde{\alpha}_j^{(p)_1}) \neq (\beta_j^{(p)_1}, \tilde{\beta}_j^{(p)_1})} E\big(\alpha_j^{(p)_1}\big) E\big(\tilde{\alpha}_j^{(p)_1}\big) \\
&{}\cdot |\langle \delta_{\mathtt{c}_\mathfrak{q}^l(\alpha_{jl + 1}, \alpha^{jl}, x) \mathtt{c}_\mathfrak{q}^l(\tilde{\alpha}_{jl + 1}, \tilde{\alpha}^{jl}, x)^{-1}} * \phi, \phi\rangle| \\
\leq{}&\left(1 - \frac{\epsilon^2}{2}\right) E\big(\beta_j^{(p)_1}\big) E\big(\tilde{\beta}_j^{(p)_1}\big) + \sum_{(\alpha_j^{(p)_1}, \tilde{\alpha}_j^{(p)_1}) \neq (\beta_j^{(p)_1}, \tilde{\beta}_j^{(p)_1})} E\big(\alpha_j^{(p)_1}\big) E\big(\tilde{\alpha}_j^{(p)_1}\big) \\
&{}\cdot \|\delta_{\mathtt{c}_\mathfrak{q}^l(\alpha_{jl + 1}, \alpha^{jl}, x) \mathtt{c}_\mathfrak{q}^l(\tilde{\alpha}_{jl + 1}, \tilde{\alpha}^{jl}, x)^{-1}} * \phi\|_2 \|\phi\|_2 \\
={}&\left(1 - \frac{\epsilon^2}{2}\right) E\big(\beta_j^{(p)_1}\big) E\big(\tilde{\beta}_j^{(p)_1}\big) + \sum_{(\alpha_j^{(p)_1}, \tilde{\alpha}_j^{(p)_1}) \neq (\beta_j^{(p)_1}, \tilde{\beta}_j^{(p)_1})} E\big(\alpha_j^{(p)_1}\big) E\big(\tilde{\alpha}_j^{(p)_1}\big) \\
={}&\sum_{\alpha_j^{(p)_1}, \tilde{\alpha}_j^{(p)_1}} E\big(\alpha_j^{(p)_1}\big) E\big(\tilde{\alpha}_j^{(p)_1}\big) - \frac{\epsilon^2}{2} E\big(\beta_j^{(p)_1}\big) E\big(\tilde{\beta}_j^{(p)_1}\big) \\
={}&\|\Pi\|_1 - \frac{\epsilon^2}{2} E\big(\beta_j^{(p)_1}\big) E\big(\tilde{\beta}_j^{(p)_1}\big).
\end{align*}
Finally, \cref{lem:NearlyFlat} gives
\begin{align*}
\|\Pi\|_1 &= \sum_{\alpha_j^{(p)_1}, \tilde{\alpha}_j^{(p)_1}} E\big(\alpha_j^{(p)_1}\big) E\big(\tilde{\alpha}_j^{(p)_1}\big) \leq C_0^2 N^{2p}E\big(\beta_j^{(p)_1}\big) E\big(\tilde{\beta}_j^{(p)_1}\big)
\end{align*}
and thus $\langle \tilde{\Pi}(\phi), \phi\rangle \leq \left(1 - \frac{\epsilon^2}{2C_0^2N^{2p}}\right)\|\Pi\|_1 = C^2\|\Pi\|_1$.
\end{proof}

\begin{lemma}
\label{lem:ExpanderMachineryBound}
There exists $l_0 \in \mathbb N$ such that if $l > l_0$, then there exists $C \in (0, 1)$ such that for all square-free ideals $\mathfrak{q} \subset \mathcal{O}_{\mathbb K}$ coprime to $\mathfrak{q}_0$, integers $s > r$, admissible sequences $(\alpha_s, \alpha_{s - 1}, \dotsc, \alpha_{r + 1})$, and $\phi \in L_0^2(\tilde{G}_\mathfrak{q})$ with $\|\phi\|_2 = 1$, we have
\begin{align*}
\left\|\nu_{(\alpha_s, \alpha_{s - 1}, \dotsc, \alpha_{r + 1})}^\mathfrak{q} * \phi\right\|_2 \leq C^r \left\|\nu_{(\alpha_s, \alpha_{s - 1}, \dotsc, \alpha_{r + 1})}^\mathfrak{q}\right\|_1
\end{align*}
where $C$ is independent of $|a| < a_0'$, $x \in \Sigma^+$, and $r \in \mathbb N$, but dependent on the factorization $r = r'l$.
\end{lemma}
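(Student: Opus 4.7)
The plan is to chain Lemma \ref{lem:EstimateNu}, which provides a pointwise comparison between $\nu_0^{a,\mathfrak{q},x,r}$ and the convolutionally-factored measure $\nu_1^{a,\mathfrak{q},x,r}$, together with Lemma \ref{lem:EtaOperatorBound}, which supplies a strict contraction on $L_0^2(\tilde G_\mathfrak{q})$ for each of the building-block measures $\eta^\mathfrak{q}$. Since $\nu_{(\alpha_s, \alpha_{s-1}, \dotsc, \alpha_{r+1})}^{a, \mathfrak{q}, x} = e^{f_{s-r}^{(a)}(\alpha_s, \dotsc, \alpha_{r+1}, \omega(\alpha_{r+1}))} \nu_0^{a, \mathfrak{q}, x, r}$ is a positive scalar multiple of $\nu_0^\mathfrak{q} := \nu_0^{a,\mathfrak{q},x,r}$ (and this scalar appears symmetrically in $\|\nu_{(\cdots)}^\mathfrak{q}\|_1$), the statement reduces to showing
\[
\|\nu_0^\mathfrak{q} * \phi\|_2 \leq C^{r'l}\|\nu_0^\mathfrak{q}\|_1
\]
for arbitrary $\phi \in L_0^2(\tilde G_\mathfrak{q})$ with $\|\phi\|_2 = 1$.

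Next I will bound $\|\nu_1^\mathfrak{q} * \phi\|_2$ using the convolution structure of $\nu_1^\mathfrak{q}$. Expanding,
\[
\nu_1^\mathfrak{q} = \sum_{\alpha_1^{(l - p)_2}, \dotsc, \alpha_{r'}^{(l - p)_2}} \mathop{\bigast}\limits_{j = 0}^{r'} \eta^\mathfrak{q}\bigl(\alpha_{j + 1}^{(l - p)_2}, \alpha_j^{(l - p)_2}\bigr),
\]
and for each fixed choice of boundary sequences I apply the convolutions to $\phi$ one at a time. Because $L_0^2$ is the orthogonal complement of the constant functions, and constants are eigenvectors of every convolution operator, $L_0^2$ is preserved at each step, so Lemma \ref{lem:EtaOperatorBound} applies uniformly: each factor with $j \geq 1$ contracts the $L^2$-norm by at most $C_{\mathrm{eta}}\|\eta^\mathfrak{q}(\cdots)\|_1$ for some $C_{\mathrm{eta}} \in (0,1)$, while the $j=0$ factor is a Dirac mass and preserves $L^2$-norm. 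Taking the product of the $r'$ contractive factors and summing over boundary sequences (using positivity of each $\eta^\mathfrak{q}$ so that the $L^1$-norm of the sum equals the sum of the $L^1$-norms), I obtain
\[
\|\nu_1^\mathfrak{q} * \phi\|_2 \leq C_{\mathrm{eta}}^{r'}\|\nu_1^\mathfrak{q}\|_1.
\]

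Finally, I transfer this bound to $\nu_0^\mathfrak{q}$ via Lemma \ref{lem:EstimateNu}. The pointwise comparison yields both $\|\nu_0^\mathfrak{q} - \nu_1^\mathfrak{q}\|_1 \leq (e^{r'C\theta^l} - 1)\|\nu_1^\mathfrak{q}\|_1$ and $\|\nu_1^\mathfrak{q}\|_1 \leq e^{r'C\theta^l}\|\nu_0^\mathfrak{q}\|_1$, so the triangle inequality combined with Young's convolution inequality gives
\[
\|\nu_0^\mathfrak{q} * \phi\|_2 \leq \bigl[C_{\mathrm{eta}}^{r'} + (e^{r'C\theta^l} - 1)\bigr]e^{r'C\theta^l}\|\nu_0^\mathfrak{q}\|_1.
\]
The main difficulty lies in the final step: choosing $l_0$ so that for all $l > l_0$ some $C \in (0,1)$ (depending on $l$) dominates the bracketed factor to the $r'l$-th power uniformly in $r'$. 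The contraction $C_{\mathrm{eta}}^{r'}$ competes against the perturbation $e^{r'C\theta^l}$, and one must arrange that $\theta^l$ is sufficiently small relative to $|\log C_{\mathrm{eta}}|/l$ so that the exponential contraction wins; this is possible because $\theta \in (0,1)$, making $\theta^l$ decay as fast as desired. The resulting constant $C$ will be slightly larger than $C_{\mathrm{eta}}^{1/l}$, which lies in $(0,1)$.
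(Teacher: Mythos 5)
Your outline is correct up through the bound $\|\nu_1^\mathfrak{q} * \phi\|_2 \leq C_{\mathrm{eta}}^{r'}\|\nu_1^\mathfrak{q}\|_1$, and the observation that $L_0^2(\tilde G_\mathfrak{q})$ is preserved under convolution (so \cref{lem:EtaOperatorBound} can be iterated) is exactly what the paper uses. The gap is in the last transfer step from $\nu_1^\mathfrak{q}$ back to $\nu_0^\mathfrak{q}$. You use the triangle inequality and Young's inequality to get
\[
\|\nu_0^\mathfrak{q} * \phi\|_2 \leq \bigl[C_{\mathrm{eta}}^{r'} + (e^{r' C_1\theta^l} - 1)\bigr] e^{r' C_1\theta^l} \, \|\nu_0^\mathfrak{q}\|_1,
\]
and then claim that for $l$ large the bracketed factor is at most $C^{r'l}$ for some fixed $C \in (0,1)$ uniformly in $r'$. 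This fails: for any \emph{fixed} $l$, the exponent $C_1\theta^l$ is a fixed positive number, so as $r' \to \infty$ the term $e^{r' C_1\theta^l} - 1$ tends to infinity while $C_{\mathrm{eta}}^{r'}$ tends to zero. Hence the bracketed factor grows without bound in $r'$, and no $C < 1$ can satisfy $\bigl[C_{\mathrm{eta}}^{r'} + (e^{r' C_1\theta^l} - 1)\bigr] e^{r' C_1\theta^l} \leq C^{r'l}$ for all $r'$. The heuristic ``make $\theta^l$ small'' only controls the \emph{rate} $C_1\theta^l$, but once the additive error $e^{r' C_1\theta^l} - 1$ is detached from the contraction $C_{\mathrm{eta}}^{r'}$, the two terms cannot be recombined into a single geometric decay. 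The resulting bound is fine only in the regime $r' \lesssim \theta^{-l}$, not for all $r'$ as the lemma requires.

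The paper avoids this by using the comparison from \cref{lem:EstimateNu} multiplicatively on \emph{both} ends rather than additively on the difference $\nu_0^\mathfrak{q} - \nu_1^\mathfrak{q}$: it estimates $\|\nu_0^\mathfrak{q}*\phi\|_2 \leq e^{r' C_1\theta^l}\|\nu_1^\mathfrak{q}*\phi\|_2$, applies the contraction from \cref{lem:EtaOperatorBound} to get $\leq e^{r' C_1\theta^l} C_2^{r'} \|\nu_1^\mathfrak{q}\|_1$, and then converts back with $\|\nu_1^\mathfrak{q}\|_1 \leq e^{r' C_1\theta^l}\|\nu_0^\mathfrak{q}\|_1$, yielding a total factor $e^{r'(2C_1\theta^l - C_3)}$ with $C_3 = -\log C_2 > 0$. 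Crucially, the contraction $C_2^{r'}$ stays \emph{multiplied against} the perturbation $e^{2r' C_1\theta^l}$, so the competition is between exponents $2C_1\theta^l$ and $C_3$ at the same rate in $r'$; choosing $l$ large enough that $2C_1\theta^l < C_3$ gives $C = e^{(2C_1\theta^l - C_3)/l} \in (0,1)$ uniformly in $r'$. To repair your argument you would need to keep the error from \cref{lem:EstimateNu} in multiplicative form, i.e.\ compare operator norms rather than subtracting the measures.
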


\begin{proof}
Fix $C_1 > 0$ to be the $C$ from \cref{lem:EstimateNu} and $C_2 \in (0, 1)$ to be the $C$ from \cref{lem:EtaOperatorBound}. Fix $C_3 = -\log(C_2) > 0$. There exists an integer $l_0 \geq p$ such that $2C_1 \theta^l - C_3 < 0$ for all $l > l_0$. Suppose $l > l_0$. Fix $C = e^{\frac{1}{l}(2C_1 \theta^l - C_3)} \in (0, 1)$. Let $\mathfrak{q} \subset \mathcal{O}_{\mathbb K}$ be a square-free ideal coprime to $\mathfrak{q}_0$, $s > r$ be integers, $(\alpha_s, \alpha_{s - 1}, \dotsc, \alpha_{r + 1})$ be an admissible sequence, and $\phi \in L_0^2(\tilde{G}_\mathfrak{q})$ with $\|\phi\|_2 = 1$. Using \cref{lem:EstimateNu} and then \cref{lem:EtaOperatorBound} repeatedly $r'$ times, we have
\begin{align*}
&\|\nu_0^\mathfrak{q} * \phi\|_2 \leq e^{r'C_1 \theta^l}\|\nu_1^\mathfrak{q} * \phi\|_2 \\
\leq{}&e^{r'C_1 \theta^l}\sum_{\alpha_1^{(l - p)_2}, \alpha_2^{(l - p)_2}, \dotsc, \alpha_{r'}^{(l - p)_2}} \left\|\mathop{\bigast}\limits_{j = 0}^{r'} \eta^\mathfrak{q}\big(\alpha_{j + 1}^{(l - p)_2}, \alpha_j^{(l - p)_2}\big) * \phi \right\|_2 \\
\leq{}&e^{r'C_1 \theta^l} C_2^{r'}\sum_{\alpha_1^{(l - p)_2}, \alpha_2^{(l - p)_2}, \dotsc, \alpha_{r'}^{(l - p)_2}} \prod_{j = 1}^{r'} \left\|\eta^\mathfrak{q}\big(\alpha_{j + 1}^{(l - p)_2}, \alpha_j^{(l - p)_2}\big)\right\|_1 \\
={}&e^{r'(C_1 \theta^l - C_3)} \sum_{\alpha_1^{(l - p)_2}, \alpha_2^{(l - p)_2}, \dotsc, \alpha_{r'}^{(l - p)_2}} \prod_{j = 1}^{r'} \sum_{\alpha_j^{(p)_1}} E\big(\alpha_{j + 1}^{(l - p)_2}, \alpha_j^l\big).
\end{align*}
Note that in the above calculations $\eta^\mathfrak{q}\big(\alpha_1^{(l - p)_2}, \alpha_0^{(l - p)_2}\big) = \delta_{\mathtt{c}_\mathfrak{q}(\alpha_1, x)}$ which preserves the norm when taking convolutions. As mentioned earlier, for fixed sequences $\alpha_1^{(l - p)_2}, \alpha_2^{(l - p)_2}, \dotsc, \alpha_{r'}^{(l - p)_2}$, the term $E\big(\alpha_{j + 1}^{(l - p)_2}, \alpha_j^l\big)$ depends only on the choice of $\alpha_j^{(p)_1}$ and not on $\alpha_k^{(p)_1}$ for all distinct $1 \leq j, k \leq r'$. Hence we can commute the inner sum and product to get
\begin{align*}
&\|\nu_0^\mathfrak{q} * \phi\|_2 \\
\leq{}&e^{\frac{r}{l}(C_1 \theta^l - C_3)} \sum_{\alpha_1^{(l - p)_2}, \alpha_2^{(l - p)_2}, \dotsc, \alpha_{r'}^{(l - p)_2}} \left(\sum_{\alpha_1^{(p)_1}, \alpha_2^{(p)_1}, \dotsc, \alpha_{r'}^{(p)_1}} \prod_{j = 1}^{r'} E\big(\alpha_{j + 1}^{(l - p)_2}, \alpha_j^l\big)\right) \\
={}&e^{\frac{r}{l}(C_1 \theta^l - C_3)} \sum_{\alpha^r} \prod_{j = 1}^{r'} E\big(\alpha_{j + 1}^{(l - p)_2}, \alpha_j^l\big).
\end{align*}
Recognizing this sum to be $\|\nu_1^\mathfrak{q}\|_1$, we use \cref{lem:EstimateNu} once again to get
\begin{align*}
\|\nu_0^\mathfrak{q} * \phi\|_2 \leq e^{\frac{r}{l}(C_1 \theta^l - C_3)} \|\nu_1^\mathfrak{q}\|_1 \leq e^{\frac{r}{l}(2C_1 \theta^l - C_3)} \|\nu_0^\mathfrak{q}\|_1 = C^r \|\nu_0^\mathfrak{q}\|_1.
\end{align*}
Since $e^{f_{s - r}^{(a)}(\alpha_s, \alpha_{s - 1}, \dotsc, \alpha_{r + 1}, \omega(\alpha_{r + 1}))} > 0$, the lemma follows.
\end{proof}

Let $\mathfrak{q} \subset \mathcal{O}_{\mathbb K}$ be a square-free ideal coprime to $\mathfrak{q}_0$ and $\mu$ be a complex measure on $\tilde{G}_\mathfrak{q}$. We note that $E_\mathfrak{q}^\mathfrak{q}$ is a $\tilde{\mu}$-invariant submodule of the left $\mathbb C[\tilde{G}_\mathfrak{q}]$-module $L^2(\tilde{G}_\mathfrak{q})$. Let $\dot{\mu}$ denote the measure on $\tilde{\mathbf{G}}(\mathcal{O}_{\mathbb K}/\mathfrak{q})$ defined by $\dot{\mu}(g) = \mu(\pi_\mathfrak{q}(\ker(\tilde{\pi}))g)$ for all $g \in \tilde{\mathbf{G}}(\mathcal{O}_{\mathbb K}/\mathfrak{q})$ and $\tilde{\dot{\mu}}: L^2(\tilde{\mathbf{G}}(\mathcal{O}_{\mathbb K}/\mathfrak{q})) \to L^2(\tilde{\mathbf{G}}(\mathcal{O}_{\mathbb K}/\mathfrak{q}))$ denote the operator acting by convolution as before. Similar to above, $\dot{E}_\mathfrak{q}^\mathfrak{q}$ is a $\tilde{\dot{\mu}}$-invariant submodule of the left $\mathbb C[\tilde{\mathbf{G}}(\mathcal{O}_{\mathbb K}/\mathfrak{q})]$-module $L^2(\tilde{\mathbf{G}}(\mathcal{O}_{\mathbb K}/\mathfrak{q}))$.

\begin{lemma}
\label{lem:ConvolutionBoundOnE_q^q}
There exists $C > 0$ such that for all square-free ideals $\mathfrak{q} \subset \mathcal{O}_{\mathbb K}$ coprime to $\mathfrak{q}_0$ and complex measures $\mu$ on $\tilde{G}_\mathfrak{q}$, we have
\begin{align*}
\|\tilde{\mu}|_{E_\mathfrak{q}^\mathfrak{q}}\|_{\mathrm{op}} \leq C N_{\mathbb K}(\mathfrak{q})^{-\frac{1}{3}} (\#\tilde{G}_\mathfrak{q})^{\frac{1}{2}} \|\mu\|_2.
\end{align*}
\end{lemma}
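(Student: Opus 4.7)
My plan is Peter--Weyl analysis reducing the bound to a dimension lower bound for the irreducible constituents of $\dot E_\mathfrak{q}^\mathfrak{q}$, combined with the Plancherel bound for convolution operators on finite groups. First, the embedding $\phi \mapsto \dot\phi$ identifies $L^2(\tilde G_\mathfrak{q})$ with the subspace of $\pi_\mathfrak{q}(\ker\tilde\pi)$--left--invariant functions in $L^2(\tilde{\mathbf G}(\mathcal{O}_\mathbb K/\mathfrak{q}))$ and intertwines the convolution actions of $\tilde\mu$ and $\tilde{\dot\mu}$, up to the bounded factor $\#\pi_\mathfrak{q}(\ker\tilde\pi)$ on the $\ell^1$ and $\ell^2$ norms. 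It therefore suffices to prove the analogous bound on the full group $\tilde{\mathbf G}(\mathcal{O}_\mathbb K/\mathfrak{q})$ for $\tilde{\dot\mu}|_{\dot E_\mathfrak{q}^\mathfrak{q}}$, absorbing the central constant into $C$.

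Next, by the Chinese Remainder Theorem, square--freeness of $\mathfrak{q}$ gives $\tilde{\mathbf G}(\mathcal{O}_\mathbb K/\mathfrak{q}) \cong \prod_{\mathfrak{p} | \mathfrak{q}} \tilde{\mathbf G}(\mathcal{O}_\mathbb K/\mathfrak{p})$. Consequently every irreducible unitary representation of the product is an outer tensor product $\rho = \boxtimes_{\mathfrak{p}|\mathfrak{q}} \rho_\mathfrak{p}$, and unwinding the definition of $\hat E^\mathfrak{q}_{\mathfrak{q}''}$ and $\dot E_\mathfrak{q}^\mathfrak{q}$ shows that the isotypic component of $\rho$ lies in $\dot E_\mathfrak{q}^\mathfrak{q}$ exactly when every factor $\rho_\mathfrak{p}$ is nontrivial.

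Now invoke a Landazuri--Seitz--type quasirandomness bound: there exist constants $c_0 > 0$ and $\alpha > 0$ (for $\tilde{\mathbf G}$ a cover of $\SO(n,1)$ one may take $\alpha \geq 1$ in every dimension $n \geq 2$) such that every nontrivial irreducible representation of $\tilde{\mathbf G}(\mathcal{O}_\mathbb K/\mathfrak{p})$ has dimension at least $c_0 N_\mathbb K(\mathfrak{p})^\alpha$, uniformly over primes $\mathfrak{p}$ coprime to $\mathfrak{q}_0$. Multiplying over the prime factors of $\mathfrak{q}$ yields $\dim\rho \geq c_0^{\omega} N_\mathbb K(\mathfrak{q})^\alpha$ where $\omega$ is the number of prime divisors of $\mathfrak{q}$; since $\omega = O(\log N_\mathbb K(\mathfrak{q}))$, the factor $c_0^\omega$ is polynomially bounded in $N_\mathbb K(\mathfrak{q})$ and can be absorbed to yield $\dim\rho \geq c\, N_\mathbb K(\mathfrak{q})^{2/3}$ for every irreducible constituent $\rho$ of $\dot E_\mathfrak{q}^\mathfrak{q}$.

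Finally, for any finite group $\mathsf H$, irreducible representation $\rho$ of $\mathsf H$, and complex measure $\mu$ on $\mathsf H$, the Plancherel formula
\[
\|\mu\|_2^2 \;=\; \frac{1}{|\mathsf H|} \sum_{\rho' \in \widehat{\mathsf H}} d_{\rho'}\,\|\hat\mu(\rho')\|_{\mathrm{HS}}^2 \;\geq\; \frac{d_\rho}{|\mathsf H|}\,\|\hat\mu(\rho)\|_{\mathrm{op}}^2
\]
yields $\|\hat\mu(\rho)\|_{\mathrm{op}} \leq \sqrt{|\mathsf H|/d_\rho}\,\|\mu\|_2$, and since convolution acts on the $\rho$--isotypic component of $L^2(\mathsf H)$ by $\hat\mu(\rho)$ tensored with the identity on the multiplicity space, we conclude $\|\tilde{\dot\mu}|_{\dot E_\mathfrak{q}^\mathfrak{q}}\|_{\mathrm{op}} \leq C N_\mathbb K(\mathfrak{q})^{-1/3}\, (\#\tilde{\mathbf G}(\mathcal{O}_\mathbb K/\mathfrak{q}))^{1/2}\, \|\mu\|_2$, which gives the claim after reabsorbing $\#\pi_\mathfrak{q}(\ker\tilde\pi)$. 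The main technical obstacle is securing the uniform Landazuri--Seitz lower bound across all residue characteristics: one must verify that a fixed exponent works for every reduction $\tilde{\mathbf G} \pmod \mathfrak{p}$ occurring for $\mathfrak{p}$ coprime to $\mathfrak{q}_0$, which is precisely why $\mathfrak{q}_0$ is enlarged to exclude any bad primes where $\tilde{\mathbf G}$ could degenerate.
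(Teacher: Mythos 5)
Your proposal follows essentially the same route as the paper: pass from $\tilde G_\mathfrak{q}$ to $\tilde{\mathbf{G}}(\mathcal{O}_\mathbb{K}/\mathfrak{q})$, use the Chinese Remainder Theorem to factor every irreducible constituent $\rho$ of $\dot E_\mathfrak{q}^\mathfrak{q}$ as an outer tensor product of nontrivial local factors, invoke a dimension lower bound for nontrivial irreps of the local finite groups (the paper cites \cite[Proposition 4.2]{KS13}, which is precisely a Landazuri--Seitz-type estimate with exponent $\tfrac{2}{3}$), and conclude with an $L^2$ duality. Your Plancherel computation $\|\hat\mu(\rho)\|_{\mathrm{op}} \leq \sqrt{|\mathsf{H}|/d_\rho}\,\|\mu\|_2$ is the bookkeeping-level twin of the paper's trace estimate $C_1 N_\mathbb{K}(\mathfrak{q})^{2/3}\lambda \leq \tr(\tilde{\dot\mu}^*\tilde{\dot\mu}) = \#\tilde{\mathbf{G}}(\mathcal{O}_\mathbb{K}/\mathfrak{q})\cdot\|\dot\mu\|_2^2$; they say the same thing.

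One point to tighten: your handling of the accumulated constant $c_0^{\omega}$ is imprecise. Multiplying $\dim(\rho_\mathfrak{p}) \geq c_0 N_\mathbb{K}(\mathfrak{p})^{\alpha}$ over the prime factors does produce $c_0^{\omega}N_\mathbb{K}(\mathfrak{q})^{\alpha}$, but with $\omega = O(\log N_\mathbb{K}(\mathfrak{q}))$ the factor $c_0^{\omega}$ is $N_\mathbb{K}(\mathfrak{q})^{O(\log c_0)}$, which is a genuine polynomial \emph{loss} when $c_0 < 1$ --- not something you can simply ``absorb'' while claiming to keep the exponent $\tfrac{2}{3}$ unless $\alpha$ already exceeds $\tfrac{2}{3}$ by a margin controlled by $c_0$. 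The cleaner observations are either (i) $\omega(\mathfrak{q}) = O(\log N_\mathbb{K}(\mathfrak{q})/\log\log N_\mathbb{K}(\mathfrak{q}))$, so $c_0^{\omega} = N_\mathbb{K}(\mathfrak{q})^{o(1)}$ and only a subpolynomial factor is lost, or (ii) enlarge $\mathfrak{q}_0$ so every allowed prime satisfies $N_\mathbb{K}(\mathfrak{p}) > Q$ with $Q$ large enough that $c_0 Q^{\epsilon} \geq 1$, yielding $\dim(\rho_\mathfrak{p}) \geq N_\mathbb{K}(\mathfrak{p})^{\alpha - \epsilon}$ with no remaining constant to multiply out. (The paper's own line ``So, $\dim(\rho) \geq C_1 N_\mathbb{K}(\mathfrak{q})^{2/3}$'' glosses over the identical issue.) Also note that for $n = 2$ the local groups are $\SL_2(\mathbb{F}_q)$ with minimal nontrivial dimension $\tfrac{q-1}{2}$, so your claim ``$\alpha \geq 1$ in every dimension'' only holds with $c_0 < 1$; the exponent $\tfrac{2}{3}$ used in the paper is chosen to be safe across all $n$.
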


\begin{proof}
Let $\mathfrak{q} \subset \mathcal{O}_{\mathbb K}$ be a square-free ideal coprime to $\mathfrak{q}_0$ and $\mu$ be a complex measure on $\tilde{G}_\mathfrak{q}$. It will be fruitful to first work with $\tilde{\dot{\mu}}$. One way to calculate the operator norm is using the formula
\begin{align*}
\|\tilde{\dot{\mu}}|_{\dot{E}_\mathfrak{q}^\mathfrak{q}}\|_{\mathrm{op}} = \max_{\lambda \in \Lambda(\tilde{\dot{\mu}}^*\tilde{\dot{\mu}}|_{\dot{E}_\mathfrak{q}^\mathfrak{q}})} \sqrt{\lambda}
\end{align*}
where $\Lambda\big(\tilde{\dot{\mu}}^*\tilde{\dot{\mu}}|_{\dot{E}_\mathfrak{q}^\mathfrak{q}}\big)$ is the set of eigenvalues of the self-adjoint positive semidefinite operator $\tilde{\dot{\mu}}^*\tilde{\dot{\mu}}|_{\dot{E}_\mathfrak{q}^\mathfrak{q}}$ which is diagonalizable with nonnegative eigenvalues. Since $\tilde{\dot{\mu}}^*\tilde{\dot{\mu}}|_{\dot{E}_\mathfrak{q}^\mathfrak{q}}: \dot{E}_\mathfrak{q}^\mathfrak{q} \to \dot{E}_\mathfrak{q}^\mathfrak{q}$ is a left $\mathbb C[\tilde{\mathbf{G}}(\mathcal{O}_{\mathbb K}/\mathfrak{q})]$-module homomorphism, its eigenspaces are submodules of $\dot{E}_\mathfrak{q}^\mathfrak{q}$ and hence must contain at least one irreducible submodule $V$ which corresponds to an irreducible representation $\rho: \tilde{\mathbf{G}}(\mathcal{O}_{\mathbb K}/\mathfrak{q}) \to \GL(V)$. Now suppose we have the prime ideal factorization $\mathfrak{q} = \prod_{j = 1}^k \mathfrak{p}_j$ for some $k \in \mathbb N$ and for some prime ideals $\mathfrak{p}_1, \mathfrak{p}_2, \dotsc, \mathfrak{p}_k \subset \mathcal{O}_{\mathbb K}$. Using the Chinese remainder theorem, we have $\tilde{\mathbf{G}}(\mathcal{O}_{\mathbb K}/\mathfrak{q}) \cong \prod_{j = 1}^k \tilde{\mathbf{G}}(\mathcal{O}_{\mathbb K}/\mathfrak{p}_j) \cong \prod_{j = 1}^k \tilde{\mathbf{G}}(\mathbb F_{N_{\mathbb K}(\mathfrak{p}_j)})$. Thus we have $\rho = \bigotimes_{j = 1}^k \rho_j$ where $\rho_j: \tilde{\mathbf{G}}(\mathbb F_{N_{\mathbb K}(\mathfrak{p}_j)}) \to \GL(V_j)$ for some complex vector space $V_j$ is an irreducible representation of Chevalley groups for all $1 \leq j \leq k$. The significance of using $\dot{E}_\mathfrak{q}^\mathfrak{q}$ is precisely that $V \subset \dot{E}_\mathfrak{q}^\mathfrak{q}$ forces $\rho_j$ to be nontrivial for all $1 \leq j \leq k$. Now we directly use \cite[Proposition 4.2]{KS13} to get $\dim(\rho_j) \geq C_1 N_{\mathbb K}(\mathfrak{p}_j)^{\frac{2}{3}}$ for all $1 \leq j \leq k$, for some constant $C_1 > 0$. So, $\dim(\rho) \geq C_1 N_{\mathbb K}(\mathfrak{q})^{\frac{2}{3}}$. Thus, for all $\lambda \in \Lambda\big(\tilde{\dot{\mu}}^*\tilde{\dot{\mu}}|_{\dot{E}_\mathfrak{q}^\mathfrak{q}}\big)$, we have
\begin{align*}
C_1 N_{\mathbb K}(\mathfrak{q})^{\frac{2}{3}} \lambda &\leq \tr(\tilde{\dot{\mu}}^*\tilde{\dot{\mu}}) = \sum_{g \in \tilde{\mathbf{G}}(\mathcal{O}_{\mathbb K}/\mathfrak{q})} \langle \tilde{\dot{\mu}}^*\tilde{\dot{\mu}}(\delta_g), \delta_g \rangle = \sum_{g \in \tilde{\mathbf{G}}(\mathcal{O}_{\mathbb K}/\mathfrak{q})} \|\dot{\mu} * \delta_g\|_2^2 \\
&= \#\tilde{\mathbf{G}}(\mathcal{O}_{\mathbb K}/\mathfrak{q}) \cdot \|\dot{\mu}\|_2^2.
\end{align*}
Hence, we have
\begin{align*}
\|\tilde{\dot{\mu}}|_{\dot{E}_\mathfrak{q}^\mathfrak{q}}\|_{\mathrm{op}}^2 = \max_{\lambda \in \Lambda(\tilde{\dot{\mu}}^*\tilde{\dot{\mu}}|_{\dot{E}_\mathfrak{q}^\mathfrak{q}})} \lambda \leq C_1^{-1} N_{\mathbb K}(\mathfrak{q})^{-\frac{2}{3}} \#\tilde{\mathbf{G}}(\mathcal{O}_{\mathbb K}/\mathfrak{q}) \cdot \|\dot{\mu}\|_2^2.
\end{align*}
Now we convert this to a bound for $\|\tilde{\mu}|_{E_\mathfrak{q}^\mathfrak{q}}\|_{\mathrm{op}}$. Let $\phi \in E_\mathfrak{q}^\mathfrak{q}$. Then $\dot{\phi} \in \dot{E}_\mathfrak{q}^\mathfrak{q}$ and also $\tilde{\dot{\mu}}(\dot{\phi}) = \#\pi_\mathfrak{q}(\ker(\tilde{\pi})) \cdot \dot{\psi} \leq \#\ker(\tilde{\pi}) \cdot \dot{\psi}$ where $\psi = \tilde{\mu}(\phi)$. So, $\|\tilde{\mu}(\phi)\|_2 = \|\psi\|_2 \leq \|\dot{\psi}\|_2 \leq \|\tilde{\dot{\mu}}(\dot{\phi})\|_2$. Now the above bound gives
\begin{align*}
\|\tilde{\mu}(\phi)\|_2 \leq \|\tilde{\dot{\mu}}(\dot{\phi})\|_2 &\leq C_1^{-\frac{1}{2}} N_{\mathbb K}(\mathfrak{q})^{-\frac{1}{3}} (\#\tilde{\mathbf{G}}(\mathcal{O}_{\mathbb K}/\mathfrak{q}))^{\frac{1}{2}} \|\dot{\mu}\|_2  \|\dot{\phi}\|_2 \\
&\leq C_1^{-\frac{1}{2}} N_{\mathbb K}(\mathfrak{q})^{-\frac{1}{3}} (\#\ker(\tilde{\pi}))^{\frac{3}{2}} (\#\tilde{G}_\mathfrak{q})^{\frac{1}{2}} \|\mu\|_2 \|\phi\|_2 \\
&\leq C N_{\mathbb K}(\mathfrak{q})^{-\frac{1}{3}} (\#\tilde{G}_\mathfrak{q})^{\frac{1}{2}} \|\mu\|_2 \|\phi\|_2
\end{align*}
where $C = C_1^{-\frac{1}{2}} (\#\ker(\tilde{\pi}))^{\frac{3}{2}}$. Hence, the lemma follows.
\end{proof}

\begin{remark}
The hypothesis that the ideal $\mathfrak{q} \subset \mathcal{O}_{\mathbb K}$ be square-free is not required in \cref{lem:ConvolutionBoundOnE_q^q}. In general, we do not get Chevalley groups but \cite[Proposition 4.2]{KS13} still holds.
\end{remark}

Now we prove \cref{lem:L2FlatteningLemma} by starting with \cref{lem:ConvolutionBoundOnE_q^q} obtained from the lower bounds of nontrivial irreducible representations of Chevalley groups, and then using \cref{lem:ExpanderMachineryBound} obtained from the exapansion machinery to continue to bound the right hand side by the $L^1$ norm and also remove the growth contributed by $\#\tilde{G}_\mathfrak{q}$ essentially by fiat.

\begin{proof}[Proof of \cref{lem:L2FlatteningLemma}]
Fix $C_1 > 0$ to be the $C$ from \cref{lem:ConvolutionBoundOnE_q^q} and $C_2 > 0$ to be the $C$ from \cref{lem:muHatLessThanCnu}. Suppose $l > l_0$ where $l_0 \in \mathbb N$ is the constant provided by \cref{lem:ExpanderMachineryBound} and fix $C_3 \in (0, 1)$ to be the corresponding $C$ from the same lemma. Fix $C = 2C_1C_2 > 0$ and $C_0 = -\frac{c}{2\log(C_3)} > 0$ where $c > 0$ depends on $n$ and is such that $\#\tilde{G}_\mathfrak{q} \leq N_{\mathbb K}(\mathfrak{q})^{c}$ for all nontrivial ideals $\mathfrak{q} \subset \mathcal{O}_{\mathbb K}$. Let $\mathfrak{q} \subset \mathcal{O}_{\mathbb K}$ be a square-free ideal coprime to $\mathfrak{q}_0$. Suppose $r \geq C_0 \log(N_{\mathbb K}(\mathfrak{q}))$. Let $s > r$ be an integer, $(\alpha_s, \alpha_{s - 1}, \dotsc, \alpha_{r + 1})$ be an admissible sequence, and $\phi \in E_\mathfrak{q}^\mathfrak{q}$ with $\|\phi\|_2 = 1$. Let $\mu$ denote either $\mu_{(\alpha_s, \alpha_{s - 1}, \dotsc, \alpha_{r + 1})}^\mathfrak{q}$ or $\hat{\mu}_{(\alpha_s, \alpha_{s - 1}, \dotsc, \alpha_{r + 1})}^\mathfrak{q}$ and $\nu$ denote $\nu_{(\alpha_s, \alpha_{s - 1}, \dotsc, \alpha_{r + 1})}^\mathfrak{q}$. Applying \cref{lem:ConvolutionBoundOnE_q^q} to $\mu$ and then using \cref{lem:muHatLessThanCnu} gives
\begin{align*}
\|\mu * \phi\|_2 \leq C_1C_2 N_{\mathbb K}(\mathfrak{q})^{-\frac{1}{3}}(\#\tilde{G}_\mathfrak{q})^{\frac{1}{2}} \|\nu\|_2.
\end{align*}
By choice of $r$ and $\varphi = \delta_e - \frac{1}{\#\tilde{G}_\mathfrak{q}}\chi_{\tilde{G}_\mathfrak{q}} \in L_0^2(\tilde{G}_\mathfrak{q})$, which satisfies $\|\varphi\|_2 \leq 1$, we can use \cref{lem:ExpanderMachineryBound} to get
\begin{align*}
\|\nu\|_2 \leq \left\|\nu * \frac{1}{\#\tilde{G}_\mathfrak{q}}\chi_{\tilde{G}_\mathfrak{q}}\right\|_2 +\|\nu * \varphi\|_2 \leq \frac{\|\nu\|_1}{(\#\tilde{G}_\mathfrak{q})^{\frac{1}{2}}} + C_3^r\|\nu\|_1 \leq 2\frac{\|\nu\|_1}{(\#\tilde{G}_\mathfrak{q})^{\frac{1}{2}}}.
\end{align*}
Combining the two inequalities, we have $\|\mu * \phi\|_2 \leq C N_{\mathbb K}(\mathfrak{q})^{-\frac{1}{3}} \|\nu\|_1$.
\end{proof}

\section{\texorpdfstring{$L^\infty$}{L-infinity} and Lipschitz bounds and proof of \texorpdfstring{\cref{thm:ReducedTheoremSmall|b|}}{\autoref{thm:ReducedTheoremSmall|b|}}}
\label{sec:SupremumAndLipschitzBounds}
In this section we use \cref{lem:L2FlatteningLemma} to prove the $L^\infty$ and Lipschitz bounds in \cref{lem:ReducedTheoremEstimate1,lem:ReducedTheoremEstimate2}. We then use them to prove \cref{thm:ReducedTheoremSmall|b|} by induction.

We start with fixing some notations and writing some easy bounds. Let $\mathfrak{q} \subset \mathcal{O}_{\mathbb K}$ be a nontrivial proper ideal. Fix integers
\begin{align*}
r_\mathfrak{q} \in [C_0 \log(N_{\mathbb K}(\mathfrak{q})), C_0 \log(N_{\mathbb K}(\mathfrak{q})) + l)
\end{align*}
with $r_\mathfrak{q} \in l\mathbb Z$ and
\begin{align*}
s_\mathfrak{q} \in \left(r_\mathfrak{q} - \frac{\log(N_{\mathbb K}(\mathfrak{q})) + \log(4C_1C_f)}{\log(\theta)}, C_s\log(N_{\mathbb K}(\mathfrak{q}))\right)
\end{align*}
where we fix $C_0$ and $l$ to be constants from \cref{lem:L2FlatteningLemma}, $C_1$ to be the constant from \cref{lem:Small|b|Bound} and $C_s = C_0 - \frac{1}{\log(\theta)} + \frac{l}{\log(2)} - \frac{\log(4C_1C_f)}{\log(\theta)\log(2)} + \frac{1}{\log(2)}$ so that there is enough room for the integer $s_\mathfrak{q}$ to exist. These definitions of constants ensure that $C_0 \log(N_{\mathbb K}(\mathfrak{q})) \leq r_\mathfrak{q} < s_\mathfrak{q}$ and $4C_1C_f \theta^{s_\mathfrak{q} - r_\mathfrak{q}} \leq N_{\mathbb K}(\mathfrak{q})^{-1}$. For all $\xi \in \mathbb C$ with $|a| < a_0'$, for all square-free ideals $\mathfrak{q} \subset \mathcal{O}_{\mathbb K}$ coprime to $\mathfrak{q}_0$, $x \in \Sigma^+$, integers $C_0 \log(N_{\mathbb K}(\mathfrak{q})) \leq r < s$ with $r \in l\mathbb Z$, and admissible sequences $(\alpha_s, \alpha_{s - 1}, \dotsc, \alpha_{r + 1})$, we have
\begin{align*}
\left\|\nu_{(\alpha_s, \alpha_{s - 1}, \dotsc, \alpha_{r + 1})}^{a, \mathfrak{q}, x}\right\|_1 &= e^{f_{s - r}^{(a)}(\alpha_s, \alpha_{s - 1}, \dotsc, \alpha_{r + 1}, \omega(\alpha_{r + 1}))} \sum_{\alpha^r} e^{f_r^{(a)}(\alpha^r, x)} \\
&\leq C_f e^{f_{s - r}^{(a)}(\alpha_s, \alpha_{s - 1}, \dotsc, \alpha_{r + 1}, \omega(\alpha_{r + 1}))}
\end{align*}
by \cref{lem:SumExpf^aBound} and hence \cref{lem:L2FlatteningLemma} implies that for all $\phi \in E_\mathfrak{q}^\mathfrak{q}$ we have
\begin{align*}
\|\mu * \phi\|_2 \leq CC_f N_{\mathbb K}(\mathfrak{q})^{-\frac{1}{3}} e^{f_{s - r}^{(a)}(\alpha_s, \alpha_{s - 1}, \dotsc, \alpha_{r + 1}, \omega(\alpha_{r + 1}))} \|\phi\|_2
\end{align*}
where $\mu$ denotes either $\mu_{(\alpha_s, \alpha_{s - 1}, \dotsc, \alpha_{r + 1})}^{\xi, \mathfrak{q}, x}$ or $\hat{\mu}_{(\alpha_s, \alpha_{s - 1}, \dotsc, \alpha_{r + 1})}^{a, \mathfrak{q}, x}$ and $C$ is the constant from the same lemma. We will use this in \cref{lem:ReducedTheoremEstimate1,lem:ReducedTheoremEstimate2}. We now start with the $L^\infty$ bound.

\begin{lemma}
\label{lem:ReducedTheoremEstimate1}
There exist $\kappa_1 \in (0, 1)$ and $q_{1,1} \in \mathbb N$ such that for all $\xi \in \mathbb C$ with $|a| < a_0'$, square-free ideals $\mathfrak{q} \subset \mathcal{O}_{\mathbb K}$ coprime to $\mathfrak{q}_0$ with $N_{\mathbb K}(\mathfrak{q}) > q_{1,1}$, and $H \in \mathcal{W}_\mathfrak{q}^\mathfrak{q}(U)$, we have
\begin{align*}
\big\|\mathcal{M}_{\xi, \mathfrak{q}}^{s_\mathfrak{q}}(H)\big\|_\infty \leq \frac{1}{2}N_{\mathbb K}(\mathfrak{q})^{-\kappa_1}(\|H\|_\infty + \Lip_{d_\theta}(H)).
\end{align*}
\end{lemma}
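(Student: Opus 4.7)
The plan is to reduce the bound via the approximation Lemma~\ref{lem:TransferOperatorConvolutionApproximation} and then apply the $L^2$-flattening Lemma~\ref{lem:L2FlatteningLemma} to each term of the resulting convolution sum. Applying Lemma~\ref{lem:TransferOperatorConvolutionApproximation} at any $x \in \Sigma^+$ with $r=r_\mathfrak{q}$ and $s=s_\mathfrak{q}$ gives
\begin{align*}
\left\|\mathcal{M}_{\xi,\mathfrak{q}}^{s_\mathfrak{q}}(H)(x) - \sum_{\alpha_{r_\mathfrak{q}+1},\dotsc,\alpha_{s_\mathfrak{q}}} \mu_{(\alpha_{s_\mathfrak{q}},\dotsc,\alpha_{r_\mathfrak{q}+1})}^{\xi,\mathfrak{q},x} * \phi_{(\alpha_{s_\mathfrak{q}},\dotsc,\alpha_{r_\mathfrak{q}+1})}^{\mathfrak{q},H}\right\|_2 \leq C_f \Lip_{d_\theta}(H) \theta^{s_\mathfrak{q}-r_\mathfrak{q}},
\end{align*}
and the calibrated choice $4C_1C_f\theta^{s_\mathfrak{q}-r_\mathfrak{q}} \leq N_{\mathbb K}(\mathfrak{q})^{-1}$ built into the definition of $s_\mathfrak{q}$ ensures this error term is at most $\tfrac{1}{4C_1} N_{\mathbb K}(\mathfrak{q})^{-1} \Lip_{d_\theta}(H)$.

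To bound the main sum, first I would observe that because $H \in \mathcal{W}_\mathfrak{q}^\mathfrak{q}(U)$ takes values in $E_\mathfrak{q}^\mathfrak{q}$, each $\phi^{\mathfrak{q},H}_{(\alpha_{s_\mathfrak{q}},\dotsc,\alpha_{r_\mathfrak{q}+1})}$ also lies in $E_\mathfrak{q}^\mathfrak{q}$: under the identification $F_\mathfrak{q} \cong \tilde{G}_\mathfrak{q}$, the $\delta_g$-convolution acts by right translation, and since $\ker(\pi_{\mathfrak{q},\mathfrak{q}'})$ is normal in $\tilde{\mathbf{G}}(\mathcal{O}_{\mathbb K}/\mathfrak{q})$ for every intermediate ideal, functions factoring through $\pi_{\mathfrak{q},\mathfrak{q}'}$ remain so after right translation, so each $\hat{E}_{\mathfrak{q}'}^\mathfrak{q}$ is preserved, and hence so are $\dot{E}_{\mathfrak{q}'}^\mathfrak{q}$, $E_{\mathfrak{q}'}^\mathfrak{q}$, and in particular $E_\mathfrak{q}^\mathfrak{q}$. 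The hypotheses of Lemma~\ref{lem:L2FlatteningLemma} are then satisfied (with $r_\mathfrak{q} \geq C_0\log(N_{\mathbb K}(\mathfrak{q}))$ and $r_\mathfrak{q} \in l\mathbb{Z}$ by construction), and using Lemma~\ref{lem:SumExpf^aBound} to estimate $\|\nu_{(\alpha_{s_\mathfrak{q}},\dotsc,\alpha_{r_\mathfrak{q}+1})}^{a,\mathfrak{q},x}\|_1 \leq C_f e^{f_{s_\mathfrak{q}-r_\mathfrak{q}}^{(a)}(\alpha_{s_\mathfrak{q}},\dotsc,\alpha_{r_\mathfrak{q}+1},\omega(\alpha_{r_\mathfrak{q}+1}))}$, the triangle inequality produces
\begin{align*}
\left\|\sum_{\alpha_{r_\mathfrak{q}+1},\dotsc,\alpha_{s_\mathfrak{q}}} \mu * \phi^{\mathfrak{q},H}\right\|_2 \leq CC_f N_{\mathbb K}(\mathfrak{q})^{-\frac{1}{3}} \|H\|_\infty \sum_{\alpha_{r_\mathfrak{q}+1},\dotsc,\alpha_{s_\mathfrak{q}}} e^{f_{s_\mathfrak{q}-r_\mathfrak{q}}^{(a)}(\alpha_{s_\mathfrak{q}},\dotsc,\alpha_{r_\mathfrak{q}+1},\omega(\alpha_{r_\mathfrak{q}+1}))}.
\end{align*}
Peeling the $k=s_\mathfrak{q}-r_\mathfrak{q}-1$ factor $e^{f^{(a)}(\alpha_{r_\mathfrak{q}+1},\omega(\alpha_{r_\mathfrak{q}+1}))}$ off the exponential and applying Lemma~\ref{lem:SumExpf^aBound} to the inner sum over $(\alpha_{s_\mathfrak{q}},\dotsc,\alpha_{r_\mathfrak{q}+2})$ with base point $(\alpha_{r_\mathfrak{q}+1},\omega(\alpha_{r_\mathfrak{q}+1}))$, and then bounding the remaining finite outer sum over $\alpha_{r_\mathfrak{q}+1}$ by a uniform constant, the full expression is of order $N_{\mathbb K}(\mathfrak{q})^{-1/3}\|H\|_\infty$.

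Combining the two estimates yields
\begin{align*}
\|\mathcal{M}_{\xi,\mathfrak{q}}^{s_\mathfrak{q}}(H)(u)\|_2 \leq C'N_{\mathbb K}(\mathfrak{q})^{-\frac{1}{3}}\|H\|_\infty + \tfrac{1}{4C_1}N_{\mathbb K}(\mathfrak{q})^{-1}\Lip_{d_\theta}(H)
\end{align*}
uniformly in $u \in U$, for some constant $C' > 0$. Fixing any $\kappa_1 \in (0, \tfrac{1}{3})$ and then $q_{1,1}$ large enough that both coefficients above are at most $\tfrac{1}{2}N_{\mathbb K}(\mathfrak{q})^{-\kappa_1}$ whenever $N_{\mathbb K}(\mathfrak{q}) > q_{1,1}$ completes the proof. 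The only nonroutine step I expect is the verification in the second paragraph that $\delta_g$-convolution preserves $E_\mathfrak{q}^\mathfrak{q}$, which is precisely what licenses the use of $L^2$-flattening; everything else is a combination of the two cited lemmas with the calibrated definitions of $s_\mathfrak{q}$ and $r_\mathfrak{q}$.
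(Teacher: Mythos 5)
Your proof is correct and takes essentially the same route as the paper: approximate $\mathcal{M}_{\xi,\mathfrak{q}}^{s_\mathfrak{q}}(H)(x)$ via Lemma~\ref{lem:TransferOperatorConvolutionApproximation}, bound the error term by the calibration of $s_\mathfrak{q}$, bound the main term by applying Lemma~\ref{lem:L2FlatteningLemma} (together with Lemma~\ref{lem:SumExpf^aBound} for the $\nu$-mass) term by term, and then choose $\kappa_1$ and $q_{1,1}$ to absorb the constants. Two minor remarks: your explicit verification that $\delta_g$-convolution preserves $E_\mathfrak{q}^\mathfrak{q}$ (via normality of the congruence kernels) is a detail the paper leaves implicit but which is indeed required to invoke Lemma~\ref{lem:L2FlatteningLemma}; and your peeling argument for the outer sum is a bit more careful than the paper, which simply quotes $\sum e^{f^{(a)}_{s-r}(\dotsc, \omega(\alpha_{r+1}))} \leq C_f$ even though the base point varies with $\alpha_{r+1}$ — your version correctly produces an additional bounded constant (of order $N e^{T_0}$), which is harmless since $q_{1,1}$ is chosen last.
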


\begin{proof}
There exist $q_{1,1} \in \mathbb N$ and $\epsilon \in (0, 1)$ such that $\frac{1}{3} - \frac{\log(2CC_f^2)}{\log(q)} > \epsilon$ for all $q > q_{1,1}$. Fix any $\kappa_1 \in (0, \epsilon)$. Let $\xi \in \mathbb C$ with $|a| < a_0'$, $\mathfrak{q} \subset \mathcal{O}_{\mathbb K}$ be a square-free ideal coprime to $\mathfrak{q}_0$ with $N_{\mathbb K}(\mathfrak{q}) > q_{1,1}$, $H \in \mathcal{W}_\mathfrak{q}^\mathfrak{q}(U)$, and $x \in \Sigma^+$. Denote $r_\mathfrak{q}$ by $r$ and $s_\mathfrak{q}$ by $s$. Now using the approximation from \cref{lem:TransferOperatorConvolutionApproximation} and then \cref{lem:L2FlatteningLemma}, we have
\begin{align*}
&\left\|\mathcal{M}_{\xi, \mathfrak{q}}^s(H)(x)\right\|_2 \\
\leq{}&\left\|\sum_{\alpha_{r + 1}, \alpha_{r + 2}, \dotsc, \alpha_s} \mu_{(\alpha_s, \alpha_{s - 1}, \dotsc, \alpha_{r + 1})}^{\xi, \mathfrak{q}, x} * \phi_{(\alpha_s, \alpha_{s - 1}, \dotsc, \alpha_{r + 1})}^{\mathfrak{q}, H}\right\|_2 + C_f \Lip_{d_\theta}(H)\theta^{s - r} \\
\leq{}&\sum_{\alpha_{r + 1}, \alpha_{r + 2}, \dotsc, \alpha_s} \left\|\mu_{(\alpha_s, \alpha_{s - 1}, \dotsc, \alpha_{r + 1})}^{\xi, \mathfrak{q}, x} * \phi_{(\alpha_s, \alpha_{s - 1}, \dotsc, \alpha_{r + 1})}^{\mathfrak{q}, H}\right\|_2 + C_f \Lip_{d_\theta}(H)\theta^{s - r} \\
\leq{}&\sum_{\alpha_{r + 1}, \alpha_{r + 2}, \dotsc, \alpha_s} CC_f N_{\mathbb K}(\mathfrak{q})^{-\frac{1}{3}} e^{f_{s - r}^{(a)}(\alpha_s, \alpha_{s - 1}, \dotsc, \alpha_{r + 1}, \omega(\alpha_{r + 1}))} \left\|\phi_{(\alpha_s, \alpha_{s - 1}, \dotsc, \alpha_{r + 1})}^{\mathfrak{q}, H}\right\|_2 \\
{}&+ C_f \Lip_{d_\theta}(H)\theta^{s - r} \\
\leq{}&CC_f N_{\mathbb K}(\mathfrak{q})^{-\frac{1}{3}} \|H\|_\infty \sum_{\alpha_{r + 1}, \alpha_{r + 2}, \dotsc, \alpha_s} e^{f_{s - r}^{(a)}(\alpha_s, \alpha_{s - 1}, \dotsc, \alpha_{r + 1}, \omega(\alpha_{r + 1}))} \\
{}&+ C_f \Lip_{d_\theta}(H)\theta^{s - r} \\
\leq{}&CC_f^2 N_{\mathbb K}(\mathfrak{q})^{-\frac{1}{3}} \|H\|_\infty + C_f \Lip_{d_\theta}(H)\theta^{s - r} \\
\leq{}&\frac{1}{2}N_{\mathbb K}(\mathfrak{q})^{-\kappa_1}(\|H\|_\infty + \Lip_{d_\theta}(H)).
\end{align*}
since $CC_f^2 N_{\mathbb K}(\mathfrak{q})^{-\frac{1}{3}} \leq \frac{1}{2}N_{\mathbb K}(\mathfrak{q})^{-\kappa_1}$ and
\begin{align*}
C_f \theta^{s - r} \leq C_1C_f \theta^{s - r} \leq \frac{1}{4} N_{\mathbb K}(\mathfrak{q})^{-1} \leq \frac{1}{2} N_{\mathbb K}(\mathfrak{q})^{-\kappa_1}
\end{align*}
by definitions of the constants.
\end{proof}

Recalling that we already fixed $b_0 = 1$, we now record an estimate.

\begin{lemma}
\label{lem:Small|b|Bound}
There exists $C > 1$ such that for all $\xi \in \mathbb C$ with $|a| < a_0'$ and $|b| \leq b_0$, elements $x, y \in \Sigma^+$, $s \in \mathbb N$, and admissible sequences $\alpha^s$, we have
\begin{align*}
\left|1 - e^{(f_s^{(a)} + ib\tau_s)(\alpha^s, y) - (f_s^{(a)} + ib\tau_s)(\alpha^s, x)}\right| \leq C d_\theta(x, y).
\end{align*}
\end{lemma}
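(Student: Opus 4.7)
The plan is to apply the elementary inequality $|1 - e^z| \leq |z|e^{|z|}$ for $z \in \mathbb C$ to the exponent
\[
z = (f_s^{(a)} + ib\tau_s)(\alpha^s, y) - (f_s^{(a)} + ib\tau_s)(\alpha^s, x),
\]
and show that $|z|$ is bounded by a constant multiple of $d_\theta(x, y)$ uniformly in $s$, $\alpha^s$, $|a| < a_0'$, and $|b| \leq b_0$. This will give the result with $C = C_0 e^{C_0}$ where $C_0$ is the Lipschitz constant obtained below, using that $d_\theta(x, y) \leq 1$.

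First, recall that by definition $f_s^{(a)}(\alpha^s, z) = \sum_{k = 0}^{s - 1} f^{(a)}(\sigma^k(\alpha^s, z))$ (and likewise for $\tau_s$), where for each $0 \leq k \leq s - 1$ we have $\sigma^k(\alpha^s, z) = (\alpha_{s - k}, \alpha_{s - k - 1}, \dotsc, \alpha_1, z)$. If $d_\theta(x, y) = \theta^m$, then $x$ and $y$ agree in their first $m$ entries, so $\sigma^k(\alpha^s, x)$ and $\sigma^k(\alpha^s, y)$ agree in their first $s - k + m$ entries, giving
\[
d_\theta(\sigma^k(\alpha^s, x), \sigma^k(\alpha^s, y)) \leq \theta^{s - k + m} = \theta^{s - k} \cdot d_\theta(x, y).
\]

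Second, both $f^{(a)}$ and $\tau$ are Lipschitz with respect to $d_\theta$, with Lipschitz constants bounded by $T_0$ (from the constant fixed immediately after the definition of the transfer operator; here we use the assumption $|a| < a_0'$). Combined with $|b| \leq b_0 = 1$ and summing the geometric series, I obtain
\[
|z| \leq \sum_{k = 0}^{s - 1} (T_0 + |b| T_0) \theta^{s - k} d_\theta(x, y) \leq \frac{2T_0 \theta}{1 - \theta} \cdot d_\theta(x, y),
\]
so setting $C_0 = \frac{2T_0 \theta}{1 - \theta}$ and $C = C_0 e^{C_0}$ yields the claimed bound. The argument is routine and the main (mild) point is ensuring uniformity of the Lipschitz constants over $|a| < a_0'$, which is handled by the choice of $T_0$.
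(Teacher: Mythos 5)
Your proposal is correct and follows essentially the same route as the paper: apply the Lipschitz bounds for $f^{(a)}$ and $\tau$ with respect to $d_\theta$, contract along the shift to sum a geometric series, and then invoke an elementary estimate for $|1 - e^z|$. The paper uses the marginally sharper bound $|1 - e^z| \leq e^{|\Re z|}|z|$ where you use $|1 - e^z| \leq |z|e^{|z|}$, but this only affects the unspecified constant $C$, not the validity of the argument.
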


\begin{proof}
Fix $C > \max\left(1, (1 + b_0)\frac{T_0 \theta}{1 - \theta}e^{\frac{T_0 \theta}{1 - \theta}}\right)$. Let $\xi \in \mathbb C$ with $|a| < a_0'$ and $|b| \leq b_0$. Let $x, y \in \Sigma^+$, $s \in \mathbb N$, and $\alpha^s$ be an admissible sequence. We calculate that
\begin{align*}
\big|f_s^{(a)}(\alpha^s, y) - f_s^{(a)}(\alpha^s, x)\big| &\leq \sum_{j = 0}^{s - 1} \big|f^{(a)}(\sigma^j(\alpha^s, y)) - f^{(a)}(\sigma^j(\alpha^s, x))\big| \\
&\leq \sum_{j = 0}^{s - 1} \Lip_{d_\theta}(f^{(a)}) \cdot d_\theta(\sigma^j(\alpha^s, y), \sigma^j(\alpha^s, x)) \\
&\leq \Lip_{d_\theta}(f^{(a)}) \sum_{j = 0}^{s - 1} \theta^{s - j} d_\theta(x, y) \leq \frac{T_0 \theta}{1 - \theta}d_\theta(x, y).
\end{align*}
In the same way, we have a similar bound $|\tau_s(\alpha^s, y) - \tau_s(\alpha^s, x)| \leq \frac{T_0 \theta}{1 - \theta}d_\theta(x, y)$. Thus, using $d_\theta(x, y) \leq 1$, we have
\begin{align*}
&\left|1 - e^{(f_s^{(a)} + ib\tau_s)(\alpha^s, y) - (f_s^{(a)} + ib\tau_s)(\alpha^s, x)}\right| \\
\leq{}&e^{\big|f_s^{(a)}(\alpha^s, y) - f_s^{(a)}(\alpha^s, x)\big|} \big|(f_s^{(a)} + ib\tau_s)(\alpha^s, y) - (f_s^{(a)} + ib\tau_s)(\alpha^s, x)\big| \\
\leq{}&e^{\frac{T_0 \theta}{1 - \theta}}\left(\frac{T_0 \theta}{1 - \theta}d_\theta(x, y) + \frac{b_0T_0 \theta}{1 - \theta}d_\theta(x, y)\right) \leq Cd_\theta(x, y).
\end{align*}
\end{proof}

\begin{remark}
This is the reason the approach of Bourgain--Gamburd--Sarnak is restricted to small $|b|$.
\end{remark}

Now we can take care of the Lipschitz bound and although \cref{lem:TransferOperatorConvolutionApproximation} cannot be used directly, we can use similar albeit more intricate estimates.

\begin{lemma}
\label{lem:ReducedTheoremEstimate2}
There exist $\kappa_2 \in (0, 1)$ and $q_{1,2} \in \mathbb N$ such that for all $\xi \in \mathbb C$ with $|a| < a_0'$ and $|b| \leq b_0$, square-free ideals $\mathfrak{q} \subset \mathcal{O}_{\mathbb K}$ coprime to $\mathfrak{q}_0$ with $N_{\mathbb K}(\mathfrak{q}) > q_{1,2}$, and $H \in \mathcal{W}_\mathfrak{q}^\mathfrak{q}(U)$, we have
\begin{align*}
\Lip_{d_\theta}(\mathcal{M}_{\xi, \mathfrak{q}}^{s_\mathfrak{q}}(H)) \leq \frac{1}{2}N_{\mathbb K}(\mathfrak{q})^{-\kappa_2}(\|H\|_\infty + \Lip_{d_\theta}(H)).
\end{align*}
\end{lemma}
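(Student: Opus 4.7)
The plan is to mirror the structure of the proof of \cref{lem:ReducedTheoremEstimate1}, but instead of the sup-norm we control $\|\mathcal{M}_{\xi,\mathfrak{q}}^{s_\mathfrak{q}}(H)(x) - \mathcal{M}_{\xi,\mathfrak{q}}^{s_\mathfrak{q}}(H)(y)\|_2/d_\theta(x,y)$ for arbitrary $x, y \in \Sigma^+$ with $d_\theta(x,y) < 1$. Writing $s = s_\mathfrak{q}$ and $r = r_\mathfrak{q}$, we apply the approximation of \cref{lem:TransferOperatorConvolutionApproximation} at both $x$ and $y$ to write $\mathcal{M}_{\xi,\mathfrak{q}}^{s}(H)(u) = A(u) + E(u)$ with $A(u) = \sum_{\alpha_{r+1},\dotsc,\alpha_s} \mu_{(\alpha_s,\dotsc,\alpha_{r+1})}^{\xi,\mathfrak{q},u} * \phi_{(\alpha_s,\dotsc,\alpha_{r+1})}^{\mathfrak{q},H}$, and decompose the difference as $[E(x) - E(y)] + [A(x) - A(y)]$. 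The key observation is that $\phi_{(\alpha_s,\dotsc,\alpha_{r+1})}^{\mathfrak{q},H}$ is base-point independent, so only $\mu$ carries the $u$-dependence inside $A$.

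For the residual $E(x) - E(y)$, we expand summand-wise via $PQ - P'Q' = P(Q-Q') + (P-P')Q'$ with $P = e^{(f_s^{(a)} + ib\tau_s)(\alpha^s,\cdot)}$ and $Q = H(\alpha^s,\cdot) - H(\alpha_s,\dotsc,\alpha_{r+1},\omega(\alpha_{r+1}))$. Applying \cref{lem:Small|b|Bound} to $P(x) - P(y)$, the Lipschitz property of $H$ to bound $\|Q(x) - Q(y)\|_2 \leq \Lip_{d_\theta}(H)\theta^s d_\theta(x,y)$ and $\|Q(y)\|_2 \leq \Lip_{d_\theta}(H)\theta^{s-r}$, and summing over $\alpha^s$ via \cref{lem:SumExpf^aBound}, we bound this contribution by $C\,\Lip_{d_\theta}(H)\,\theta^{s-r}\, d_\theta(x,y)$ for some constant $C$.

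For $A(x) - A(y)$, matching of cocycles (which holds because $x_0 = y_0$) gives
\begin{align*}
\mu_{(\alpha_s,\dotsc,\alpha_{r+1})}^{\xi,\mathfrak{q},x} - \mu_{(\alpha_s,\dotsc,\alpha_{r+1})}^{\xi,\mathfrak{q},y} = \sum_{\alpha^r} \Delta(\alpha^s)\,\delta_{\mathtt{c}_\mathfrak{q}^{r+1}(\alpha_{r+1},\alpha^r,x)},
\end{align*}
where $\Delta(\alpha^s) = e^{(f_s^{(a)} + ib\tau_s)(\alpha^s,x)} - e^{(f_s^{(a)} + ib\tau_s)(\alpha^s,y)}$ satisfies $|\Delta(\alpha^s)| \leq C_1 d_\theta(x,y)\, e^{f_s^{(a)}(\alpha^s,x)}$ by \cref{lem:Small|b|Bound}. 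This pointwise control yields $\|\mu^{\xi,\mathfrak{q},x} - \mu^{\xi,\mathfrak{q},y}\|_2 \leq C_1 d_\theta(x,y)\,\|\hat{\mu}^{a,\mathfrak{q},x}\|_2$. Since $H$ takes values in $E_\mathfrak{q}^\mathfrak{q}$, a left $\mathbb C[\tilde{G}_\mathfrak{q}]$-submodule of $L^2(\tilde{G}_\mathfrak{q})$, the function $\phi^{\mathfrak{q},H}$ also lies in $E_\mathfrak{q}^\mathfrak{q}$, so we apply \cref{lem:ConvolutionBoundOnE_q^q} directly to the signed measure $\mu^{\xi,\mathfrak{q},x} - \mu^{\xi,\mathfrak{q},y}$; combining this with the expander argument from the proof of \cref{lem:L2FlatteningLemma} (\cref{lem:ExpanderMachineryBound} applied to $\delta_e - \frac{1}{\#\tilde{G}_\mathfrak{q}}\chi_{\tilde{G}_\mathfrak{q}}$ together with \cref{lem:muHatLessThanCnu}) to control $\|\hat{\mu}^{a,\mathfrak{q},x}\|_2 \leq 2C_2\|\nu^{a,\mathfrak{q},x}\|_1/(\#\tilde{G}_\mathfrak{q})^{1/2}$, the factors of $(\#\tilde{G}_\mathfrak{q})^{\pm 1/2}$ cancel. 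Summing over $(\alpha_s,\dotsc,\alpha_{r+1})$ via \cref{lem:SumExpf^aBound} and using $\|\phi^{\mathfrak{q},H}\|_2 \leq \|H\|_\infty$ then yields $\|A(x) - A(y)\|_2 \leq C'\,N_{\mathbb K}(\mathfrak{q})^{-1/3}\,\|H\|_\infty\, d_\theta(x,y)$.

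Dividing both bounds by $d_\theta(x,y)$, the choice of $s_\mathfrak{q}$ (which forces $4C_1 C_f\theta^{s-r} \leq N_{\mathbb K}(\mathfrak{q})^{-1}$) makes the residual contribute at most $\tfrac{1}{4}N_{\mathbb K}(\mathfrak{q})^{-\kappa_2}\Lip_{d_\theta}(H)$ for any $\kappa_2 \leq 1$, while fixing $\kappa_2 \in (0, 1/3)$ and $q_{1,2}$ large enough that $4C'N_{\mathbb K}(\mathfrak{q})^{\kappa_2 - 1/3} \leq 1$ for $N_{\mathbb K}(\mathfrak{q}) > q_{1,2}$ makes the approximation contribute at most $\tfrac{1}{4}N_{\mathbb K}(\mathfrak{q})^{-\kappa_2}\|H\|_\infty$, yielding the lemma. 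The principal obstacle is that \cref{lem:L2FlatteningLemma} as stated does not directly supply $d_\theta(x,y)$-smallness for the difference $\mu^{\xi,\mathfrak{q},x} - \mu^{\xi,\mathfrak{q},y}$; we sidestep this by reopening the proof of that lemma and feeding in the sharper estimate $\|\mu^{\xi,\mathfrak{q},x} - \mu^{\xi,\mathfrak{q},y}\|_2 \leq C_1 d_\theta(x,y)\,\|\hat{\mu}^{a,\mathfrak{q},x}\|_2$ coming from \cref{lem:Small|b|Bound}.
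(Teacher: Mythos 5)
Your proposal is correct and follows essentially the same strategy as the paper's proof. Unwinding your decomposition $\mathcal{M}^s(H) = A + E$ and the product rule $P_xQ_x - P_yQ_y = P_x(Q_x - Q_y) + (P_x - P_y)Q_y$ applied to $E$, one recovers exactly the paper's three-term split: your $E(x) - E(y)$ corresponds to the paper's $K_1 + K_2$, and your $A(x) - A(y)$ corresponds to $K_3$. The estimates then match term by term.

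Two remarks. First, your handling of the $A(x) - A(y)$ term is a mild and welcome variation: you observe that $\phi^{\mathfrak{q},H}_{(\alpha_s,\dotsc,\alpha_{r+1})}$ is base-point independent and lies in $E_\mathfrak{q}^\mathfrak{q}$ (since $E_\mathfrak{q}^\mathfrak{q}$ is a left $\mathbb C[\tilde{G}_\mathfrak{q}]$-submodule), so you apply \cref{lem:ConvolutionBoundOnE_q^q} to the signed measure $\mu^{\xi,\mathfrak{q},x} - \mu^{\xi,\mathfrak{q},y}$ and then control $\|\hat{\mu}^{a,\mathfrak{q},x}\|_2$ via \cref{lem:muHatLessThanCnu,lem:ExpanderMachineryBound} as in the proof of \cref{lem:L2FlatteningLemma}. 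The paper instead takes absolute values inside the $L^2$ norm, passing to $\phi^{\mathfrak{q},|H|}$, and invokes the $\hat\mu$ case of \cref{lem:L2FlatteningLemma}; that lemma is stated for $\phi \in E_\mathfrak{q}^\mathfrak{q}$, and it is not automatic that $|H|(u) \in E_\mathfrak{q}^\mathfrak{q}$, since pointwise modulus need not preserve the orthogonal complement defining $E_\mathfrak{q}^\mathfrak{q}$. Your route — feeding the sharper pointwise bound $|\mu^{\xi,\mathfrak{q},x} - \mu^{\xi,\mathfrak{q},y}| \le C_1 d_\theta(x,y)\,\hat\mu^{a,\mathfrak{q},x}$ into the argument while staying with $\phi^{\mathfrak{q},H}$ — is cleaner on this point. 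Second, you should also dispose of the case $d_\theta(x,y) = 1$: your cocycle-matching $\mathtt{c}_\mathfrak{q}^{r+1}(\alpha_{r+1},\alpha^r,x) = \mathtt{c}_\mathfrak{q}^{r+1}(\alpha_{r+1},\alpha^r,y)$ requires $x_0 = y_0$, so this case must be treated separately (it follows trivially from \cref{lem:ReducedTheoremEstimate1} applied at $x$ and at $y$). With that trivial addition, and modulo benign adjustment of the numerical constants at the very end (the $\tfrac14$'s versus the paper's $\tfrac12$'s), the argument is complete.
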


\begin{proof}
There exist $q_{1,2} \in \mathbb N$ and $\epsilon \in (0, 1)$ such that $\frac{1}{3} - \frac{\log(4CC_1C_f^2)}{\log(q)} > \epsilon$ for all $q > q_{1,2}$. Fix any $\kappa_2 \in (0, \epsilon)$. Let $\xi \in \mathbb C$ with $|a| < a_0'$ and $|b| \leq b_0$. Let $\mathfrak{q} \subset \mathcal{O}_{\mathbb K}$ be a square-free ideal coprime to $\mathfrak{q}_0$ with $N_{\mathbb K}(\mathfrak{q}) > q_{1,2}$, $H \in \mathcal{W}_\mathfrak{q}^\mathfrak{q}(U)$, and $x, y \in \Sigma^+$. Denote $r_\mathfrak{q}$ by $r$ and $s_\mathfrak{q}$ by $s$. First suppose that $d_\theta(x, y) = 1$. Then from the proof of \cref{lem:ReducedTheoremEstimate1}, we can simply estimate as
\begin{align*}
&\left\|\mathcal{M}_{\xi, \mathfrak{q}}^s(H)(x) - \mathcal{M}_{\xi, \mathfrak{q}}^s(H)(y)\right\|_2 \\
\leq{}&\left\|\mathcal{M}_{\xi, \mathfrak{q}}^s(H)(x)\right\|_2 + \left\|\mathcal{M}_{\xi, \mathfrak{q}}^s(H)(y)\right\|_2 \\
\leq{}&\big(2CC_f^2 N_{\mathbb K}(\mathfrak{q})^{-\frac{1}{3}} \|H\|_\infty + 2C_f \Lip_{d_\theta}(H)\theta^{s - r}\big)d_\theta(x, y).
\end{align*}
Now suppose $d_\theta(x, y) < 1$. Then of course $x_0 = y_0$ and hence all the sums which will appear are over the same set of admissible sequences and moreover $\delta_{\mathtt{c}_\mathfrak{q}^s(\alpha^s, x)} = \delta_{\mathtt{c}_\mathfrak{q}^s(\alpha^s, y)}$. Thus we have
\begin{align*}
&\left\|\mathcal{M}_{\xi, \mathfrak{q}}^s(H)(x) - \mathcal{M}_{\xi, \mathfrak{q}}^s(H)(y)\right\|_2 \\
\leq{}&\left\|\sum_{\alpha^s} e^{(f_s^{(a)} + ib\tau_s)(\alpha^s, x)} \delta_{\mathtt{c}_\mathfrak{q}^s(\alpha^s, x)} * H(\alpha^s, x)\right. \\
{}&\left.- \sum_{\alpha^s} e^{(f_s^{(a)} + ib\tau_s)(\alpha^s, y)} \delta_{\mathtt{c}_\mathfrak{q}^s(\alpha^s, y)} * H(\alpha^s, y)\right\|_2 \\
\leq{}& \left\|\sum_{\alpha^s} e^{(f_s^{(a)} + ib\tau_s)(\alpha^s, y)}\delta_{\mathtt{c}_\mathfrak{q}^s(\alpha^s, x)} * \left(H(\alpha^s, x) - H(\alpha^s, y)\right)\right\|_2 \\
&{}+ \left\|\sum_{\alpha^s} \left(e^{(f_s^{(a)} + ib\tau_s)(\alpha^s, x)} - e^{(f_s^{(a)} + ib\tau_s)(\alpha^s, y)}\right)\right. \\
{}&\left.\cdot\delta_{\mathtt{c}_\mathfrak{q}^s(\alpha^s, x)} * (H(\alpha^s, x) - H(\alpha_s, \alpha_{s - 1}, \dotsc, \alpha_{r + 1}, \omega(\alpha_{r + 1})))\rule{0cm}{0.6cm}\right\|_2 \\
&{}+ \left\|\sum_{\alpha^s} \left(e^{(f_s^{(a)} + ib\tau_s)(\alpha^s, x)} - e^{(f_s^{(a)} + ib\tau_s)(\alpha^s, y)}\right)\right. \\
{}&\left.\cdot\delta_{\mathtt{c}_\mathfrak{q}^s(\alpha^s, x)} * H(\alpha_s, \alpha_{s - 1}, \dotsc, \alpha_{r + 1}, \omega(\alpha_{r + 1}))\rule{0cm}{0.6cm}\right\|_2 \\
={}&K_1 + K_2 + K_3.
\end{align*}
We easily estimate the first term $K_1$ as
\begin{align*}
K_1 &\leq \sum_{\alpha^s} e^{f_s^{(a)}(\alpha^s, x)} \|H(\alpha^s, x) - H(\alpha^s, y)\|_2 \\
&\leq \Lip_{d_\theta}(H) \theta^s d_\theta(x, y) \sum_{\alpha^s} e^{f_s^{(a)}(\alpha^s, x)} \leq C_f \Lip_{d_\theta}(H) \theta^s d_\theta(x, y).
\end{align*}
Next we estimate the second term $K_2$ as
\begin{align*}
K_2 &\leq \sum_{\alpha^s} \left|e^{(f_s^{(a)} + ib\tau_s)(\alpha^s, x)}\right| \cdot \left|1 - e^{(f_s^{(a)} + ib\tau_s)(\alpha^s, y) - (f_s^{(a)} + ib\tau_s)(\alpha^s, x)}\right| \\
&\cdot \|H(\alpha^s, x) - H(\alpha_s, \alpha_{s - 1}, \dotsc, \alpha_{r + 1}, \omega(\alpha_{r + 1}))\|_2 \\
&\leq C_1 \Lip_{d_\theta}(H) \theta^{s - r} d_\theta(x, y) \sum_{\alpha^s} e^{f_s^{(a)}(\alpha^s, x)} \leq C_1C_f \Lip_{d_\theta}(H) \theta^{s - r} d_\theta(x, y).
\end{align*}
Finally, using \cref{lem:L2FlatteningLemma}, we estimate the third and last term $K_3$ as
\begin{align*}
K_3 \leq{}&\left\|\sum_{\alpha^s} \left|e^{(f_s^{(a)} + ib\tau_s)(\alpha^s, x)}\right| \cdot \left|1 - e^{(f_s^{(a)} + ib\tau_s)(\alpha^s, y) - (f_s^{(a)} + ib\tau_s)(\alpha^s, x)}\right|\right. \\
{}&\left.\cdot\delta_{\mathtt{c}_\mathfrak{q}^s(\alpha^s, x)} * |H|(\alpha_s, \alpha_{s - 1}, \dotsc, \alpha_{r + 1}, \omega(\alpha_{r + 1}))\rule{0cm}{0.6cm}\right\|_2 \\
\leq{}&C_1 d_\theta(x, y) \sum_{\alpha_{r + 1}, \alpha_{r + 2}, \dotsc, \alpha_s} \left\|\sum_{\alpha^r} e^{f_s^{(a)}(\alpha^s, x)} \delta_{\mathtt{c}_\mathfrak{q}^{r + 1}(\alpha_{r + 1}, \alpha^r, x)} * \phi_{(\alpha_s, \alpha_{s - 1}, \dotsc, \alpha_{r + 1})}^{\mathfrak{q}, |H|}\right\|_2 \\
\leq{}&C_1 d_\theta(x, y) \sum_{\alpha_{r + 1}, \alpha_{r + 2}, \dotsc, \alpha_s} \left\|\hat{\mu}_{(\alpha_s, \alpha_{s - 1}, \dotsc, \alpha_{r + 1})}^{a, \mathfrak{q}, x} * \phi_{(\alpha_s, \alpha_{s - 1}, \dotsc, \alpha_{r + 1})}^{\mathfrak{q}, |H|}\right\|_2 \\
\leq{}&C_1 d_\theta(x, y) \sum_{\alpha_{r + 1}, \alpha_{r + 2}, \dotsc, \alpha_s} CC_f N_{\mathbb K}(\mathfrak{q})^{-\frac{1}{3}} e^{f_{s - r}^{(a)}(\alpha_s, \alpha_{s - 1}, \dotsc, \alpha_{r + 1}, \omega(\alpha_{r + 1}))} \\
{}&\cdot\left\|\phi_{(\alpha_s, \alpha_{s - 1}, \dotsc, \alpha_{r + 1})}^{\mathfrak{q}, |H|}\right\|_2 \\
\leq{}&CC_1C_f N_{\mathbb K}(\mathfrak{q})^{-\frac{1}{3}} \|\, |H| \,\|_\infty d_\theta(x, y) \sum_{\alpha_{r + 1}, \alpha_{r + 2}, \dotsc, \alpha_s} e^{f_{s - r}^{(a)}(\alpha_s, \alpha_{s - 1}, \dotsc, \alpha_{r + 1}, \omega(\alpha_{r + 1}))} \\
\leq{}&CC_1C_f^2 N_{\mathbb K}(\mathfrak{q})^{-\frac{1}{3}} \|H\|_\infty d_\theta(x, y).
\end{align*}
So combining all three estimates, we have
\begin{align*}
&\left\|\mathcal{M}_{\xi, \mathfrak{q}}^s(H)(x) - \mathcal{M}_{\xi, \mathfrak{q}}^s(H)(y)\right\|_2 \\
\leq{}&\big(CC_1C_f^2 N_{\mathbb K}(\mathfrak{q})^{-\frac{1}{3}} \|H\|_\infty + (C_f \theta^s + C_1C_f \theta^{s - r})\Lip_{d_\theta}(H)\big) d_\theta(x, y).
\end{align*}
Thus, for all $x, y \in \Sigma^+$, we have
\begin{align*}
\Lip_{d_\theta}(\mathcal{M}_{\xi, \mathfrak{q}}^s(H)) &\leq 2CC_1C_f^2 N_{\mathbb K}(\mathfrak{q})^{-\frac{1}{3}} \|H\|_\infty + 2C_1C_f\Lip_{d_\theta}(H)\theta^{s - r} \\
&\leq \frac{1}{2}N_{\mathbb K}(\mathfrak{q})^{-\kappa_2}(\|H\|_\infty + \Lip_{d_\theta}(H))
\end{align*}
since $2CC_1C_f^2 N_{\mathbb K}(\mathfrak{q})^{-\frac{1}{3}} \leq \frac{1}{2}N_{\mathbb K}(\mathfrak{q})^{-\kappa_2}$ and $2C_1C_f\theta^{s - r} \leq \frac{1}{2} N_{\mathbb K}(\mathfrak{q})^{-1} \leq \frac{1}{2} N_{\mathbb K}(\mathfrak{q})^{-\kappa_2}$ by definitions of the constants.
\end{proof}

\begin{proof}[Proof of \cref{thm:ReducedTheoremSmall|b|}]
Fix $\kappa_1, \kappa_2 \in (0, 1)$ and $q_{1,1}, q_{1,2} \in \mathbb N$ to be the constants from \cref{lem:ReducedTheoremEstimate1,lem:ReducedTheoremEstimate2}. Recall the constant $C_s$ and that we already fixed $b_0 = 1$. Fix $a_0 = a_0'$, $\kappa = \min(\kappa_1, \kappa_2) \in (0, 1)$, and $q_1 = \max(q_{1,1}, q_{1,2}) \in \mathbb N$. Let $\xi \in \mathbb C$ with $|a| < a_0$ and $|b| \leq b_0$. Let $\mathfrak{q} \subset \mathcal{O}_{\mathbb K}$ be a square-free ideal coprime to $\mathfrak{q}_0$ with $N_{\mathbb K}(\mathfrak{q}) > q_1$. Denote $s_\mathfrak{q}$ by $s$. Let $j \in \mathbb Z_{\geq 0}$ and $H \in \mathcal{W}_\mathfrak{q}^\mathfrak{q}(U)$. Note that \cref{lem:ReducedTheoremEstimate1,lem:ReducedTheoremEstimate2} together give
\begin{align*}
\left\|\mathcal{M}_{\xi, \mathfrak{q}}^s(H)\right\|_\infty + \Lip_{d_\theta}(\mathcal{M}_{\xi, \mathfrak{q}}^s(H)) \leq N_{\mathbb K}(\mathfrak{q})^{-\kappa}(\|H\|_\infty + \Lip_{d_\theta}(H))
\end{align*}
for all $H \in \mathcal{W}_\mathfrak{q}^\mathfrak{q}(U)$. Now let $j \in \mathbb Z_{\geq 0}$ and $H \in \mathcal{W}_\mathfrak{q}^\mathfrak{q}(U)$. Then by induction we have
\begin{align*}
\big\|\mathcal{M}_{\xi, \mathfrak{q}}^{js}(H)\big\|_2 &\leq \big\|\mathcal{M}_{\xi, \mathfrak{q}}^{js}(H)\big\|_\infty \leq N_{\mathbb K}(\mathfrak{q})^{-j\kappa}(\|H\|_\infty + \Lip_{d_\theta}(H)) \\
&= N_{\mathbb K}(\mathfrak{q})^{-j\kappa}\|H\|_{\Lip(d_\theta)}.
\end{align*}
\end{proof}

\section{Proofs of Theorems \ref{thm:TheoremLarge|b|OrNontrivial_rho} and \ref{thm:TheoremUniformExponentialMixingOfFrameFlow}}
\label{sec:Dolgopyat'sMethod}
In this section, we will prove \cref{thm:TheoremLarge|b|OrNontrivial_rho} using Dolgopyat's method \cite{Dol98}. Then we describe how \cref{thm:TheoremUniformExponentialMixingOfFrameFlow} is derived from \cref{thm:TheoremFrameFlow}.

First, we reduce \cref{thm:TheoremLarge|b|OrNontrivial_rho} to \cref{thm:FrameFlowDolgopyat} which is the main technical theorem in our setting associated to Dolgopyat's method. Similar theorems have appeared in \cite{Dol98,Sto11,OW16,SW20} but the difference here is that we deal with both the holonomy and the uniformity in ideals $\mathfrak{q} \subset \mathcal{O}_{\mathbb K}$ simultaneously.

We define the cone
\begin{align*}
K_B(\tilde{U}) = \{h \in C^1(\tilde{U}, \mathbb R): h > 0, \|(dh)_u\|_{\mathrm{op}} \leq Bh(u) \text{ for all } u \in \tilde{U}\}.
\end{align*}

\begin{remark}
It is useful to note that we can easily derive the equivalent $\log$-Lipschitz characterization given by $K_B(\tilde{U}) = \{h \in C^1(\tilde{U}, \mathbb R): h > 0, |\log \circ h|_{C^1} \leq B\}$.
\end{remark}

\begin{theorem}
\label{thm:FrameFlowDolgopyat}
There exist $m \in \mathbb N$, $\eta \in (0, 1)$, $E > \max\left(1, \frac{1}{b_0}, \frac{1}{\delta_\varrho}\right)$, $a_0 > 0$, $b_0 > 0$, and a set of operators $\{\mathcal{N}_{a, J}^H: C^1(\tilde{U}, \mathbb R) \to C^1(\tilde{U}, \mathbb R): H \in \mathcal{V}_{\mathfrak{q}, \rho}(\tilde{U}), |a| < a_0, J \in \mathcal{J}(b, \rho), \text{ for some } (b, \rho) \in \widehat{M}_0(b_0) \text{ and nontrivial ideal } \mathfrak{q} \subset \mathcal{O}_{\mathbb K}\}$, where $\mathcal{J}(b, \rho)$ is some finite set for all $(b, \rho) \in \widehat{M}_0(b_0)$, such that
\begin{enumerate}
\item\label{itm:FrameFlowDolgopyatProperty1}	$\mathcal{N}_{a, J}^H(K_{E\|\rho_b\|}(\tilde{U})) \subset K_{E\|\rho_b\|}(\tilde{U})$ for all $H \in \mathcal{V}_{\mathfrak{q}, \rho}(\tilde{U})$, $|a| < a_0$, $J \in \mathcal{J}(b, \rho)$, $(b, \rho) \in \widehat{M}_0(b_0)$, and nontrivial ideals $\mathfrak{q} \subset \mathcal{O}_{\mathbb K}$;
\item\label{itm:FrameFlowDolgopyatProperty2}	$\|\mathcal{N}_{a, J}^H(h)\|_2 \leq \eta \|h\|_2$ for all $h \in K_{E\|\rho_b\|}(\tilde{U})$, $H \in \mathcal{V}_{\mathfrak{q}, \rho}(\tilde{U})$, $|a| < a_0$, $J \in \mathcal{J}(b, \rho)$, $(b, \rho) \in \widehat{M}_0(b_0)$, and nontrivial ideals $\mathfrak{q} \subset \mathcal{O}_{\mathbb K}$;
\item\label{itm:FrameFlowDolgopyatProperty3}	for all $\xi \in \mathbb C$ with $|a| < a_0$, if $(b, \rho) \in \widehat{M}_0(b_0)$, then for all nontrivial ideals $\mathfrak{q} \subset \mathcal{O}_{\mathbb K}$, if $H \in \mathcal{V}_{\mathfrak{q}, \rho}(\tilde{U})$ and $h \in K_{E\|\rho_b\|}(\tilde{U})$ satisfy
\begin{enumerate}[label=(1\alph*), ref=\theenumi(1\alph*)]
\item\label{itm:FrameFlowDominatedByh}	$\|H(u)\|_2 \leq h(u)$ for all $u \in \tilde{U}$;
\item\label{itm:FrameFlowLogLipschitzh}	$\|(dH)_u\|_{\mathrm{op}} \leq E\|\rho_b\|h(u)$ for all $u \in \tilde{U}$;
\end{enumerate}
then there exists $J \in \mathcal{J}(b, \rho)$ such that
\begin{enumerate}[label=(2\alph*), ref=\theenumi(2\alph*)]
\item\label{itm:FrameFlowDominatedByDolgopyat}	$\big\|\tilde{\mathcal{M}}_{\xi, \mathfrak{q}, \rho}^m(H)(u)\big\|_2 \leq \mathcal{N}_{a, J}^H(h)(u)$ for all $u \in \tilde{U}$;
\item\label{itm:FrameFlowLogLipschitzDolgopyat}	$\left\|\left(d\tilde{\mathcal{M}}_{\xi, \mathfrak{q}, \rho}^m(H)\right)_u\right\|_{\mathrm{op}} \leq E\|\rho_b\|\mathcal{N}_{a, J}^H(h)(u)$ for all $u \in \tilde{U}$.
\end{enumerate}
\end{enumerate}
\end{theorem}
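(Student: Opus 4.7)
The plan is to construct the Dolgopyat operators $\mathcal{N}_{a,J}^H$ by adapting the framework of \cite[Section 5]{SW20} to handle simultaneously the tensor factor $L^2(F_\mathfrak{q})$ coming from the congruence cover and the representation factor $V_\rho^\oplus$, uniformly in $\mathfrak{q}$. The starting point is the local non-integrability condition (LNIC) and the non-concentration property (NCP) from \cite{SW20}, which for $(b,\rho) \in \widehat{M}_0(b_0)$ produce pairs of Bowen balls $(B_{j_1},B_{j_2})$ inside each cylinder $\tilde{\mathtt{C}}[\alpha]$ of length $m$ on which the unitary operators $\rho_b(\Phi^\alpha)^{-1} = e^{ib\tau_\alpha}\rho(\vartheta^\alpha)^{-1}$ evaluated at the two preimage branches differ by amplitude $\gtrsim 1$ along an explicit direction in $\mathfrak{a}\oplus\mathfrak{m}$ supplied by \cref{lem:maActionLowerBound} (with constant $\varepsilon_1$). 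The Bowen balls will be chosen at spatial scale $\asymp 1/\|\rho_b\|$ so that the separation is non-trivial while the phase variation inside each single ball is controlled by $\|\rho_b\| \cdot \theta^m$.

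First I would fix $m \in \mathbb{N}$ large enough that the $m$-fold branches contract cylinders down to this scale, and then let $\mathcal{J}(b,\rho)$ denote the finite collection of admissible dense families $J$ of Bowen-ball pairs, mirroring the combinatorial setup of \cite[Section 5]{SW20}. For each $J$ I would set
\begin{equation*}
\mathcal{N}_{a, J}^H(h)(u) = \sum_{\substack{\alpha: \len(\alpha) = m \\ u' = \sigma^{-\alpha}(u)}} e^{f_\alpha^{(a)}(u')} \bigl(1 - \varepsilon \chi_{a, J}(u, \alpha)\bigr) h(u'),
\end{equation*}
where $\chi_{a, J}$ is a smooth cutoff supported on the branches singled out by $J$ and $\varepsilon > 0$ depends only on $\varepsilon_1$. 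Cone invariance \cref{itm:FrameFlowDolgopyatProperty1} and strict $L^2$ contraction \cref{itm:FrameFlowDolgopyatProperty2} then follow from the density of the pair family $J$ and the uniform $C^1$ bounds on $f_\alpha^{(a)}$ inherited via $T_0$ from \cite[Lemma 4.1]{PS16}. For the domination properties \cref{itm:FrameFlowDominatedByDolgopyat} and \cref{itm:FrameFlowLogLipschitzDolgopyat} I would apply the usual Cauchy--Schwarz/triangle step to the iterated expression \cref{eqn:k^thIterationOfCongruenceTransferOperatorOfType_rho}: the cancellation inside $V_\rho^\oplus$ yields a factor $(1-\varepsilon)$ bounding the sum of $\|\cdot\|_2$-norms contributed by the two summands of a pair in $J$.

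The key new observation that handles both the congruence aspect and the uniformity in $\mathfrak{q}$ is \cref{cor:CocyclesLocallyConstantCorollary}: the cocycle $\mathtt{c}_\mathfrak{q}^\alpha$ is \emph{constant} on the cylinder $\tilde{\mathtt{C}}[\alpha]$. Consequently, in the operator $(\mathtt{c}_\mathfrak{q}^\alpha)^{-1} \otimes \rho_b(\Phi^\alpha)^{-1}$ under the sum, the $L^2(F_\mathfrak{q})$ tensor factor is a rigid $u$-independent unitary on each preimage branch, while only the $V_\rho^\oplus$ factor varies with $u$. Taking norms in $L^2(F_\mathfrak{q}) \otimes V_\rho^\oplus$ therefore allows the $L^2(F_\mathfrak{q})$-rotation to split off via Minkowski, and the cancellation estimate reduces to one purely on the $V_\rho^\oplus$ factor, exactly as in \cite{SW20}. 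This makes both the index set $\mathcal{J}(b,\rho)$ and the scalar operators $\mathcal{N}_{a, J}^H$ independent of $\mathfrak{q}$, producing the required uniformity.

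The main obstacle will be the delicate ordering of constants: $b_0$ must first be chosen so that $\|\rho_b\| \geq \min(b_0, \delta_\varrho)$ provides the baseline oscillation (consistent with the $b_0 = 1$ fixed in \cref{sec:UniformSpectralBoundsWithHolonomy} and \cite[Eq.~(11)]{SW20}); then $E > \max(1, 1/b_0, 1/\delta_\varrho)$ must be calibrated against the Lipschitz constants of $f_\alpha^{(a)}$ and $\Phi^\alpha$ so that the cone $K_{E\|\rho_b\|}(\tilde{U})$ is both invariant under $\mathcal{N}_{a,J}^H$ and large enough to contain the dominators produced under \cref{itm:FrameFlowDominatedByh,itm:FrameFlowLogLipschitzh}; finally $m$ must be taken large enough that the contraction $\theta^m$ of cylinder diameters overwhelms the Lipschitz blow-up $\|\rho_b\|$ inside a single Bowen ball, while leaving room for the cancellation $\varepsilon$ to produce $\eta \in (0,1)$. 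Once these are consistently arranged, every remaining step --- the verification of LNIC/NCP on $\tilde{U}$, the bounded-distortion estimates, and the pair-density combinatorics --- is a routine transcription of the arguments in \cite[Sections 5--9]{SW20}, with the cocycle factor carried along as a passive unitary.
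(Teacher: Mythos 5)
Your proposal takes essentially the same approach as the paper: recycle the Dolgopyat-operator machinery of \cite{SW20}, cite the local constancy of the cocycle (\cref{cor:CocyclesLocallyConstantCorollary}) so that the $(\mathtt{c}_\mathfrak{q}^\alpha)^{-1}$ factor becomes a passive, $u$-independent unitary on each branch, and re-run the LNIC/NCP cancellation argument with the tensored representation $\rho_{b,\mathfrak{q}} = \Id_{L^2(F_\mathfrak{q})}\otimes\rho_b$. The constant ordering, cone choice, and Paley--Wiener postprocessing are exactly as you describe.

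One sentence in your third paragraph is, however, subtly inaccurate and worth correcting: you write that ``the $L^2(F_\mathfrak{q})$-rotation split[s] off via Minkowski, and the cancellation estimate reduces to one purely on the $V_\rho^\oplus$ factor, exactly as in \cite{SW20}.'' This understates what \cref{lem:maActionLowerBound} is doing. In the Dolgopyat cancellation step, the vector $\omega = H(\sigma^{-\alpha}(u))$ is a \emph{general} element of $L^2(F_\mathfrak{q})\otimes V_\rho^\oplus$, not a pure tensor, and one cannot remove the $L^2(F_\mathfrak{q})$ factor to land in SW20's original situation. The issue is that if one tried to apply \cite[Lemma 4.4]{SW20} fiber-by-fiber in $g \in F_\mathfrak{q}$, the oscillation direction $z_g \in \mathfrak{a}\oplus\mathfrak{m}$ produced would in general depend on $g$, and there is no a priori reason a single $z$ should work across all fibers simultaneously. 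This is precisely why \cref{lem:maActionLowerBound} is a genuine strengthening rather than a routine extension: the $z\in\mathfrak{a}$ direction does split trivially (giving $|b|$), but the $\mathfrak{m}$ direction requires the Casimir argument --- the negative Casimir $\varsigma$ acts by a \emph{scalar} $\geq\|\lambda\|_B^2$ on $V_\rho$, hence by the same scalar on the full tensor product, and only then can one extract a single $z_0$ in the orthonormal basis working uniformly across all $\omega_g$. Since you do cite \cref{lem:maActionLowerBound} correctly earlier in your write-up, this is not a fatal gap, but describing the step as ``exactly as in \cite{SW20}'' could lead someone following your sketch to mistakenly bypass the Casimir argument and then be stuck on the non-pure-tensor case.
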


\begin{proof}[Proof that \cref{thm:FrameFlowDolgopyat} implies \cref{thm:TheoremLarge|b|OrNontrivial_rho}]
Fix $m \in \mathbb N, a_0 > 0, b_0 > 0, E > 0$ to be the ones from \cref{thm:FrameFlowDolgopyat} and $\tilde{\eta} \in (0, 1)$ to be the $\eta$ from \cref{thm:FrameFlowDolgopyat}. Fix
\begin{align*}
B = \sup_{|a| \leq a_0, \{0\} \subsetneq \mathfrak{q} \subset \mathcal{O}_{\mathbb K}, \rho \in \widehat{M}} \big\|\tilde{\mathcal{M}}_{\xi, \mathfrak{q}, \rho}\big\|_{\mathrm{op}} \leq \sup_{|a| \leq a_0} \big\|\tilde{\mathcal{L}}_\xi\big\|_{\mathrm{op}} \leq Ne^{T_0}
\end{align*}
where we use operator norms for operators on $L^2(\tilde{U}, L^2(F_\mathfrak{q}) \otimes V_\rho^\oplus)$ and $L^2(\tilde{U}, \mathbb R)$ respectively. Fix $\eta = \frac{-\log(\tilde{\eta})}{m} > 0$ and $C = B^m \tilde{\eta}^{-1}$. Let $\xi \in \mathbb C$ with $|a| < a_0$. Suppose $(b, \rho) \in \widehat{M}_0(b_0)$. Let $\mathfrak{q} \subset \mathcal{O}_{\mathbb K}$ be a nontrivial ideal, $k \in \mathbb N$, and $H \in \mathcal{V}_{\mathfrak{q}, \rho}(\tilde{U})$. The theorem is trivial if $H = 0$, so suppose that $H \neq 0$. First set $h_0 \in K_{E\|\rho_b\|}(\tilde{U})$ to be the positive constant function defined by $h_0(u) = \|H\|_{1, \|\rho_b\|}$ for all $u \in \tilde{U}$. Then $H$ and $h_0$ satisfy \cref{itm:FrameFlowDominatedByh,itm:FrameFlowLogLipschitzh} in \cref{thm:FrameFlowDolgopyat}. Thus, given $h_j \in K_{E\|\rho_b\|}(\tilde{U})$ for any $j \in \mathbb Z_{\geq 0}$, \cref{thm:FrameFlowDolgopyat} provides a $J_j \in \mathcal{J}(b)$ and we inductively obtain $h_{j + 1} = \mathcal{N}_{a, J_j}(h_j) \in K_{E\|\rho_b\|}(\tilde{U})$. Then $\big\|\tilde{\mathcal{M}}_{\xi, \mathfrak{q}, \rho}^{jm}(H)(u)\big\|_2 \leq h_j(u)$ for all $u \in \tilde{U}$ and hence $\big\|\tilde{\mathcal{M}}_{\xi, \mathfrak{q}, \rho}^{jm}(H)\big\|_2 \leq \|h_j\|_2 \leq \tilde{\eta}^j\|h_0\|_2 = \tilde{\eta}^j\|H\|_{1, \|\rho_b\|}$ for all $j \in \mathbb Z_{\geq 0}$. Then writing $k = jm + l$ for some $j \in \mathbb Z_{\geq 0}$ and $0 \leq l < m$, we have
\begin{align*}
\big\|\tilde{\mathcal{M}}_{\xi, \mathfrak{q}, \rho}^k(H)\big\|_2 \leq B^l\big\|\tilde{\mathcal{M}}_{\xi, \mathfrak{q}, \rho}^{jm}(H)\big\|_2 \leq B^l\tilde{\eta}^j\|H\|_{1, \|\rho_b\|} \leq Ce^{-\eta k} \|H\|_{1, \|\rho_b\|}.
\end{align*}
\end{proof}

The proof of \cref{thm:FrameFlowDolgopyat} is almost a verbatim repeat of the proof of \cite[Theorem 5.4]{SW20} but with the extra congruence aspect. Thus, instead of repeating all of it, we list here the required cosmetic changes and then later describe in more detail the required changes in some proofs where the cocycle plays a significant role.
\begin{enumerate}
\item Almost all corollaries, lemmas, propositions, and theorems need to be changed so that they are stated uniformly with respect to the nontrivial ideals $\mathfrak{q} \subset \mathcal{O}_{\mathbb K}$.
\item The transfer operators with holonomy need to be changed to congruence transfer operators with holonomy. Accordingly, many terms related to the operators appearing in the arguments need to have an action by both the cocycle $\mathtt{c}$ and an irreducible representation $\rho \in \widehat{M}$.
\item Occurrences of $H \in \mathcal{V}_\rho(\tilde{U})$ should be replaced by $H \in \mathcal{V}_{\mathfrak{q}, \rho}(\tilde{U})$.
\item Many of the proofs are almost the same remembering that both the cocycle $\mathtt{c}$ and the irreducible representations $\rho \in \widehat{M}$ act unitarily.
\end{enumerate}

\subsection{Changes required for \texorpdfstring{\cite[Lemma 7.3]{SW20}}{[\cite{SW20}, Lemma 7.3]}}
\label{subsec:ChangesRequiredForLemma7.3}
We use the same notation as in the proof of \cite[Lemma 7.3]{SW20}. Taking the differential and using the product rule, we have instead
\begin{align*}
&\left(d\tilde{\mathcal{M}}_{\xi, \mathfrak{q}, \rho}^k(H)\right)_u(z) \\
={}&\sum_{\alpha: \len(\alpha) = k} e^{f_\alpha^{(a)}(\sigma^{-\alpha}(u))} d(f_\alpha^{(a)} \circ \sigma^{-\alpha})_u(z) \\
&{}\cdot ((\mathtt{c}^\alpha)^{-1} \otimes \rho_b(\Phi^\alpha(\sigma^{-\alpha}(u)))^{-1}) H(\sigma^{-\alpha}(u)) \\
{}&- \sum_{\alpha: \len(\alpha) = k} e^{f_\alpha^{(a)}(\sigma^{-\alpha}(u))} (\Id_{L^2(F_{\mathfrak{q}})} \otimes d(\rho_b \circ \Phi^\alpha \circ \sigma^{-\alpha})_u(z)) H(\sigma^{-\alpha}(u)) \\
{}&+ \sum_{\alpha: \len(\alpha) = k} e^{f_\alpha^{(a)}(\sigma^{-\alpha}(u))} ((\mathtt{c}^\alpha)^{-1} \otimes \rho_b(\Phi^\alpha(\sigma^{-\alpha}(u)))^{-1}) d(H \circ \sigma^{-\alpha})_u(z).
\end{align*}
Now, in each of the summation, we have
\begin{enumerate}
\item $\|((\mathtt{c}^\alpha)^{-1} \otimes \rho_b(\Phi^\alpha(\sigma^{-\alpha}(u)))^{-1}) H(\sigma^{-\alpha}(u))\|_2 = \|H(\sigma^{-\alpha}(u))\|_2$;
\item $\|\Id_{L^2(F_{\mathfrak{q}})} \otimes d(\rho_b \circ \Phi^\alpha \circ \sigma^{-\alpha})_u(z)\|_{\mathrm{op}} = \|d(\rho_b \circ \Phi^\alpha \circ \sigma^{-\alpha})_u(z)\|_{\mathrm{op}}$;
\item $\|((\mathtt{c}^\alpha)^{-1} \otimes \rho_b(\Phi^\alpha(\sigma^{-\alpha}(u)))^{-1}) d(H \circ \sigma^{-\alpha})_u(z)\|_2 = \|d(H \circ \sigma^{-\alpha})_u(z)\|_2$.
\end{enumerate}
Thus, the rest of the proof proceeds as in \cite[Lemma 7.3]{SW20}.

\subsection{Changes required for \texorpdfstring{\cite[Lemma 8.1]{SW20}}{[\cite{SW20}, Lemma 8.1]}}
\label{subsec:ChangesRequiredForLemma8.1}
We simply use \cref{lem:maActionLowerBound} instead of \cite[Lemma 4.4]{SW20}.

\subsection{Changes required for \texorpdfstring{\cite[Lemma 9.10]{SW20}}{[\cite{SW20}, Lemma 9.10]}}
\label{subsec:ChangesRequiredForLemma9.10}
First, according to the changes above, we have the following definitions. For all $\xi \in \mathbb C$ with $|a| < a_0'$, if $(b, \rho) \in \widehat{M}_0(b_0)$, then for all nontrivial ideals $\mathfrak{q} \subset \mathcal{O}_{\mathbb K}$, $H \in \mathcal{V}_{\mathfrak{q}, \rho}(\tilde{U})$, $h \in K_{E\|\rho_b\|}(\tilde{U})$, and $1 \leq j \leq j_{\mathrm{m}}$, we define the functions $\chi_1^j[\xi, \rho, H, h], \chi_2^j[\xi, \rho, H, h]: \tilde{U}_1 \to \mathbb C$ by
\begin{align*}
\chi_1^j[\xi, \rho, H, h](u) ={}&\left\|e^{f_{\alpha_0}^{(a)}(v_0(u))} ((\mathtt{c}^{\alpha_0})^{-1} \otimes \rho_b(\Phi^{\alpha_0}(v_0(u)))^{-1})H(v_0(u))\right. \\
&\left.{}+ e^{f_{\alpha_j}^{(a)}(v_j(u))} ((\mathtt{c}^{\alpha_j})^{-1} \otimes \rho_b(\Phi^{\alpha_j}(v_j(u)))^{-1})H(v_j(u))\right\|_2 \\
&{}\cdot \left((1 - N\mu)e^{f_{\alpha_0}^{(a)}(v_0(u))}h(v_0(u)) + e^{f_{\alpha_j}^{(a)}(v_j(u))}h(v_j(u))\right)^{-1}
\end{align*}
and
\begin{align*}
\chi_2^j[\xi, \rho, H, h](u) ={}&\left\|e^{f_{\alpha_0}^{(a)}(v_0(u))} ((\mathtt{c}^{\alpha_0})^{-1} \otimes \rho_b(\Phi^{\alpha_0}(v_0(u)))^{-1})H(v_0(u))\right. \\
&\left.{}+ e^{f_{\alpha_j}^{(a)}(v_j(u))} ((\mathtt{c}^{\alpha_j})^{-1} \otimes \rho_b(\Phi^{\alpha_j}(v_j(u)))^{-1})H(v_j(u))\right\|_2 \\
&{}\cdot \left(e^{f_{\alpha_0}^{(a)}(v_0(u))}h(v_0(u)) + (1 - N\mu)e^{f_{\alpha_j}^{(a)}(v_j(u))}h(v_j(u))\right)^{-1}
\end{align*}
for all $u \in \tilde{U}_1$.

Now we discuss the changes required in the proof of \cite[Lemma 9.10]{SW20}. We use the same notation as in the proof of \cite[Lemma 9.10]{SW20}. We have instead
\begin{align*}
V_\ell(u) &= e^{f_{\alpha_\ell}^{(a)}(v_\ell(u))}((\mathtt{c}^{\alpha_\ell})^{-1} \otimes \rho_b(\phi_\ell(u))^{-1})H(v_\ell(u)); \\
\hat{V}_\ell(u) &= \frac{V_\ell(u)}{\|V_\ell(u)\|_2} = ((\mathtt{c}^{\alpha_\ell})^{-1} \otimes \rho_b(\phi_\ell(u))^{-1})\omega_\ell(u)
\end{align*}
for all $u \in \tilde{U}_1$ and $\ell \in \{0, j\}$. Since $\omega_0$ and $\omega_j$ are Lipschitz on $\hat{D}_p$ with Lipschitz constant $\delta_1 \|\rho_b\|$ and $d(x_1, x_2) \leq \frac{\epsilon_1}{2\|\rho_b\|}$, we have
\begin{align*}
&\big\|\hat{V}_0(x_2) - \hat{V}_j(x_2)\big\|_2 \\
={}&\|((\mathtt{c}^{\alpha_0})^{-1} \otimes \rho_b(\phi_0(x_2))^{-1})\omega_0(x_2) - ((\mathtt{c}^{\alpha_j})^{-1} \otimes \rho_b(\phi_j(x_2))^{-1})\omega_j(x_2)\|_2 \\
={}&\|((\mathtt{c}^{\alpha_0})^{-1} \otimes \rho_b(\phi_j(x_2)\phi_0(x_2)^{-1}))\omega_0(x_2) - ((\mathtt{c}^{\alpha_j})^{-1} \otimes \Id_{V_\rho^\oplus})\omega_j(x_2)\|_2 \\
\geq{}&\|((\mathtt{c}^{\alpha_0})^{-1} \otimes \rho_b(\phi_j(x_2)\phi_0(x_2)^{-1}))\omega_0(x_1) - ((\mathtt{c}^{\alpha_j})^{-1} \otimes \Id_{V_\rho^\oplus})\omega_j(x_1)\|_2 \\
{}&- \|((\mathtt{c}^{\alpha_0})^{-1} \otimes \rho_b(\phi_j(x_2)\phi_0(x_2)^{-1}))\omega_0(x_2) \\
&{}- ((\mathtt{c}^{\alpha_0})^{-1} \otimes \rho_b(\phi_j(x_2)\phi_0(x_2)^{-1}))\omega_0(x_1)\|_2 \\
{}&- \|((\mathtt{c}^{\alpha_j})^{-1} \otimes \Id_{V_\rho^\oplus})\omega_j(x_2) - ((\mathtt{c}^{\alpha_j})^{-1} \otimes \Id_{V_\rho^\oplus})\omega_j(x_1)\|_2 \\
={}&\|((\mathtt{c}^{\alpha_0})^{-1} \otimes \rho_b(\phi_j(x_2)\phi_0(x_2)^{-1}))\omega_0(x_1) - ((\mathtt{c}^{\alpha_j})^{-1} \otimes \Id_{V_\rho^\oplus})\omega_j(x_1)\|_2 \\
&{}- \|\omega_0(x_2) - \omega_0(x_1)\|_2 - \|\omega_j(x_2) - \omega_j(x_1)\|_2 \\
\geq{}&\|((\mathtt{c}^{\alpha_0})^{-1} \otimes \rho_b(\phi_j(x_2)\phi_0(x_2)^{-1}))\omega_0(x_1) \\
&{}- ((\mathtt{c}^{\alpha_0})^{-1} \otimes \rho_b(\phi_j(x_1)\phi_0(x_1)^{-1}))\omega_0(x_1)\|_2 - \delta_1\epsilon_1 \\
{}&- \|((\mathtt{c}^{\alpha_0})^{-1} \otimes \rho_b(\phi_j(x_1)\phi_0(x_1)^{-1}))\omega_0(x_1) - ((\mathtt{c}^{\alpha_j})^{-1} \otimes \Id_{V_\rho^\oplus})\omega_j(x_1)\|_2 \\
={}&\|(\Id_{L^2(F_{\mathfrak{q}})} \otimes \rho_b(\phi_0(x_1))^{-1})\omega_0(x_1) \\
&{}- (\Id_{L^2(F_{\mathfrak{q}})} \otimes \rho_b(\phi_0(x_1)^{-1}\phi_0(x_2)\phi_j(x_2)^{-1}\phi_j(x_1)\phi_0(x_1)^{-1}))\omega_0(x_1)\|_2 - \delta_1\epsilon_1 \\
{}&- \|((\mathtt{c}^{\alpha_0})^{-1} \otimes \rho_b(\phi_0(x_1))^{-1})\omega_0(x_1) - ((\mathtt{c}^{\alpha_j})^{-1} \otimes \rho_b(\phi_j(x_1))^{-1})\omega_j(x_1)\|_2 \\
\geq{}&\|\rho_{b, \mathfrak{q}}(\phi_0(x_1)^{-1})\omega_0(x_1) - \rho_{b, \mathfrak{q}}(\BP_j(x_2, x_1))\rho_{b, \mathfrak{q}}(\phi_0(x_1)^{-1})\omega_0(x_1)\|_2 \\
&{}- \delta_1\epsilon_1 - \big\|\hat{V}_0(x_1) - \hat{V}_j(x_1)\big\|_2.
\end{align*}
Denote $\omega = \rho_{b, \mathfrak{q}}(\phi_0(x)^{-1})\omega_0(x)$ and $Z = d(\BP_{j, x_1} \circ \Psi)_{\check{x}_1}(z)$ where $z = (\check{x}_1, \check{x}_2 - \check{x}_1) \in \T_{\check{x}_1}(\mathbb R^{n - 1})$. Recall the curve $\varphi^{\mathrm{BP}}_{j, x_1, z}: [0, 1] \to AM$ defined by $\varphi^{\mathrm{BP}}_{j, x_1, z}(t) = \BP_{j, x_1}(\Psi(\check{x}_1 + tz))$ for all $t \in [0, 1]$. Recall that ${\varphi^{\mathrm{BP}}_{j, x_1, z}}'(0) = Z$ and $\varphi^{\mathrm{BP}}_{j, x_1, z}(0) = \BP_{j, x_1}(x_1) = e$ and $\varphi^{\mathrm{BP}}_{j, x_1, z}(1) = \BP_{j, x_1}(x_2) = \BP_j(x_2, x_1)$. Now, applying \cite[Lemma 7.1 and Eq. (14)]{SW20} and the modified version of \cite[Lemma 8.1]{SW20} from \cref{subsec:ChangesRequiredForLemma8.1}, we bound the first term above as
\begin{align*}
&\|\omega - \rho_{b, \mathfrak{q}}(\BP_j(x_2, x_1))(\omega)\|_2 \\
\geq{}&\|\omega - \rho_{b, \mathfrak{q}}(\exp(Z))(\omega)\|_2 - \left\|\rho_{b, \mathfrak{q}}(\exp(Z))(\omega) - \rho_{b, \mathfrak{q}}\left(\varphi^{\mathrm{BP}}_{j, x_1, z}(1)\right)(\omega)\right\|_2 \\
\geq{}&\|\omega - \exp(d\rho_{b, \mathfrak{q}}(Z))(\omega)\|_2 - \|\rho_{b, \mathfrak{q}}\| \cdot d_{AM}\left(\exp(Z), \varphi^{\mathrm{BP}}_{j, x_1, z}(1)\right) \\
\geq{}&\|d\rho_{b, \mathfrak{q}}(Z)(\omega)\|_2 - \|\rho_{b, \mathfrak{q}}\|^2 \|Z\|^2 - \|\rho_{b, \mathfrak{q}}\| \cdot d_{AM}\left(\exp(Z), \varphi^{\mathrm{BP}}_{j, x_1, z}(1)\right) \\
\geq{}&\|d\rho_{b, \mathfrak{q}}(Z)(\omega)\|_2 - \|\rho_b\|^2 (C_{\BP, \Psi} C_{\Psi})^2 d(x_1, x_2)^2 - C_{\exp, \BP} \cdot \|\rho_b\| \cdot d(x_1, x_2)^2 \\
\geq{}&7\delta_1\epsilon_1 - \delta_1\epsilon_1 - \delta_1\epsilon_1 \geq 5\delta_1\epsilon_1.
\end{align*}
Hence, we have
\begin{align*}
\big\|\hat{V}_0(x_1) - \hat{V}_j(x_1)\big\|_2 + \big\|\hat{V}_0(x_2) - \hat{V}_j(x_2)\big\|_2 \geq 4\delta_1\epsilon_1.
\end{align*}
Now, the rest of the proof proceeds as in \cite[Lemma 9.10]{SW20}, remembering that the cocycle acts unitarily.

\subsection{Proof that \texorpdfstring{\cref{thm:TheoremFrameFlow}}{Theorem \ref{thm:TheoremFrameFlow}} implies \texorpdfstring{\cref{thm:TheoremUniformExponentialMixingOfFrameFlow}}{Theorem \ref{thm:TheoremUniformExponentialMixingOfFrameFlow}}}
\cref{thm:TheoremUniformExponentialMixingOfFrameFlow} is proved using \cref{thm:TheoremFrameFlow} via Paley--Wiener theory analogous to proofs in \cite{OW16,SW20} so we only mention the differences here.

In fact, the treatment for the frame flow case in \cite[Section 10]{SW20} can be closely followed. The first minor difference is that since our function spaces have tensor factors of $L^2(F_\mathfrak{q})$ involved, the notations and definitions in \cite[Section 10]{SW20} need to be modified accordingly. The second major difference is that similar to \cite[Section 5]{OW16}, we need to keep track of the factors of powers of $N_{\mathbb K}(\mathfrak{q})$ throughout the argument uniformly in the nontrivial ideals $\mathfrak{q} \subset \mathcal{O}_{\mathbb K}$. To do so, firstly, we use \cref{thm:TheoremFrameFlow} to prove an analogue of \cite[Proposition 5.5]{OW16} and \cite[Lemma 10.3]{SW20}. Here we interject that unlike \cite[Lemma 10.3]{SW20}, we should omit the analogous condition $\int_M \tilde{\psi}(u, g, m, t) \, dm = 0$ where $\tilde{\psi} \in C^1(\tilde{U}^{\mathfrak{q}, \vartheta, \tau}, \mathbb R)$ and rather include the condition $\sum_{g \in \tilde{G}_{\mathfrak{q}}} \tilde{\psi}(u, g, m, t) = 0$ analogous to \cite[Proposition 5.5]{OW16}. Secondly, we use the growth estimate $\#\tilde{G}_\mathfrak{q} \leq N_{\mathbb K}(\mathfrak{q})^{c}$ for all nontrivial ideals $\mathfrak{q} \subset \mathcal{O}_{\mathbb K}$, for some $c > 0$ depending on $n$, wherever required. Namely, it should be used in the analogues of \cite[Lemma 10.2 and Corollary 10.5]{SW20}. Similar to \cite[Section 5]{OW16} and \cite[Section 10]{SW20}, \cref{thm:TheoremUniformExponentialMixingOfFrameFlow} then follows from the analogues of \cite[Lemma 10.3 and Corollary 10.5]{SW20} and using the result of \cite{SW20} for the base manifold $X$.


\nocite{*}
\bibliographystyle{alpha_name-year-title}
\bibliography{References}
\end{document}